\documentclass[10pt]{amsart}
\usepackage[a4paper, total={6in,8in}]{geometry}
\usepackage[utf8]{inputenc}
\usepackage[T1]{fontenc}
\usepackage[english]{babel}
\usepackage{amsmath}
\usepackage{amsfonts}
\usepackage{amssymb}
\usepackage{amsthm}
\usepackage{esint}
\usepackage{mathtools}
\usepackage{bbm}
\usepackage{graphicx}
\usepackage{xcolor}
\usepackage{enumitem}

\makeatletter
\newcommand{\proofpart}[2]{%
	\par
	\addvspace{\medskipamount}%
	\noindent\emph{Part #1: #2}\par\nobreak
	\addvspace{\smallskipamount}%
	\@afterheading
}
\makeatother

\makeatletter
\DeclareFontFamily{OMX}{MnSymbolE}{}
\DeclareSymbolFont{MnLargeSymbols}{OMX}{MnSymbolE}{m}{n}
\SetSymbolFont{MnLargeSymbols}{bold}{OMX}{MnSymbolE}{b}{n}
\DeclareFontShape{OMX}{MnSymbolE}{m}{n}{
    <-6>  MnSymbolE5
   <6-7>  MnSymbolE6
   <7-8>  MnSymbolE7
   <8-9>  MnSymbolE8
   <9-10> MnSymbolE9
  <10-12> MnSymbolE10
  <12->   MnSymbolE12
}{}
\DeclareFontShape{OMX}{MnSymbolE}{b}{n}{
    <-6>  MnSymbolE-Bold5
   <6-7>  MnSymbolE-Bold6
   <7-8>  MnSymbolE-Bold7
   <8-9>  MnSymbolE-Bold8
   <9-10> MnSymbolE-Bold9
  <10-12> MnSymbolE-Bold10
  <12->   MnSymbolE-Bold12
}{}

\let\llangle\@undefined
\let\rrangle\@undefined
\DeclareMathDelimiter{\llangle}{\mathopen}%
                     {MnLargeSymbols}{'164}{MnLargeSymbols}{'164}
\DeclareMathDelimiter{\rrangle}{\mathclose}%
                     {MnLargeSymbols}{'171}{MnLargeSymbols}{'171}
\makeatother

\DeclareMathOperator{\Div}{div}
\DeclareMathOperator{\tr}{tr}

\newtheorem{definition}{Definition}
\newtheorem{lemma}{Lemma}
\newtheorem{theorem}{Theorem}

\newtheorem*{remark}{Remark}

\newcommand{\weakstarto}{\overset{\ast}{\rightharpoonup}}
\newcommand{\weakto}{\rightharpoonup}
\newcommand{\Rxv}{\mathbb{T}^N_x \times \mathbb{R}^N_v}
\newcommand{\vmean}[1]{\left\langle #1 \right\rangle}
\newcommand{\vvmean}[1]{\left\llangle #1 \right\rrangle}

\author{Paulo Sampaio}
\title[Hydrodynamic limits for the BFD equation]{Hydrodynamic limits for the Boltzmann--Fermi--Dirac equation: Acoustic and Stokes--Fourier}
\date{}
\address{CEREMADE, CNRS, Université Paris-Dauphine, PSL Research
University, Place de Lattre de Tassigny, 75016 Paris, France}
\email{paulo.alves-sampaio@dauphine.psl.eu}

\begin{document}
\maketitle

\begin{abstract}
	We establish the acoustic and Stokes--Fourier hydrodynamic limits for the Boltzmann--Fermi--Dirac (BFD) equation on the torus of dimension 2 or more.
	For each limit, we consider fluctuations of appropriately scaled solutions of the BFD equation around a constant background Fermi--Dirac distribution.
	By adapting the weak solution framework developed by Bardos, Golse and Levermore for the classical Boltzmann equation to handle the cubic nonlinearity of the BFD collision operator, we show that these fluctuations converge weakly in time to a unique limit governed by the respective hydrodynamic equation.
\end{abstract}

\tableofcontents

\section{Introduction}

Introduced in the early 20th century, the Boltzmann-Fermi-Dirac (BFD) equation is a modification of the classical Boltzmann equation in which the interacting particles obey Fermi-Dirac statistics.
This equation is written as
\[
	\partial_t f + v \cdot \nabla_x f
	= Q(f).
\]
Here, $f(t,x,v) \geq 0$ is the particle number density.
At any time $t \geq 0$, the quantity $f(t,x,v) dxdv$ represents the number of particles occupying the infinitesimal phase-space volume $dxdv$ centered at $(x,v) \in \Rxv$.

The distinguishing feature of Fermi-Dirac particles is that they are subject to the Pauli exclusion principle, which prevents more than one particle from occupying the same quantum state.
This restriction imposes the upper bound
\[
	f(t,x,v)
	\leq \delta^{-1}
	= \frac{m^N g}{h^N},
\]
where $h$ is Planck's constant, $m$ is the particle's mass and $g$ is the statistical weight of a quantum state (often referred to as the degeneracy factor).
This constraint induces a saturation effect in phase space, where the probability of a collision decreases as the local phase space becomes highly occupied.

To account for these effects, Nordheim \cite{nordheim1928}, and subsequently Uehling and Uhlenbeck \cite{uehling1933}, proposed a modification of the classical Boltzmann collision integral, given by
\begin{equation}
	\label{eq:collision-integral}
	Q(f) = \iint_{\mathbb{R}^N \times \mathbb{S}^{N-1}}
		b(v-v_*, \omega) \big(
			f' f'_* (1 - \delta f) (1 - \delta f_*)
			- f f_* (1 - \delta f') (1 - \delta f'_*)
		\big) \, dv_* d\omega.
\end{equation}

The collision kernel $b(v-v_*, \omega) \geq 0$ is a measurable function, generally taken to be identical to its classical analog in the Boltzmann equation.
We have used the common abbreviations $f$, $f_*$, $f'$ and $f'_*$ to denote the function $f(t,x,\cdot)$ evaluated at the velocities $v$, $v_*$, $v'$ and $v'_*$, respectively.

The unprimed variables $v, v_* \in \mathbb{R}^N$ conventionally denote the pre-collision velocities, while the primed variables $v', v'_*$ denote the post-collision velocities.
These pairs are coupled by the microscopic conservation of linear momentum, $v + v_* = v' + v'_*$, and kinetic energy, $|v|^2 + |v_*|^2 = |v'|^2 + |v'_*|^2$.
The solutions to this algebraic system are parameterized by the unit vector $\omega \in \mathbb{S}^{N-1}$.
Therefore, in the integral \eqref{eq:collision-integral}, the post-collision velocities are defined by the relations
\[
	v' = v - [(v - v_*) \cdot \omega] \,\omega, \quad
	v'_* = v_* + [(v - v_*) \cdot \omega] \,\omega.
\]

A natural question that arises when studying kinetic models is whether a rigorous connection can be established between these microscopic models and the macroscopic fluid equations of hydrodynamics.
The mathematical formalization of this problem traces back to Hilbert's Sixth Problem \cite{hilbert-1902}, which called for the axiomatic treatment of physics and the rigorous derivation of macroscopic mechanics from atomistic views.

The study of what are now called \emph{hydrodynamic limits} consists of identifying the appropriate asymptotic scalings and limiting processes through which the averages of the microscopic quantity $f$ yield macroscopic fluid equations.
In this paper, we aim to prove two hydrodynamic limits for the BFD equation: the acoustic limit and the Stokes-Fourier limit.

\subsection{Presentation of the problem}
\label{subsec:presentation-of-the-problem}

The usual argument in hydrodynamic limits consists of decoupling the different parts of the BFD equation based on their characteristic time scales.
Let $L$ be the macroscopic characteristic length of the domain and $v_{th}$ the typical thermal velocity of the particles.

Macroscopically, the transport term on the left-hand side is governed by an \emph{advection time} $t_{adv} = L / v_{th}$,
representing the time it takes for particles to traverse the characteristic length $L$.
Meanwhile, the overall distribution function $f(t,x,v)$ evolves over an \emph{observation time} $t_{obs}$.
We decouple these two scales by introducing the Strouhal number $\sigma$, defined as the ratio of the advection time to the observation time, $\sigma = t_{adv} / t_{obs}$.

Microscopically, the collision integral $Q(f)$ operates on a much shorter \emph{collision time} $t_{coll} = \lambda / v_{th}$, representing the mean free time between particle collisions, where $\lambda$ is the mean free path.
To decouple this fast microscopic scale from the macroscopic transport dynamics, we introduce the Knudsen number $\kappa$, defined as the ratio of the collision time to the advection time, $\kappa = t_{coll} / t_{adv} = \lambda / L$.

As in the classical Boltzmann equation, a standard non-dimensionalization argument yields the scaled Boltzmann-Fermi-Dirac equation
\begin{equation}
	\label{eq:scaled-bfd}
	\sigma \partial_t f + v \cdot \nabla_x f
 		= \frac{1}{\kappa} Q(f).
\end{equation}

The fundamental assumption underlying any hydrodynamic limit is that collisions occur much faster than advective transport, which corresponds to the limit $\kappa \to 0$.
Multiplying \eqref{eq:scaled-bfd} by $\kappa$ and taking this limit, we formally expect $f$ to approach a local equilibrium state of the collision integral $Q(f)$.

These equilibrium states, known as local Fermi-Dirac distributions, are characterized by the local chemical potential $\varPi(t,x)$ (which implicitly determines the local density), the local bulk velocity $U(t,x)$ and the local temperature $\Theta(t,x)$, taking the general form
\[
	F_{\varPi, U, \Theta}(t,x,v)
	= \left[
		1
		+ \exp\left(
			\frac{|v - U(t,x)|^2 - 2\varPi(t,x)}{2\Theta(t,x)}
		\right)
	\right]^{-1}.
\]

In this work, we are interested in low-Mach limits, where the bulk velocity of the fluid is much smaller than the individual thermal velocities of the particles.
We consider small fluctuations around a global equilibrium state with constant parameters $(\varPi, U, \Theta) = (\varPi_0, U_0, \Theta_0)$.
By an appropriate choice of units and reference frame, we normalize this background state to $(\varPi_0,0,1)$ without loss of generality.

To capture these small fluctuations, we introduce the ansatz $(\varPi, U, \Theta) = (\varPi_0,0,1) + \mu(\varpi, u, \theta)$, where $\mu \to 0$ is the Mach number of the flow.
Taylor expanding the distribution $F_{\varPi, U, \Theta}$ with respect to $\mu$, we obtain
\[
	F_{\varPi, U, \Theta}(v)
	\approx F_{\varPi_0, 0, 1}(v) + \mu  F_{\varPi_0, 0, 1}(v) (1 -  F_{\varPi_0, 0, 1}(v))
	\left[
		\varpi + u \cdot v + \theta
		\left(
			\frac{|v|^2}{2} - \varPi_0
		\right)
	\right].
\]

In order to obtain the local density fluctuation from the chemical potential fluctuation $\varpi$, we recall that the local mass density is given by integrating the distribution function over the velocity space.

Integrating the above Taylor expansion, we can write the total density as $\rho_{total} = \rho_0 + \mu \delta \rho$, where $\rho_0$ is the background density.
Since the odd term $u \cdot v$ integrates to zero by symmetry, $\delta\rho$ depends only on $\varpi$ and $\theta$.
Dividing $\delta\rho$ by the integral of $F_{\varPi_0, 0, 1} (1 -  F_{\varPi_0, 0, 1})$ defines a normalized first-order density fluctuation $\rho$, yielding the relation $\rho = \varpi + C_{\varPi_0} \theta$, where $C_{\varPi_0}$ is a constant depending only on $\varPi_0$.
Substituting this relation back into the Taylor expansion to eliminate the chemical potential fluctuation $\varpi$ yields that there exists a constant $K_{\varPi_0}$ such that
\begin{equation}
	\label{eq:F-varpi-U-theta-approx}
	F_{\varPi, U, \Theta}(v)
	\approx F_{\varPi_0, 0, 1}(v) + \mu  F_{\varPi_0, 0, 1}(v) (1 -  F_{\varPi_0, 0, 1}(v))
	\left[
		\rho + u \cdot v + \frac{1}{2} \theta \left(|v|^2 - K_{\varPi_0}\right)
	\right].
\end{equation}

The saturation effect inherent to Fermi-Dirac statistics dictates that adding new particles to the system becomes increasingly difficult as the phase space fills up.
The chemical potential $\varPi$ is the thermodynamic quantity that measures this difficulty.
In the limit $\varPi \to -\infty$. the system behaves as a classical dilute gas, where one can add as many particles as desired.
In the fully degenerate limit $\varPi \to \infty$, the system becomes completely saturated, making it impossible to add new particles.

In our perturbative framework, the overall physical regime of the fluid is governed by the constant background potential $\varPi_0$.
For finite values of $\varPi_0$, however, the mathematical structure of the problem remains the same.
Therefore, up to an adjustment of constants and renormalization factors, our analysis holds for any choice of background chemical potential $\varPi_0$.
In keeping with standard conventions in the literature (e.g., \cite{jiang-2022}, \cite{jiang-2024} and \cite{jiang-2026}), from now on we set $\varPi_0 = 1$ and denote the background Fermi-Dirac distribution $F_{1,0,1}$ simply as $F$.
Moreover, we denote $K_1$ as $K$, deferring its expression to the next section.

The standard next step in establishing hydrodynamic limits is to demonstrate that the fluctuation variables $(\rho, u, \theta)$ in the expansion \eqref{eq:F-varpi-U-theta-approx} are indeed governed by macroscopic fluid equations.
Formally, this is achieved by substituting the expansion into the scaled equation \eqref{eq:scaled-bfd}, multiplying by a collision invariant $\zeta \in \operatorname{span}\{1, v_i, |v|^2\}$ and integrating over velocity space, before passing to the appropriate limits in the parameters $(\sigma, \kappa, \mu)$.
We refer the reader to \cite{golse-2012} and \cite{golse-2002} for the classical Boltzmann case, which is analogous to the quantum case treated here, and to \cite{zakrevskiy-thesis-2015} and \cite{jiang-2024} for the formal arguments in the Fermi-Dirac case.

\subsection{State of the art}

The mathematical history of these limits traces back to Hilbert himself \cite{hilbert-1912}, who proposed expanding $f$ as a formal power series in the Knudsen number $\kappa$.
This approach, while purely formal, serves as the basis for many early rigorous justifications, such as the derivation of the compressible Euler equations by Caflisch \cite{caflisch-1980}.
For a comprehensive overview of these classical limits, we refer to \cite[Section 1.4]{saint-raymond-2009}.

Compared to its classical counterpart, the hydrodynamic limits of the Boltzmann-Fermi-Dirac equation have been significantly less studied.
Formal derivations of the compressible Euler, as well as both the compressible and incompressible Navier-Stokes equations were first established in the 2015 thesis of Zakrevskiy \cite{zakrevskiy-thesis-2015}, with the compressible Euler result published in \cite{zakrevskiy-euler-2015}.
Building on this formal justification, subsequent efforts have successfully provided rigorous justifications:
the compressible Euler limit was established by Jiang and Zhou in \cite{jiang-2024}, while the incompressible Navier-Stokes limit was proved by Jiang, Xiong and Zhou in \cite{jiang-2022}.
More recently, the regularity requirements for the incompressible Navier-Stokes limit were relaxed in \cite{jiang-2026}.

These results rely on perturbative energy methods in the spirit of Caflisch \cite{caflisch-1980} and Guo \cite{guo-2006}, which are inherently restricted to regimes where the existence of strong solutions to both the kinetic equation and the limiting fluid equation can be guaranteed.
To overcome this limitation in the classical setting, Bardos, Golse and Levermore established a program in the early 1990s (now widely known as the BGL program) to derive weak solutions of fluid models from the DiPerna--Lions renormalized solutions of the Boltzmann equation \cite{bgl-1991, bgl1993}.
By exploiting the entropic structure of the Boltzmann equation alongside weak convergence methods, this framework provided a robust path for justifying low-Mach limits globally in time, relying solely on natural physical \emph{a priori} bounds of the solutions and imposing no restrictions on the size of the initial data.

Over the years, several fluid models were derived within this program, including the acoustic \cite{bgl-acoustic-2000, golse-2002}, the Stokes and incompressible Euler \cite{lions-masmoudi-2001}, and the Stokes-Fourier \cite{golse-2002} systems.
This culminated in the breakthrough compactness argument by Golse and Saint-Raymond \cite{golse-saint-raymond-2004}, which enabled them to recover the incompressible Navier-Stokes system.

Despite these successes, the BGL framework has not yet been extended to the Fermi-Dirac setting and analogous weak-solution limits for the Boltzmann-Fermi-Dirac equation remain absent from the literature.
The present work aims to address this problem.
Supposing only the regularity provided by the physical \emph{a priori} bounds, we leverage the entropic structure of the BFD equation to rigorously recover the acoustic and Stokes-Fourier systems.

\section{Main results}
\label{sec:main-results}

\subsection{Notation}

Throughout the paper, we denote by $F$ the standard Fermi-Dirac distribution $F_{1,0,1}$ and define the weight $\tau = \frac{1}{4} (1 + |v|^2)$.
For a function $\varphi = \varphi(v)$, we denote by $\vmean{\varphi}$ its weighted integral with respect to the velocity variable,
\[
	\vmean{\varphi} = \int_{\mathbb{R}^N} \varphi F(1-F) \, dv.
\]
Let $K = \vmean{|v|^2} / \vmean{1}$.
Observe that this definition implies that the functions $1$, $v_i$ and $\frac{1}{2} (|v|^2 - K)$ are mutually orthogonal in the space $L^2(\mathbb{R}^N; F(1-F) dv)$.

For a function $\varphi = \varphi(v, v_*, \omega)$, we define the double-bracketed mean associated with the collision kernel,
\begin{equation}
\label{eq:vv-mean}
	\vvmean{\varphi} = \iiint_{\mathbb{S}^{N-1} \times \mathbb{R}^{2N}} b(v-v_*, \omega) FF_*(1-F')(1-F'_*) \varphi\, d\omega dvdv_*.
\end{equation}

Unless otherwise specified, the domains of our function spaces are implicitly defined by the indicated measure, where $dt$, $dx$ and $dv$ are understood to denote integration over $(0,\infty)$, $\mathbb{T}^N$ and $\mathbb{R}^N$, respectively.
Thus, for example, the space $L^p(F(1-F) dvdxdt)$ corresponds to the space $L^p((0,\infty) \times \Rxv; dtdx F(1-F) dv)$.
Finally we denote by $\mathcal{D}$ the space of compactly supported $C^\infty$ functions, frequently indicating the domain by its underlying measure (for instance, $\mathcal{D}(dx)$ for test functions on $\mathbb{T}^N$).

\subsection{Analytical setting}
We assume the collision kernel $b = b(v, \omega) \geq 0$ depends only on $|v|$ and $|(v, \omega)|$, and satisfies the bound
\begin{equation}
	\label{eq:collision-kernel-bound}
		b(v, \omega) \leq C_b (1 + |v|).
\end{equation}
This includes hard spheres, as well as hard potentials and Maxwellian molecules under a strong angular cutoff assumption.

We define a weak solution of BFD as follows, formulating it using the scaled version \eqref{eq:scaled-bfd} so that it is already present in the hydrodynamic limit scaling.
This provides a direct reference for our subsequent analysis and is mathematically equivalent to the unscaled formulation.

\begin{definition}
	\label{def:weak-solution}
	Let the initial data $f^{in} \in L^1(\tau dvdx)$.
	We say that $f \in L^\infty(\tau dvdxdt) \cap C(dt; w-L^1(dxdv))$ is a weak solution of the BFD equation with initial data $f^{in}$ if $f$ solves \eqref{eq:scaled-bfd} in the sense of distributions and satisfies $f(t) \weakto f^{in}$ weakly in $L^1(dxdv)$ as $t \to 0$.

	Furthermore, $f$ conserves total mass and linear momentum and satisfies the decay of kinetic energy.
	That is, for almost every $t \geq 0$, we have
	\[
		\begin{split}
			\iint_{\Rxv} \zeta f(t,x,v) \, dxdv
			&= \iint_{\Rxv} \zeta f^{in}(x,v) \, dxdv, \\
			\iint_{\Rxv} |v|^2 f(t,x,v) \, dxdv
			&\leq \iint_{\Rxv} |v|^2 f^{in}(x,v) \, dxdv,
		\end{split}
	\]
	for $\zeta = 1, v_1, v_2, \dots, v_N$.

	Finally, letting $\eta = \sqrt{\kappa \sigma}$, the solution $f$ satisfies the entropy inequality for almost every $t \geq 0$:
	\begin{equation}
		\label{eq:entropy-inequality}
		H(f|F)(t) + \frac{1}{\eta^2} \int_{0}^t R(f) \, d\tau \leq H(f^{in}|F).
	\end{equation}
	Here, $H(f|F)(t)$ is the quantum relative entropy,
	\[
		H(f|F)(t) = \iint_{\Rxv} f \log\left(\frac{f}{F}\right) + (1-f) \log\left(\frac{1-f}{1-F}\right) \, dxdv,
	\]
	and $R(f)$ is the quantum entropy dissipation,
	\begin{equation}
		\label{eq:entropy-dissipation}
		\begin{split}
			R(f) = \frac{1}{4}
			\iiint_{\mathbb{T}^N \times \mathbb{R}^{2N}_{v,v_*} \times \mathbb{S}^{N-1}}
				b(v-v_*, \omega) &(f' f'_{*} (1-f) (1-f_{*}) - f f_{*} (1-f') (1-f'_{*})) \\
				&\quad\times \log\left(
					\frac{f' f'_{*} (1-f) (1-f_{*})}{f f_{*} (1-f') (1-f'_{*})}
				\right)
				\, dxdvdv_*d\omega.
		\end{split}
	\end{equation}
\end{definition}

Weak solutions to the BFD equation were first constructed by Dolbeault \cite{dolbeault-1994} for integrable collision kernels $b \in L^1(\mathbb{R}^N \times \mathbb{S}^{N-1})$ in the full space $x \in \mathbb{R}^N$.
Because all the spatial estimates hold analogously on the torus, this result extends naturally to yield periodic solutions in the sense of Definition \ref{def:weak-solution}.

Since the requirement of an integrable collision kernel is somewhat restrictive, Lions \cite{lions-1994} later relaxed this assumption by proving a compactness result for this equation.
This establishes the existence of solutions for certain locally integrable kernels $b \in L^1_{loc}(\mathbb{R}^N \times \mathbb{S}^{N-1})$, which encompasses the cases in \eqref{eq:collision-kernel-bound}.
Moreover, as noted by Lions in \cite[Remark V.1 iii)]{lions-1994}, this extension remains valid in a periodic box $\mathbb{T}^N$.

In order to rigorously justify the hydrodynamic limits, we consider a sequence of parameters $(\sigma_n, \kappa_n, \mu_n)$, with $\kappa_n, \mu_n \to 0$.
Let $(f^{in}_n)_n$ be a sequence of initial data belonging to $L^1((1+|v|^2) dvdx; \Rxv)$ and define the corresponding sequence of initial fluctuations $(g^{in}_n)_n$ by the relation $f^{in}_n = F + \mu_n F(1-F) g^{in}_n$.

Expanding the relative entropy functional $H(f^{in}_n | F)$, we find that the first-order terms in $\mu_n$ vanish, making this functional of order $\mu_n^2$ in the limit $\mu_n \to 0$ (a more detailed argument is given in Section \ref{subsec:lower-semi-continuity}).
Therefore, in the low-Mach limit, it is natural to assume that our sequence of initial data satisfies the initial entropy bound
\begin{equation}
	\label{eq:initial-relative-entropy}
	H(f^{in}_n|F) \leq C^{in} \mu_n^2.
\end{equation}
Furthermore, to ensure the initial fluctuations do not capture the global mass, linear momentum and kinetic energy, we assume that the initial distributions are matched to the background $F$ for these quantities.
That is, for each collision invariant $\zeta = 1, v_1, \dots, v_N, |v|^2$, we have
\begin{equation}
	\label{eq:fn-invariants-concentrated}
	\iint_{\Rxv} \zeta f^{in}_n \, dxdv
	= \iint_{\Rxv} \zeta F \, dxdv.
\end{equation}

For each $n$, let $f_n$ be a weak solution to the scaled BFD equation with parameters $(\sigma_n, \kappa_n)$ and initial data $f^{in}_n$.
That is, $f_n$ satisfies
\begin{equation}
	\label{eq:scaled-bfd-fn}
	\begin{cases}
		\sigma_n \partial_t f_n + v \cdot \nabla_x f_n
	 		= \frac{1}{\kappa_n} Q(f_n), \\
	 	f_n|_{t = 0} = f^{in}_n.
 	\end{cases}
\end{equation}
We define the corresponding sequence of fluctuations $(g_n)_n$ around the global Fermi-Dirac distribution by the relation
\begin{equation}
	\label{eq:micro-macro-decomposition}
	f_n = F + \mu_n F (1-F) g_n.
\end{equation}
Rigorously justifying the formal approximation \eqref{eq:F-varpi-U-theta-approx} thus amounts to establishing the necessary compactness for $(g_n)_n$ and proving that, as $n \to \infty$, these fluctuations converge in an appropriate sense to the macroscopic hydrodynamic variables $\rho + u \cdot v + \frac{1}{2} \theta (|v|^2 - K)$.

\subsection{Acoustic limit}

Acoustic waves in an ideal fluid are small-amplitude fluctuations around an equilibrium state.
Accordingly, in the fluid dynamics setting, the acoustic system is obtained by linearizing the compressible Euler equations around a constant solution.

To deduce the acoustic limit of the Boltzmann-Fermi-Dirac equation, we start by matching the scale of the observation time to the advection time, setting $\sigma = 1$.
A Hilbert power series argument then traditionally yields the compressible Euler equations in the limit $\kappa \to 0$ even in the Fermi-Dirac case, as shown in \cite[Chapter 1]{zakrevskiy-thesis-2015}.
Further imposing the low-Mach limit $\mu \to 0$, we capture the first-order fluctuations and expect to recover the acoustic system.

A complete formal argument yielding the acoustic limit may be found in \cite{golse-2002} for the classical Boltzmann equation, and an analogous argument holds for the quantum case.
While this formal derivation establishes the acoustic limit without any restrictions on the relative size of the scalings $\mu$ and $\kappa$, our rigorous proof demands the technical scaling \eqref{eq:acoustic-limit-technical-scaling} presented below.
This restriction, also encountered by Bardos, Golse and Levermore in the classical setting, arises from the technique used to establish the vanishing of conservation defects.
Specifically, if $\mu = \kappa^m$, this scaling requires that $m > 1/2$, that is, the Mach number must decrease faster than the square root of the Knudsen number.

\begin{theorem}
	\label{thm:acoustic-limit}
	Let $(\sigma_n, \kappa_n, \mu_n)$, be a sequence of parameters such that $\sigma_n = 1$ and $\kappa_n, \mu_n \to 0$ as $n \to \infty$, satisfying
	\begin{equation}
		\label{eq:acoustic-limit-technical-scaling}
		\mu_n \sqrt{\log\left(\frac{1}{\mu_n}\right)}
		= o \left( \sqrt{\kappa_n} \right).
	\end{equation}

	Let $(f_n)_n$ be a sequence of weak solutions to the scaled BFD equation \eqref{eq:scaled-bfd} with initial data $(f^{in}_n)_n$ satisfying \eqref{eq:initial-relative-entropy}, and let $(g_n)_n$ be the sequence of corresponding fluctuations defined by \eqref{eq:micro-macro-decomposition}.

	Suppose that for some $(\rho_0, u_0, \theta_0) \in L^2(dx; \mathbb{R} \times \mathbb{R}^N \times \mathbb{R})$ the family of initial fluctuations $g^{in}_n$ satisfies, in the sense of distributions,
		\[
			(\rho_0, u_0, \theta_0)
			= \frac{1}{\vmean{1}}
			\lim_{n \to \infty}
			\left(
				\vmean{g^{in}_n},
				\vmean{\frac{N}{K} v g^{in}_n},
				\vmean{\frac{|v|^2 - K}{\gamma K} g^{in}_n}
			\right).
		\]

	Then, as $n \to \infty$, the fluctuations $g_n$ converge weakly in $L^1([0,T] \times \mathbb{T}^N \times \mathbb{R}^N; dtdx F(1-F) dv)$ for all $T>0$ to an infinitesimal Fermi-Dirac distribution $g$, given by
	\[
		g(t,x,v) = \rho(t,x)
			+ u(t,x) \cdot v
			+ \theta(t,x) \frac{1}{2} \left( |v|^2 - K \right).
	\]

	Moreover, $(\rho, u, \theta) \in L^\infty(dt; L^2(dx))$ is the unique weak solution to the acoustic system
	\[
		\begin{cases}
			\partial_t \rho + \frac{K}{N} \Div_x u = 0,\\
			\partial_t u + \nabla_x (\rho + \gamma\theta) = 0,\\
			\partial_t \theta + \frac{2}{N} \Div_x u = 0,
		\end{cases}
	\]
	where $\gamma = \vmean{|v|^4} / (2 \vmean{|v|^2}) - K / 2$, subject to the initial data $(\rho, u, \theta)|_{t = 0} = (\rho_0, u_0, \theta_0)$.
	In addition, this solution satisfies
	\[
		\int_{\mathbb{T}^N} \rho \, dx = 0, \quad
		\int_{\mathbb{T}^N} u \, dx = 0, \quad
		\int_{\mathbb{T}^N} \theta \, dx = 0.
	\]
\end{theorem}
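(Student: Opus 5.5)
We will adapt the Bardos--Golse--Levermore strategy for the acoustic limit to the Fermi--Dirac setting. The proof proceeds in four steps: compactness of the fluctuations, identification of the limit as an infinitesimal Fermi--Dirac distribution, passage to the limit in the local conservation laws, and uniqueness for the limiting system.

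\emph{Step 1: compactness from the entropy.} The first step is to derive uniform bounds on $g_n$ from the entropy inequality \eqref{eq:entropy-inequality} together with \eqref{eq:initial-relative-entropy}.
\begin{itemize}
\item The quantum relative entropy $H(f|F)$ is the integral of $h(f) = f\log(f/F) + (1-f)\log((1-f)/(1-F))$. Its second derivative is $1/(f(1-f)) \ge 1/(F(1-F))$ near $F$. The pointwise Young-type inequality then gives, for $f\in[0,1]$, a quadratic-or-linear lower bound of $h$ in terms of $\mu_n g_n$ with respect to the weight $F(1-F)$.
\item The Pauli bound $0\le f\le 1$ ($\delta=1$ after normalization) is a genuine simplification compared with the classical case. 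It yields $|g_n| \le 1/(\mu_n F)$, while $1-F$ decays like a Gaussian.
\item From this I would show that $(1+|v|^2) g_n$ is bounded in $L^\infty(dt;L^1(F(1-F)dvdx))$ and relatively weakly compact (equi-integrable in $v$). The key ingredient is a Young inequality of the form $\mu|z| r \le h$-type convex dual bounds, as in the classical BGL entropy bound.
\item We also get $\int_0^\infty R(f_n)\,dt \le C\mu_n^2\kappa_n$.
\end{itemize}

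\emph{Step 2: the limit is an infinitesimal Fermi--Dirac distribution.} Let $g$ be any weak limit of $g_n$.
\begin{itemize}
\item Write $Q(f_n) = \mu_n F(1-F)\mathcal{L}g_n + \mu_n^2 \mathcal{Q}_2(g_n,g_n) + \mu_n^3\mathcal{Q}_3(g_n,g_n,g_n)$, where $\mathcal{L}$ is the linearized BFD operator.
\item The dissipation bound gives control of the normalized collision integrand $q_n = (f_n'f_{n*}'(1-f_n)(1-f_{n*}) - f_nf_{n*}(1-f_n')(1-f_{n*}'))/(\mu_n FF_*(1-F')(1-F'_*))$. Using the inequality $\frac14(a-b)\log(a/b)\ge(\sqrt a-\sqrt b)^2$, we have $\vvmean{|q_n|}$ bounded and $\mu_n q_n \to 0$ in a suitable sense, using $\kappa_n\to 0$.
\item Multiplying \eqref{eq:scaled-bfd-fn} by $\kappa_n/\mu_n$ and passing to the limit, I expect $q_n \to g'+g'_*-g-g_*$ in $L^1$ of the $\vvmean{\cdot}$ measure. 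This forces $g'+g'_*=g+g_*$ a.e.
\item The classical collision-invariant characterization then gives $g=\rho+u\cdot v+\frac12\theta(|v|^2-K)$.
\end{itemize}

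\emph{Step 3: limit of the conservation laws.} Since weak solutions are not known to satisfy local conservation laws, we proceed as follows.
\begin{itemize}
\item Following Bardos--Golse--Levermore, I would renormalize by a truncation, using $\gamma_n$ a cutoff on $|g_n|\le \log(1/\mu_n)$-type sets (or of $v$). In the Fermi--Dirac case, the a priori bound $f\le1$ may even allow direct use of the moment equations.
\item The moment equations for $\zeta\in\{1,v,|v|^2\}$ then hold up to conservation defects that vanish as $n\to\infty$.
\item The flux terms $\vmean{v\zeta g_n}$ converge to the corresponding moments of $g$, whose computation with the Fermi--Dirac moments gives the coefficients $K/N$, $\gamma$, $2/N$.
\item The main obstacle is showing that the conservation defects vanish. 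They are controlled by $\frac{1}{\mu_n\kappa_n}\vvmean{\zeta\,(\text{truncation error})\,\mu_n q_n}$ terms, estimated via Cauchy--Schwarz with the dissipation bound ($\sqrt{\kappa_n}\mu_n$) and the truncation tails (Gaussian tails cost $\sqrt{\log(1/\mu_n)}$). This is exactly where \eqref{eq:acoustic-limit-technical-scaling} enters. The cubic terms of BFD need extra care, but the bounds $0\le f\le1$ make the extra factors harmless.
\end{itemize}

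\emph{Step 4: initial data, uniqueness, and conclusion.}
\begin{itemize}
\item Weak continuity in time from the moment equations gives the initial data. The normalizations $N/K$ and $1/(\gamma K)$ in the theorem match the projection onto the orthogonal basis $1$, $v_i$, $\frac12(|v|^2-K)$.
\item The conditions $\int\rho=\int u=\int\theta=0$ follow from \eqref{eq:fn-invariants-concentrated} and conservation of mass and momentum. For the energy, the inequality in the limit combined with the acoustic system's conservation of $\int\theta$ yields zero.
\item Uniqueness of $L^\infty L^2$ weak solutions of the linear symmetric-hyperbolic acoustic system, by an energy/duality argument, shows the whole sequence converges.
\end{itemize}
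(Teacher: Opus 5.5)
Your overall architecture matches the paper's: entropy compactness, identification of $g$ as an infinitesimal Fermi--Dirac distribution, renormalized local conservation laws whose defects vanish under \eqref{eq:acoustic-limit-technical-scaling}, time-equicontinuity for the initial data, and uniqueness to recover the whole sequence. However, Step~3, the heart of the theorem, does not close as you describe it.

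For any renormalization $\Gamma_n(g_n)$ (your unspecified truncation, or the paper's $\lambda_n$, for which $\Gamma_n'(g_n)=1/N_n$), the defect in the moment equation is $\frac{1}{\kappa_n\mu_n}\int\zeta\,\Gamma_n'(g_n)\,Q(f_n)\,dv=\frac{\eta_n}{\kappa_n}\vvmean{\zeta\,\Gamma_n'(g_n)\,q_n}$. You estimate it as ``truncation error times $q_n$'', i.e.\ as $\frac{\eta_n}{\kappa_n}\vvmean{\zeta(\Gamma_n'(g_n)-1)q_n}$. This silently subtracts $\vvmean{\zeta q_n}$, i.e.\ it uses $\int\zeta\,Q(f_n)\,dv=0$. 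That is exactly the local conservation law that is unavailable. The moment $\int|v|^3f_n\,dv$ is not controlled, and the Pauli bound $f_n\le 1$ does not help, so your remark that it ``may even allow direct use of the moment equations'' is incorrect. Moreover, for hard-sphere-type kernels $|v|^2Q(f_n)$ is only locally integrable.

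The paper gets around this in Lemma~\ref{lem:conservation-defects} by writing $\frac{1}{N_n}=\big(1-\frac{1}{N_{n,*}}\big)\frac{1}{N_n}+\frac{1}{N_nN_{n,*}}$.
\begin{itemize}
\item The first piece carries the small factor $\mu_n(\frac34-F_*)g_{n,*}/N_{n,*}$. It is treated much as you envisage: Young--Fenchel with $r^*$ absorbs the weight $\tau$, together with $\tau g_n^2/N_n=\mathcal{O}(\log(1/\mu_n))$ from Lemma~\ref{lem:gn2-Nn-bounds}. It produces the $\mu_n\sqrt{\log(1/\mu_n)}$ term that forces the technical scaling.
\item The second piece has no small factor; its smallness comes from symmetry. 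It is integrable, so it may be symmetrized under $(v,v_*)\mapsto(v',v'_*)$. The identity $N_nN_{n,*}=\frac14+\frac34P_nP_{n,*}M'_nM'_{n,*}+\frac14\mu_nA_1-\frac1{16}\mu_n^2A_2$ then reduces it to $\mu_nI_1$, $\mu_n^2I_2$ and $I_0=-\frac12\mu_n\eta_n\vvmean{(\zeta+\zeta_*)q_n^2/(N_nN_{n,*}N'_nN'_{n,*})}$.
\end{itemize}
The term $I_0$ is a velocity-weighted entropy dissipation, which the dissipation bound does not control by Cauchy--Schwarz. It requires a separate argument: the function $s$ and the Young inequality \eqref{eq:young-inequality-h-s}, with a loss $\log(1/(\mu_n\eta_n))$. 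Your proposal has no mechanism for this symmetric part, so the vanishing of the conservation defects is not established.

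Two further steps are only asserted.
\begin{itemize}
\item In Step~2, the remainder $\mu_n\mathcal{Q}_2(g_n,g_n)$ is not controlled by the $L^1$ compactness of Step~1. You give no argument that your unrenormalized $\vvmean{|q_n|}$ is bounded. Cauchy--Schwarz against the dissipation would need $\int b\,f_nf_{n,*}\,dv\,dv_*\,d\omega\in L^1(dx)$, which the a priori bounds do not provide. You need either the square-root variables of BGL, which your inequality $\frac14(a-b)\log(a/b)\ge(\sqrt a-\sqrt b)^2$ hints at but you do not carry through, or, as in Lemma~\ref{lem:limit-local-maxwellian}, division by $N_nZ_n$ combined with the bound on $g_n^2/N_n$ to kill the quadratic term.
\item In Step~3, the energy flux $\vmean{v|v|^2g_n}$ is not controlled by the bound on $\tau g_n$. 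It is the renormalized fluctuation, through $\lambda_n^2\le g_n^2/N_n$, that gives the $L^\infty(dt;L^2(F(1-F)dvdx))$ bound needed to pass to the limit in the fluxes.
\end{itemize}
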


\begin{remark}
	The reader may notice that in the formal derivation of the acoustic system referenced earlier \cite{jiang-2024}, the resulting equations appear slightly different.
	In that work, the system is symmetrized to visually mirror the classical Boltzmann acoustic limit by defining the effective density fluctuation $\sigma \equiv \rho + (\gamma - 1) \theta$.
	This variable substitution yields the classical momentum equation $\partial_t u + \nabla_x (\sigma + \theta) = 0$, but shifts the quantum thermodynamic complexity into the mass equation.

	We have retained the above expression for the acoustic limit so that the macroscopic quantities $(\rho, u, \theta)$ directly represent the physical thermodynamic fluctuations established in \eqref{eq:F-varpi-U-theta-approx}.
	Moreover, by projecting onto the set of functions $\{ 1, v_i, |v|^2 - K \}$, which are orthogonal in $L^2(F(1-F) dv)$, we obtain a convenient algebraic structure for the perturbative theory studied in the next section.
\end{remark}

\subsection{Stokes-Fourier limit}

The Stokes-Fourier limit is obtained by identifying the observation time $t_{obs}$ with a diffusion time $t_{diff}$.
Recalling the characteristic scales defined in Section \ref{subsec:presentation-of-the-problem}, we approximate the gas as a random walk with a step size equal to the mean free path $\lambda$ and heuristically define $t_{diff}$ as the time it takes this process to travel the macroscopic distance $L$.

Given that the mean free time is $t_{coll}$, the number of steps taken over this diffusion time is $t_{diff} / t_{coll} = 1/(\kappa\sigma)$.
However, a standard result of probability theory dictates that the number of steps required to diffuse across a distance $L$ with a step size of $\lambda$ is proportional to $(L / \lambda)^2 = 1 / \kappa^2$.
Equating both expressions for the number of steps yields the scale matching $\sigma = \kappa$.

Furthermore, the Stokes system models a fluid regime where the viscous diffusive forces dominate the inertial convective forces.
The hydrodynamic quantity that determines this regime is the \emph{Reynolds number} $\varrho$, defined as the ratio of inertial to viscous forces within a fluid.
This quantity can be expressed as a function of the Knudsen and Mach numbers through the \emph{von Kármán} relation $\varrho \sim \mu / \kappa$.
Therefore, to obtain the Stokes limit ($\varrho \to 0$), we must formally impose $\mu = o(\kappa)$.

As shown in the formal argument of Zakrevskiy \cite[Chapter III]{zakrevskiy-thesis-2015}, these scalings for $(\sigma, \kappa, \mu)$ are sufficient to recover the Stokes-Fourier limit.
However, we encounter the same difficulty here as in the classical case \cite{golse-2002} in establishing the vanishing of the conservation defects.
This forces a technical scaling introduced in  \eqref{eq:stokes-limit-technical-scaling} below.
This condition is not overly restrictive, since if $\mu = \kappa^m$, both the physical requirement $\mu = o(\kappa)$ and the technical scaling \eqref{eq:stokes-limit-technical-scaling} demand that $m > 1$.

The following theorem establishes the full Stokes-Fourier limit for the BFD equation.
Because the density and temperature fluctuations are coupled in this regime by the Boussinesq relation $\rho + \gamma \theta = 0$, the limiting infinitesimal Fermi-Dirac distribution $g$ can be written as a function of bulk velocity and temperature alone.

\begin{theorem}
	\label{thm:stokes-limit}
	Let $(\sigma_n, \kappa_n, \mu_n)$ be a sequence of parameters such that $\kappa_n, \mu_n \to 0$, $\sigma_n = \kappa_n$ and the scaling
	\begin{equation}
		\label{eq:stokes-limit-technical-scaling}
		\mu_n \sqrt{\log\left(\frac{1}{\mu_n}\right)}
		= o \left( \kappa_n \right).
	\end{equation}

	Consider a sequence of initial data $(f^{in}_n)_n$ belonging to $L^1(\tau dvdx)$ satisfying \eqref{eq:initial-relative-entropy}.
	Let $(f_n)_n$ be a sequence of weak solutions to the scaled BFD equation \eqref{eq:scaled-bfd} and $(g_n)_n$ be the sequence of corresponding fluctuations, defined by \eqref{eq:micro-macro-decomposition}.

	Let $K' = \vmean{|v|^4} / (2 \vmean{|v|^2})$ and $C_1 = (K')^2 \vmean{1} - \vmean{|v|^4} / 4$.
	Suppose the initial fluctuations satisfy
	\[
		(u_0, \theta_0)
		= \lim_{n \to \infty}
		\left(
			\Pi \vmean{\frac{N}{K \vmean{1}} v g^{in}_n},
			\vmean{ \frac{1}{C_1}
				\left(
					\frac{|v|^2}{2} - K'
				\right) g^{in}_n}
		\right)
	\]
	in the sense of distributions, where $\Pi$ is the orthogonal projection from $L^2(dx; \mathbb{R}^N)$ onto divergence-free vector fields.

	Then, as $n \to \infty$, the fluctuations converge weakly in $L^1([0,T] \times \mathbb{T}^N \times \mathbb{R}^N; dtdx F(1-F) dv)$ for all $T > 0$ to an infinitesimal Fermi-Dirac distribution of the form
	\[
		g(t,x,v) = u(t,x) \cdot v
			+ \theta(t,x)
			\left[
				\frac{1}{2} \left( |v|^2 - K \right) - \gamma
			\right].
	\]

	Moreover $(u, \theta)$ is the unique weak solution to the Stokes-Fourier system
	\[
		\begin{cases}
			\partial_t u + \nabla_x p = \nu \Delta_x u,\\
			\partial_t \theta - k \Delta_x \theta = 0.
		\end{cases}
	\]
	coupled with the incompressibility relation $\Div_x u = 0$, corresponding to the initial data $(u, \theta)|_{t = 0} = (u_0, \theta_0)$.
	In addition, this solution satisfies
	\[
		\int_{\mathbb{T}^N} u \, dx = 0, \quad
		\int_{\mathbb{T}^N} \theta \, dx = 0.
	\]
\end{theorem}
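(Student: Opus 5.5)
Following the Bardos--Golse--Levermore program \cite{golse-2002}, the proof proceeds in four steps: uniform bounds from the entropy, compactness with identification of the limit's velocity profile, passage to the limit in the local conservation laws, and uniqueness for the limiting system.

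\textbf{Step 1: bounds from the entropy.} The entropy inequality \eqref{eq:entropy-inequality}, combined with \eqref{eq:initial-relative-entropy} and $\eta_n^2=\kappa_n^2$, gives
\[
\sup_t H(f_n|F)\le C^{in}\mu_n^2, \qquad \int_0^\infty R(f_n)\,dt\le C^{in}\mu_n^2\kappa_n^2 .
\]
The quantum relative entropy is locally quadratic near $F$ and superlinear away from it. This yields a Young-type inequality in the spirit of Section \ref{subsec:lower-semi-continuity}, splitting $g_n=g_n\mathbbm{1}_{\{|g_n|\le 1/\mu_n\}}+\dots$. From it we obtain that $(g_n)$ is relatively weakly compact in $L^1_{loc}(dt;L^1(\tau F(1-F)dvdx))$, with the bound $\mu_n g_n^2\tau/(1+\mu_n|g_n|)$ uniformly integrable.

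For the dissipation we introduce the quantum collision quotient
\[
q_n=\frac{1}{\mu_n\kappa_n}\Big(\frac{f_n'f_{n*}'(1-f_n)(1-f_{n*})}{f_nf_{n*}(1-f_n')(1-f_{n*}')}-1\Big)
\]
(or its square-root variant). The elementary inequality $(\sqrt{a}-\sqrt{b})^2\le\frac14(a-b)\log(a/b)$ then makes it bounded in $L^2$ with respect to the measure defining $\vvmean{\cdot}$.

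\textbf{Step 2: compactness and identification.} Expanding $f_n'f_{n*}'(1-f_n)(1-f_{n*})-f_nf_{n*}(1-f_n')(1-f_{n*}')$ in $\mu_n$, the cubic structure produces a linear term $\mu_n F F_*(1-F')(1-F'_*)(g_n'+g_{n*}'-g_n-g_{n*})$ plus higher-order remainders. By the dissipation bound, any weak limit $g$ satisfies $g'+g'_*-g-g_*=0$ a.e. Hence $g\in\ker\mathcal{L}=\mathrm{span}\{1,v,|v|^2\}$, which we write as $\rho+u\cdot v+\tfrac12\theta(|v|^2-K)$.

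In the Stokes regime the momentum equation divided by $\kappa_n$ moreover gives $\nabla_x\vmean{\tfrac{|v|^2}{N}g}=\nabla_x(\rho+\gamma\theta)$-type Boussinesq relations, and the mass equation gives $\Div_x u=0$. The zero-mean statements follow from \eqref{eq:fn-invariants-concentrated} and the conservation laws. For energy, which is only a decay in Definition \ref{def:weak-solution}, we use the entropy bound.

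\textbf{Step 3: the limiting equations.} We take moments of \eqref{eq:scaled-bfd-fn} against $\zeta\in\{1,v,|v|^2\}$. The main obstacle is that weak solutions do not satisfy local conservation laws exactly. We therefore renormalize: we work with $\vmean{\zeta g_n\hat\gamma_n}$, where $\hat\gamma_n$ is a truncation to $\{|v|^2\le C\log(1/\mu_n)\}$, or with a renormalized fluctuation, and then show that the conservation defects vanish.

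The defect is controlled by the truncated collision integral, $\frac{1}{\mu_n\kappa_n}\vmean{\zeta\hat\gamma_n Q(f_n)}$, and by the tail contributions. To bound it we split $Q$ into quadratic and cubic parts in $q_n$ and use $0\le f_n\le1$ to control the cubic factors. The high-velocity truncation costs $\sqrt{\log(1/\mu_n)}$, and this is precisely where \eqref{eq:stokes-limit-technical-scaling} forces the defects to be $o(1)$, as in \cite{golse-2002}.

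For the Stokes--Fourier system we also need the flux $\frac{1}{\kappa_n}\vmean{A(v)g_n}$ and $\frac{1}{\kappa_n}\vmean{B(v)g_n}$, with $A=v\otimes v-\frac{K}{N}|v|^2I$-type and $B$ the heat-flux tensors. Using $\mathcal{L}$-inverses $\hat A,\hat B$, these fluxes become $\vvmean{\hat A\, q_n}+O(\mu_n/\kappa_n)$. The limit $q$ of $q_n$ satisfies $q\approx v\cdot\nabla_x g$ from the rescaled kinetic equation, which yields the fluxes $-\nu(\nabla u+\nabla u^T)$ and $-k\nabla\theta$. Finally we apply $\Pi$ and the energy combination to obtain the weak Stokes and heat equations.

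\textbf{Step 4: uniqueness and whole-sequence convergence.} Weak solutions of the Stokes--Fourier system in $L^\infty L^2\cap L^2H^1$ with given initial data are unique. The initial data are identified through the weak formulation together with $f_n(t)\to f^{in}_n$, and the projection handles the initial layer in $\Div u$ and in $\rho+\gamma\theta$. Uniqueness then implies that the full sequence converges.
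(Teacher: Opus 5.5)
Your outline has the same architecture as the paper's proof: entropy bounds, identification of the limit in $\ker\mathcal{L}$, incompressibility and Boussinesq from the conservation laws multiplied by $\kappa_n$, renormalized conservation laws with vanishing defects, fluxes through $\hat A=\mathcal{L}^{-1}A$, $\hat B=\mathcal{L}^{-1}B$ and the limiting collision product, then initial data and uniqueness. The gap is the step that carries the quantitative content of the theorem, namely that the defects are $o(\kappa_n)$ exactly under \eqref{eq:stokes-limit-technical-scaling}. You assert it, and the ingredients you list cannot produce it.

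Dividing by a renormalization factor breaks the collision symmetries, so the defect contains terms such as $\mu_n\vvmean{\zeta(\frac34-F_*)\frac{g_{n,*}}{N_nN_{n,*}}q_n}$. In the paper these are handled by Young--Fenchel against the dissipation functional $r$. The dual term is controlled by $K_n\sim\int_{\mathbb{T}^N}\vmean{\tau g_n^2/N_n}\,dx$, and optimizing the Young parameter gives a bound $\mu_n\sqrt{K_n}$. Your Step 1 (uniform integrability of $\mu_n\tau g_n^2/(1+\mu_n|g_n|)$) only controls $\tau g_n^2/N_n$ at order $\mu_n^{-1}$. Along this route that gives defects of order $\sqrt{\mu_n}/\kappa_n$, so you would need $\mu_n=o(\kappa_n^2)$ rather than \eqref{eq:stokes-limit-technical-scaling}.

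The missing ingredients are:
\begin{enumerate}
\item the logarithmic weighted bound $\tau g_n^2/N_n=\mathcal{O}(\log(1/\mu_n))$ in $L^\infty(dt;L^1(F(1-F)dvdx))$, which is Lemma \ref{lem:gn2-Nn-bounds}, obtained from the Young inequality for $h\circ s^{-1}$ with a velocity-dependent parameter $w_n(v)$;
\item the rewriting $N_nN_{n,*}=\frac14+\frac34P_nP_{n,*}M'_nM'_{n,*}+\dots$, which turns the leading asymmetric part into $-\frac12\mu_n\eta_n\vvmean{(\zeta+\zeta_*)q_n^2/(N_nN_{n,*}N'_nN'_{n,*})}$ (Lemma \ref{lem:conservation-defects}).
\end{enumerate}
A cutoff at $|v|^2\le C\log(1/\mu_n)$ would give the weighted bound for free on its support. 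However, it does not remove the need to renormalize $g_n$. You would also have to estimate the cutoff-induced asymmetry of the collision term and the flux tails $\frac{1}{\kappa_n}\vmean{A(1-\hat\gamma_n)g_n}$, and you do neither. The mechanism ``split $Q$ into quadratic and cubic parts and use $0\le f_n\le1$'' gives nothing here: $0\le f_n\le 1$ only yields $\mu_n|g_n|\le 1/(F(1-F))$, which is exponentially large in $|v|$.

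The flux step has the same problem. The identity $\frac{1}{\kappa_n}\vmean{Ag_n}=\vvmean{\hat Aq_n}+\mathcal{O}(\mu_n/\kappa_n)$ is unfounded for unrenormalized $g_n$, because the quadratic remainder involves $g_ng_{n,*}$, on which you have no $L^2$ control. The paper instead works with $g_n/N_n$ and $q_n/(N_nN_{n,*}N'_nN'_{n,*})$. It proves an exact algebraic identity (Lemma \ref{lem:stokes-fluxes-difference-vanishing}) whose remainder is $\mathcal{O}\big(\frac{\mu_n}{\kappa_n}\sqrt{\log(1/\mu_n)}\big)$ in $L^\infty(dt;L^1(dx;L^2(\Lambda)))$, again via the weighted bound.

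Finally, the entropy bound cannot make up for the mere decay of energy. $H(f|F)$ contains $\frac12\iint|v|^2(f-F)\,dxdv$ linearly, so the entropy inequality is compatible with energy loss. The strong Boussinesq relation $\rho+\gamma\theta=0$, which the stated form of $g$ requires, comes from two things. First, the renormalized energy law with vanishing defects shows that $\rho+\gamma\theta$ is a constant independent of $t$. Second, the uniform-in-time convergence of the renormalized moments at $t=0$, together with \eqref{eq:fn-invariants-concentrated}, shows that this constant is zero.
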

\begin{remark}
	The weak formulation of the Stokes momentum equation is understood in the standard incompressible sense.
	That is, weak solutions are defined by testing against smooth, divergence-free vector fields $\varphi \in \mathcal{D}((0,\infty) \times \mathbb{T}^N; \mathbb{R}^N)$.
	In this formulation, the pressure term naturally vanishes, since $\int_0^\infty \int_{\mathbb{T}^N} \nabla_x p \cdot \varphi \, dxdt = 0$.
\end{remark}

The remainder of the paper is organized as follows.
In Section \ref{sec:fluctuation-theory}, we analyze the structure of the entropy inequality \eqref{eq:entropy-inequality} to deduce weak compactness results and bounds for the sequence of fluctuations $(g_n)_n$.
We also introduce the renormalization procedure necessary to control specific quantities within the equation.
Next, in Section \ref{sec:linearized-operator}, we identify the limiting form of every weakly converging sequence $(g_n)_n$, rigorously establishing the approximation \eqref{eq:F-varpi-U-theta-approx}.
Section \ref{sec:vanishing-conservation-defects} addresses the conservation defects arising from this renormalization and proves decay estimates that ensure these defects vanish as $n \to \infty$.
Finally, in Sections \ref{sec:acoustic-limit-proof} and \ref{sec:stokes-limit}, we prove Theorems \ref{thm:acoustic-limit} and \ref{thm:stokes-limit}, respectively, by showing that the macroscopic parameters $\rho$, $u$ and $\theta$ obey their respective hydrodynamic equations.

\section{Consequences of the entropy inequality}
\label{sec:fluctuation-theory}

The primary tool in a BGL approach for establishing hydrodynamic limits is the relative entropy inequality \eqref{eq:entropy-inequality}.
Combined with the initial bound \eqref{eq:initial-relative-entropy}, this inequality provides the essential bounds required to control both the relative entropy and the entropy dissipation.
In this section, we leverage these estimates to establish several compactness results independent of the hydrodynamic scaling.
To do this, recall that if $\Psi$ is a strictly convex function and $\Psi^*$ is its Legendre transform, the \emph{Young-Fenchel} inequality (see, for example, \cite{arnold-1989}) states that
\begin{equation}
	\label{eq:young-fenchel}
	pz \leq \Psi(z) + \Psi^*(p)
	\quad \text{for all } p,z > -1.
\end{equation}

In order to extract uniform bounds from $H(f_n | F)$ and $R(f_n)$ using \eqref{eq:young-fenchel}, we respectively apply the auxiliary functions $h$ and $r$.
Notably, while our setting concerns the quantum Fermi-Dirac statistics, these are the identical functions employed for the classical framework in \cite{bgl1993}.
They are defined for all $z > -1$ as
\[
	h(z) = (1+z) \log(1+z) - z
	\quad\text{and}\quad
	r(z) = z \log(1+z).
\]
We conclude this setup by recalling the main properties of these strictly convex functions and their Legendre transforms $h^*$ and $r^*$, as used in \cite{bgl1993}.

\begin{lemma}
	\label{lem:h-r-properties}
	The following properties hold:
	\begin{enumerate}
		\item \label{item:h-r-absolute-value} For all $z > -1$,
		\[
			h(|z|) \leq h(z)
			\quad \text{and}\quad
			r(|z|) \leq r(z).
		\]
		\item \label{item:h*-r*-quadratic} For all $p \geq 0$ and $\lambda \in [0,1]$,
		\[
			h^*(\lambda p) \leq \lambda^2 h^*(p)
			\quad\text{and}\quad
			r^*(\lambda p) \leq \lambda^2 r^*(p).
		\]
		\item \label{item:h*-r*-exp-estimates} For all $p \geq 0$,
		\[
			h^*(p) = e^p - p - 1 \leq e^p
			\quad\text{and}\quad
			r^*(p) \leq e^p.
		\]
	\end{enumerate}
\end{lemma}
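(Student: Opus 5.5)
The plan is to verify the three items directly from the explicit formulas for $h$, $r$ and their Legendre transforms, using elementary calculus. I will first compute the transforms, then get (3), then (2) through the power-series structure of $h^*$ and a convexity/scaling argument for $r^*$, and finally (1) through sign considerations.

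\textbf{Legendre transforms.} For $h$ we have $h'(z)=\log(1+z)$, so the supremum in $h^*(p)=\sup_{z>-1}(pz-h(z))$ is attained at $1+z=e^p$. This gives
\[
h^*(p)=p(e^p-1)-\bigl(pe^p-(e^p-1)\bigr)=e^p-p-1,
\]
which is the identity in item (3). The bound $e^p-p-1\le e^p$ for $p\ge0$ is immediate. The transform $r^*$ has no closed form, so I will only estimate it. Since $\log(1+z)\ge z/(1+z)$, we get $r(z)=z\log(1+z)\ge z^2/(1+z)$ for $z\ge0$. For $p\ge0$ the supremum in $r^*(p)$ may be restricted to $z\ge0$, because for $z\in(-1,0)$ we have $pz\le0$ and $r(z)\ge0$. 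I will bound $r^*(p)\le e^p$ by a comparison $r(z)\ge h(z)-c$: indeed $r(z)-h(z)=z-\log(1+z)\ge0$, so $r\ge h$ and hence $r^*\le h^*\le e^p$. This settles item (3).

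\textbf{Item (2).} For $h^*$ the series $h^*(p)=\sum_{k\ge2}p^k/k!$ has nonnegative coefficients starting at $k=2$. Hence for $\lambda\in[0,1]$, $h^*(\lambda p)=\sum_k\lambda^kp^k/k!\le\lambda^2h^*(p)$. For $r^*$ the key observation is that $r(z)/z^2$ is nonincreasing on $z>0$, i.e.\ $r(z/\lambda)\le r(z)/\lambda^2$; I will check this by differentiating $\log(1+z)/z$, which is decreasing. Then
\[
r^*(\lambda p)=\sup_{z}\bigl(\lambda pz-r(z)\bigr)=\sup_w\bigl(\lambda^2 pw-r(\lambda w)\bigr)\quad (z=\lambda w).
\]
From the monotonicity, $r(\lambda w)\ge\lambda^2 r(w)$ for $w\ge0$, so the right side is at most $\lambda^2\sup_w(pw-r(w))=\lambda^2r^*(p)$. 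As in the previous step, the restriction to $w\ge0$ is harmless because $p\ge0$. The same argument works for $h$, since $h(z)/z^2$ is also decreasing on $z>0$, and it serves as a check on the series argument.

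\textbf{Item (1).} This reduces to showing $h(-s)\ge h(s)$ and $r(-s)\ge r(s)$ for $s\in[0,1)$; for $|z|\ge1$ with $z>-1$ we have $z\ge1>0$, so $|z|=z$ and there is nothing to prove. For $r$, set $\phi(s)=r(-s)-r(s)=-s\log(1-s)-s\log(1+s)=-s\log(1-s^2)\ge0$. For $h$, set $\psi(s)=h(-s)-h(s)$. Then $\psi(0)=0$ and $\psi'(s)=-\log(1-s)-\log(1+s)=-\log(1-s^2)\ge0$, so $\psi\ge0$. The main obstacle, modest here, is the domain bookkeeping in the $r^*$ bounds (restricting suprema to $z\ge0$ and justifying the change of variables), which the sign argument above handles.
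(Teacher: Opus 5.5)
Your proof is correct. It also differs from the paper's treatment, because the paper does not prove this lemma at all: it only recalls the properties from \cite{bgl1993}. Your argument is therefore a self-contained replacement for that citation, built from elementary facts:
\begin{itemize}
\item the explicit maximizer $1+z=e^p$, which gives $h^*(p)=e^p-p-1$;
\item the series $h^*(p)=\sum_{k\ge 2}p^k/k!$ with nonnegative coefficients, which gives the $\lambda^2$ scaling of $h^*$;
\item the pointwise comparison $r(z)-h(z)=z-\log(1+z)\ge 0$;
\item the monotonicity of $r(z)/z^2=\log(1+z)/z$ on $z>0$, together with restricting the supremum to $z\ge 0$ when $p\ge 0$, for the scaling of $r^*$;
\item the computations $-s\log(1-s^2)\ge 0$ and $\psi'(s)=-\log(1-s^2)\ge 0$ for item (1).
\end{itemize}

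What the two approaches buy: citing keeps the paper short, while your route makes the lemma verifiable on the spot. Your route also yields something slightly stronger than stated. Since the Legendre transform reverses order, $r\ge h$ gives $r^*\le h^*=e^p-p-1$, not merely $r^*\le e^p$.

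Two small points of bookkeeping:
\begin{itemize}
\item The case $\lambda=0$ in the scaling of $r^*$ is not covered by the substitution $z=\lambda w$. It is trivial: $r^*(0)=-\inf r=0$ because $r\ge 0$ and $r(0)=0$.
\item Your preliminary remark $r(z)\ge z^2/(1+z)$ is never used and can be dropped. Likewise, the ``$r\ge h-c$'' comparison is really used with $c=0$.
\end{itemize}
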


\subsection{Weak compactness of the fluctuations}

As a first consequence of the entropy inequality, we note that since the entropy dissipation \eqref{eq:entropy-dissipation} is nonnegative, the entropy inequality \eqref{eq:entropy-inequality} implies that the bound \eqref{eq:initial-relative-entropy} propagates forward in time.
This yields, for all $t \geq 0$,
\begin{equation}
	\label{eq:fn-relative-entropy}
	H(f_n(t)|F) \leq C^{in} \mu_n^2.
\end{equation}
The following lemma leverages this bound alongside the Young-Fenchel inequality \eqref{eq:young-fenchel} to establish a uniform $L^1$ bound on the fluctuations $(g_n)_n$.

Furthermore, by introducing a parameter $\alpha > 0$, we decompose these fluctuations into an entropy-bounded term that can be made arbitrarily small and a higher-integrability $L^{3/2}$ remainder that controls concentration.
A standard interpolation argument shows that this decomposition yields weak compactness in $L^1$ on bounded time intervals.

\begin{lemma}
	\label{lem:tau-gn-compactness}

	Let $(f_n)_n$ be a sequence of functions in $L^\infty(dt; L^1(\tau F(1-F) dvdx))$ satisfying \eqref{eq:fn-relative-entropy} and let $(g_n)_n$ be the corresponding sequence of fluctuations defined in \eqref{eq:micro-macro-decomposition}.

	The sequence $(\tau g_n)_n$ is uniformly bounded in $L^\infty(dt; L^1(F(1-F) dvdx))$ and weakly compact in $L^1([0,T] \times \Rxv; F(1-F) \, dvdxdt)$.
	Moreover, for each $t \geq 0$, the sequence $\tau g_n(t, \cdot, \cdot)$ is weakly compact in $L^1(F(1-F) dvdx)$.
\end{lemma}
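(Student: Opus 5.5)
The plan is to follow the BGL scheme: first turn the entropy bound into a pointwise lower bound for the quantum relative entropy density in terms of $h$, and then use the Young--Fenchel inequality \eqref{eq:young-fenchel} with a parameter $\alpha$.

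\emph{Step 1: a lower bound for the entropy density.} Write $f_n = F + \mu_n F(1-F) g_n$. Set
\[
\Phi(f) = f\log\frac{f}{F} + (1-f)\log\frac{1-f}{1-F}.
\]
Then $\Phi(F)=0$, $\Phi'(F)=0$ and $\Phi''(f) = \frac{1}{f(1-f)}$. With $z = \mu_n(1-F) g_n$ we have $f_n = F(1+z)$. The term $f\log(f/F)$ combined with the linear correction gives $F h(z)$. The second term is handled the same way, with $1-f_n = (1-F)(1-\mu_n F g_n)$, and contributes $(1-F)h(-\mu_n F g_n)\ge 0$. Since the linear parts cancel, this yields
\[
\Phi(f_n) = F\,h\big(\mu_n(1-F)g_n\big) + (1-F)\,h\big(-\mu_n F g_n\big) \ge F\,h\big(\mu_n (1-F) g_n\big).
\]
Using part \eqref{item:h-r-absolute-value} of Lemma \ref{lem:h-r-properties}, \eqref{eq:fn-relative-entropy} then gives
\[
\iint F\,h\big(\mu_n(1-F)|g_n|\big)\,dvdx \le C^{in}\mu_n^2 .
\]
One could symmetrize with the second term to also control the region where $g_n<0$, but the absolute value already suffices.

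\emph{Step 2: the $L^1$ bound.} Apply \eqref{eq:young-fenchel} with $z = \mu_n(1-F)|g_n|$ and $p = \alpha\tau/\dots$. More precisely, pointwise
\[
\alpha \tau |g_n| (1-F) = \frac{1}{\mu_n}\,\big(\alpha\tau\big)\,\mu_n(1-F)|g_n| \le \frac{1}{\mu_n}\Big(h\big(\mu_n(1-F)|g_n|\big) + h^*(\alpha\tau)\Big).
\]
Scaling gives a better estimate: write $p=\mu_n\alpha\tau$ times $1/\mu_n^2$, so that
\[
\alpha\tau|g_n|(1-F) \le \frac{1}{\mu_n^2}\Big(h\big(\mu_n(1-F)|g_n|\big) + h^*(\mu_n\alpha\tau)\Big).
\]
By part \eqref{item:h*-r*-quadratic}, $h^*(\mu_n\alpha\tau)\le \mu_n^2 h^*(\alpha\tau)$ for $\mu_n\le1$. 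Multiplying by $F$ and integrating,
\[
\alpha\iint \tau|g_n|F(1-F) \le C^{in} + \iint h^*(\alpha\tau)F\,dvdx .
\]
By part \eqref{item:h*-r*-exp-estimates} the last integral is at most $\int e^{\alpha(1+|v|^2)/4}F\,dv$. This is finite for $\alpha<2$, since $F\lesssim e^{-|v|^2/2}$. This gives the uniform $L^\infty(dt;L^1)$ bound.

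\emph{Step 3: weak compactness via Dunford--Pettis.} We need equi-integrability of $\tau g_n$ in $L^1(F(1-F)dvdx)$, uniformly in $t$. For a measurable set $A$ of small measure and a large $\alpha$ we decompose. On $\{\mu_n(1-F)|g_n|\ge 1\}$ we use the inequality of Step 2 restricted to $A$. On the complement $|g_n|\le \frac{1}{\mu_n(1-F)}$, where $h(z)\ge c z^2$ for $|z|\le 1$ gives $\iint F(1-F)^2 g_n^2\mathbb 1_{|z|\le1} \le C$. This piece is bounded in a weighted $L^2$, and hence in $L^{3/2}$ after absorbing the factor $(1-F)$ and the weight $\tau$ through Hölder with the Gaussian tail. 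Its integral over $A$ is therefore $O(|A|^{1/3})$. For the large-$z$ piece, Young--Fenchel with a large $\alpha$ gives
\[
\int_A \tau|g_n|F(1-F)\le \frac{C^{in}}{\alpha} + \frac{1}{\alpha}\int_A h^*(\alpha\tau)F .
\]
Here we use $\mu_n^2 h^*(\alpha\tau)$ without the quadratic reduction, or split $\alpha\tau$ so that $\alpha$ stays small relative to the Gaussian while $1/\alpha$ is controlled by letting $\alpha\tau\mu_n$ absorb factors. This balancing, making $C/\alpha$ small while $h^*(\alpha\tau)F$ stays integrable, is the main obstacle. I would handle it by truncating in $v$, $|v|\le R$: the tail $|v|>R$ is small because of the Gaussian weight, while inside the ball $\tau$ is bounded and $\alpha$ can be taken large. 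Adding the tightness in $v$ from the $\tau$ moment then gives equi-integrability and tightness for each fixed $t$, hence weak compactness in $L^1(F(1-F)dvdx)$. Integrating over $[0,T]$ gives the statement on $[0,T]\times\Rxv$.
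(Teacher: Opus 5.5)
There is a genuine gap in Step 3. Steps 1 and 2 are correct and give the uniform $L^\infty(dt;L^1(\tau F(1-F)dvdx))$ bound much as the paper does; your $\alpha$ is the reciprocal of the paper's. The trouble is at exactly the point you flag as the main obstacle, and truncating in $v$ does not remove it.

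\emph{Inside a ball.} On $|v|\le R$ your argument works. There $h^*(\alpha\tau)\le e^{\alpha(1+R^2)/4}$, so $\alpha$ may be taken large.

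\emph{The tail.} The set $\{|v|>R\}$ is itself a set of small $F(1-F)\,dvdx$-measure. So showing $\sup_n\int_{|v|>R}\tau|g_n|F(1-F)\,dvdx\to0$ is the same equi-integrability problem you started with. The Gaussian weight does not help, because $g_n$ is unbounded. The $\tau$-moment bound only gives
\[
\int_{|v|>R}|g_n|F(1-F)\,dvdx\le\frac{4}{1+R^2}\sup_n\|\tau g_n(t)\|_{L^1(F(1-F)dvdx)},
\]
which is tightness of $g_n$, not of $\tau g_n$. Your own inequality restricted to $|v|>R$ needs $\alpha<2$ for the $h^*(\alpha\tau)F$ term to be integrable, so its first term $C^{in}/\alpha$ stays at least $C^{in}/2$. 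The large-$z$ piece of your split has the same defect, since the $h^*$ term there is unchanged.

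\emph{The missing step.} Apply the quadratic homogeneity of $h^*$ with $\lambda=\alpha\mu_n$ instead of $\lambda=\mu_n$. For $n$ large enough that $\alpha\mu_n\le1$, you have $h^*(\alpha\mu_n\tau)\le\alpha^2\mu_n^2h^*(\tau)$, and your Young--Fenchel inequality becomes
\[
(1-F)\tau|g_n|\le\frac{1}{\alpha\mu_n^2}\,h\big(\mu_n(1-F)g_n\big)+\alpha\,h^*(\tau).
\]
Now the large parameter only multiplies the fixed function $h^*(\tau)$. This function lies in $L^{3/2}(F(1-F)dv)$, since $h^*(\tau)^{3/2}F\le e^{11/8}e^{-|v|^2/8}$. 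The paper writes the same bound with $p=\mu_n\tau/\alpha$ and $\alpha$ small, and adds the analogous inequality for $F\tau|g_n|$.

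Multiply by $F$, use $F\le(1+e)F(1-F)$, and integrate over any measurable $E\subset[0,T]\times\Rxv$. This gives
\[
\int_E\tau|g_n|\,d\nu\le\frac{C^{in}T}{\alpha}+C\alpha\,\nu(E)^{1/3}.
\]
Choose $\alpha$ large, then $\nu(E)$ small. The finitely many $n$ with $\alpha\mu_n>1$ are handled by absolute continuity of each single integral.

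This bound applies directly to space-time sets. That matters because equi-integrability of each time slice, as in your final sentence, would not by itself give it on $[0,T]\times\Rxv$. With it, neither the truncation in $v$ nor the small/large $z$ split is needed.
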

\begin{proof}
	We begin by rewriting the relative entropy in terms of the function $h(z) = (1+z)\log(1+z) - z$.
	Substituting the definition of $f_n$, we obtain
	\begin{equation}
	\label{eq:relative-entropy-rewriting-with-h}
	\begin{split}
		&f_n \log\left(\frac{f_n}{F}\right) + (1-f_n) \log\left(\frac{1-f_n}{1-F}\right)\\
		&\quad = F \big(1 + \mu_n (1-F) g_n \big) \log\big(1 + \mu_n (1-F) g_n\big)
			+ (1-F) \big(1 - \mu_n F g_n \big) \log\big(1 - \mu_n F g_n\big)\\
		&\quad = F h(\mu_n (1-F) g_n) + (1-F) h(-\mu_n F g_n).
	\end{split}
	\end{equation}

	To derive a bound for the fluctuation $g_n$, we set $p = \mu_n \tau / \alpha$ and $z = \mu_n (1-F) |g_n|$ in the Young-Fenchel inequality \eqref{eq:young-fenchel}, to obtain
	\[
	    \frac{\mu^2_n}{\alpha} (1-F) |g_n| \tau
	    \leq h\big(\mu_n (1-F) |g_n|\big) + h^*\left( \frac{ \mu_n \tau}{\alpha} \right).
	\]
	Given an $\alpha > 0$, let $n_0(\alpha)$ be an integer such that $\mu_n \leq \alpha$ for every $n \geq n_0(\alpha)$.
	Applying the scaling inequality from Lemma~\ref{lem:h-r-properties}(\ref{item:h*-r*-quadratic}) and the absolute value inequality from Lemma~\ref{lem:h-r-properties}(\ref{item:h-r-absolute-value}), we find that for any $n \geq n_0(\alpha)$,
	\begin{equation*}
	    (1-F) |g_n| \tau
	    \leq \frac{\alpha}{\mu^2_n} h\big(\mu_n (1-F) g_n\big) + \frac{1}{\alpha} h^*( \tau ).
	\end{equation*}
	Similarly, choosing $p = \mu_n \tau / \alpha $ and $z = \mu_n F |g_n|$, we obtain that
	\begin{equation*}
	    F |g_n| \tau
	    \leq \frac{\alpha}{\mu^2_n} h\big(- \mu_n F g_n\big) + \frac{1}{\alpha} h^*( \tau ).
	\end{equation*}
	Summing the two inequalities above, we deduce that for every $n \geq n_0(\alpha)$,
	\begin{equation}
	    \label{eq:pointwise-bound-g_n-tau}
	    \begin{split}
	    \tau |g_n|
	    &\leq \alpha \; \frac{1}{\mu_n^2} \Big[
	        h\big(\mu_n (1-F) g_n\big) + h\big(- \mu_n F g_n\big)
	    \Big]
	    + \frac{2}{\alpha} h^*(\tau)\\
	    &= \alpha A_n + \frac{1}{\alpha} B.
	    \end{split}
	\end{equation}

	First, we establish the uniform boundedness of $(\tau g_n)_n$ in $L^\infty(dt; L^1(F(1-F) dvdx))$.
	For a fixed $n$, observe that the boundedness of $\tau g_n$ follows directly from that of $\tau f_n$.
	Setting $\alpha = 1$ in \eqref{eq:pointwise-bound-g_n-tau}, multiplying this inequality by $F(1-F)$, then integrating in the phase space $\Rxv$ we deduce that, for every $n$ such that $n \geq n_0(1)$,
	\begin{equation*}
		\int_{\mathbb{T}^N} \vmean{|g_n(t)| \tau} \, dx
		\leq \int_{\mathbb{T}^N} \vmean{A_n} \, dx
		+ \int_{\mathbb{T}^N} \vmean{B} \, dx.
	\end{equation*}

	To estimate the term $A_n$, we use that $h(z) \geq 0$ and that $F, 1 - F \leq 1$.
	Then, applying the identity \eqref{eq:relative-entropy-rewriting-with-h} we obtain that
	\begin{align*}
		\int_{\mathbb{T}^N} \vmean{A_n} \, dx
		&= \frac{1}{\mu_n^2} \iint_{\Rxv} \big[F(1-F) h(\mu_n (1-F) g_n) + F(1-F) h(- \mu_n F g_n) \big] \, dxdv\\
		&\leq \frac{1}{\mu_n^2} \iint_{\Rxv} \big[F h(\mu_n (1-F) g_n) + (1-F) h(- \mu_n F g_n) \big] \, dxdv\\
		&= \frac{1}{\mu_n^2} H(f_n | F).
	\end{align*}
	From the statement, this quantity is bounded by a constant $C^{in}$.

	To estimate the $B$ term, we use the exponential estimate for $h^*$ in Lemma~\ref{lem:h-r-properties}(\ref{item:h*-r*-exp-estimates}) to obtain that
	\begin{equation*}
		\int_{\mathbb{T}^N} \vmean{B} \, dx
		= 2 \iint_{\Rxv} h^*(\tau) F(1-F) \, dxdv
		\leq 2 \iint_{\Rxv} e^{\frac{1}{4} (1+|v|^2)} F(1-F) \, dxdv.
	\end{equation*}
	Since $F(v) \leq e^{1 - |v|^2/2}$, this last integral is finite and this, together with the estimate for $A_n$, shows that the sequence $(\tau g_n)_n$ is uniformly bounded in $L^\infty(dt; L^1(F(1-F) \, dvdx))$.

	Next, we establish the weak compactness of the sequence $(\tau g_n(t))_n$ in $L^1([0,T] \times \Rxv; d\nu)$, where $d\nu = F(1-F) \, dvdxdt$, by showing that the inequality \eqref{eq:pointwise-bound-g_n-tau} provides a decomposition of the form $\alpha L^1 + (1/\alpha) L^{3/2}$ for the term $\tau g_n$.
	Indeed, we have proven above that the sequence $(A_n)_n$ is uniformly bounded in $L^\infty(dt; L^1(F(1-F) \, dvdx))$.
	Observing that
	\[
		\begin{split}
			\int_{\mathbb{T}^N} \vmean{h^*(\tau)^{3/2}} \, dx
			&\leq \iint_{\Rxv} e^{\frac{3}{2} \tau} F(1-F) \, dxdv \\
			&\leq \iint_{\Rxv} e^{3/8 (1+|v|^2)} e^{1 - |v|^2/2} \, dxdv \\
			&= e^{1 + 3/8} \iint_{\Rxv} e^{-|v|^2/8} \, dxdv,
		\end{split}
	\]
	we obtain that $B$ is bounded in $L^\infty(dt; L^{3/2}(F(1-F)\, dvdx))$.

	This $\alpha L^1 + (1/\alpha) L^{3/2}$ decomposition implies the weak compactness of the sequence $(\tau g_n)_n$ in the global space $L^1([0,T] \times \Rxv; d\nu)$.
	While this implication is standard, we provide the details below for the sake of completeness.

	Let $C > 0$ be a constant satisfying the uniform bounds
	\begin{equation}
		\label{eq:An-Bn-bounds}
		\| A_n \|_{L^1([0,T] \times \Rxv; d\nu)}
		\leq C
		\quad\text{and}\quad
		\| B \|_{L^{3/2}([0,T] \times \Rxv; d\nu)}
		\leq C.
	\end{equation}
	From the uniform boundedness of $(\tau g_n)_n$ in $L^\infty(dt; L^1(F(1-F) \, dvdx))$ we can deduce that this sequence is uniformly bounded in $L^1([0,T] \times \Rxv; d\nu)$.
	Since the measure $\nu$ is finite, the Dunford-Pettis theorem ensures that $(\tau g_n)_n$ is weakly compact in $L^1(d\nu)$ provided the sequence is equi-integrable.

	Given $\varepsilon > 0$ we choose $\alpha = \varepsilon / (2C)$ in \eqref{eq:pointwise-bound-g_n-tau} and set $\delta_{n_0} = \left(\varepsilon / (2C) \right)^6$.
	Let $E \subset [0,T] \times \Rxv$ be a measurable set with $\nu(E) < \delta_{n_0}$.
	Integrating the decomposition \eqref{eq:An-Bn-bounds} over $E$ against the measure $d\nu$ and applying H\"{o}lder's inequality to the second term we obtain, for every $n \geq n_0(\alpha)$,
	\begin{align*}
		\int_E |\tau g_n| \, d\nu
		&\leq \alpha \int_E |A_n| \, d\nu
		+ \frac{1}{\alpha} \int_E |B| \, d\nu\\
		&\leq \alpha \| A_n \|_{L^1([0,T] \times \Rxv;\, d\nu)}
		+ \frac{1}{\alpha} \| B \|_{L^{3/2}([0,T] \times \Rxv;\, d\nu)} \nu(E)^{1/3}\\
		&\leq \alpha C
		+ \frac{1}{\alpha} C \delta_{n_0}^{1/3}\\
		&= \frac{\varepsilon}{2} + \frac{\varepsilon}{2} = \varepsilon.
	\end{align*}

	From the absolute continuity of the integral, we have for each $i = 1, 2, \dots, n_0 - 1$ a $\delta_i > 0$ such that if $\nu(E) < \delta_i$, then $\int_E |\tau g_i| \, d\nu \leq \varepsilon$.
	Taking $\delta = \min\{\delta_1, \delta_2, \dots, \delta_{n_0}\}$ we show that the sequence $(\tau g_n)_n$ is equi-integrable, and thus weakly compact in $L^1(d\nu)$.

	To establish weak compactness for fixed times, observe that, for any fixed $t \geq 0$, the decomposition \eqref{eq:pointwise-bound-g_n-tau} takes the form
	\[
		\tau |g_n(t)| \leq \alpha A_n(t) + \frac{1}{\alpha} B(t),
	\]
	Moreover, the uniform estimates \eqref{eq:An-Bn-bounds} imply the spatial bounds
	\[
		\| A_n(t) \|_{L^1(F(1-F) dvdx)}
		\leq C
		\quad\text{and}\quad
		\| B(t) \|_{L^{3/2}(F(1-F) dvdx)}
		\leq C.
	\]
	The same argument as before can be used to show that the sequence is equi-integrable and thus, by the Dunford-Pettis theorem, $(\tau g_n(t))_n$ is weakly compact in $L^1(\Rxv; F(1-F) \, dv dx)$.
\end{proof}

\subsection{Renormalization and compactness of the collision product}
\label{subsec:renorm-comp-collision-product}

Establishing a hydrodynamic limit by this weak-convergence approach requires controlling nonlinear quantities that emerge when expressing the collision integral \eqref{eq:collision-integral} in terms of the fluctuations $(g_n)_n$, particularly when proving the vanishing of conservation defects.
Although Lemma \ref{lem:tau-gn-compactness} guarantees that this sequence of fluctuations is uniformly bounded and weakly compact in weighted $L^1$ spaces, this result does not provide the necessary control.

Moreover, since the fluctuations are related to $f_n$ by
\[
	g_n = \frac{1}{\mu_n} \frac{f_n - F}{F(1-F)},
\]
the Pauli exclusion bounds ($0 \leq f_n \leq 1$), which ensure that the Boltzmann-Fermi-Dirac collision integral is well-defined without renormalization, are lost for $g_n$ due to the $1 / \mu_n$ scaling.

To overcome this lack of a priori control of the fluctuations, we introduce the renormalization factor $N_n$, defined by
\begin{equation}
	\label{eq:3-4-normalization}
	N_n = 1 + \mu_n \left( \frac{3}{4} - F \right) g_n.
\end{equation}
We also introduce the auxiliary functions $P_n$ and $M_n$, defined as
\[
    P_n = \frac{f_n}{F} = 1 + \mu_n (1-F) g_n
    \quad \text{and} \quad
    M_n = \frac{1 - f_n}{1 - F} = 1 - \mu_n F g_n.
\]

The factor $N_n$ provides the necessary non-linear saturation to control the large-fluctuation regime.
By dividing by this factor, we ensure that the resulting renormalized quantities remain uniformly bounded while preserving their behavior in regions of bounded fluctuations.
Moreover, because the denominator $N_n$ is strictly bounded away from zero, this maintains the natural bounds of these quantities.
Rigorously, we have the following estimates.

\begin{lemma}
    \label{lem:Nn-Pn-bounds}
    There exists a constant $M_{\mathrm{ren}} > 0$ such that for every $n \in \mathbb{N}$,
    \[
        \frac{1}{4} \leq N_n \leq \frac{3}{4F}, \qquad
        \frac{P_n}{N_n} \leq \frac{4}{3}, \qquad \text{and} \qquad
        \left| \frac{\mu_n g_n}{N_n} \right| \leq M_{\mathrm{ren}}.
    \]
\end{lemma}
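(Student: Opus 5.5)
The whole argument rests on the Pauli bounds $0 \leq f_n \leq 1$. Write $s = \mu_n g_n$, so that $f_n = F + F(1-F)s$. The constraint $0 \le f_n \le 1$ then reads
\[
	-\frac{1}{1-F} \leq s \leq \frac{1}{F},
\]
because $f_n \geq 0$ is equivalent to $1 + (1-F)s \geq 0$, and $f_n \leq 1$ is equivalent to $(1-F)(1 - Fs) \geq 0$. Since $N_n = 1 + (\tfrac34 - F)s$ is affine in $s$, I will obtain each bound by evaluating at the two endpoints of this interval, splitting into cases according to the sign of $\tfrac34 - F$. I will also use $0 < F < 1$.

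\textbf{Bounds on $N_n$.}
\begin{itemize}
	\item At $s = 1/F$: $N_n = 1 + \frac{3}{4F} - 1 = \frac{3}{4F}$.
	\item At $s = -1/(1-F)$: $N_n = 1 - \frac{3/4 - F}{1-F} = \frac{1/4}{1-F} \geq \frac14$.
\end{itemize}
Because $N_n$ is monotone in $s$, it lies between these two values. Moreover $\frac{1/4}{1-F} \le \frac{3}{4F}$, which is equivalent to $F \le 3 - 3F$, i.e. $F \le 3/4$. Hence in both cases ($F \le 3/4$ or $F > 3/4$), $N_n$ lies between $\frac{1/4}{1-F}$ and $\frac{3}{4F}$. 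The lower bound $\tfrac14$ holds since $\frac{1/4}{1-F} \ge \tfrac14$, and also $\frac{3}{4F} \ge \tfrac34 \ge \tfrac14$. The upper bound $\frac{3}{4F}$ needs $\frac{1/4}{1-F} \le \frac{3}{4F}$, which holds exactly when $F \le 3/4$. When $F > 3/4$, the maximum of $N_n$ is $\frac{1/4}{1-F}$, and this exceeds $\frac{3}{4F}$. So the stated upper bound needs care there, either because $F = F_{1,0,1} = (1 + e^{|v|^2/2 - 1})^{-1}$ can exceed $3/4$ near $v = 0$, or because the constant has to be read with the larger endpoint. I would state $N_n \le \max\bigl(\frac{3}{4F}, \frac{1}{4(1-F)}\bigr)$ and check whether the paper's choice of background makes $F \le 3/4$. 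Here $F(0) = (1+e^{-1})^{-1} \approx 0.73 < 3/4$, and $F$ is radially decreasing, so $F \le 3/4$ everywhere and the upper bound $\frac{3}{4F}$ holds. This verification is where the specific choice $\varPi_0 = 1$ and the constant $\tfrac34$ enter.

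\textbf{Bound on $P_n/N_n$.} Consider $\phi(s) = \frac{1 + (1-F)s}{1 + (\tfrac34 - F)s}$. It is a Möbius function of $s$, and its denominator is positive on the interval by the first step, so $\phi$ is monotone there. It therefore suffices to evaluate the endpoints:
\begin{itemize}
	\item At $s = -1/(1-F)$: $\phi = 0$.
	\item At $s = 1/F$: $\phi = \frac{1/F}{3/(4F)} = \frac43$.
\end{itemize}
Hence $0 \le P_n/N_n \le \tfrac43$.

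\textbf{Bound on $\mu_n g_n / N_n$.} Consider $\psi(s) = s/N_n$, which is again monotone in $s$ on the interval. Its endpoint values are
\begin{itemize}
	\item at $s = 1/F$: $\psi = \frac{1/F}{3/(4F)} = \frac43$;
	\item at $s = -1/(1-F)$: $\psi = \frac{-1/(1-F)}{1/(4(1-F))} = -4$.
\end{itemize}
So $|\mu_n g_n / N_n| \le 4$, and we may take $M_{\mathrm{ren}} = 4$.

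The only delicate point in the whole argument is the verification that $F \le 3/4$ (equivalently $e^{-1} \ge 1/3$), which guarantees the upper bound on $N_n$ in the form stated. The remaining steps are endpoint evaluations justified by monotonicity of affine and Möbius maps on the interval $s \in [-1/(1-F), 1/F]$ given by the Pauli bounds.
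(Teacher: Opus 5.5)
Your proof is correct and follows essentially the paper's route. The Pauli bounds $0\le f_n\le 1$ become the interval constraint $-1/(1-F)\le \mu_n g_n\le 1/F$ (equivalently $0\le P_n\le 1/F$), each bound then follows from monotonicity of an affine or linear-fractional map, and your explicit check $F\le e/(e+1)<3/4$ supplies the fact the paper uses tacitly when it asserts $(3-4F)/(4(1-F))\ge 0$. The one real difference is the last bound: evaluating at the upper endpoint gives $\mu_n g_n/N_n\le 4/3$ and hence $M_{\mathrm{ren}}=4$, whereas the paper uses the cruder estimate $x/(1+x)\le 1$ together with $(3/4-F)^{-1}\le 4(e+1)/(3-e)$ and obtains the larger constant $\max\{4,\,4(e+1)/(3-e)\}$.
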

\begin{proof}
	For the first inequality, we start by establishing an algebraic identity for $N_n$ in terms of $P_n$.
	By factoring out the term $(1-F)$ and substituting the identity $\mu_n(1-F)g_n = P_n - 1$, we rewrite $N_n$ as
	\[
		N_n = \frac{3 - 4F}{4(1-F)} P_n + \frac{1}{4(1-F)}
	\]
	Since $0 \leq f_n \leq 1$, it follows that $0 \leq P_n \leq 1/F$.
	Given that $(3 - 4F)/(4(1-F)) \geq 0$, the lower bound for $N_n$ follows from the non-negativity of $P_n$ and the fact that $1 - F \leq 1$.
	The upper bound follows directly from $P_n \leq 1/F$.

	For the second inequality, we observe that $P_n \geq 0$.
	If $P_n = 0$ then the inequality is trivially satisfied, so  we consider points where $P_n > 0$ and we bound the inverse ratio $N_n / P_n$.
	Dividing the expression for $N_n$ by $P_n$ and applying the bound $1 / P_n \geq F$ yields
	\[
		\frac{N_n}{P_n}
		= \frac{3 - 4F}{4(1-F)} + \frac{1}{4(1-F)} \frac{1}{P_n}
		\geq \frac{3 - 4F}{4(1-F)} + \frac{F}{4(1-F)}
	\]
	and this last expression evaluates directly to $3/4$.

	Finally, to prove the last inequality, we expand the definition of $N_n$ to obtain
	\[
		\frac{\mu_n g_n}{N_n}
		= \frac{1}{3/4 - F}
		\left(
			\frac{\mu_n (3/4 - F) g_n}{1 + \mu_n (3/4 - F) g_n}
		\right).
	\]
	Consider the function $E(x) = x/(1+x)$, which is strictly increasing for $x > -1$.
	Since $(1-F) \mu_n g_n \geq -1$ we have that $\mu_n (3/4 - F) g_n \geq - (3/4 - F)/(1-F)$.
	Applying $E(x)$ to both sides of this inequality yields
	\[
		- 4 \left(
			\frac{3}{4} - F
		\right)
		\leq
		\frac{\mu_n (3/4 - F) g_n}{1 + \mu_n (3/4 - F) g_n}.
	\]
	Dividing this expression by $(3/4 - F)$ we obtain that $\mu_n g_n / N_n$ is lower bounded by $-4$.

	On the other hand, using that $E(x) \leq 1$ and that $(3/4 - F)^{-1} \leq 4(e+1)/(3-e)$, we obtain an upper bound on $\mu_n g_n / N_n$.
	Taking the maximum absolute value of these lower and upper bounds yields the constant $M_{\mathrm{ren}}$, concluding the proof.
\end{proof}

We now proceed to deduce a compactness result for the collision product in the integrand of the collision integral \eqref{eq:collision-integral}.
Combining the definitions of $P_n$ and $M_n$ with the balance identity
\begin{equation}
\label{eq:equilibrium-balance-identity}
	F' F'_* (1-F)(1-F_*) = F F_* (1-F')(1-F'_*),
\end{equation}
which equates the forward and reverse collision probabilities at the equilibrium $F$, we deduce the following factorization of the collision product
\begin{equation*}
\begin{split}
	&b(v-v_*, \omega) [f'_n f'_{n,*} (1-f_n) (1-f_{n,*}) - f_n f_{n,*} (1-f'_n) (1 - f'_{n,*})] \\
	&\quad = b(v-v_*, \omega) F F_* (1-F') (1-F'_*)
		\times
		\big[
			P'_n P'_{n,*} M_n M_{n,*} - P_n P_{n,*} M'_n M'_{n,*}
		\big].
\end{split}
\end{equation*}

This factorization suggests splitting the collision integral \eqref{eq:collision-integral} into a finite equilibrium measure $\Lambda$, defined on the space $\mathbb{R}^N_v \times \mathbb{R}^N_{v_*} \times \mathbb{S}^{N-1}_{\omega}$, and a residual collision product.
To ensure compatibility with the natural scaling of the entropy dissipation in \eqref{eq:entropy-inequality}, we define the scaled collision product $q_n$ by normalizing the term in square brackets by $\mu_n \eta_n$, yielding
\begin{align}
	\Lambda(dvdv_*d\omega) &= b(v-v_*, \omega) F F_* (1-F') (1-F'_*) dv dv_* d\omega,
	\label{eq:measure-lambda-def}\\
	q_n &= \frac{P'_n P'_{n,*} M_n M_{n,*} -  P_n P_{n,*} M'_n M'_{n,*}}{\mu_n \eta_n}.
	\label{eq:scaled-collision-product}
\end{align}
Observe that the double bracketed mean \eqref{eq:vv-mean} coincides precisely to the integral against the measure $\Lambda$.

We now investigate the compactness properties of the scaled collision product $q_n$.
Unlike the fluctuations $g_n$, the term $q_n$ does not satisfy the necessary uniform bounds to establish compactness in standard spaces directly.
Instead, we analyze the compactness properties of its renormalized version $q_n / N_n$.

Relying on the boundedness of the entropy dissipation $R(f_n)$ provided by the entropy inequality \eqref{eq:entropy-inequality}, we apply the Young-Fenchel inequality followed by an interpolation argument.
This establishes the weak compactness of the sequence $(q_n / N_n)_n$ using a strategy analogous to the proof of Lemma \ref{lem:tau-gn-compactness}, yielding the following result.

\begin{lemma}
	\label{lem:collision-product-compactness}
	The sequence $( \tau q_n / N_n )_n$ is weakly compact in $L^1(dtdx\Lambda)$.
	Moreover, the sequence $( q_n / N_n N_{n,*} N'_n N'_{n,*} )_n$ is uniformly bounded in $L^2(dtdx\Lambda)$.
\end{lemma}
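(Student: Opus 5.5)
Following the classical BGL argument, I would rewrite the entropy dissipation through the function $r(z)=z\log(1+z)$. Set
\[
D_n=P_n P_{n,*} M'_n M'_{n,*},\qquad D_n' = P'_n P'_{n,*} M_n M_{n,*},
\]
so that $\mu_n\eta_n q_n = D_n'-D_n$. The integrand of $R(f_n)$ is $\Lambda$ times
\[
(D'_n-D_n)\log(D'_n/D_n)=D_n\, r\!\left(\frac{\mu_n\eta_n q_n}{D_n}\right).
\]
The entropy inequality \eqref{eq:entropy-inequality} with \eqref{eq:initial-relative-entropy} then gives
\[
\frac{1}{\mu_n^2\eta_n^2}\int_0^\infty\!\!\int_{\mathbb{T}^N}\vvmean{D_n\, r\!\left(\tfrac{\mu_n\eta_n q_n}{D_n}\right)}\,dx\,dt\le 4C^{in}.
\]
This is the analogue of the bound on $A_n$ in Lemma~\ref{lem:tau-gn-compactness}.

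For the $L^1$ weak compactness, I would apply Young--Fenchel \eqref{eq:young-fenchel} with $z=\mu_n\eta_n|q_n|/D_n$ (noting $z>-1$ for the signed version and using Lemma~\ref{lem:h-r-properties}(\ref{item:h-r-absolute-value})) and $p=\mu_n\eta_n\tau/\alpha$. By Lemma~\ref{lem:h-r-properties}(\ref{item:h*-r*-quadratic}), once $\mu_n\eta_n\le\alpha$, multiplying by $D_n/(\mu_n\eta_n)^2$ gives
\[
\tau|q_n|\le \frac{\alpha}{\mu_n^2\eta_n^2}\,D_n\, r\!\left(\tfrac{\mu_n\eta_n q_n}{D_n}\right)+\frac{1}{\alpha}D_n\, r^*(\tau).
\]
Dividing by $N_n$ only helps the first term, since $N_n\ge1/4$ by Lemma~\ref{lem:Nn-Pn-bounds}.

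The remaining term $D_n/N_n$ is the main obstacle. It is a product of four factors, and only $P_n/N_n\le4/3$ is controlled. The other factors are handled as follows:
\begin{itemize}
\item $M'_n\le 1/(1-F')$ and $M'_{n,*}\le 1/(1-F'_*)$, and these are bounded by the constant $1+e$, since $1-F\ge (1+e)^{-1}$ when $\varPi_0=1$.
\item $P_{n,*}\le 1/F_*$, which is not bounded.
\end{itemize}
I would absorb the factor $1/F_*\lesssim e^{|v_*|^2/2}$ into the measure $\Lambda$, which contains $F_*$. Then $\frac{D_n}{N_n}r^*(\tau)\lesssim e^{\tau}/F_*$, and I need this in $L^{3/2}(\Lambda)$. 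Since $\Lambda$ carries $FF_*$ and $b\lesssim1+|v-v_*|$, the check is that $e^{\frac32\tau}F^{-1/2}_*\,FF_*$ is integrable with polynomial weights. This holds: $F_*^{1/2}$ still decays like a Gaussian, and $e^{3(1+|v|^2)/8}F\lesssim e^{-|v|^2/8}$.

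If the weight is too tight, I would use the available freedom in $p$, for instance replacing $\tau$ by a smaller Gaussian-admissible weight, to keep the estimate valid. This is exactly where the choice $\tau=\frac14(1+|v|^2)$ is tuned. After that, the $\alpha L^1+\alpha^{-1}L^{3/2}$ decomposition and the Dunford--Pettis argument of Lemma~\ref{lem:tau-gn-compactness} apply verbatim on $[0,T]$.

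For the $L^2$ bound, I would use the elementary inequality
\[
(a-b)^2\le \tfrac14(a+b)(a-b)\log(a/b)\quad\text{for }a,b>0,
\]
or more precisely $(\sqrt a-\sqrt b)^2\le\frac14(a-b)\log(a/b)$ combined with $(a-b)^2=(\sqrt a-\sqrt b)^2(\sqrt a+\sqrt b)^2$. Applied with $a=D'_n$ and $b=D_n$, this gives
\[
q_n^2\le\frac{2(D_n+D'_n)}{\mu_n^2\eta_n^2}\,(\text{dissipation integrand}).
\]
It then suffices to bound $(D_n+D'_n)/(N_nN_{n,*}N'_nN'_{n,*})^2$ uniformly. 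Each of $P/N$ is at most $4/3$, each $M$ is at most $1+e$, and each $1/N$ is at most $4$ by Lemma~\ref{lem:Nn-Pn-bounds}. Because both $D_n$ and $D'_n$ contain exactly one $P$ per pair of velocities and the denominator has all four $N$'s, the ratio is bounded by a constant. Integrating in $dt\,dx\,\Lambda$ and using the dissipation bound yields the uniform $L^2$ estimate.
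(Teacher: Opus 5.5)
The $L^2$ part of your argument is correct, but the $L^1$ weak-compactness step has a genuine gap: the claim that $e^{\tau}/F_*$ belongs to $L^{3/2}(\Lambda)$ is false. You dropped a power of $F_*$. Since $\Lambda$ carries only one factor $F_*$, the relevant integrand is
\[
\Big(\frac{e^{\tau}}{F_*}\Big)^{3/2} F F_* = e^{\frac32\tau}\, F\, F_*^{-1/2},
\]
not $e^{\frac32\tau} F F_*^{1/2}$. Because $F_*^{-1/2}$ grows like $e^{|v_*|^2/4}$, the $v_*$-integral diverges. The problem is not just the exponent. Even $\iiint F_*^{-1}\, d\Lambda = \iiint b\, F (1-F')(1-F'_*)\, dv\, dv_*\, d\omega$ is infinite for hard spheres. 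So no argument that bounds $P_{n,*}$ pointwise by $1/F_*$ can produce a majorant in any $L^p(\Lambda)$ with $p \geq 1$. Your fallback of shrinking the weight does not help. The divergence sits in $v_*$ and comes from $P_{n,*}$, not from the factor $r^*(\tau)$ in $v$; in any case the statement fixes the weight $\tau$.

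The missing idea is that $P_{n,*}$ must be controlled through the a priori bound on the fluctuations, not pointwise. Write $P_{n,*} \leq 1 + \mu_n (1-F_*)|g_{n,*}|$. Using $P_n/N_n \leq 4/3$ and $M'_n, M'_{n,*} \leq 1+e$, this gives
\[
\frac{1}{\alpha}\, r^*(\tau)\, \frac{P_n P_{n,*} M'_n M'_{n,*}}{N_n} \leq \frac{c}{\alpha}\, r^*(\tau) + \frac{c\,\mu_n}{\alpha}\, r^*(\tau)\,(1-F_*)\, |g_{n,*}|.
\]
The first term is a fixed function in $L^{3/2}(\Lambda)$. Here your Gaussian computation in $v$ does apply, because the $v_*$-integrand is just $(1+|v_*|)F_*$. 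The second term is bounded in $L^1(dt\,dx\,\Lambda)$ uniformly in $n$. This follows from $b \leq C_b(1+|v|)(1+|v_*|)$, the integrability of $(1+|v|)e^{\tau}F$, and the uniform bound on $\tau g_n$ in $L^\infty(dt; L^1(F(1-F)dvdx))$ from Lemma~\ref{lem:tau-gn-compactness}. Taking $n$ large enough that $\mu_n \leq \alpha^2$ absorbs this term into the $\alpha L^1$ part, so $\tau |q_n|/N_n \leq \alpha(A_n + C_n) + \alpha^{-1} B$. The Dunford--Pettis argument then goes through; this is exactly the paper's route. Your $L^2$ bound is equivalent to the paper's, which uses $r(z) \geq z^2/(1+\frac12 z)$ to get $q_n^2 \leq \frac12 (D_n + D'_n)$ times the rescaled dissipation density. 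Note that your first-stated inequality with constant $\frac14$ is false near $a=b$, since the sharp constant is $\frac12$. The ``more precise'' version you give is correct and suffices.
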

\begin{proof}
	Using the equilibrium balance identity \eqref{eq:equilibrium-balance-identity} and factoring out the equilibrium weight $F F_* (1-F') (1-F'_*)$, we obtain that
	\begin{equation*}
	\label{eq:rewriting-entropy-dissipation}
	\begin{split}
		\big[ f'_n f'_{n,*} &(1-f_n) (1-f_{n,*}) - f_n f_{n,*} (1-f'_n)(1-f'_{n,*})\big] \log\left(\frac{f'_n f'_{n,*} (1-f_n) (1-f_{n,*})}{f_n f_{n,*} (1-f'_n)(1-f'_{n,*})}\right)\\
		&\quad = F F_* (1-F') (1-F'_*) \big[ P'_n P'_{n,*} M_n M_{n,*} -  P_n P_{n,*} M'_n M'_{n,*}\big] \log\left(\frac{P'_n P'_{n,*} M_n M_{n,*}}{P_n P_{n,*} M'_n M'_{n,*}}\right).
	\end{split}
	\end{equation*}
	We substitute this relation into the entropy dissipation \eqref{eq:entropy-dissipation}.
	Then, applying the definition of the scaled collision product \eqref{eq:scaled-collision-product} and expressing the result in terms of $r$, we have
	\begin{equation*}
	\begin{split}
		R(f_n) &= \frac{1}{4} \int_{\mathbb{T}^N} \vvmean{\big[ P'_n P'_{n,*} M_n M_{n,*} -  P_n P_{n,*} M'_n M'_{n,*}\big] \log\left(\frac{P'_n P'_{n,*} M_n M_{n,*}}{P_n P_{n,*} M'_n M'_{n,*}}\right)} \, dx\\
		&= \frac{1}{4} \int_{\mathbb{T}^N} \vvmean{\frac{\mu_n \eta_n q_n}{P_n P_{n,*} M'_n M'_{n,*}} \log\left(1 + \frac{\mu_n \eta_n q_n}{P_n P_{n,*} M'_n M'_{n,*}}\right) P_n P_{n,*} M'_n M'_{n,*}} \, dx \\
		&= \frac{1}{4} \int_{\mathbb{T}^N} \vvmean{r\left( \frac{\mu_n \eta_n q_n}{P_n P_{n,*} M'_n M'_{n,*}} \right) P_n P_{n,*} M'_n M'_{n,*}} \, dx.
	\end{split}
	\end{equation*}

	Consider the entropy inequality \eqref{eq:entropy-inequality}.
	Since the function $h$ is nonnegative, the decomposition \eqref{eq:relative-entropy-rewriting-with-h} implies that the relative entropy $H(f_n | F)$ is nonnegative.
	Dropping this term from the left-hand side of \eqref{eq:entropy-inequality}, substituting the above expression for $R(f_n)$ and applying the initial relative entropy bound \eqref{eq:initial-relative-entropy} yields
	\begin{equation}
	\label{eq:r-product-of-p-bound}
		\frac{1}{\mu_n^2 \eta_n^2 }\int_0^\infty \int_{\mathbb{T}^N} \vvmean{r\left( \frac{\mu_n \eta_n q_n}{P_n P_{n,*} M'_n M'_{n,*}} \right) P_n P_{n,*} M'_n M'_{n,*}} \, dx dt
		\leq 4 C^{in}.
	\end{equation}

	We aim to bound $( \tau q_n / N_n)_n$ using the Young-Fenchel inequality \eqref{eq:young-fenchel}.
	Fix $\alpha > 0$.
	We rewrite the quantity $\tau |q_n| / N_n$ to isolate the specific argument of $r$ in \eqref{eq:r-product-of-p-bound}, yielding
	\[
		\tau \frac{|q_n|}{N_n}
		= \frac{\alpha}{\mu_n^2 \eta_n^2}
		\bigg(
			\frac{\mu_n \eta_n |q_n|}{P_n P_{n,*} M'_n M'_{n,*}}
		\bigg)
		\bigg( \frac{\mu_n \eta_n \tau}{\alpha} \bigg) \frac{P_n P_{n,*} M'_n M'_{n,*}}{N_n}.
	\]
	Choose $n_0(\alpha) > 0$ such that $\mu_n \eta_n \leq \alpha$ for every $n \geq n_0(\alpha)$.
	Applying the Young-Fenchel inequality \eqref{eq:young-fenchel} to the two terms in parentheses, we obtain
	\begin{equation}
		\label{eq:tau-qn-Nn-young-fenchel}
		\tau \frac{|q_n|}{N_n}
		\leq \frac{\alpha}{\mu_n^2 \eta_n^2} r\left( \frac{\mu_n \eta_n |q_n|}{P_n P_{n,*} M'_n M'_{n,*}} \right) \frac{P_n P_{n,*} M'_n M'_{n,*}}{N_n}
		+ \frac{\alpha}{\mu_n^2 \eta_n^2} r^*\left( \frac{\mu_n \eta_n \tau}{\alpha} \right) \frac{P_n P_{n,*} M'_n M'_{n,*}}{N_n}.
	\end{equation}

	We estimate each of these two terms separately.
	For the first term, we use the bound $N_n \geq 1/4$ from Lemma \ref{lem:Nn-Pn-bounds} and the absolute value bound from Lemma \ref{lem:h-r-properties}~(\ref{item:h-r-absolute-value}) to deduce that
	\begin{equation}
	\label{eq:tau-qn-Nn-young-fenchel-term1}
		\frac{1}{\mu_n^2 \eta_n^2} r\left( \frac{\mu_n \eta_n |q_n|}{P_n P_{n,*} M'_n M'_{n,*}} \right) \frac{P_n P_{n,*} M'_n M'_{n,*}}{N_n}
		\leq 4 \frac{1}{\mu_n^2 \eta_n^2} r\left( \frac{\mu_n \eta_n q_n}{P_n P_{n,*} M'_n M'_{n,*}} \right) P_n P_{n,*} M'_n M'_{n,*},
	\end{equation}
	which is bounded in $L^1(\Lambda dtdx)$ from \eqref{eq:r-product-of-p-bound}.
	Denote by $A_n$ the function on the right-hand side.

	For the second term on the right-hand side of \eqref{eq:tau-qn-Nn-young-fenchel}, we apply the quadratic homogeneous estimate for $r^*$ from Lemma \ref{lem:h-r-properties}~(\ref{item:h*-r*-quadratic}) with $\lambda = (\mu_n \eta_n)/\alpha$.
	Then, applying the bound $P_n/N_n \leq 4/3$ from Lemma \ref{lem:Nn-Pn-bounds} and noting that
	\[
		M_n = \frac{1 - f_n}{1 - F} \leq \frac{1}{1 - F} \leq 1 + e,
	\]
	with an identical bound holding for $M'_{n,*}$,	we have that
	\begin{equation}
	\label{eq:tau-qn-Nn-young-fenchel-term2}
	\begin{split}
		\frac{\alpha}{\mu_n^2 \eta_n^2} r^*\left( \frac{\mu_n \eta_n \tau}{\alpha} \right) \frac{P_n P_{n,*} M'_n M'_{n,*}}{N_n}
		&\leq \frac{1}{\alpha} r^*( \tau ) \frac{P_n P_{n,*} M'_n M'_{n,*}}{N_n}\\
		&\leq \frac{(1+e)^2}{3} \frac{1}{\alpha} r^*(\tau) P_{n,*}\\
		&= \frac{(1+e)^2}{3} \frac{1}{\alpha} r^*(\tau) + \frac{(1+e)^2}{3} \frac{1}{\alpha} r^*(\tau) \mu_n (1-F_*) g_{n,*}\\
		&\coloneq \frac{1}{\alpha} B + \frac{\mu_n}{\alpha} C_n.
	\end{split}
	\end{equation}

	Let us bound each of the terms $B$ and $C_n$.
	Observe that from \eqref{eq:collision-kernel-bound} we have, for every $(v,v_*,\omega) \in \mathbb{R}^N_v \times \mathbb{R}^N_{v_*} \times \mathbb{S}^{N-1}$,
	\begin{equation}
		\label{eq:b-v-v*-estimate}
		b(v-v_*, \omega)
		\leq C_b (1 + |v|) (1 + |v_*|).
	\end{equation}
	This bound, together with the exponential bound from Lemma \ref{lem:h-r-properties}~(\ref{item:h*-r*-exp-estimates}) and the fact that $1 - F'$ and $1 - F'_*$ are bounded by $1$, implies that
	\begin{equation*}
		\vvmean{r^*(\tau)^{3/2}}
		\leq 4 \pi C_b e^{3/8}
		\left(
			\int_{\mathbb{R}^N} (1+|v|) e^{\frac{3}{8}|v|^2} F \, dv
		\right)
		\left(
			\int_{\mathbb{R}^N} (1 + |v_*|) F_* \, dv_*
		\right).
	\end{equation*}
	Since $F(v) \leq e^{1 - |v|^2/2}$, the integrals in the right-hand side are finite, which implies the term $B$ is bounded in $L^{3/2}(\Lambda dtdx)$.

	The term $C_n$ is bounded in $L^1(\Lambda dtdx)$ since by using the same bounds as above, we obtain
	\begin{equation*}
		\vvmean{r^*(\tau) (1-F_*) g_{n,*}}
		\leq 4 \pi C_b e^{3/8}
		\left(
			\int_{\mathbb{R}^N} (1+|v|) e^{\frac{3}{8} |v|^2} F \, dv
		\right)
		\left(
			\int_{\mathbb{R}^N} (1 + |v_*|) g_{n,*} F_* (1-F_*) \, dv_*
		\right).
	\end{equation*}
	The first integral is finite, since $F(v) \leq e^{1 - |v|^2/2}$.
	Using that $1 + |v| \leq 1 + 2\tau \leq 6 \tau$ we deduce that there exists a constant $C > 0$ such that, for every $T>0$,
	\[
		\int_0^T \int_{\mathbb{T}^N} \vvmean{r^*(\tau) (1-F_*) g_{n,*}} \, dxdt
		\leq C \int_0^T \int_{\mathbb{T}^N} \int_{\mathbb{R}^N} \tau g_{n} F (1-F) \, dvdxdt,
	\]
	which implies $(C_n)_n$ is uniformly bounded in $L^1(\Lambda dtdx)$, from the result of Lemma \ref{lem:tau-gn-compactness}.

	Combining \eqref{eq:tau-qn-Nn-young-fenchel-term1}, \eqref{eq:tau-qn-Nn-young-fenchel-term2} with the bounds for $(A_n)_n$, $B$ and $(C_n)_n$, we find that for every $\alpha > 0$, there exists an integer $n_0(\alpha)$ such that, for every $n \geq n_0(\alpha)$,
	\begin{equation}
	\label{eq:q_n-N_n-estimate-An-B-Cn}
		\tau \frac{|q_n|}{N_n}
		\leq \alpha A_n + \frac{1}{\alpha} B + \frac{\mu_n}{\alpha} C_n.
	\end{equation}
	Here, the sequences $(A_n)_n$ and $(C_n)_n$ are uniformly bounded in $L^1(\Lambda dtdx)$, while $B$ is bounded in $L^{3/2}(\Lambda dtdx)$.

 	This implies $( \tau q_n / N_n )_n$ is weakly compact in $L^1(\Lambda dtdx)$ by an argument analogous to the proof of Lemma \ref{lem:tau-gn-compactness}, adapted to account for the additional term involving $\mu_n$.
 	We now present this adaptation in detail.

 	First, we establish that the sequence $( \tau q_n / N_n )_n$ is uniformly bounded in $L^1(\Lambda dtdx)$.
 	Recalling the definition of $n_0(\alpha)$, observe that for $\alpha = \max_n (\mu_n \eta_n)$, we may take $n_0 = 1$.
 	Therefore, the inequality \eqref{eq:q_n-N_n-estimate-An-B-Cn} holds for all $n \in \mathbb{N}$.
 	Since $\Lambda dtdx$ is a finite measure on the space $\mathbb{R}^N_v \times \mathbb{R}^N_{v_*} \times \mathbb{S}^{N-1}_{\omega} \times [0,T] \times \mathbb{T}^N_x$, we have the embedding $L^{3/2}(\Lambda dtdx) \hookrightarrow L^1(\Lambda dtdx)$.
 	Thus, the uniform boundedness follows immediately from the bounds on $(A_n)_n$, $B$ and $(C_n)_n$.

 	Next, for any $\alpha > 0$, let $n_1(\alpha) \geq n_0(\alpha)$ be chosen such that $\mu_n \leq \alpha^2$.
 	The estimate \eqref{eq:q_n-N_n-estimate-An-B-Cn} then implies that for all $n \geq n_1(\alpha)$,
	\[
 		\tau \frac{|q_n|}{N_n}
		\leq \alpha (A_n + C_n) + \frac{1}{\alpha} B.
 	\]
 	Since $(A_n + C_n)_n$ is uniformly bounded in $L^1(\Lambda dtdx)$ and $B$ is bounded in $L^{3/2}(\Lambda dtdx)$, the desired weak compactness follows from an argument analogous to the one reproduced at the end of the proof of Lemma \ref{lem:tau-gn-compactness}.

 	For the second statement, we return to the entropy dissipation bound \eqref{eq:r-product-of-p-bound}.
 	Consider the inequality $r(z) \geq z^2 / (1 + \frac{1}{2} z)$, which is valid for every $z > -1$.
	Substituting $z = \mu_n \eta_n q_n / (P_n P_{n,*} M'_n M'_{n,*})$, multiplying both sides by $P_n P_{n,*} M'_n M'_{n,*} / (\mu_n^2 \eta_n^2)$ and recalling the definition of $q_n$ to simplify the resulting denominator, we obtain
	\[
		\frac{1}{\mu_n^2 \eta_n^2}
		r\left(
			\frac{\mu_n \eta_n q_n}{P_n P_{n,*} M'_n M'_{n,*}}
		\right)
		P_n P_{n,*} M'_n M'_{n,*}
		\geq
		2 \frac{q_n^2}{P'_n P'_{n,*} M_n M_{n,*} + P_n P_{n,*} M'_n M'_{n,*}}.
	\]

	Next, we bound the denominator from above.
	Using the bounds for $P_n / N_n$ and $1 / N_n$ from Lemma \ref{lem:Nn-Pn-bounds}, together with the estimate $M_n \leq 1 - F \leq 1 + e$, we obtain
	\[
		\begin{split}
			\frac{P'_n P'_{n,*} M_n M_{n,*} + P_n P_{n,*} M'_n M'_{n,*}}{N_n N_{n,*} N'_n N'_{n,*}}
			&= \frac{P'_n P'_{n,*}}{N'_n N'_{n,*}}
			\frac{M_n M_{n,*}}{N_n N_{n,*}}
			+ \frac{P_n P_{n,*}}{N_n N_{n,*}}
			\frac{M'_n M'_{n,*}}{N'_n N'_{n,*}} \\
			&\leq \frac{512}{9} (1 + e)^2.
		\end{split}
	\]

	Multiplying these inequalities and noting that the lower bound for $N_n$ from Lemma \ref{lem:Nn-Pn-bounds} implies $1 / (N_n N_{n,*} N'_n N'_{n,*}) \leq 256$, we deduce that these exists a constant $C' > 0$ such that
	\[
		\left(
			\frac{q_n}{N_n N_{n,*} N'_n N'_{n,*}}
		\right)^2
		\leq C' \frac{1}{\mu_n^2 \eta_n^2}
			r\left(
				\frac{\mu_n \eta_n q_n}{P_n P_{n,*} M'_n M'_{n,*}}
			\right)
			P_n P_{n,*} M'_n M'_{n,*}.
	\]

	Integrating this inequality with respect to $dtdx\Lambda$ and applying the entropy dissipation bound \eqref{eq:r-product-of-p-bound} to the right-hand side, we conclude that
	\[
		\int_0^T \int_{\mathbb{T}^N}
		\vvmean{
			\left(
				\frac{q_n}{N_n N_{n,*} N'_n N'_{n,*}}
			\right)^2
		} \, dxdt
		\leq 4 C' C^{in}.
	\]
	This establishes the uniform $L^2(dtdx\Lambda)$ bound and completes the proof.
\end{proof}

\subsection{Bounds on renormalized quadratic fluctuations}

As discussed in Section \ref{subsec:renorm-comp-collision-product}, renormalization is introduced to control the nonlinearities that emerge throughout the asymptotic analysis, particularly in the vanishing of conservation defects.
Although Lemma \ref{lem:collision-product-compactness} established bounds for the renormalized scaled collision product, we now tackle the nonlinearities arising from the pointwise product of the fluctuations $g_n$, specifically quadratic terms of the form $g_n^2$.

While the uniform boundedness of the renormalized $g_n^2 / N_n$ is shown to be a direct consequence of the relative entropy bound \eqref{eq:fn-relative-entropy}, uniform boundedness for the weighted sequence $\tau g_n^2 / N_n$ seems out of reach with the current techniques.
However, following the approach of \cite{bgl1993}, we apply the Young-Fenchel inequality to a convex function adapted to the renormalization factor $N_n$.
By estimating the residual factors generated by this inequality, we demonstrate that the norm of this weighted sequence grows at most logarithmically.
Even though this is weaker than a uniform bound, this logarithmic control is sufficient to close the estimates in the remainder of the paper.

\begin{lemma}
	\label{lem:gn2-Nn-bounds}

	In the space $L^\infty(dt; L^1(F(1-F)dvdx))$, we have that
	\[
		\frac{g_n^2}{N_n} = \mathcal{O}(1)
		\quad\text{and}\quad
		\tau \frac{g_n^2}{N_n} = \mathcal{O}\left(\log\left(\frac{1}{\mu_n}\right)\right).
	\]
\end{lemma}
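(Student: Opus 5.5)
The plan is to derive a pointwise comparison between $g_n^2/N_n$ and the relative entropy density, and then to handle the weight $\tau$ by cutting velocity space at a level $\tau\sim\log(1/\mu_n)$. As in \eqref{eq:relative-entropy-rewriting-with-h}, write
\[
e_n=\frac{1}{\mu_n^2}\Big[F\,h\big(\mu_n(1-F)g_n\big)+(1-F)\,h\big(-\mu_nFg_n\big)\Big].
\]
By \eqref{eq:fn-relative-entropy}, $\iint e_n\,dxdv\le C^{in}$ uniformly in $t$ and $n$.

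For the unweighted bound I would use the elementary inequality
\[
h(z)\ge \frac{z^2}{2(1+z/3)},\qquad z>-1.
\]
Applied to the first term of $e_n$ alone, it gives
\[
e_n\ge \frac{F(1-F)^2g_n^2}{2\big(1+\tfrac13\mu_n(1-F)g_n\big)}.
\]
It then suffices to show that the denominator $D_n=1+\frac13\mu_n(1-F)g_n$ is comparable to $N_n=1+\mu_n(\frac34-F)g_n$. Set $x=\mu_ng_n$. The Pauli bounds $0\le f_n\le1$ confine $x$ to the interval $[-1/(1-F),\,1/F]$. On that interval both $D_n$ and $N_n$ are positive affine functions of $x$, so their ratio is monotone in $x$ and only the endpoints need checking.
\begin{itemize}
\item At $x=-1/(1-F)$: $D_n=2/3$ and $N_n=1/(4(1-F))$, which is bounded above and below.
\item At $x=1/F$: $D_n\sim 1/(3F)$ and $N_n\sim 3/(4F)$.
\end{itemize}
Hence $N_n\le C D_n$ uniformly in $v$, including for large $|v|$ where $F$ is small. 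Since $1-F\ge 1-F(0)>0$, this yields the pointwise estimate
\[
F(1-F)\,\frac{g_n^2}{N_n}\le C\,e_n ,
\]
and integrating over $x$ and $v$ gives the $\mathcal O(1)$ bound.

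For the weighted bound, fix $\lambda_n=c\log(1/\mu_n)$ with $c$ to be chosen.
\begin{itemize}
\item On $\{\tau\le\lambda_n\}$ we have $\tau g_n^2/N_n\le\lambda_n g_n^2/N_n$. By the first part, this piece contributes $\mathcal O(\log(1/\mu_n))$.
\item On $\{\tau>\lambda_n\}$ I would use Lemma~\ref{lem:Nn-Pn-bounds} in the form $g_n^2/N_n\le M_{\mathrm{ren}}|g_n|/\mu_n$, which reduces matters to bounding $\mu_n^{-1}\iint_{\tau>\lambda_n}\tau|g_n|F(1-F)$.
\end{itemize}

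For this tail term I would redo the Young--Fenchel step of Lemma~\ref{lem:tau-gn-compactness} with $p=\tau$ and $z=\mu_n(1-F)|g_n|$ (respectively $z=\mu_nF|g_n|$), without rescaling $p$. Combined with Lemma~\ref{lem:h-r-properties}(\ref{item:h-r-absolute-value}), this gives
\[
\mu_n\tau|g_n|\le h\big(\mu_n(1-F)g_n\big)+h\big(-\mu_nFg_n\big)+2h^*(\tau).
\]
After multiplying by $F(1-F)$, integrating over $\{\tau>\lambda_n\}$ and dividing by $\mu_n^2$:
\begin{itemize}
\item the first two terms give at most $C\iint e_n\le C\,C^{in}$;
\item the last term is at most $2\mu_n^{-2}\iint_{\tau>\lambda_n}e^{\tau}F(1-F)\,dv$.
\end{itemize}
Since $F\le e^{1-|v|^2/2}=e^{3-2\tau}$, the last integrand is at most $Ce^{-\tau}$, and the integral is bounded by $C\mu_n^{-2}\,\lambda_n^{N/2}e^{-\lambda_n}=C\mu_n^{c-2}\big(c\log(1/\mu_n)\big)^{N/2}$. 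This is $o(1)$ once $c>2$, say $c=3$. Summing the two regions gives $\mathcal O(\log(1/\mu_n))$ uniformly in $t$.

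The main obstacle I anticipate is the first step: checking that the renormalization $N_n$ with the specific constant $3/4$ is uniformly comparable to the denominator produced by the lower bound on $h$. In particular, the comparison must hold for large velocities, where $F\to0$ and the admissible range of $\mu_ng_n$ becomes unbounded. The endpoint analysis above is what I would verify carefully; the tail-cutting step is routine once the exponential decay of $F$ is available.
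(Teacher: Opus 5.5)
Your proof is correct, and for the weighted estimate it takes a genuinely different route from the paper's.

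\textbf{Unweighted bound.} Here you follow the paper essentially verbatim. Your inequality $h(z)\ge \frac{z^2}{2(1+z/3)}$ is the paper's $s\le h$, and your $D_n$ is its $\mathcal N_n$. The paper bounds $\mathcal N_n/N_n$ via Lemma~\ref{lem:Nn-Pn-bounds}; you instead use monotonicity of a ratio of positive affine functions on $[-1/(1-F),1/F]$. Note that the direction you actually need is $D_n\le C N_n$, not $N_n\le C D_n$ as written. Your endpoint values $D_n/N_n=\frac83(1-F)$ and $\frac49(1+2F)$ give both directions, so this is only a misstatement.

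\textbf{Weighted bound.} The paper applies a refined Young--Fenchel inequality for $h\circ s^{-1}$ with a velocity-dependent point $w_n(v)$, defined by $h'(w)/s'(w)=1+\log(1+\mu_n e^{|v|^2/8})$. It then imports two estimates from Proposition 3.3 of \cite{bgl1993}:
\begin{enumerate}
\item the lower bound $h'(w_n)/s'(w_n)\ge \zeta_n|v|^2/8$ with $\zeta_n\sim 1/\log(1/\mu_n)$;
\item the bound on $\frac{h'}{s'}s-h$.
\end{enumerate}
This yields a single pointwise inequality, without splitting velocity space. The paper reuses that template for the $(\tau+\tau_*)q_n^2$ term in Lemma~\ref{lem:conservation-defects}.

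You instead cut at $\tau=3\log(1/\mu_n)$:
\begin{enumerate}
\item On the bulk, the unweighted bound suffices.
\item On the tail, the saturation bound $|\mu_n g_n/N_n|\le M_{\mathrm{ren}}$ linearizes the quantity: $g_n^2/N_n\le M_{\mathrm{ren}}|g_n|/\mu_n$.
\item The remaining $\mu_n^{-1}\tau|g_n|$ is handled by an unscaled Young--Fenchel step. There the Gaussian tail $F(1-F)e^{\tau}\lesssim e^{-\tau}$ beats the factor $\mu_n^{-2}$ once the cut exceeds $2\log(1/\mu_n)$.
\end{enumerate}

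\textbf{What each route buys.} Yours is more elementary and self-contained: it needs only Lemmas~\ref{lem:h-r-properties} and~\ref{lem:Nn-Pn-bounds}, with no implicit $w_n$ and no imported estimates. It also makes transparent that the logarithm is just the velocity threshold beyond which $\mu_n^{-2}e^{-\tau}$ is negligible, and it shows the tail contributes only $\mathcal O(1)$. The paper's route buys a reusable pointwise inequality in the standard BGL framework.

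\textbf{Minor slip.} You wrote $e^{1-|v|^2/2}=e^{3-2\tau}$; it is actually $e^{3/2-2\tau}$. This is harmless, since you only use it as an upper bound.
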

\begin{proof}
	Following the approach established for the classical Boltzmann equation in \cite{bgl1993}, we introduce the function $s(z)$, defined for all $z > -3$ by
	\begin{equation}
		\label{eq:z-function}
		s(z) = \frac{1}{2} \frac{z^2}{1 + \frac{1}{3}z}.
	\end{equation}
	We further define an auxiliary renormalization factor $\mathcal{N}_n$ by
	\[
		\mathcal{N}_n = 1 + \frac{1}{3} (1-F) \mu_n g_n.
	\]

	We start by estimating the ratio $\mathcal{N}_n / N_n$.
	Expanding the definition of $\mathcal{N}_n$ we rewrite
	\begin{equation*}
		\frac{\mathcal{N}_n}{N_n}
		= \frac{1}{N_n}
			+ \frac{1}{3} (1-F)
			\frac{\mu_n g_n}{N_n}
	\end{equation*}
	Applying the triangle inequality, the fact that $|1-F| \leq 1$, and the bounds for $1/N_n$ and $\mu_n g_n/N_n$ established in Lemma \ref{lem:Nn-Pn-bounds}, we conclude that $|\mathcal{N}_n/N_n|$ is uniformly bounded.

	This bound, together with the identity $\mu_n^2 g_n^2 / \mathcal{N}_n = 2 (1-F)^{-2} s( \mu_n (1-F) g_n )$ and the estimate $1/(1-F) \leq 4$ implies that there exists a constant $C_1 > 0$ such that
	\begin{equation}
		\label{eq:mu_n^2-g_n^2-N_n-estimate1}
		\mu_n^2 \frac{g_n^2}{N_n}
		= \mu_n^2 \frac{\mathcal{N}_n}{N_n} \frac{g_n^2}{\mathcal{N}_n}
		\leq C_1 s( \mu_n (1-F) g_n ).
	\end{equation}

	We observe that $s(z) \leq h(z)$ for every $z \geq -1$, where $h$ is the function defined in Lemma \ref{lem:h-r-properties} (cf. the proof of Corollary 3.2 in \cite{bgl1993}).
	Since $s(z)$ is a positive function for $z \geq -1$, we may combine estimate \eqref{eq:mu_n^2-g_n^2-N_n-estimate1} with this upper bound to obtain that
	\begin{equation*}
	\begin{split}
		\frac{1}{C_1} \mu_n^2 \frac{g_n^2}{N_n}
		&\leq
			s( \mu_n (1-F) g_n )
			+ s( -\mu_n F g_n ) \\
		&\leq
			h( \mu_n (1-F) g_n )
			+ h( -\mu_n F g_n ).
	\end{split}
	\end{equation*}
	We now multiply both sides by $F(1-F)$.
	Using the bound $1 - F \leq 1$ on the first term of the right-hand side and $F \leq 1$ on the second one, we deduce that
	\[
		F (1-F) \frac{g_n^2}{N_n}
		\leq \frac{C_1}{\mu_n^2}
		\left[
			F h( \mu_n (1-F) g_n )
			+ (1-F) h( -\mu_n F g_n )
		\right].
	\]
	The first desired estimate then follows from integrating this inequality over $(x,v) \in \mathbb{T}^N \times \mathbb{R}^N$ and applying \eqref{eq:fn-relative-entropy}.

	For the second estimate, we rely on the inequality
	\begin{equation*}
		\frac{h'(w)}{s'(w)} s(z)
		\leq h(z) + \frac{h'(w)}{s'(w)} s(w) - h(w),
	\end{equation*}
	for every $z, w \geq 0$, which follows from the Young-Fenchel inequality applied to the function $f = h \circ s^{-1}$ (cf. proof of Proposition 3.3 in \cite{bgl1993}).
	This inequality, a priori only valid for positive values of $z$, can be extended if we notice that, for every $z \in ]0,1]$,
	\[
		\frac{s(-z)}{s(z)}
		= \frac{3+z}{3-z}.
	\]
	The expression on the right-hand side has a maximum value of $2$ in this region.
	This implies $s(z) \leq 2 s(|z|)$ for every $z > -1$, since this inequality is trivially satisfied for $z \geq 0$.
	Multiplying by $h'(w)/s'(w) \geq 0$ and applying the Young-Fenchel inequality, we estimate $h(|z|)$ on the right-hand side using Lemma~\ref{lem:h-r-properties}(\ref{item:h-r-absolute-value}) to obtain that, for every $z \geq -1$, and $w \geq 0$,
	\begin{equation}
		\label{eq:young-inequality-h-s}
		\frac{h'(w)}{s'(w)} s(z)
		\leq 2 h(z)
		+ 2 \left(
			\frac{h'(w)}{s'(w)} s(w) - h(w)
		\right).
	\end{equation}

	We study the term $h' / s'$ on the left-hand side.
	Computing this directly from the definitions of $h$ and $s$, we find that for all $w > 0$,
	\begin{equation}
		\label{eq:h'-s'-formula}
		\frac{h'(w)}{s'(w)}
		= \frac{2 (w+3)^2 \log(1+w)}{3 w (w+6)}.
	\end{equation}
	Because this function is strictly increasing for $w > 0$ and its range is the interval $(1, +\infty)$, for each $n \in \mathbb{N}$ and $v \in \mathbb{R}^N$, there exists a unique solution $w = w_n(v) > 0$ to the equation
	\begin{equation}
		\label{eq:definition-wn}
		\frac{h'(w)}{s'(w)} = 1 + \log\left(
			1 + \mu_n \exp\left(\frac{1}{8} |v|^2\right)
		\right).
	\end{equation}

	By substituting $z = \mu_n (1-F) g_n$ and $w = w_n(v)$ in \eqref{eq:young-inequality-h-s}, then applying inequality \eqref{eq:mu_n^2-g_n^2-N_n-estimate1} to the left-hand side, we find that there exists a constant $C_2 > 0$ such that
	\begin{equation*}
		C_2 \frac{h'(w_n(v))}{s'(w_n(v))} \mu_n^2 \frac{g_n^2}{N_n}
		\leq h(\mu_n (1-F) g_n)
			+ \left(
				\frac{h'(w_n(v))}{s'(w_n(v))} s(w_n(v))
				- h(w_n(v))
			\right).
	\end{equation*}

	Since $h$ is a nonnegative function, we bound the right-hand side above by adding a term $h(- \mu_n F g_n)$.
	Multiplying both sides by $F(1-F)$ and applying the bounds $1 - F \leq 1$ and $F \leq 1$ to the terms involving $h(\mu_n (1-F) g_n)$ and $h(- \mu_n F g_n)$, respectively, yields
	\begin{equation}
	\label{eq:mu_n^2-g_n^2-N_n-estimate2}
	\begin{split}
		C_2 \frac{h'(w_n(v))}{s'(w_n(v))} \mu_n^2 F(1-F) \frac{g_n^2}{N_n}
		&\leq F h(\mu_n (1-F) g_n)
			+ (1-F) h(- \mu_n F g_n)\\
			&\quad + F(1-F) \left(
				\frac{h'(w_n(v))}{s'(w_n(v))} s(w_n(v))
				- h(w_n(v))
			\right).
	\end{split}
	\end{equation}

	We estimate the left-hand side of \eqref{eq:mu_n^2-g_n^2-N_n-estimate2} from below.
	Regarding the choice of $w_n$ made previously in \eqref{eq:young-inequality-h-s}, it was shown in the proof of Proposition 3.3 of \cite{bgl1993} that
	\begin{equation}
		\label{eq:zeta_n-h'-s'-from-bgl}
		\zeta_n \frac{1}{8} |v|^2 \leq \frac{h'(w_n(v))}{s'(w_n(v))},
		\quad \text{for every } v \in \mathbb{R}^N,
	\end{equation}
	where $\zeta = \zeta_n$ is the solution to the algebraic equation
	\[
		1
		- \zeta \log(\zeta)
		- (1 - \zeta) \log(1 - \zeta)
		+ \zeta \log \mu_n = 0.
	\]
	This equation can be interpreted as the intersection of the graph of $f(\zeta) = \zeta \log(\zeta) + (1 - \zeta) \log(1 - \zeta)$ with the graph of $g(\zeta) = 1 +\zeta \log \mu_n$, as illustrated in Fig. \ref{fig:triangle-graph}.
	In this representation, we have that $\tan\theta = [\log(1/\mu_n)]^{-1}$.
	As $\mu_n\to 0$, the angle $\theta$ tends to $0$, and consequently $\zeta_n \to 0$.

	\begin{figure}[htp]
	\begin{center}
		\includegraphics[width=0.5\textwidth]{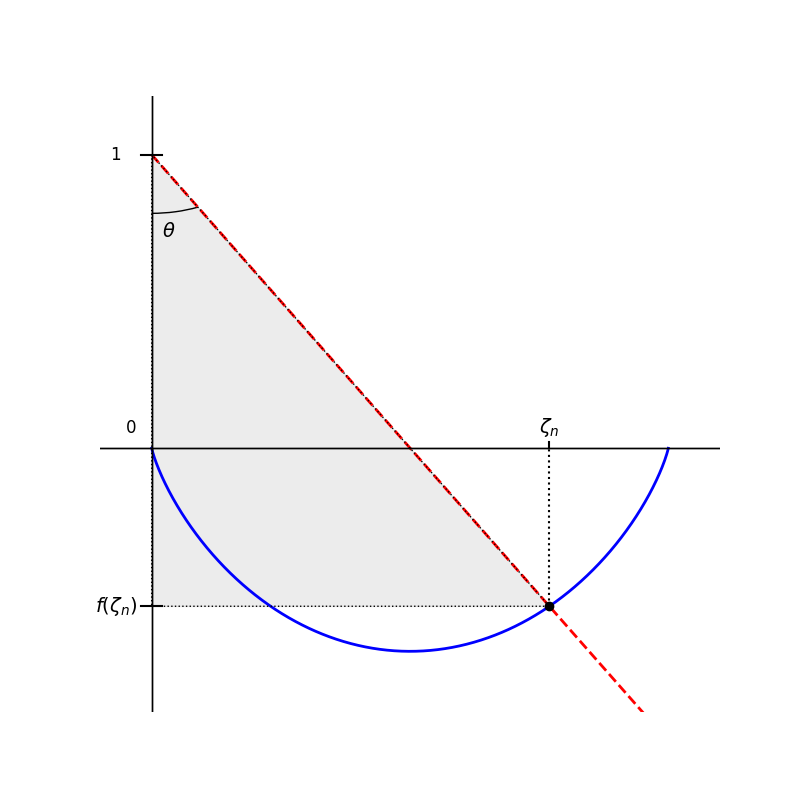}
		\caption{Intersection of the functions $f$ and $g$}\label{fig:triangle-graph}
	\end{center}
	\end{figure}

	Furthermore, consider the triangle formed by the intersection point $(\zeta_n, f(\zeta_n))$ and the points $(0,1)$ and $(0, f(\zeta_n))$ on the y-axis.
	We observe that
	\[
		\frac{1}{\log(1/\mu_n)}
		= \tan\theta
		= \frac{\zeta_n}{1 + f(\zeta_n)}.
	\]
	Since $- \log(2) \leq f(\zeta) \leq 0$ for every $0 \leq \zeta \leq 1$, we deduce the bounds
	\begin{equation*}
		\zeta_n \leq \frac{1}{\log(1/\mu_n)} \leq \frac{\zeta_n}{1 - \log(2)}.
	\end{equation*}

	Combining this estimate with \eqref{eq:zeta_n-h'-s'-from-bgl} allows us to bound the quotient $h'(w_n(v)) / s'(w_n(v))$ from below.
	Substituting this bound into \eqref{eq:mu_n^2-g_n^2-N_n-estimate2} and dividing both sides by $\mu_n^2$, we deduce that there exists a constant $C_3 > 0$ such that
	\begin{equation}
	\label{eq:mu_n^2-g_n^2-N_n-estimate3}
	\begin{split}
		\frac{1}{\log(1/\mu_n)} |v|^2 F(1-F) \frac{g_n^2}{N_n}
		&\leq \frac{C_3}{\mu_n^2}\big[
				F h(\mu_n (1-F) g_n)
				+ (1-F) h(- \mu_n F g_n)
			\big]\\
		&\quad + F(1-F) \frac{C_3}{\mu_n^2} \left[
			\frac{h'(w_n(v))}{s'(w_n(v))} s(w_n(v))
			- h(w_n(v))
		\right].
	\end{split}
	\end{equation}

	We now prove that the right-hand side of \eqref{eq:mu_n^2-g_n^2-N_n-estimate3} is bounded in $L^\infty(dt; L^1(dvdx))$ by studying the two terms of \eqref{eq:mu_n^2-g_n^2-N_n-estimate3} separately.
	The first term in square brackets equals the relative quantum entropy density, from the identity \eqref{eq:relative-entropy-rewriting-with-h}.
	Therefore, its integral over $(x,v) \in \Rxv$ is given by
	\[
		\frac{1}{\mu_n^2}
		\int_{\mathbb{T}^N}
		\int_{\mathbb{R}^N}
			F h(\mu_n (1-F) g_n)
			+ (1-F) h(- \mu_n F g_n)
			\, dxdv
		= \frac{1}{\mu_n^2} H(f_n | F).
	\]
	Using the entropy bound \eqref{eq:fn-relative-entropy}, we conclude that this quantity is bounded by $C^{in}$.

	To estimate the second term of \eqref{eq:mu_n^2-g_n^2-N_n-estimate3}, we return to the proof of Proposition 3.3 in \cite{bgl1993}, which establishes that there exists a constant $C_4 > 0$ such that for all $w \geq 0$,
	\begin{equation}
	\label{eq:h'-s'-s-w-estimate}
		\frac{h'(w)}{s'(w)} s(w) - h(w)
		\leq C_4 (1 + w)
		\left(
			\frac{h'(w)}{s'(w)} - 1
		\right)^2.
	\end{equation}
	Choosing $w = w_n(v)$, we estimate the two factors on the right-hand side separately.
	First, using the definition of $w_n(v)$ in \eqref{eq:definition-wn} and the inequality $\log(1+x) \leq x$, we bound the squared difference
	\begin{equation} \label{eq:squared-h'-s'-estimate}
	\begin{split}
		\left(
			\frac{h'(w_n(v))}{s'(w_n(v))} - 1
		\right)^2
		= \left[
			\log\left(
				1 + \mu_n \exp\left(\frac{1}{8} |v|^2\right)
			\right)
		\right]^2
		\leq \mu_n^2 \exp\left(\frac{1}{4} |v|^2\right).
	\end{split}
	\end{equation}

	Second, let $n_0 \in \mathbb{N}$ be such that $\mu_n \leq 1$ for every $n \geq n_0$.
	From \eqref{eq:definition-wn} and the trivial bound $1 \leq \exp(|v|^2/8)$, we find that for every $n \geq n_0$,
	\begin{equation*}
	\begin{split}
		\frac{h'(w_n(v))}{s'(w_n(v))}
		&\leq 1
		+ \log\left(
			(1 + \mu_n)
			\exp\left(\frac{1}{8} |v|^2\right)
		\right)\\
		&\leq 1 + \log(2)
		+ \frac{1}{8} |v|^2\\
		&\leq 2 + \frac{1}{8} |v|^2.
	\end{split}
	\end{equation*}
	On the other hand, since $(w+3)^2 / (w(w+6)) \geq 1$ for all $w > 0$, equation \eqref{eq:h'-s'-formula} yields the lower bound $h'(w)/s'(w) \geq 2 \log(1 + w)/3$.
	Combining this lower bound evaluated at $w = w_n(v)$ with the upper bound above, we can estimate the second factor of \eqref{eq:h'-s'-s-w-estimate} to obtain
	\begin{equation*}
		1 + w_n(v)
		\leq e^3 \exp\left( \frac{3}{16} |v|^2 \right).
	\end{equation*}

	Using this result alongside \eqref{eq:squared-h'-s'-estimate}, we deduce from \eqref{eq:h'-s'-s-w-estimate} that
	\[
		\frac{h'(w_n(v))}{s'(w_n(v))} s(w_n(v)) - h(w_n(v))
		\leq C_4 e^3 \mu_n^2 \exp\left(\frac{7}{16} |v|^2\right).
	\]
	Multiplying this estimate by $F(1-F)/\mu_n^2$, integrating over $(x,v) \in \Rxv$ and applying the bound $F(v) \leq e \exp(-|v|^2/2)$ yields
	\begin{equation*}
		\frac{1}{\mu_n^2}
		\int_{\mathbb{T}^N}
		\int_{\mathbb{R}^N}
			\left[
				\frac{h'\big(w_n(v)\big)}{s'\big(w_n(v)\big)} s\big(w_n(v)\big)
				- h\big(w_n(v)\big)
			\right]
			F(1-F) \, dv dx
		\leq C_4 e^4 \int_{\mathbb{R}^N}
			\exp\left(-\frac{1}{16} |v|^2\right)\, dv.
	\end{equation*}
	Hence the second term in \eqref{eq:mu_n^2-g_n^2-N_n-estimate3} is uniformly bounded, which implies that $|v|^2 g_n^2 / N_n$ is of order $\log(1/\mu_n)$ in $L^\infty(dt; L^1(F(1-F)dvdx))$.
	Since we have already established that $g_n^2 / N_n$ is uniformly bounded in this space, the desired estimate follows.
\end{proof}

\subsection{Lower semi-continuity and $L^2$ bounds}
\label{subsec:lower-semi-continuity}

The final consequence of the entropy inequality \eqref{eq:entropy-inequality} that we study is that, although the weak compactness results established in Lemmas \ref{lem:tau-gn-compactness} and \ref{lem:collision-product-compactness} operate in $L^1$ spaces, any resulting weak limit also belongs to $L^2$.

Formally, this follows because the function $h$ defined in the introduction to Section \ref{sec:fluctuation-theory} satisfies $h(z) \approx z^2$ as $z \to 0$.
Considering the decomposition \eqref{eq:relative-entropy-rewriting-with-h}, we deduce that in the limit as $\mu_n \to 0$, the integrand of the relative entropy $H(f_n | F)$ satisfies
\[
	F h(\mu_n (1-F) g_n) + (1-F) h(-\mu_n F g_n)
	\approx \mu_n^2 F(1-F) g^2_n.
\]
This implies that, asymptotically, the bound \eqref{eq:fn-relative-entropy} provides control on the $L^2$ norm of the sequence $(g_n)_n$ in a weighted space.
Similarly, using that $r(z) \approx z^2$ as $z \to 0$ in \eqref{eq:r-product-of-p-bound} yields an $L^2$ bound on the sequence $q_n (P_n P_{n,*} M'_n M'_{n,*})^{-1/2}$, which asymptotically corresponds to an $L^2$ bound on $q_n$, since $P_n P_{n,*} M'_n M'_{n,*} \approx 1$ as $\mu_n \to 0$.

These formal observations are made rigorous in the following result:
\begin{lemma}
	\label{lem:lower-semi-continuity-entropy}
	The following lower semi-continuity properties hold:
	\begin{enumerate}[label=\emph{\roman*})]
		\item Let $T>0$.
		If $\tau g_n \weakto \tau g$ in $L^1([0,T] \times \Rxv; dtdx F(1-F) dv)$ then $g \in L^\infty(dt; L^2(F(1-F) dvdx))$ and, for almost every $t \in (0,T)$,
		\begin{equation}
			\label{eq:g-L2-bounded-entropy}
			\frac{1}{2} \int_{\mathbb{T}^N} \vmean{g(t,x, \cdot)^2} \, dx
			\leq \liminf_n \frac{1}{\mu_n^2} H(f_n(t)|F).
		\end{equation}
		\item If $\tau q_n / N_n \weakto \tau q$ in $L^1(dtdx \Lambda)$ then $q \in L^2(dtdx \Lambda)$ and we have, for every $t \geq 0$,
		\[
			\frac{1}{4} \int_0^t \int_{\mathbb{T}^N} \vvmean{q(s,x, \cdot)^2} \, dxds
				\leq \liminf_n \frac{1}{(\mu_n \eta_n)^2} \int_0^t R(P_n)(s) \, ds.
		\]
	\end{enumerate}
\end{lemma}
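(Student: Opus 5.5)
Both parts rest on the same duality principle. Write the entropy functional as a supremum of linear functionals in the fluctuation. Pass to the limit in each linear piece using the assumed weak convergence. Then take the supremum.

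For part i), fix a test function $\varphi \in L^\infty$ with compact support in $(0,T)\times\Rxv$. By Young--Fenchel \eqref{eq:young-fenchel} applied to $h$, with $z=\mu_n(1-F)g_n$ and $z=-\mu_n F g_n$ and $p=\pm\mu_n\varphi$, we get pointwise
\[
\mu_n^2 F(1-F) g_n\varphi \le F h(\mu_n(1-F)g_n) + (1-F)h(-\mu_n F g_n) + F h^*(\mu_n(1-F)\varphi) + (1-F)h^*(-\mu_n F\varphi).
\]
Here I will use $h^*$ on all of $\mathbb{R}$, namely $h^*(p)=e^p-p-1$. Since $h^*(p)=\tfrac12 p^2+O(|p|^3)$ and $\varphi$ is bounded, dividing by $\mu_n^2$ makes the last two terms converge uniformly to $\tfrac12 F(1-F)^2\varphi^2+\tfrac12(1-F)F^2\varphi^2=\tfrac12F(1-F)\varphi^2$. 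Multiply by a nonnegative $\chi(t)$ with $\int\chi=1$ supported near a Lebesgue point $t$ and integrate. On the left I use $\tau g_n\weakto\tau g$: $\varphi/\tau$ is bounded, so $\int g_n\varphi\,d\nu\to\int g\varphi\,d\nu$. The entropy term is handled by Fatou/$\liminf$ together with \eqref{eq:fn-relative-entropy}. This yields
\[
\iint \chi\,\vmean{g\varphi - \tfrac12\varphi^2}\,dxdt \le \liminf_n \int\chi(s)\,\frac{H(f_n(s)|F)}{\mu_n^2}\,ds.
\]
Taking the supremum over $\varphi$ (choose $\varphi\to g$ truncated) gives $\tfrac12\int\chi\int\vmean{g^2}\le$ the right-hand side, which is bounded by $C^{in}$. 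Hence $g\in L^\infty(dt;L^2)$. Letting $\chi$ concentrate at $t$ gives \eqref{eq:g-L2-bounded-entropy} at a.e. $t$. The one subtlety is the pointwise-in-time $\liminf$. I would handle it either via the fixed-time weak compactness in Lemma~\ref{lem:tau-gn-compactness}, or by accepting the time-averaged version and invoking the Lebesgue differentiation theorem. The latter requires a uniform bound on the liminf quantity, which \eqref{eq:fn-relative-entropy} provides.

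For part ii) I proceed the same way using $r$. Set $D_n = P_nP_{n,*}M'_nM'_{n,*}$. By the computation in the proof of Lemma~\ref{lem:collision-product-compactness},
\[
\frac{R(f_n)}{\mu_n^2\eta_n^2} = \frac14\int\vvmean{\frac{D_n}{\mu_n^2\eta_n^2}\, r\Big(\frac{\mu_n\eta_n q_n}{D_n}\Big)}dx.
\]
For bounded compactly supported $\varphi$, Young--Fenchel gives
\[
q_n\varphi \le \frac{D_n}{\mu_n^2\eta_n^2}\, r\Big(\frac{\mu_n\eta_n q_n}{D_n}\Big) + \frac{D_n}{\mu_n^2\eta_n^2}\, r^*(\mu_n\eta_n\varphi).
\]
Since $r(z)\sim z^2$, we have $r^*(p)=\tfrac14p^2+O(|p|^3)$, so the last term is $\approx \tfrac14 D_n\varphi^2$.

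The main obstacle is that the hypothesis controls $q_n/N_n$, not $q_n$, and that $D_n\not\to1$ pointwise uniformly. I would therefore test with $\varphi$ replaced by $\varphi/N_n$, or equivalently write $q_n\varphi = (q_n/N_n)(N_n\varphi)$. Then I use $N_n\to1$ and $D_n\to1$ in measure, since $\mu_n g_n\to0$ in $L^1$ by Lemma~\ref{lem:tau-gn-compactness}. The functions involved are uniformly bounded by Lemma~\ref{lem:Nn-Pn-bounds}, and the bounds $P_n/N_n\le4/3$ and $M_n\le 1+e$ keep $D_n\varphi^2/N_n^2$ bounded. Weak $L^1$ convergence times bounded a.e.-convergent (after a subsequence) multipliers then converges by Egorov plus equi-integrability. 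This gives
\[
\int_0^t\!\!\int\vvmean{q\varphi - \tfrac14\varphi^2} \le \liminf_n \frac{4}{\mu_n^2\eta_n^2}\cdot\frac14\int_0^t R\,ds.
\]
Optimizing $\varphi\to 2q$ truncated yields the stated inequality with constant $\tfrac14$ (the constant tracking should be checked carefully). Finiteness of the right-hand side, from \eqref{eq:r-product-of-p-bound}, gives $q\in L^2(dtdx\Lambda)$.
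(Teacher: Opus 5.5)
Your duality argument takes a genuinely different route from the paper. For ii), and for the time-averaged content of i), it works.

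\textbf{Comparison with the paper's proof.} The paper truncates the limit ($E_\lambda=\{|g|\le\lambda\}$, $F_\lambda=\{|q|\le\lambda\}$) and writes the tangent-line inequality for $h$ (resp.\ $r$) at the truncated limit $\mu_n g$ (resp.\ $\mu_n\eta_n q$). It then shows that the zeroth-order term tends to $\tfrac12F(1-F)g^2$ (resp.\ $q^2$) and that the first-order term vanishes by a weak--strong pairing. In ii) it also needs the regularization $D_n+\alpha_n$ and the auxiliary weight $D_n/N^{abs}_n$.

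You instead pair with a bounded, compactly supported $\varphi$ through Young--Fenchel, using $h^*(p)=e^p-p-1$ on all of $\mathbb{R}$, which is legitimate. The only nonlinear limits you need are $h^*(p)=\tfrac12p^2+O(|p|^3)$ and $r^*(p)=\tfrac14p^2+O(|p|^3)$. The only product-limit step concerns $N_n$ and $D_n$, and the constants $\tfrac12,\tfrac14$ come out of the final optimization in $\varphi$. This is shorter and avoids $\alpha_n$ and $N^{abs}_n$.

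\textbf{Small fixes in ii).}
\begin{itemize}
\item The right-hand side of your last display should be $\liminf_n 4(\mu_n\eta_n)^{-2}\int_0^tR\,ds$. With your extra $\tfrac14$, optimizing at $\varphi=2q$ gives a bound off by a factor $4$.
\item $D_n\varphi^2/N_n^2$ is bounded only because $\varphi$ has compact support in $v_*$. Lemma~\ref{lem:Nn-Pn-bounds} controls $P_n/N_n$, not $P_{n,*}\le 1/F_*$.
\item You should say what happens where $D_n=0$. There the dissipation integrand is $+\infty$ unless $q_n=0$, so the inequality holds trivially, and finiteness of the dissipation makes that set null.
\end{itemize}

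\textbf{The genuine gap: the a.e.-$t$ inequality in i).} Your argument gives, for every probability density $\chi$ on $(0,T)$, $\tfrac12\int\chi(t)\int_{\mathbb{T}^N}\vmean{g(t)^2}\,dx\,dt\le\liminf_n\int\chi\,a_n\,dt$, where $a_n(t)=\mu_n^{-2}H(f_n(t)|F)$. Hence $g\in L^\infty(dt;L^2(F(1-F)dvdx))$. Neither proposed fix upgrades this to $\tfrac12\int_{\mathbb{T}^N}\vmean{g(t)^2}\,dx\le\liminf_n a_n(t)$ for a.e.\ $t$.
\begin{itemize}
\item Letting $\chi\to\delta_t$ only yields $\lim_{\chi\to\delta_t}\liminf_n\int\chi a_n$. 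Fatou's lemma points the wrong way, $\int\chi\liminf_n a_n\le\liminf_n\int\chi a_n$, and a uniform bound on $a_n$ does not reverse it.
\item The fixed-time compactness of Lemma~\ref{lem:tau-gn-compactness} gives a weak limit of $g_{n_k}(t)$ along a $t$-dependent subsequence. Nothing identifies that limit with $g(t)$ without time equicontinuity of $g_n$, which is unavailable.
\end{itemize}

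No argument using only the entropy bound and space--time weak convergence can close this. Take $g_n=\phi(x,v)\bigl(1+\operatorname{sign}\sin(nt)\bigr)$ with $\phi$ bounded and compactly supported. It satisfies both hypotheses and converges weakly to $\phi$. Yet whenever $t/\pi\notin\mathbb{Q}$, $g_n(t)=0$ for infinitely many $n$, so $\liminf_n a_n(t)=0$.

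The paper's proof has the same hole. It integrates the convexity inequality only in $(x,v)$ at fixed $t$, while invoking a weak convergence that holds only in $L^1([0,T]\times\Rxv)$. So both arguments really prove the time-averaged inequality. That is enough for $g\in L^\infty(dt;L^2)$, which is the only consequence of i) used later. A genuinely pointwise statement would need time regularity extracted from the equation.
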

\begin{remark}
	A result traditionally established alongside the above statement is that the limit collision product $q$ also satisfies the collision symmetries:
	\begin{equation*}
			\begin{split}
		q(t,x,v,v_*,\omega)
		&= q(t,x,v_*,v,\omega)\\
		&= q(t,x,v,v_*,-\omega)\\
		&= - q(t,x,v',v'_*,\omega),
	\end{split}
	\end{equation*}
	where we have used the standard notation $v' \equiv v'(v,v_*,\omega)$ and $v'_* \equiv v'_*(v,v_*,\omega)$ for the post-collisional velocities.
	In \cite{bgl1993}, this property was used to prove the Boussinesq and incompressibility relations in the Stokes limit, prior to the method of vanishing conservation defects.
	In our approach, we establish those relations directly through the vanishing of these defects, rendering this explicit symmetry check optional.

	Nevertheless, this property may be easily proven by introducing the auxiliary sequence $Q_n = q_n / (N_n N_{n,*} N'_n N'_{n,*})$, which possesses the same symmetries as $q_n$.
	Because the factor $1/(N_{n,*} N'_n N'_{n,*})$ is uniformly bounded and, up to a subsequence, converges pointwise almost everywhere to $1$, the product limit theorem implies that $Q_n$ converges weakly in $L^1_{loc}(dt; L^1(dx \Lambda))$ to $q$, and the result follows.
\end{remark}

\begin{proof}[Proof of Lemma \ref{lem:lower-semi-continuity-entropy}]

	In view of the decomposition \eqref{eq:relative-entropy-rewriting-with-h}, let
	\[
		\eta_F(z) = F h((1-F) z) + (1-F) h(- F z)
	\]
	be the density corresponding to relative entropy $H(f | F)$.
	To establish the first result, we employ a truncation argument on the limiting function $g$.
	For every $\lambda \geq 0$, let
	\[
		E_\lambda
		= \{
			(t,x,v) \in [0,T] \times \Rxv
			\quad \text{s.t.} \quad
			|g(t,x,v)| \leq \lambda
		\}.
	\]
	We will work on this set throughout the proof and eventually let $\lambda \to \infty$.

	The proof proceeds in two steps.
	We begin by showing that if we evaluate the entropy integral on the right-hand side of \eqref{eq:g-L2-bounded-entropy} on the fixed limit $g$, rather than the sequence $g_n$, we obtain full convergence to the squared norm.
	That is, we will show that, for almost every $t \in (0,T)$,
	\begin{equation}
		\label{eq:entropy-limit-claim-1}
		\lim_{n \to \infty} \frac{1}{\mu_n^2} \int_{E_\lambda} \eta_F(\mu_n g) \, dxdv
		= \frac{1}{2} \int_{E_\lambda} g^2 F(1-F) \, dxdv.
	\end{equation}
	To prove this, we use that, for every $z \geq -1$,
	\begin{equation}
		\label{eq:z^2-h-inequality}
		0 \leq \frac{1}{2} z^2 - h(z) \leq \frac{1}{6} |z|^3.
	\end{equation}

	Fix $\lambda > 0$.
	Since $\mu_n \to 0$ as $n \to \infty$, for every $n \in \mathbb{N}$, let $n_0(\lambda)$ be an integer such that $|\mu_n| \leq 1/(2\lambda)$ for every $n \geq n_0(\lambda)$.
	Observe that, by definition, for every $n \geq n_0(\lambda)$, the terms $z_1 = \mu_n (1-F) g \mathbbm{1}_{E_\lambda}$ and $z_2 = - \mu_n F g \mathbbm{1}_{E_\lambda}$ both satisfy
	\begin{equation}
		\label{eq:z_i-correct-domain}
		|z_i| \leq |\mu_n| \lambda \leq \frac{1}{2},
	\end{equation}
	and therefore both terms $z_1, z_2$ lie in the domain of validity of the inequality.

	We multiply inequality \eqref{eq:z^2-h-inequality} for $z_1$ by $F$ and add it to the inequality \eqref{eq:z^2-h-inequality} for $z_2$ multiplied by $(1-F)$.
	Observing that $F z_1^2 + (1-F) z_2^2 = \mu_n^2 F(1-F) g^2 \mathbbm{1}_{E_\lambda}$ and using that $\eta_F(\mu_n g \mathbbm{1}_{E_\lambda}) = \eta_F(\mu_n g) \mathbbm{1}_{E_\lambda}$ we obtain, for every $n \geq n_0(\lambda)$,
	\begin{equation*}
		0 \leq \frac{1}{2} \mu_n^2 F (1 - F) g^2 \mathbbm{1}_{E_\lambda}
		- \eta_F(\mu_n g) \mathbbm{1}_{E_\lambda}
		\leq \frac{1}{6} \mu_n^3 F ( 1 - F )
		\left[
			F^2 + (1-F)^2
		\right] |g|^3 \mathbbm{1}_{E_\lambda}.
	\end{equation*}
	We simplify the upper bound using that $F^2 + (1-F)^2 \leq 1$ and that $|g| \leq \lambda$ in $E_\lambda$.
	Dividing the entire expression by $\mu_n^2$ we conclude that for every $n \geq n_0(\lambda)$,
	\begin{equation*}
		0 \leq \frac{1}{2}  F (1 - F) g^2 \mathbbm{1}_{E_\lambda}
		- \frac{1}{\mu_n^2} \eta_F(\mu_n g) \mathbbm{1}_{E_\lambda}
		\leq \frac{1}{6} \mu_n F (1 - F) \lambda^3.
	\end{equation*}

	We now integrate the whole expression over $(x,v) \in \Rxv$.
	For a fixed $\lambda > 0$, passing to the limit as $n \to \infty$, we obtain \eqref{eq:entropy-limit-claim-1}.

	Having established the convergence for a fixed $g$, we now exploit the convexity of $h$ to deduce the lower bound result \eqref{eq:g-L2-bounded-entropy} for the sequence $(g_n)_n$.
	Since $h$ is a differentiable convex function throughout its entire domain, its graph must lie above every tangent line.
	That is, for every $x, y \geq -1$, we have
	\[
		h(x) + h'(x) (y-x) \leq h(y).
	\]

	Consider the points $x_1 = \mu_n (1-F) g \mathbbm{1}_{E_\lambda}$, $y_1 = \mu_n (1-F) g_n \mathbbm{1}_{E_\lambda}$ and $x_2 = -\mu_n F g \mathbbm{1}_{E_\lambda}$, $y_2 = - \mu_n F g_n \mathbbm{1}_{E_\lambda}$.
	For $n \geq n_0(\lambda)$, observe that by \eqref{eq:z_i-correct-domain}, we have $|x_1|, |x_2| \geq 1/2$ and since $0 \leq f_n \leq 1$, it follows that $y_1, y_2 \geq -1$.
	Therefore, these choices of $x_i$ and $y_i$ lie within the domain of validity for the convexity inequality.

	We multiply this inequality for $(x_1, y_1)$ by $F$ and add it to the inequality for $(x_2, y_2)$ multiplied by $(1-F)$.
	Using the identities $\eta_F(\mu_n g \mathbbm{1}_{E_\lambda}) = \eta_F(\mu_n g) \mathbbm{1}_{E_\lambda}$ and $h'(z \mathbbm{1}_{E_\lambda}) = h'(z) \mathbbm{1}_{E_\lambda}$, then dividing the whole expression by $ \mu_n^2$ yields
	\begin{equation}
	\label{eq:convexity-h-psi_n-psi}
	\begin{split}
		\frac{1}{\mu_n^2} \eta_F(\mu_n g) \mathbbm{1}_{E_\lambda}
		+ \frac{1}{\mu_n} F(1-F)
			\Big[
				h'(\mu_n (1-F) g)
				- h'(- \mu_n F g)
			\Big] &(g_n - g) \mathbbm{1}_{E_\lambda} \\
		&\quad\leq
		\frac{1}{\mu_n^2}
		\eta_F(\mu_n g_n) \mathbbm{1}_{E_\lambda}.
	\end{split}
	\end{equation}

	To establish the lower bound \eqref{eq:g-L2-bounded-entropy}, we will first prove that the second term in the left-hand side tends to zero.
	To this end, we rely on the inequality
	\[
		0 \leq z - h'(z) \leq z^2,
	\]
	which holds for every $z \geq -1/2$.
	For $n \geq n_0(\lambda)$, we subtract the inequality for $z = x_2$ from the inequality for $z = x_1$.
	Using that $h'(z \mathbbm{1}_{E_\lambda}) = h'(z) \mathbbm{1}_{E_\lambda}$ and dividing the whole expression by $\mu_n$ yields
	\[
		- \mu_n F^2 g^2 \mathbbm{1}_{E_\lambda}
		\leq g \mathbbm{1}_{E_\lambda}
		- \frac{1}{\mu_n}
		\big[
			h'(\mu_n(1-F) g)
			- h'(- \mu_n F g)
		\big] \mathbbm{1}_{E_\lambda}
		\leq \mu_n (1-F)^2 g^2 \mathbbm{1}_{E_\lambda}.
	\]

	Using that in $E_\lambda$ we have $\mu_n (1-F)^2 g^2 \leq \mu_n \lambda^2$ and $- \mu_n F^2 g^2 \geq - \mu_n \lambda^2$.
	Integrating over $(x,v) \in \Rxv$ we have, for fixed $\lambda > 0$ and $t \in [0,T]$, that
	\[
		\frac{1}{\mu_n}
		\big[
			h'(\mu_n(1-F) g)
			- h'(- \mu_n F g)
		\big] \mathbbm{1}_{E_\lambda}
		\to g \mathbbm{1}_{E_\lambda}
	\]
	strongly in $L^\infty(\Rxv)$.
	Since $F(1-F) g_n \weakto F(1-F) g$ weakly in $L^1([0,T] \times \Rxv)$ it follows, by a weak-strong result, that
	\begin{equation}
		\label{eq:h'-psi_n-psi-convergence}
		\frac{1}{\mu_n}
		F(1-F)
		\big[
			h'(\mu_n(1-F) g)
			- h'(- \mu_n F g)
		\big] (g_n - g) \mathbbm{1}_{E_\lambda}
		\weakto 0
	\end{equation}
	weakly in $L^1([0,T] \times \Rxv)$.

	We integrate inequality \eqref{eq:convexity-h-psi_n-psi} over $(x,v) \in \Rxv$, estimating the right-hand side with $\mathbbm{1}_{E_\lambda} \leq 1$.
	Passing both sides to the $\liminf$ and using the convergences \eqref{eq:entropy-limit-claim-1} and \eqref{eq:h'-psi_n-psi-convergence}, we obtain
	\[
		\frac{1}{2} \int_{\Rxv} F(1-F) g^2 \mathbbm{1}_{E_\lambda} \, dxdv
		\leq \liminf_n \frac{1}{\mu_n^2} \int_{\Rxv} \eta_F(\mu_n g_n) \, dxdv.
	\]
	Applying the Beppo-Levi theorem, we can pass the left-hand side to the limit as $\lambda \to \infty$.
	This establishes \eqref{eq:g-L2-bounded-entropy} and the first assertion in the statement.

	Turning to the proof of the second assertion, we employ essentially the same general technique as in the proof of the first, adapting the spaces and relying on inequalities for $r$ instead of $h$.
	For any $\lambda > 0$, consider the set
	\[
		F_\lambda
		= \{
			(t,x,v,v_*,\omega) \in [0,T] \times
			\mathbb{T}^N_x \times \mathbb{R}^{2N}_{v,v_*} \times \mathbb{S}^{N-1}_\omega
			\quad \text{s.t.} \quad
			|q(t,x,v,v_*,\omega)| \leq \lambda
		\}.
	\]
	Since $\mu_n \eta_n \to 0$, for $\lambda > 0$ let $n_1(\lambda)$ be such that $|\mu_n \eta_n| \leq 1/2$ for every $n \geq n_1(\lambda)$.
	Consider the inequality, valid for every $z \geq -1$,
	\[
		0 \leq z^2 - r(z) \leq \frac{1}{2} |z|^3.
	\]
	Fix $\lambda > 0$.
	For every $n \geq n_1(\lambda)$, we choose $z = \mu_n \eta_n q \mathbbm{1}_{F_\lambda}$ in this inequality (noting that $|z| \leq 1/2$).
	Using that $r(\mu_n \eta_n q \mathbbm{1}_{F_\lambda}) = r(\mu_n \eta_n q) \mathbbm{1}_{F_\lambda}$, estimating the right-hand side with $q^3 \mathbbm{1}_{F_\lambda} \leq \lambda^3$ and dividing the whole expression by $(\mu_n \eta_n)^2$, we obtain
	\[
		0 \leq q^2 \mathbbm{1}_{F_\lambda}
		- \frac{1}{(\mu_n \eta_n)^2} r(\mu_n \eta_n q) \mathbbm{1}_{F_\lambda}
		\leq \frac{1}{2} \mu_n \eta_n \lambda^3.
	\]
	This uniform estimate then implies the convergence
	\begin{equation}
		\label{eq:r-conv-to-q2}
		\frac{1}{(\mu_n \eta_n)^2} r(\mu_n \eta_n q) \mathbbm{1}_{F_\lambda}
		\to q^2 \mathbbm{1}_{F_\lambda}
	\end{equation}
	in $L^\infty(dtdx \Lambda)$, with $\Lambda$ as defined in \eqref{eq:measure-lambda-def}.

	Next, we use that, for every $z > -1$,
	\[
		0 \leq 2z - r'(z)
		\leq \frac{3 z^2}{1+z}.
	\]
	For $n \geq n_1(\lambda)$, we substitute once again $z = \mu_n \eta_n q \mathbbm{1}_{F_\lambda}$ in the above inequality. Using that $r'(\mu_n \eta_n q \mathbbm{1}_{F_\lambda}) = r'(\mu_n \eta_n q) \mathbbm{1}_{F_\lambda}$ and dividing the whole expression by $\mu_n \eta_n$ we obtain,
	\[
		0 \leq 2q \mathbbm{1}_{F_\lambda}
		- \frac{1}{\mu_n \eta_n} r'(\mu_n \eta_n q) \mathbbm{1}_{F_\lambda}
		\leq 3 \frac{\mu_n \eta_n q^2}{1 + \mu_n \eta_n q} \mathbbm{1}_{F_\lambda}.
	\]
	Since for $n \geq n_1(\lambda)$ we have $|\mu_n \eta_n q| \leq 1/2$ on the set $F_\lambda$, the denominator on the right-hand side satisfies $1 + \mu_n \eta_n q \geq 1/2$.
	Therefore, from the definition of $F_\lambda$, the right-hand side is bounded by $6 \mu_n \eta_n \lambda^2$, which implies the convergence
	\begin{equation}
		\label{eq:r'-conv-to-2q}
		\frac{1}{\mu_n \eta_n} r'(\mu_n \eta_n q) \mathbbm{1}_{F_\lambda}
		\to 2q \mathbbm{1}_{F_\lambda}
	\end{equation}
	in $L^\infty(dtdx \Lambda)$.

	Because $r$ is a convex differentiable function we have, for every $x, y > -1$,
	\[
		r(x) + r'(x) (y-x) \leq r(y).
	\]
	To handle potential singularities in the quotient, let $(\alpha_n)_n$ be a sequence of strictly positive numbers such that $\alpha_n \to 0$.
	We introduce the renormalization factor
	\begin{equation}
		\label{eq:renormalization-factor-absolute-value}
	    N^{abs}_n = 1 + \left(\frac{3}{4} - F\right) |\mu_n g_n|
	    \geq 1.
	\end{equation}
	For $n \geq n_1(\lambda)$, we substitute
	\[
	    x = \mu_n \eta_n q \mathbbm{1}_{F_\lambda}
	    \quad \text{and} \quad
	    y = \frac{\mu_n \eta_n q_n }{P_n P_{n,*} M'_n M'_{n,*} + \alpha_n} \mathbbm{1}_{F_\lambda}
	\]
	into the convexity inequality of $r$.
	Multiplying the result by $P_n P_{n,*} M'_n M'_{n,*} / N^{abs}_n \geq 0$, we use that $1/N^{abs}_n \leq 1$ and the trivial bound $\mathbbm{1}_{F_\lambda} \leq 1$ to find an upper bound for the right-hand side.
	Dividing the whole expression by $(\mu_n \eta_n)^2$, we obtain
	\begin{equation}
	\label{eq:r-r'-convexity-inequality}
	\begin{split}
		&\frac{1}{(\mu_n \eta_n)^2} r(\mu_n \eta_n q)
		\frac{P_n P_{n,*} M'_n M'_{n,*}}{N^{abs}_n}
		\mathbbm{1}_{F_\lambda}\\
		&\quad + \frac{1}{\mu_n \eta_n} r'(\mu_n \eta_n q)
		\left[
			\frac{q_n}{P_n P_{n,*} M'_n M'_{n,*} + \alpha_n}
			- q
		\right]
		\frac{P_n P_{n,*} M'_n M'_{n,*}}{N^{abs}_n}
		\mathbbm{1}_{F_\lambda} \\
		&\leq
		\frac{1}{(\mu_n \eta_n)^2}
		r\left(
			\frac{\mu_n \eta_n q_n}{P_n P_{n,*} M'_n M'_{n,*} + \alpha_n}
		\right)
		P_n P_{n,*} M'_n M'_{n,*}.
	\end{split}
	\end{equation}

	To establish the lower semi-continuity of the collision product, we study the $L^1(dtdx\Lambda)$ limit of each term on the left-hand side of the preceding inequality.
	For the first term, we start by studying the limit of $P_n P_{n,*} M'_n M'_{n,*} / N^{abs}_n $.
	Using the bounds $M'_n, M'_{n,*} \leq 1+e$ and the estimate $P_n/N_n \leq 4/3$ from Lemma \ref{lem:Nn-Pn-bounds}, we obtain the domination
	\[
		\frac{P_n P_{n,*} M'_n M'_{n,*}}{N^{abs}_n}
		\leq (1+e)^2 \frac{P_n}{N_n} P_{n,*}
		\leq \frac{4}{3} (1+e)^2 P_{n,*}
		= \frac{4}{3} (1+e)^2
		\left[
			1
			+ \mu_n (1-F_*) g_{n,*}
		\right],
	\]

	Given that $(\tau g_n)_n$ is uniformly bounded in $L^\infty(dt; L^1(F(1-F)dvdx))$, we deduce that $\mu_n (1 - F_*) g_{n,*} \to 0$ in $L^\infty(dt; L^1(dx \Lambda))$.
	Therefore, the right-hand side above converges in $L^1_{loc}(dt; L^1(dx \Lambda))$.
	Furthermore, passing to a subsequence, we may assume that $\mu_n g_n \to 0$ almost everywhere in $[0,T] \times \Rxv$, which yields
	\begin{equation}
		\label{eq:P-P-M-M-convergence}
		\frac{P_n P_{n,*} M'_n M'_{n,*}}{N^{abs}_n}
		\to 1
	\end{equation}
	almost everywhere in $[0,T] \times \mathbb{T}^N_x \times \mathbb{R}^{2N}_{v,v_*} \times \mathbb{S}^{N-1}_\omega$.
	The generalized dominated convergence theorem then implies that the convergence \eqref{eq:P-P-M-M-convergence} holds in $L^1_{loc}(dt; L^1(dx \Lambda))$.
	Combining this with the convergence \eqref{eq:r-conv-to-q2}, we have that
	\begin{equation}
		\label{eq:r-convergence-first-term}
		\int_0^t
		\int_{\mathbb{T}^N}
			\vvmean{
				\frac{1}{(\mu_n \eta_n)^2} r(\mu_n \eta_n q)
				\frac{P_n P_{n,*} M'_n M'_{n,*}}{N^{abs}_n}
				\mathbbm{1}_{F_\lambda}
			}
		\, dxds
		\to
		\int_0^t
		\int_{\mathbb{T}^N}
			\vvmean{
				q^2 \mathbbm{1}_{F_\lambda}
			}
		\, dxds,
	\end{equation}
	for every $t > 0$.

	For the second term on the left-hand side of \eqref{eq:r-r'-convexity-inequality}, we write
	\begin{equation}
	\label{eq:q_n-q-times-product-Pn}
	\begin{split}
		&\left[
			\frac{q_n}{P_n P_{n,*} M'_n M'_{n,*} + \alpha_n}
			- q
		\right]
		\frac{P_n P_{n,*} M'_n M'_{n,*}}{N^{abs}_n}
		\mathbbm{1}_{F_\lambda} \\
		&\quad= \frac{q_n}{P_n P_{n,*} M'_n M'_{n,*} + \alpha_n} \frac{P_n P_{n,*} M'_n M'_{n,*}}{N^{abs}_n} \mathbbm{1}_{F_\lambda}
		- q \frac{P_n P_{n,*} M'_n M'_{n,*}}{N^{abs}_n}
		\mathbbm{1}_{F_\lambda}.
	\end{split}
	\end{equation}
	Call the first term on the right-hand side $T_1$ and the second term $T_2$.
	We rewrite $T_1$ as
	\[
		T_1 = \frac{q_n}{N_n}
		\frac{N_n}{N^{abs}_n}
		\frac{P_n P_{n,*} M'_n M'_{n,*}}{P_n P_{n,*} M'_n M'_{n,*} + \alpha_n}
		\mathbbm{1}_{F_\lambda}.
	\]
	By Lemma \ref{lem:collision-product-compactness}, the first factor converges to $q$ in $L^\infty(dt; L^1(dx \Lambda))$. The second and third factors are nonnegative, uniformly bounded by $1$, and converge almost everywhere to $1$.
	Therefore, by the product limit theorem (see, for example, the Appendix of \cite{bgl1993}), we have that $T_1 \weakto q \mathbbm{1}_{F_\lambda}$ weakly in $L^1_{loc}(dt; L^1(dx \Lambda))$.

	For $T_2$, the convergence \eqref{eq:P-P-M-M-convergence} in $L^1_{loc}(dt; L^1(dx \Lambda))$ yields that $T_2$ converges to $q \mathbbm{1}_{F_\lambda}$ strongly in $L^1_{loc}(dt; L^1(dx \Lambda))$.
	Since we previously showed that $T_1$ converges weakly to the same limit, the expression \eqref{eq:q_n-q-times-product-Pn} converges weakly to $0$ in $L^1_{loc}(dt; L^1(dx \Lambda))$.

	We now pair this weak convergence with the strong $L^\infty$ convergence of the $r'$ factor derived in \eqref{eq:r'-conv-to-2q}.
	A standard weak-strong convergence argument yields
	\[
		\int_0^t \int_{\mathbb{T}^N}
		\vvmean{
			\frac{1}{\mu_n \eta_n}
			r'(\mu_n \eta_n q)
			\left[
				\frac{q_n}{P_n P_{n,*} M'_n M'_{n,*} + \alpha_n}
				- q
			\right]
			\frac{P_n P_{n,*} M'_n M'_{n,*}}{N^{abs}_n}
			\mathbbm{1}_{F_\lambda}
		} \, dxds
		\to 0.
	\]

	We now integrate the convexity inequality \eqref{eq:r-r'-convexity-inequality} over $[0,t] \times  \mathbb{T}^N_x \times \mathbb{R}^{2N}_{v,v_*} \times \mathbb{S}^{N-1}_\omega$.
	Using the limit established above for the second term, together with the convergence \eqref{eq:r-convergence-first-term} for the first term, we take the $\liminf$ as $n \to \infty$ to obtain that
	\begin{equation}
		\label{eq:lower-semic-q2-F-lambda}
		\begin{split}
			&\int_0^t \int_{\mathbb{T}^N}
				\vvmean{
					q^2 \mathbbm{1}_{F_\lambda}
				}
			\, dxds \\
			&\leq \liminf_{n\to\infty}
			\int_0^t \int_{\mathbb{T}^N}
				\vvmean{
					\frac{1}{(\mu_n \eta_n)^2}
					r\left(
						\frac{\mu_n \eta_n q_n}{P_n P_{n,*} M'_n M'_{n,*} + \alpha_n}
					\right)
					P_n P_{n,*} M'_n M'_{n,*}
				}
			\, dxds.
		\end{split}
	\end{equation}
	Finally, observing that $r$ is convex with $r(0) = 0$, we have $r(cz) \leq c r(z) \leq r(z)$ for any $c \in (0,1)$.
	Therefore, for every integer $n$, we have
	\[
		r\left(
			\frac{\mu_n \eta_n q_n}{P_n P_{n,*} M'_n M'_{n,*} + \alpha_n}
		\right)
		\leq
		r\left(
			\frac{\mu_n \eta_n q_n}{P_n P_{n,*} M'_n M'_{n,*}}
		\right).
	\]
	Applying this inequality to \eqref{eq:lower-semic-q2-F-lambda} we recognize the expression for the time integral of $4 R(f_n)$ on the right-hand side.
	The result then follows by passing to the limit as $\lambda \to \infty$, applying the Beppo-Levi theorem on the left-hand side.
\end{proof}

\section{The linearized collision operator and the limit fluctuation}
\label{sec:linearized-operator}

Linearizing the collision integral $Q(f)$ around the equilibrium using the perturbation $f = F + \mu F(1-F) \phi$ and applying the equilibrium balance identity \eqref{eq:equilibrium-balance-identity}, we obtain the linearized collision operator.
The general convention in the literature is to flip the sign and divide the expression by $F(1-F)$, ensuring the resulting definition is positive semi-definite and self-adjoint, respectively (see, for example, \cite[Chapter II]{zakrevskiy-thesis-2015}).

Thus, for any function $\phi = \phi(v)$, we define the linearized Fermi-Dirac collision operator as
\begin{equation}
	\label{eq:linearized-op-def}
	\mathcal{L}(\phi)
	= \frac{1}{F(1-F) }\iint_{\mathbb{R}^N \times \mathbb{S}^{N-1}} b(v-v_*, \omega) F F_* (1-F')(1-F'_*) (\phi + \phi_* - \phi' - \phi'_*) \, dv_* d\omega.
\end{equation}

Notice this operator closely resembles the classical linearized Boltzmann collision operator.
In fact, replacing the weight $F F_* (1-F') (1-F'_*) / (F (1-F))$ with the classical Maxwellian $M_*$ recovers the classical operator exactly.
Accordingly, as noted in Remark 2.1 of \cite{jiang-2022}, the foundational properties established for the classical theory (proven, for example, in \cite{glassey1996}) transfer naturally to the quantum analogue.
We recall these fundamental properties in the following result.

\begin{lemma}
	\label{lem:linearized-properties}
	The linearized collision operator $\mathcal{L}$ is a positive semi-definite unbounded self-adjoint Fredholm operator on $L^2(F(1-F) dv)$ with domain $L^2(\tau F(1-F) dv)$ and null space
	\[
		\ker(\mathcal{L})
		= \operatorname{span}\{1, v_1, v_2, \dots, v_N, |v|^2\}.
	\]
\end{lemma}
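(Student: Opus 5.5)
The plan is to follow the classical Grad/Hilbert decomposition of $\mathcal{L}$, adapting each step to the Fermi--Dirac weight. Set $d\mu = F(1-F)\,dv$ and write
\[
	\mathcal{L}\phi = a(v)\phi - \mathcal{K}\phi,
\]
where the collision frequency is
\[
	a(v) = \frac{1}{1-F}\iint b(v-v_*,\omega)\,F_*(1-F')(1-F'_*)\,dv_*d\omega,
\]
and $\mathcal{K}$ collects the three gain-type terms involving $\phi_*$, $\phi'$ and $\phi'_*$.

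\emph{Quadratic form.} Multiply $\mathcal{L}\phi$ by $\psi F(1-F)$ and integrate. Use the change of variables $(v,v_*)\leftrightarrow(v_*,v)$ and $(v,v_*)\leftrightarrow(v',v'_*)$, which is unimodular with $b$ invariant, together with the balance identity \eqref{eq:equilibrium-balance-identity}. This gives
\[
	\int \psi\,\mathcal{L}\phi\,d\mu = \frac14\vvmean{(\phi+\phi_*-\phi'-\phi'_*)(\psi+\psi_*-\psi'-\psi'_*)}.
\]
Symmetry and positive semi-definiteness follow at once. The kernel consists of those $\phi$ for which $\phi+\phi_*=\phi'+\phi'_*$ holds $\Lambda$-a.e. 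Since $\Lambda$ has a positive density wherever $b>0$, the classical characterization of collision invariants (Boltzmann's functional equation, as in \cite{glassey1996}) identifies this set as $\operatorname{span}\{1,v_i,|v|^2\}$. One inclusion is immediate from the conservation laws.

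\emph{Multiplication part.} Show that $a(v)\simeq 1+|v|$. The upper bound follows from \eqref{eq:collision-kernel-bound} and $F_*\le e^{1-|v_*|^2/2}$. For the lower bound, use $1-F'\ge 1-F(0)>0$ together with the hard-sphere-type lower bound on $b$ (I would assume $b$ is bounded below appropriately, as in \cite{glassey1996}). This makes the multiplication operator $a$ self-adjoint with domain $L^2(a\,d\mu)=L^2(\tau\,d\mu)$ up to equivalent weights, since $1+|v|\lesssim \tau$.

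\emph{Compactness of $\mathcal{K}$.} This is the main obstacle. The $\phi_*$ term is handled directly: the kernel $k_1(v,v_*)$ is Hilbert--Schmidt on $L^2(d\mu)$ because of the Gaussian decay of $F F_*$. For the $\phi'$ and $\phi'_*$ terms, I will use Carleman's representation to rewrite them as integral operators $\int k(v,w)\phi(w)\,dw$. In the classical setting their kernels are bounded by $|v-w|^{-1}e^{-c|v-w|^2-c(|v|^2-|w|^2)^2/|v-w|^2}$. Here the extra factors $(1-F')(1-F'_*)\le 1$ and $1/(1-F)\le 1+e$ only improve or preserve these bounds, while $F$ is comparable to the Maxwellian $e^{-|v|^2/2}$ up to constants since $e^{1-|v|^2/2}/(1+e)\le F\le e^{1-|v|^2/2}$. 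Hence Grad's estimates transfer, and $\mathcal{K}$ is compact on $L^2(d\mu)$. I would do this by truncating near the diagonal and at large velocities, then showing norm convergence of the Hilbert--Schmidt approximations.

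\emph{Conclusion.} Since $\mathcal{K}$ is compact and symmetric and $a$ is self-adjoint with $a\ge a_0>0$, $\mathcal{L}=a-\mathcal{K}$ is self-adjoint on $L^2(\tau\,d\mu)$ by Kato--Rellich, as a relatively compact and hence relatively bounded perturbation with arbitrarily small relative bound. It is Fredholm because $a^{-1}\mathcal{L}=I-a^{-1}\mathcal{K}$ is Fredholm of index zero.
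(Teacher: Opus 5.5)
\textbf{Comparison with the paper.} Your route is genuinely different from the paper's, because the paper gives no argument for this lemma. It only remarks that replacing the weight $FF_*(1-F')(1-F'_*)/(F(1-F))$ by the Maxwellian $M_*$ recovers the classical operator, and defers to Remark 2.1 of \cite{jiang-2022} and to \cite{glassey1996}.

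You actually carry out that transfer:
\begin{itemize}
\item the Grad splitting $\mathcal{L}=a-\mathcal{K}$;
\item the symmetrized quadratic form, which gives symmetry, positivity, and the reduction of $\ker\mathcal{L}$ to the collision-invariant functional equation;
\item pointwise domination of the conjugated Fermi--Dirac gain kernels by Grad's classical ones, via $F(1-F)\simeq e^{-|v|^2/2}$ and $FF_*(1-F')(1-F'_*)\le FF_*$.
\end{itemize}
The Hilbert--Schmidt and Schur truncation estimates only involve $|k|$, so compactness carries over.

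What your approach buys is that the hypotheses become visible. The lower bound on $b$ that you add (hard-sphere type, giving $a\gtrsim 1+|v|$, plus $b>0$ a.e.\ for the kernel) is genuinely needed. Under \eqref{eq:collision-kernel-bound} alone, which the paper says covers cutoff Maxwell molecules, $a$ can be bounded. Then $\mathcal{L}$ is a bounded operator on all of $L^2(F(1-F)dv)$, and the ``unbounded with domain $L^2(\tau F(1-F)dv)$'' part of the statement fails.

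\textbf{The domain step is wrong as written.} The domain of multiplication by $a$ in $L^2(d\mu)$ is $\{\phi : a\phi\in L^2(d\mu)\}=L^2(a^2\,d\mu)$, not $L^2(a\,d\mu)$. The space $L^2(a\,d\mu)$ is the form domain and is strictly larger than $L^2(\tau\,d\mu)$. Moreover, the one-sided bound $1+|v|\lesssim\tau$ could only ever give an inclusion, not equality. The correct justification is $a^2\simeq(1+|v|)^2$ together with $4\tau\le(1+|v|)^2\le 8\tau$, which yields $D(a)=L^2(\tau\,d\mu)$ with equivalent norms.

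\textbf{Two smaller points.}
\begin{itemize}
\item The kernel bound you quote is the three-dimensional one. In dimension $N$ the hard-sphere Carleman Jacobian gives the prefactor $|v-w|^{2-N}$, with the same Gaussian factors. This is locally integrable for every $N\ge 2$, but it is not locally square-integrable for $N\ge 4$. So the near-diagonal piece must be made small in operator norm by the Schur test rather than treated as Hilbert--Schmidt. That is exactly the truncation you describe, so it works, but the Hilbert--Schmidt claim should be restricted to the off-diagonal, bounded-velocity part.
\item Since $\mathcal{K}$ is bounded on $L^2(d\mu)$, self-adjointness of $a-\mathcal{K}$ follows immediately from the bounded-perturbation theorem; Kato--Rellich with relative compactness is unnecessary. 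The operator is self-adjoint \emph{in} $L^2(d\mu)$ \emph{with domain} $L^2(\tau\,d\mu)$, not ``on $L^2(\tau\,d\mu)$''. The Fredholm property is then cleanest stated as follows: $a:D(a)\to L^2(d\mu)$ is an isomorphism and $\mathcal{K}|_{D(a)}$ is compact.
\end{itemize}
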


Furthermore, by estimating its norm, we show that the linearized collision operator is continuous between weighted $L^1$ spaces.
This continuity plays a crucial role in Lemma \ref{lem:limit-local-maxwellian}, where we determine the limiting form of the fluctuations $(g_n)_n$.

\begin{lemma}
	\label{lem:L-continuity}
	The operator $\mathcal{L}$ defined in \eqref{eq:linearized-op-def} is a bounded operator from $L^1( \tau F(1-F) dv )$ to $L^1( F(1-F) dv)$.
\end{lemma}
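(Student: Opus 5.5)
The plan is to estimate $\|\mathcal{L}\phi\|_{L^1(F(1-F)dv)}$ directly from the definition \eqref{eq:linearized-op-def}. Multiplying by $F(1-F)$ cancels the prefactor, and the triangle inequality gives
\[
	\int_{\mathbb{R}^N} |\mathcal{L}\phi| \, F(1-F)\, dv
	\leq \vvmean{|\phi|} + \vvmean{|\phi_*|} + \vvmean{|\phi'|} + \vvmean{|\phi'_*|},
\]
where $\vvmean{\cdot}$ is the integral against the measure $\Lambda$ from \eqref{eq:measure-lambda-def}. The task is then to bound each of these four terms by $C\int |\phi|\tau F(1-F)\,dv$.

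First I reduce the four terms to one. The weight $b\,FF_*(1-F')(1-F'_*)$ is invariant under the exchange $v\leftrightarrow v_*$, since $b$ depends only on $|v-v_*|$ and $|(v-v_*)\cdot\omega|$, and this exchange swaps $v',v'_*$. It is also invariant under the pre/post-collision change of variables $(v,v_*)\mapsto(v',v'_*)$ for fixed $\omega$. This map is a measure-preserving involution, $b$ is unchanged because $|v'-v'_*|=|v-v_*|$ and $|(v'-v'_*)\cdot\omega|=|(v-v_*)\cdot\omega|$, and the weight is preserved by the balance identity \eqref{eq:equilibrium-balance-identity}. These symmetries give $\vvmean{|\phi_*|}=\vvmean{|\phi'|}=\vvmean{|\phi'_*|}=\vvmean{|\phi|}$, so it suffices to bound $\vvmean{|\phi|}$.

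Next I use $1-F'\le 1$, $1-F'_*\le 1$ and the kernel bound \eqref{eq:b-v-v*-estimate}, $b\le C_b(1+|v|)(1+|v_*|)$, to get
\[
	\vvmean{|\phi|} \leq C_b |\mathbb{S}^{N-1}| \left( \int (1+|v_*|)F_*\,dv_* \right) \int |\phi|(1+|v|) F\,dv .
\]
The $v_*$ integral is finite because $F\le e^{1-|v|^2/2}$. To return to the measure $F(1-F)dv$, I use $1-F\geq 1-F(0)=1/(1+e^{-1})$, since $F$ is radially decreasing. This gives $F\le C\,F(1-F)$. Finally, $1+|v|\le 6\tau$, as already used in the proof of Lemma \ref{lem:collision-product-compactness}. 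Together these yield $\|\mathcal{L}\phi\|_{L^1(F(1-F)dv)}\le C\|\phi\|_{L^1(\tau F(1-F)dv)}$.

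The only delicate point is the change of variables: I must check that the weight is genuinely symmetric under $(v,v_*)\mapsto(v',v'_*)$, which is exactly what \eqref{eq:equilibrium-balance-identity} provides, and that the Jacobian is $1$. Everything else is routine bookkeeping. I also need $\mathcal{L}\phi$ to be well defined a.e. for $\phi$ in the weighted $L^1$ space, and the Fubini/Tonelli bound above supplies this simultaneously.
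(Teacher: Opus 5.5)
Your proof is correct and follows essentially the paper's route: the triangle inequality, invariance of $\Lambda$ under the pre/post-collisional change of variables via \eqref{eq:equilibrium-balance-identity}, the kernel bound \eqref{eq:b-v-v*-estimate} to decouple the $v$ and $v_*$ integrals, and finally $1/(1-F)\le 1+e$ together with $1+|v|\lesssim\tau$. There is one small arithmetic slip: $1-F(0)=1/(1+e)$, not $1/(1+e^{-1})$ (the latter is $F(0)$), but any positive lower bound on $1-F$ suffices, so the argument is unaffected.
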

\begin{proof}
	Let $\phi \in L^1(F(1-F) dv)$.
	To estimate the weighted norm, we integrate $|\mathcal{L}(\phi)|$ over $v \in \mathbb{R}^N$ against the weight $F(1-F)$.
	Successive applications of the triangle inequality yield
	\begin{align*}
		\int_{\mathbb{R}^N} F(1-F) |\mathcal{L}(\phi)| \, dv
		&\leq \iiint_{\mathbb{R}^{2N} \times \mathbb{S}^{N-1}}
			|\phi + \phi_* |
			\, F(1-F) \,  d\Lambda(v,v_*,\omega) \\
		&\quad + \iiint_{\mathbb{R}^{2N} \times \mathbb{S}^{N-1}}
			|\phi' + \phi'_*|
			\, F(1-F) \,  d\Lambda(v,v_*,\omega),
	\end{align*}
	where $\Lambda$ denotes the measure defined in \eqref{eq:measure-lambda-def}.
	We denote the two integrals on the right-hand side by $I_1$ and $I_2$, respectively.

	To handle $I_2$, we apply the pre/post-collisional change of variables $(v, v_*) \leftrightarrow (v', v'_*)$.
	From the the equilibrium balance identity $F F_* (1-F') (1-F'_*) = F' F'_*(1-F)(1-F_*)$
	we deduce that the collision measure $\Lambda$ is invariant by this transformation, and therefore $I_2$ rewrites as
	\begin{align*}
		I_2 &= \iiint_{\mathbb{R}^{2N} \times \mathbb{S}^{N-1}}   |\phi' + \phi'_*| F(1-F) \, d\Lambda(v,v_*,\omega) \\
		& = \iiint_{\mathbb{R}^{2N} \times \mathbb{S}^{N-1}}  |\phi + \phi_*| F'(1-F') \, d\Lambda(v', v'_*, \omega) \\
		&= \iiint_{\mathbb{R}^{2N} \times \mathbb{S}^{N-1}} |\phi + \phi_*| F'(1-F') \, d\Lambda(v,v_*,\omega).
	\end{align*}
	Applying the bounds $F(1-F) \leq 1$ and $F'(1-F') \leq 1$ to $I_1$ and $I_2$, respectively, and expanding the definition of $\Lambda$, we obtain the following estimate.
	Note that we have also used the inequalities $1 - F' \leq 1$ and $1 - F'_* \leq 1$ to conclude that
	\begin{equation*}
	\begin{split}
		\int_{\mathbb{R}^N} F(1-F) |\mathcal{L}(\phi)| \, dv
		&\leq 2 \iiint_{\mathbb{R}^{2N} \times \mathbb{S}^{N-1}} |\phi + \phi_*|  \, d\Lambda(v,v_*,\omega) \\
		&\leq 2 \iiint_{\mathbb{R}^{2N} \times \mathbb{S}^{N-1}} b(|v-v_*|, \omega) |\phi + \phi_*| F F_* \, dv dv_* d\omega.
	\end{split}
	\end{equation*}

	Substituting the hard potentials bound \eqref{eq:collision-kernel-bound} on the collision kernel, applying the triangle inequality and evaluating the integral over $\omega$, we obtain
	\begin{equation*}
	\begin{split}
		\int_{\mathbb{R}^N} F(1-F) |\mathcal{L}(\phi)| \, dv
		&\leq 2C_b |\mathbb{S}^{N-1}| \iint_{\mathbb{R}^{2N}} (1 + |v|) (1 + |v_*|) (|\phi| + |\phi_*|) F F_* \, dv dv_* \\
		&= 4C_b |\mathbb{S}^{N-1}|
		\left(
			\int_{\mathbb{R}^N} (1+|v|) F \, dv
		\right)
		\int_{\mathbb{R}^N} (1+|v|) F |\phi| \, dv.
	\end{split}
	\end{equation*}
	In the final equality, we decoupled the integrals and relabeled the $v_*$ variables as $v$.
	Finally, using that $1 + |v| \leq 2 (1+ |v|^2) = 8 \tau$ together with the bound $1/(1 - F) \leq 1 + e$, we derive
	\[
		\| \mathcal{L}(\phi) \|_{L^1( F(1-F) dv)}
		\leq 32 C_b |\mathbb{S}^{N-1}| (1 + e) K_1
		\|\phi \|_{L^1( \tau F(1-F) dv )},
	\]
	where $K_1 = \int_{\mathbb{R}^N} (1+|v|) F \, dv$.
	This inequality then achieves the proof.
\end{proof}

The following result establishes the limiting form of any weakly convergent sequence of fluctuations $(g_n)$, providing a rigorous justification for the approximation \eqref{eq:F-varpi-U-theta-approx}.

\begin{lemma}
	\label{lem:limit-local-maxwellian}
	Let $T>0$.
	If $\tau g_n \weakto \tau g$ in $L^1([0,T] \times \Rxv; dtdx F(1-F) dv)$, then $g$ can be expressed as a local Fermi-Dirac distribution, that is,
	\[
		g(t,x,v) = \rho(t,x)
			+ u(t,x) \cdot v
			+ \theta(t,x) \frac{1}{2} \left( |v|^2 - K \right),
	\]
\end{lemma}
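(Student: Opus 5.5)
The plan is to show that $\mathcal{L}(g) = 0$ in the sense of distributions in $(t,x)$, with values in $L^1(F(1-F)dv)$. Lemma \ref{lem:linearized-properties} then gives $g(t,x,\cdot) \in \ker(\mathcal{L}) = \operatorname{span}\{1, v_i, |v|^2\}$ for almost every $(t,x)$. Since $1$, $v_i$ and $\frac12(|v|^2-K)$ span the same space, we can write $g = \rho + u\cdot v + \theta\,\frac12(|v|^2-K)$. Lemma \ref{lem:lower-semi-continuity-entropy} gives $g \in L^\infty(dt;L^2(F(1-F)dvdx))$, which places $g(t,x,\cdot)$ in $L^2(F(1-F)dv)$ for a.e. $(t,x)$. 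This justifies applying the $L^2$ kernel characterization, and it also gives measurability and square-integrability of the coefficients $\rho,u,\theta$ as projections onto this orthogonal family.

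To obtain $\mathcal{L}(g)=0$, I start from the scaled equation \eqref{eq:scaled-bfd-fn}, divided by $\mu_n F(1-F)$ and multiplied by $\kappa_n$:
\[
	\kappa_n\left(\sigma_n \partial_t g_n + v\cdot\nabla_x g_n\right) = \frac{1}{\mu_n F(1-F)} Q(f_n).
\]
By the factorization of the collision product, the right-hand side equals $\frac{\eta_n}{F(1-F)}\int b\,F F_*(1-F')(1-F'_*)\, q_n\,dv_*d\omega$. I then split $Q(f_n)/\mu_n$ into a linear part $-F(1-F)\mathcal{L}(g_n)$ and a remainder that is quadratic and cubic in $\mu_n g_n$. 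Expanding $P'_nP'_{n,*}M_nM_{n,*}-P_nP_{n,*}M'_nM'_{n,*}$ in powers of $\mu_n$, the first-order term is exactly $-\mu_n F(1-F)\mathcal{L}(g_n)$ after integration. The higher-order terms are $\mu_n^k$ times products of $k+1$ fluctuations, $k=1,2,3$.

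I test against $\chi(t,x)\varphi(v)$ with $\chi\in\mathcal{D}$ and $\varphi$ bounded and compactly supported. The left side tends to zero: $\tau g_n$ is bounded in $L^1$ by Lemma \ref{lem:tau-gn-compactness}, and the left side carries $\kappa_n\sigma_n$ and $\kappa_n$ factors. For the linear part, Lemma \ref{lem:L-continuity} gives continuity of $\mathcal{L}$ from $L^1(\tau F(1-F)dv)$ to $L^1(F(1-F)dv)$, so its adjoint maps bounded $\varphi$ to functions bounded by $C\tau$. Hence $\int \chi\varphi\, \mathcal{L}(g_n)F(1-F) \to \int\chi\varphi\,\mathcal{L}(g)F(1-F)$ by the weak convergence of $\tau g_n$.

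The main obstacle is the nonlinear remainder, which must vanish in $\mathcal{D}'$. I would avoid the raw expansion and exploit the pointwise Pauli bounds $0\le P_n\le 1/F$ and $0\le M_n\le 1/(1-F)$. For instance, $P'_nP'_{n,*}M_nM_{n,*}-1$ telescopes as a sum of terms (difference of one factor)$\times$(bounded product of the others), and each difference is $\mu_n$ times a fluctuation. To close the estimate, I would write each quadratic term as $\mu_n g_n\cdot g_{n,*}$, or similar, times bounded factors. Using $|\mu_n g_n|\le \max(1/F,1/(1-F))$ would lose the smallness, so instead I would use $\mu_n|g_n g_{n,*}|\le \mu_n(g_n^2+g_{n,*}^2)$. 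Combined with Lemma \ref{lem:gn2-Nn-bounds} and the renormalization bounds of Lemma \ref{lem:Nn-Pn-bounds} on $N_n$, this makes the term $O(\mu_n)$, after the factor $N_n$ is absorbed by the exponential decay of $F$. The weight $b\le C_b(1+|v|)(1+|v_*|)$ with $\varphi$ compactly supported in $v$ leaves a factor $(1+|v_*|)\le 8\tau_*$, which the $\log(1/\mu_n)$ bound on $\tau g_n^2/N_n$ controls, giving $O(\mu_n\log(1/\mu_n))\to 0$. The collision variables $v',v'_*$ are handled by the change of variables $(v,v_*)\leftrightarrow(v',v'_*)$, which preserves $\Lambda$ by \eqref{eq:equilibrium-balance-identity}.

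An alternative that may be cleaner is to use Lemma \ref{lem:collision-product-compactness} directly. Since $\eta_n\to0$ while $\tau q_n/N_n$ is weakly compact, the collision term $\eta_n\langle q_n\rangle$, suitably renormalized by $N_n$, tends to zero, and so $\mathcal{L}(g)=0$ follows by identifying the linear part.
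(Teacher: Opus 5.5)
Your reduction to $\mathcal{L}(g)=0$, the treatment of the linear term through $|\mathcal{L}\varphi|\le C\tau\|\varphi\|_{\infty}$, and the kernel argument are fine. Using the kinetic equation to make the collision side small is also a legitimate variant of the paper's use of $\eta_n q_n/N_n$.

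The gap is the nonlinear remainder, which is the real content of the lemma. Your bound $\mu_n|g_ng_{n,*}|\le\mu_n(g_n^2+g_{n,*}^2)$ only closes if each squared fluctuation carries \emph{two} factors of $F$: one to absorb $N_n\le 3/(4F)$, and one left over as the weight $F(1-F)$ required by Lemma~\ref{lem:gn2-Nn-bounds}. The loss-type quadratic term $\mu_n g_ng_{n,*}M'_nM'_{n,*}$ (your $(P_n-1)(P_{n,*}-1)$ term) has total weight comparable to $bFF_*$. So even with $v$ localized by $\varphi$, you must control $\mu_n\int(1+|v_*|)F_*g_{n,*}^2\,dv_*$. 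Absorbing $N_{n,*}$ uses up the only $F_*$ and leaves $\int(1+|v_*|)g_{n,*}^2/N_{n,*}\,dv_*$ with no Gaussian weight, which nothing in the paper controls.

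This quantity is in fact unbounded under the entropy bound. Take $f_n=1$ on a velocity set $S$ near $|v|^2=4\log(1/\mu_n)$ with $|S|\approx\mu_n^2/(4\log(1/\mu_n))$, and $f_n=F$ elsewhere. Then $H(f_n|F)=O(\mu_n^2)$ and $\tau g_n=O(\mu_n)$ in $L^1(F(1-F)\,dv)$, yet $\mu_n\int Fg_n^2\,dv$ is of order $(\mu_n\log(1/\mu_n))^{-1}\to\infty$. The compact support of $\varphi$ does not rescue the gain terms either. The change of variables $(v,v_*)\leftrightarrow(v',v'_*)$ turns $\varphi(v)$ into $\varphi(v')$, which no longer localizes the variable carrying $g_n^2$.

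The paper never decouples the product. Writing $P_n=M_n+\mu_ng_n$ gives the exact identity \eqref{eq:linear-part-decomposition}, with quadratic part $\mu_n[g'_ng'_{n,*}M_nM_{n,*}-g_ng_{n,*}M'_nM'_{n,*}]$. Using $(1-F)M_n=1-f_n\in[0,1]$ and the collisional change of variables, this reduces to $C\mu_n\bigl[\int(1+|v|)F|g_n|\,dv\bigr]^2$.

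The remaining difficulty is that the square of an $L^1_v$ norm need not be integrable in $x$. The paper handles this by dividing the whole identity by $N_nZ_n$, with $Z_n=\int\tau[1+\frac13(\frac34-F)|\mu_ng_n|]F\,dv$. Jensen's inequality for the convex function $s$ of \eqref{eq:z-function} then bounds $\mu_n[\cdots]^2/Z_n$ by $C\mu_n\vmean{\tau g_n^2/N_n}=O(\mu_n\log(1/\mu_n))$. The leftover $R_n$, linear in $g_n$ times $1-M\cdots M$, tends to zero only weakly, via the product limit theorem.

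Dividing by $N_nZ_n$ (a $(t,x)$-dependent functional of $g_n$) destroys the derivative structure of $\kappa_n(\sigma_n\partial_t+v\cdot\nabla_x)g_n$. So the collision side must then be handled as in your closing alternative, through $\eta_n q_n/N_n$ and Lemma~\ref{lem:collision-product-compactness}. But that alternative, as you state it, leaves exactly the identification of the linear part — the nonlinear remainder — unresolved.
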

\begin{proof}

	We aim to show that $\mathcal{L}(g) = 0$.

	The core of the proof relies on the observation that, as established previously, the collision term scales globally as $\mu_n \eta_n$.
	However, by applying the micro-macro decomposition \eqref{eq:micro-macro-decomposition}, this collision product $\mu_n \eta_n q_n$ can be expanded as the sum of a linear part, $- \mu_n \mathcal{L}(g_n)$, and remainder terms of strictly higher order in $\mu_n$.
	Therefore, in the limit as $\mu_n, \eta_n \to 0$, the higher-order terms vanish faster than the linear part, implying $\mathcal{L}(g_n) \to 0$.

	In practice, it is easier to exploit the boundedness of the terms $M_n$ rather than fully expanding the term $\mu_n \eta_n q_n$ in terms of $g_n$.
	Using the relation $P_n = M_n + \mu_n g_n$, we have
	\begin{equation*}
	\begin{split}
		\mu_n \eta_n q_n
		&= P'_n P'_{n,*} M_n M_{n,*} - P_n P_{n,*} M'_n M'_{n,*} \\
		&= \mu_n \big[
			g'_n M'_{n,*} M_n M_{n,*}
			+ g'_{n,*} M'_n M_n M_{n,*}
			- g_n M_{n,*} M'_n M'_{n,*}
			- g_{n,*} M_n M'_n M'_{n,*}
		\big] \\
		&\quad + \mu^2_n \big[
			g'_n g'_{n,*} M_n M_{n,*}
			- g_n g_{n,*} M'_n M'_{n,*}.
		\big]
	\end{split}
	\end{equation*}
	Dividing the entire expression by $\mu_n$ and rearranging the linear terms to the left-hand side, we obtain the decomposition
	\begin{equation}
		\label{eq:linear-part-decomposition}
		g_n + g_{n,*} - g'_n - g'_{n,*}
		= \mu_n [g'_n g'_{n,*} M_n M_{n,*} - g_n g_{n,*} M'_n M'_{n,*}]
		- \eta_n q_n
		+ R_n,
	\end{equation}
	where $R_n$ is given by
	\begin{equation*}
	\begin{split}
		R_n &= g_n (1 - M_{n,*} M'_n M'_{n,*})
			+ g_{n,*} (1 - M_n M'_n M'_{n,*})\\
			&\quad - g'_n (1 - M'_{n,*} M_n M_{n,*})
			- g'_{n,*} (1 - M'_n M_n M_{n,*}).
	\end{split}
	\end{equation*}

	Consider the integrated renormalization factor
	\[
	    Z_n \coloneq
	    \int_{\mathbb{R}^N} \tau
	    	\left[
	    		1
	    		+ \frac{1}{3} \left(\frac{3}{4} - F\right) |\mu_n g_n|
	    	\right]  F \, dv.
	\]
	To exploit the compactness properties established in Lemmas \ref{lem:tau-gn-compactness} and \ref{lem:collision-product-compactness}, we divide the entire expression \eqref{eq:linear-part-decomposition} by $N_n Z_n$.
	Substituting into the definition \eqref{eq:linearized-op-def} of $\mathcal{L}$, we obtain that
	\begin{equation}
		\label{eq:limit-maxw-L-gn-decomposition}
		\frac{F(1-F) \mathcal{L}(g_n)}{N_n Z_n}
		= \mu_n I_1 - \eta_n I_2 + I_3
	\end{equation}
	where
	\begin{align*}
		I_1 &\coloneq
		\begin{aligned}[t]
			\frac{1}{N_n Z_n}
			\iint_{\mathbb{R}^N \times \mathbb{S}^{N-1}} b(v-v_*, \omega)
			& F' F'_* (1-F) (1-F_*) \\
			&\quad \times \big[
				g'_n g'_{n,*} M_n M_{n,*}
				- g_n g_{n,*} M'_n M'_{n,*}
			\big] \, dv_* d\omega,
		\end{aligned} \\
		I_2 &\coloneq
			\frac{1}{Z_n}
			\iint_{\mathbb{R}^N \times \mathbb{S}^{N-1}} b(v-v_*, \omega) F' F'_* (1-F) (1-F_*) \frac{q_n}{N_n} \, dv_* d\omega, \\
		I_3 &\coloneq
			\frac{1}{N_n Z_n}
			\iint_{\mathbb{R}^N \times \mathbb{S}^{N-1}} b(v-v_*, \omega) F' F'_* (1-F) (1-F_*)
				R_n \, dv_* d\omega.
	\end{align*}

	To estimate $I_1$, we start by integrating over $v \in \mathbb{R}^N$ and estimating the renormalization factor using Lemma \ref{lem:Nn-Pn-bounds}.
	This way, for every $(t, x) \in (0,\infty) \times \mathbb{T}^N$, we have,
	\begin{equation*}
	\begin{split}
		\| I_1(t,x) \|_{L^1(\mathbb{R}^N)}
		\leq \frac{4}{Z_n}
		\iiint_{\mathbb{R}^{2N} \times \mathbb{S}^{N-1}}
			&| b(v-v_*, \omega) |
			F' F'_* (1-F) (1-F_*) \\
			& \quad \times \big[
				| g'_n g'_{n,*} | | M_n M_{n,*} |
				+ | g_n g_{n,*} | | M'_n M'_{n,*} |
			\big]
			\, dvdv_*d\omega.
	\end{split}
	\end{equation*}
	We separate the integrals and use the equilibrium balance equation \eqref{eq:equilibrium-balance-identity}.
	Then, given the bound $|(1-F) M_n| = |1 - f_n| \leq 1$ and similar relations for $M_{n,*}$, $M'_n$ and $M'_{n,*}$, it follows that
	\begin{equation*}
	\begin{split}
		\| I_1(t,x) \|_{L^1(\mathbb{R}^N)}
		&\leq \frac{4}{Z_n}
		\iiint_{\mathbb{R}^{2N} \times \mathbb{S}^{N-1}}
			| b(v-v_*, \omega) |
			F' F'_*  | g'_n g'_{n,*} |
			\, dvdv_*d\omega\\
		&\quad + \frac{4}{Z_n}
		\iiint_{\mathbb{R}^{2N} \times \mathbb{S}^{N-1}}
			| b(v-v_*, \omega) |
			F F_*  | g_n g_{n,*} |
			\, dvdv_*d\omega \\
		&= \frac{8}{Z_n}
		\iiint_{\mathbb{R}^{2N} \times \mathbb{S}^{N-1}}
			| b(v-v_*, \omega) |
			F F_*  | g_n g_{n,*} |
			\, dvdv_*d\omega.
	\end{split}
	\end{equation*}
	The equality in the last line follows from an application of a pre/post-collisional change of variables $(v, v_*) \leftrightarrow (v', v'_*)$ in the first integral.
	We apply the bound \eqref{eq:collision-kernel-bound} on the collision kernel and we evaluate the integral over $\omega$ to obtain
	\begin{equation*}
		\| I_1(t,x) \|_{L^1(\mathbb{R}^N)}
		\leq \frac{8}{Z_n} C_b | \mathbb{S}^{N-1} |
			\left[
				\int_{\mathbb{R}^N} (1 + |v|) F |g_n| \, dv
			\right]^2.
	\end{equation*}

	We estimate the integrand on the right-hand side using that $1 + |v| \leq 8 \tau$ and that $(3/4 - F)^{-1} \leq 4(1+e)/(3-e)$.
	These bounds imply there exists a constant $C_1 > 0$ such that $(1 + |v|) F |g_n| \leq C_1 \tau F (3/4 - F) |g_n|$.
	Substituting this estimate back into the inequality for $\| I_1(t,x) \|_{L^1}$ and expanding the definition of $Z_n$, we deduce that there exists a constant $C_2 > 0$ such that
	\begin{equation*}
		\| I_1(t,x) \|_{L^1(\mathbb{R}^N)}
		\leq C_2
		\frac{
			\left[
				\displaystyle \int_{\mathbb{R}^N}  \left(\frac{3}{4} - F\right) |g_n|
				\tau F \, dv
			\right]^2
		}{
			\| \tau F \|_{L^1}
			+ \displaystyle \frac{1}{3} \int_{\mathbb{R}^N} \left(\frac{3}{4} - F\right) \mu_n |g_n| \tau F \, dv
		}.
	\end{equation*}

	Multiplying and dividing the numerator by $\mu_n^2 \| \tau F \|^2_{L^1}$ and factoring $\| \tau F \|_{L^1}$ out of the denominator, we can rewrite the fraction on the right-hand side in terms of the function $s(z)$, defined in \eqref{eq:z-function}, yielding
	\begin{equation}
		\label{eq:limit-maxw-I1-inequality-s}
		\begin{split}
			\| I_1(t,x) \|_{L^1(\mathbb{R}^N)}
			\leq \frac{C_2}{\mu_n^2} \| \tau F \|_{L^1}\,
			s\left(
				\int_{\mathbb{R}^N} \Big(\frac{3}{4} - F\Big) \mu_n |g_n| \frac{\tau F}{\| \tau F \|_{L^1}} \, dv
			\right).
		\end{split}
	\end{equation}
	We then apply Jensen's inequality to the convex function $s(z)$ with respect to the probability measure $\tau F / \| \tau F \|_{L^1} \, dv$.
	Estimating the integrand with $s(|z|) \leq 3z^2/(1+z)$ for every $z > -1$ and recognizing the renormalization factor $N_n$ \eqref{eq:3-4-normalization} in the denominator $1 + z$, we extract an upper bound in terms of $g_n^2 / N_n$.
	Finally, using that $(3/4 - F)^2 \leq (1-F)$, we obtain that
	\begin{equation*}
	\begin{split}
		s\left(
			\int_{\mathbb{R}^N} \left(\frac{3}{4} - F\right) \mu_n |g_n| \frac{\tau F}{\| \tau F \|_{L^1}} \, dv
		\right)
		&\leq \int_{\mathbb{R}^N} s\left(
			\left(\frac{3}{4} - F\right) \mu_n |g_n|
		\right)
		\frac{\tau F}{\| \tau F \|_{L^1}} \, dv \\
		&\leq \frac{3\mu_n^2}{\| \tau F \|_{L^1}} \int_{\mathbb{R}^N}
			\tau \frac{g_n^2}{N_n}
			F \left(\frac{3}{4} - F\right)^2
			\, dv, \\
		&\leq \frac{3\mu_n^2}{\| \tau F \|_{L^1}} \vmean{\frac{g_n^2}{N_n}}.
	\end{split}
	\end{equation*}
	Substituting this back into \eqref{eq:limit-maxw-I1-inequality-s} and integrating over $x \in \mathbb{T}^N$ we conclude, from Lemma \ref{lem:gn2-Nn-bounds}, that the term $I_1$ is of order $\log (1/\mu_n)$ in $L^\infty((0,\infty); L^1(\Rxv))$.
	Therefore, for the first term on the right-hand side of \eqref{eq:limit-maxw-L-gn-decomposition}, we have the convergence
	\begin{equation}
		\label{eq:limit-maxw-I1-conv}
		\mu_n I_1 \to 0
		\quad \text{in }
		L^\infty((0,\infty); L^1(\Rxv)).
	\end{equation}

	For the second term, we fix $T>0$.
	Integrating the expression of $I_2$ over $v \in \mathbb{R}^N$ and using the estimate $Z_n \geq \| \tau F \|_{L^1}$ we obtain, for every $(t,x) \in (0,T) \times \mathbb{T}^N$,
	\[
		\| I_2(t,x) \|_{L^1(\mathbb{R}^N)}
		\leq \frac{1}{\| \tau F \|_{L^1}}
		\iiint_{\mathbb{R}^{2N} \times \mathbb{S}^{N-1}}
			\left|
				\frac{q_n}{N_n}
			\right|
			d\Lambda(v, v_*, \omega),
	\]
	where $\Lambda$ is the measure defined in \eqref{eq:measure-lambda-def}.
	Further integrating in $(t,x)$ we deduce, as a consequence of Lemma \ref{lem:collision-product-compactness} that, for every $T>0$, the $L^1((0,T) \times \Rxv; \, dtdxdv)$ norm of $I_2$ is bounded by a constant.
	Hence, for the second term of \eqref{eq:limit-maxw-L-gn-decomposition}, we have the convergence
	\begin{equation}
		\label{eq:limit-maxw-I2-conv}
		\eta_n I_2 \to 0
		\quad \text{in }
		L^1((0,T) \times \Rxv).
	\end{equation}

	Finally, we consider $I_3$.
	Although we cannot establish suitable norm bounds to guarantee strong convergence, we can nevertheless show that this term converges weakly to zero in $L^1$, which suffices for our result.

	To establish this weak convergence, we start by showing the weak convergence of the term $R_n$.
	Fix a final time $T > 0$.
	Our first step is to show that
	\begin{equation}
		\label{eq:limit-maxw-I3-weak-convergence}
		g_n \weakto g
		\quad \text{in }
		L^1([0,T] \times \mathbb{T}^N_x \times \mathbb{R}^{2N}_{v,v_*} \times \mathbb{S}^{N-1}_\omega; \, dtdx\Lambda).
	\end{equation}
	To this end, take an arbitrary test function $\varphi \in L^\infty([0,T] \times \mathbb{T}^N_x \times \mathbb{R}^{2N}_{v,v_*} \times \mathbb{S}^{N-1}_\omega; dtdx\Lambda)$.
	From $\varphi$, we construct a new function $\psi$ on $[0,T] \times \Rxv$, defined by
	\[
		\psi(t,x,v) =
		\frac{1}{\tau} \iint_{\mathbb{R}^N \times S^{N-1}}
			b(v-v_*,\omega) \frac{F_* (1-F') (1-F'_*)}{1-F}
			\varphi \, dv_* d\omega.
	\]

	Let us show that $\psi \in L^\infty([0,T] \times \mathbb{T}^N_x \times \mathbb{R}^{N}_{v}; F(1-F) dvdxdt)$.
	Applying the estimate \eqref{eq:b-v-v*-estimate} on the collision kernel, alongside the bounds $(1-F') (1-F'_*) \leq 1$ and $1/(1-F) \leq 1+e$, we evaluate the integral over $\omega \in \mathbb{S}^{N-1}_\omega$ to obtain that
	\[
		|\psi(t,x,v)| \leq C_b
			\| \varphi \|_{L^\infty}
			\frac{1+|v|}{\tau}
			(1 + e)
			| \mathbb{S}^{N-1} |
			\int_{\mathbb{R}^N} (1 + |v_*|) F_* \, dv_*.
	\]
	Since the integral on the right-hand side is finite and $(1 + |v|)\leq 8 \tau$, this establishes the required bound.

	Taking $\psi$ as a test function for the weak convergence $\tau g_n \weakto \tau g$ in $L^1(F(1-F) dvdxdt)$, we obtain that
	\begin{align*}
		\int_{[0,T] \times \mathbb{T}^N \times \mathbb{R}^{2N} \times \mathbb{S}^{N-1}}
			g_n \varphi
			\, dtdxd\Lambda
		&= \int_{[0,T] \times \mathbb{T}^N \times \mathbb{R}^{N}}
			\tau g_n \psi F (1-F)
			\, dtdxdv \\
		&\to \int_{[0,T] \times \mathbb{T}^N \times \mathbb{R}^{N}}
			\tau g \psi F (1-F)
			\, dtdxdv \\
		&= \int_{[0,T] \times \mathbb{T}^N \times \mathbb{R}^{2N} \times \mathbb{S}^{N-1}}
			g \varphi
			\, dtdxd\Lambda
	\end{align*}
	which establishes \eqref{eq:limit-maxw-I3-weak-convergence}.

	Next, recall that $\mu_n g_n \to 0$ almost everywhere in $[0,T] \times \Rxv$, which immediately implies that $M_{n,*} \to 1$, $M'_{n} \to 1$ and $M'_{n,*} \to 1$ almost everywhere.
	Furthermore, the definition of these terms guarantees that
	$0 \leq M_{n,*}, M'_n, M'_{n,*} \leq 1/(1-F) \leq 1+e$.

	Therefore, for almost every point $(t,x,v,v_*)$, we have $1 - M_{n,*} M'_n M'_{n,*} \to 0$ alongside the uniform bound
	\[
		| 1 - M_{n,*} M'_n M'_{n,*} | \leq 1 + (1+e)^3.
	\]
	Since the measure $dtdx\Lambda$ is finite, the product limit theorem then yields
	\[
		g_n (1 - M_{n,*} M'_n M'_{n,*}) \weakto 0
		\quad \text{in }
		L^1([0,T] \times \mathbb{T}^N_x \times \mathbb{R}^{2N}_{v,v_*} \times \mathbb{S}^{N-1}_\omega; \, dtdx\Lambda).
	\]

	We now exploit the standard symmetries of the Boltzmann collision operator in order to obtain the convergences of the other terms of $R_n$.
	Recall that the measure $dtdx\Lambda$ is invariant under the exchange of colliding particles $(v, v_*) \mapsto (v_*, v)$ and the pre/post-collisional change of variables $(v, v_*) \mapsto (v', v'_*)$.

	By the definition of weak convergence in $L^1$, this limit holds when integrated against any test function $\varphi \in L^\infty$.
	Applying these transformations to $\varphi$ and using the invariance of the measure transfers the changes of variables directly to the sequence itself.

	Thus, exchanging $v \leftrightarrow v_*$ in the first limit immediately yields
	\[
		g_{n,*} (1 - M_{n} M'_n M'_{n,*}) \weakto 0
		\quad \text{in }
		L^1([0,T] \times \mathbb{T}^N_x \times \mathbb{R}^{2N}_{v,v_*} \times \mathbb{S}^{N-1}_\omega; \, dtdx\Lambda).
	\]
	Similarly, applying the pre/post-collisional change of variables $(v, v_*, v', v'_*) \mapsto (v', v'_*, v, v_*)$ to the initial limit gives $g'_n (1 - M'_{n,*} M_n M_{n,*}) \weakto 0$ and exchanging $v \leftrightarrow v_*$ in this third limit gives $g'_{n,*} (1 - M'_n M_{n,*} M_n) \weakto 0$.
	Combining these four weak limits yields the convergence
	\begin{equation}
		\label{eq:limit-maxw-Rn-convergence}
		R_n \weakto 0
		\quad \text{in }
		L^1([0,T] \times \mathbb{T}^N_x \times \mathbb{R}^{2N}_{v,v_*} \times \mathbb{S}^{N-1}_\omega; \, dtdx\Lambda).
	\end{equation}

	We now show that this convergence implies that $R_n / (N_n Z_n) \weakto 0$.
	From Lemma \ref{lem:tau-gn-compactness}, the sequence $(\tau g_n)_n$ is uniformly bounded in $L^\infty(dt; L^1(F(1-F) dvdx))$.
	By passing to a subsequence, we may assume that $\mu_n g_n(t,x,v) \to 0$ for almost every $(t,x,v) \in [0,T] \times \Rxv$ and that $\mu_n \| \tau g_n(t,x) \|_{L^1(F(1-F) dv)} \to 0$ for almost every $(t,x) \in [0,T] \times \mathbb{T}^N$.

	Furthermore, since $3/4 - F \leq 1 - F$ we have from the expression for $Z_n$ that
	\[
		\| \tau F \|_{L^1}
		\leq Z_n(t,x)
		\leq \| \tau F \|_{L^1}
		+ \frac{1}{3} \mu_n
		\left\| \tau g_n(t,x) \right\|_{L^1(F(1-F) dv)}
	\]
	and therefore $Z_n \to \| \tau F \|_{L^1}$ for almost every $(t,x) \in [0,T] \times \mathbb{T}^N$.
	Given the bounds $1/N_n \leq 1$ and $1/Z_n \leq 1/\| \tau F \|_{L^1}$, alongside the fact that the measure $dtdx\Lambda$ is finite, the product limit theorem yields
	\begin{equation}
		\label{eq:limit-maxw-Nn-Zn-Rn-convergence}
		\frac{1}{N_n Z_n}
		R_n
		\weakto 0
		\quad \text{in }
		L^1([0,T] \times \mathbb{T}^N_x \times \mathbb{R}^{2N}_{v,v_*} \times \mathbb{S}^{N-1}_\omega; \, dtdx\Lambda).
	\end{equation}

	Let $\varphi \in L^\infty([0,T] \times \mathbb{T}^N_x \times \mathbb{R}^{N}_v)$ be an arbitrary test function.
	Testing the above convergence against $(t,x,v,v_*,\omega) \mapsto \varphi(t,x,v)$, using the equilibrium balance identity \eqref{eq:equilibrium-balance-identity} and recognizing the inner integral as $I_3$, we obtain
	\begin{equation*}
		I_3 \weakto 0
		\quad \text{in }
		L^1([0,T] \times \mathbb{T}^N_x \times \mathbb{R}^{N}_v; \, dtdxdv).
	\end{equation*}
	Substituting the above limit, alongside \eqref{eq:limit-maxw-I1-conv} and \eqref{eq:limit-maxw-I2-conv}, into the decomposition \eqref{eq:limit-maxw-L-gn-decomposition}, we obtain that for all $T>0$,
	\[
		\frac{\mathcal{L}(g_n)}{N_n Z_n} \weakto 0
		\text{ in }
		L^1([0,T] \times \Rxv; \, F(1-F) dvdxdt).
	\]

	On the other hand, the strong continuity of $\mathcal{L}$ given by Lemma \ref{lem:L-continuity} implies its weak continuity.
	Therefore, $\mathcal{L}(g_n) \weakto \mathcal{L}(g)$ in $L^1([0,T] \times \Rxv; \, F(1-F) dvdxdt)$.
	Using the same bounds and convergences for $N_n$ and $Z_n$ as those used to establish \eqref{eq:limit-maxw-Nn-Zn-Rn-convergence}, along with the fact that the measure $F(1-F) dvdxdt$ is finite, we can apply product limit theorem.
	This yields
	\[
		\frac{\mathcal{L}(g_n)}{N_n Z_n} \weakto \frac{1}{\| \tau F\|_{L^1}} \mathcal{L}(g)
		\quad \text{in }
		L^1([0,T] \times \Rxv; \, F(1-F) dvdxdt).
	\]
	By uniqueness of the weak limit, we have $\mathcal{L}(g) = 0$, which implies $g$ is a local Fermi-Dirac distribution by Lemma \ref{lem:linearized-properties}.
\end{proof}

\section{Vanishing of conservation defects}
\label{sec:vanishing-conservation-defects}

With the explicit form of the limiting function $g$ established in Lemma \ref{lem:limit-local-maxwellian}, we aim to derive the hydrodynamic equations governing the macroscopic parameters $\rho$, $u$ and $\theta$.
Formally, this is done by multiplying the BFD equation for $f_n$ by a collision invariant $\zeta \in \{1, v_1, \dots, v_N, |v|^2\}$ and integrating over velocity space.
By the standard symmetry properties of the collision integral, the velocity integral of $\zeta Q(f_n)$ on the right-hand side vanishes.
Substituting $f_n$ using the decomposition \eqref{eq:micro-macro-decomposition}, we drop the velocity integrals of $F$, as they are constant in space and time.
The resulting equation is the formal conservation law
\begin{equation}
	\label{eq:formal-local-conserv}
	\partial_t \vmean{\zeta g_n}
	+ \Div_x \vmean{\zeta v g_n} = 0.
\end{equation}

Taking the formal limit $g_n \weakto g$, we obtain the local conservation law for the limiting distribution.
Substituting the explicit form of $g$ and making appropriate choices for $\zeta$ then produces the desired hydrodynamic equations.

As in classical Boltzmann theory, physical a priori estimates are insufficient to justify all the conservation laws \eqref{eq:formal-local-conserv} rigorously.
While these estimates guarantee uniform $L^1$ bounds controlling up to the second moment $|v|^2 f_n$, they fall short of bounding the higher moments required by the flux terms.
Most notably, when $\zeta = |v|^2$, the flux term $f_n |v|^2 v$ cannot be controlled a priori.

It is worth noting that for certain restricted regimes, the available bounds suffice to deduce \eqref{eq:formal-local-conserv} rigorously.
For example, assuming the collision kernel $b$ is integrable (as in Dolbeault \cite{dolbeault-1994}), one can show that the second moment of the collision integral, $(1+|v|^2) Q(f_n)$, is integrable in velocity space for each $n$.
This justifies \eqref{eq:formal-local-conserv} for $\zeta \in \operatorname{span}\{1, v_1, \dots, v_N\}$, which is enough to obtain the isothermal Stokes limit and bypass the technical scaling \eqref{eq:stokes-limit-technical-scaling}.

However, for more general kernels (including hard spheres), one can generally only establish local integrability on $(1+|v|^2) Q(f_n)$.
This weaker property is insufficient to guarantee that the velocity integral of $\zeta Q(f_n)$ vanishes.
Coupled with the lack of a priori control over the higher-moment fluxes, this poses a major obstacle.

To circumvent this, we test the conservation laws using the renormalized formulation of the equations.
This renormalization procedure, however, breaks the symmetries of the collision operator $Q(f_n)$ that would otherwise ensure the velocity integral of $\zeta Q(f_n)$ vanishes.
It is this break of symmetry that generates residual terms on the right-hand side, known as \emph{conservation defects}.

Since we cannot eliminate these defects for finite $n$, our strategy is to prove they vanish in the limit $g_n \weakto g$, thereby recovering the exact conservation laws \eqref{eq:formal-local-conserv} for the limiting fluctuation $g$.

In this section, we establish the main result required to control the principal, unscaled components of these defects, which will later be used in Sections \ref{sec:acoustic-limit-proof} and \ref{sec:stokes-limit}.
First developed in \cite{bgl-acoustic-2000} for the acoustic limit of the classical Boltzmann equation, this technique relies on a decomposition of the collision product and the Young-Fenchel inequality to deduce the necessary bounds.
The main result is as follows.

\begin{lemma}
	\label{lem:conservation-defects}
	Let $\zeta \in \{ 1, v_1, v_2, \dots, v_N, |v|^2 \}$ be a collision invariant.
	We have the bounds
	\[
		\vvmean{
			\zeta \frac{q_n}{N_n}
		}, \,
		\vvmean{\zeta \frac{q_n}{N_n^2}}
		=
		\mathcal{O}
		\left(
			\mu_n
			\sqrt{
				\log\Big(
					\frac{1}{\mu_n}
				\Big)
			}
		\right)
		+ \mathcal{O}\left(
			\mu_n \eta_n \log\frac{1}{\mu_n \eta_n}
		\right)
	\]
	in $L^1_{loc}(dtdx)$
\end{lemma}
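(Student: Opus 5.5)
The plan is to follow the Bardos--Golse--Levermore strategy for acoustic conservation defects. The idea is to use the collision symmetries of $\Lambda$ and the fact that $\zeta$ is a collision invariant, so that $\zeta+\zeta_*-\zeta'-\zeta'_*=0$, to rewrite $\vvmean{\zeta q_n/N_n}$ as a symmetrized integral that only sees the \emph{difference} of renormalization factors.

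\emph{Step 1: symmetrization.} Since $\Lambda$ is invariant under $(v,v_*)\leftrightarrow(v_*,v)$ and $(v,v_*)\leftrightarrow(v',v'_*)$, and $q_n$ is odd under the latter, we get
\[
\vvmean{\zeta \frac{q_n}{N_n}}=\frac14\vvmean{q_n\Big(\frac{\zeta}{N_n}+\frac{\zeta_*}{N_{n,*}}-\frac{\zeta'}{N'_n}-\frac{\zeta'_*}{N'_{n,*}}\Big)}.
\]
We subtract $\zeta+\zeta_*-\zeta'-\zeta'_*=0$ and write $1/N_n-1=-\mu_n(\tfrac34-F)g_n/N_n$. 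The defect then becomes $\mu_n$ times a sum of terms of the form $\vvmean{q_n\,\zeta\,(\tfrac34-F)g_n/N_n}$ (and its permuted versions). For $1/N_n^2$ we use $1/N_n^2-1=-(1/N_n)(1+1/N_n)\mu_n(\tfrac34-F)g_n$, which has the same structure with an extra bounded factor $1/N_n\le4$.

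\emph{Step 2: splitting $q_n$.} The product $q_n g_n/N_n$ is not directly controlled, so we split $q_n/N_n$ by inserting the full renormalization $N_nN_{n,*}N'_nN'_{n,*}$. We write
\[
\frac{q_n}{N_n}=\frac{q_n}{N_nN_{n,*}N'_nN'_{n,*}}+\frac{q_n}{N_n}\Big(1-\frac{1}{N_{n,*}N'_nN'_{n,*}}\Big).
\]
For the main piece, Cauchy--Schwarz in $L^2(dtdx\Lambda)$ uses the $L^2$ bound from Lemma~\ref{lem:collision-product-compactness} times $\big\|\zeta g_n/\sqrt{N_n}\cdot\sqrt{1/N_n}\big\|_{L^2}$. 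By $|\zeta|\lesssim\tau$ and the kernel bound \eqref{eq:b-v-v*-estimate}, that second norm is controlled by $\vmean{\tau^2 g_n^2/N_n}^{1/2}$ integrated in $x$. We do not have this moment, only $\tau g_n^2/N_n=\mathcal{O}(\log(1/\mu_n))$ from Lemma~\ref{lem:gn2-Nn-bounds}. To fix this, we first split $\zeta=\zeta\mathbbm 1_{|v|^2\le R}+\zeta\mathbbm 1_{|v|^2>R}$, or equivalently keep one $\tau^{1/2}$ with the $q_n$ factor, using the Gaussian decay of $F$ and the $(1+|v|)$ kernel growth so that only one power of $\tau$ lands on $g_n^2/N_n$. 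This gives $\mathcal{O}(\mu_n\sqrt{\log(1/\mu_n)})$.

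\emph{Step 3: the remainder.} For the remainder piece, $1-1/(N_{n,*}N'_nN'_{n,*})$ is again $\mathcal{O}(\mu_n|g|/N)$ in the other velocities. This produces terms of the form $\mu_n^2 q_n g_n g_{n,*}/(\dots)$, where one factor $\mu_n g/N$ is bounded by $M_{\mathrm{ren}}$ (Lemma~\ref{lem:Nn-Pn-bounds}). What is left is $\mu_n\, q_n\, \tau |g_n|/N_n$-type products. We control these with the Young--Fenchel inequality for $r$, as in the proof of Lemma~\ref{lem:collision-product-compactness}. We take $p\sim \mu_n\eta_n\,\tau|g|$ and $z$ the argument of $r$ in \eqref{eq:r-product-of-p-bound}, optimized with a weight $\alpha=\log(1/(\mu_n\eta_n))$. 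The entropy dissipation bound pays for the $r$ part, and the $r^*$ part, bounded by $e^{p}$, is paid by the relative entropy via $h^*$ bounds as in Lemma~\ref{lem:tau-gn-compactness}. This yields the $\mathcal{O}(\mu_n\eta_n\log\frac1{\mu_n\eta_n})$ term.

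\emph{Main obstacle.} The hardest step is Step 3, and the velocity-moment bookkeeping in Step 2. We must make sure that the unbounded weight $|\zeta|\le|v|^2$ and the kernel growth never demand more than one power of $\tau$ on $g_n^2/N_n$. We must also make sure that the Young--Fenchel step produces only a logarithmic loss, which requires choosing the Legendre pair $(r,r^*)$ with exponential integrability against $F$ exactly as in \cite{bgl-acoustic-2000}.
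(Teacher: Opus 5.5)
The tools you list are the right ones: symmetrization, Young--Fenchel with $r$, and optimization in $\alpha$. But the decomposition in Step~1 is the wrong one, and everything after it inherits the problem.

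\textbf{Why Step~1 fails.} Subtracting $\zeta+\zeta_*-\zeta'-\zeta'_*=0$ amounts to writing $\zeta\frac{q_n}{N_n}=\zeta q_n-\mu_n\zeta(\frac34-F)\frac{g_nq_n}{N_n}$ and discarding $\vvmean{\zeta q_n}$. But $\mu_n\eta_n\vvmean{\zeta q_n}$ is exactly the $\zeta$-moment of the unrenormalized collision integral. Its absolute integrability is what fails for hard-sphere-type kernels, and is the reason for renormalizing in the first place. Since $\tau q_n/N_n$ is bounded in $L^1(dtdx\Lambda)$ (Lemma~\ref{lem:collision-product-compactness}), an absolute bound on your reduced integrand $\mu_n\zeta(\frac34-F)g_nq_n/N_n$ would yield a uniform $L^1(dtdx\Lambda)$ bound on $\zeta q_n$. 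Steps~2--3 are absolute bounds, and the a priori estimates do not give such a bound on $\zeta q_n$.

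\textbf{Why the moment bookkeeping cannot close.} The weight $|\zeta|\,b$ grows like $\tau^{3/2}$ in $v$, the fluctuation $g_n$ sits at the same velocity, and $\frac34-F\to\frac34$ supplies no Gaussian.
\begin{itemize}
\item Cauchy--Schwarz would need $\tau^{a}q_n^2\in L^1(\Lambda)$ and $\tau^{5/2-a}g_n^2\in L^1(F\,dv)$. At most one power of $\tau$ is available on each, only after renormalization and with a logarithmic loss, so no $a$ works.
\item Your main piece also pairs the $L^2$-bounded $q_n/(N_nN_{n,*}N'_nN'_{n,*})$ with $\zeta g_n$ rather than $\zeta g_n/N_n$, and $g_n$ itself has no uniform $L^2$ bound.
\item Young--Fenchel with $(r,r^*)$ fails for the same reason: the dual term $r^*(\tau)\le e^{\tau}$ lands on the variable carrying $g_n^2/N_n$. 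You would need $\int e^{\tau}\frac{g_n^2}{N_n}F\,dv$, which Lemma~\ref{lem:gn2-Nn-bounds} does not control.
\item Truncating $\zeta$ does not help, since the tail must be small uniformly in $n$.
\item In Step~3, taking $p\sim\mu_n\eta_n\tau|g_n|$ makes $r^*(p)$ of size $e^{c\,\mu_n\eta_n\tau|g_n|}$. The relative entropy, which gives only an $L\log L$-type control of $\mu_ng_n$ through $h$, cannot pay for this: the Legendre-dual term must be integrable without any information on $g_n$.
\end{itemize}

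\textbf{The missing idea.} Organize the defect so that every Young--Fenchel step sends $e^{\tau}$ to a velocity carrying a free Gaussian factor.
\begin{itemize}
\item The paper splits $\frac{1}{N_n}=(1-\frac{1}{N_{n,*}})\frac{1}{N_n}+\frac{1}{N_nN_{n,*}}$. Then $T_1=\mu_n\vvmean{\zeta(\frac34-F_*)\frac{g_{n,*}}{N_nN_{n,*}}q_n}$ has the fluctuation at the partner velocity. The dual term is $\frac{1}{\alpha}\frac{g_{n,*}^2}{N_{n,*}}e^{\tau}$, with $e^{\tau}$ absorbed by $F$ and $(1+|v_*|)g_{n,*}^2/N_{n,*}=\mathcal{O}(\log\frac{1}{\mu_n})$. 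Optimizing $\alpha$ gives $\mu_n\sqrt{\log(1/\mu_n)}$.
\item The remaining piece is symmetrized into $-\frac12\vvmean{(\zeta+\zeta_*)\frac{N_nN_{n,*}}{N_nN_{n,*}N'_nN'_{n,*}}q_n}$. The numerator is expanded as $N_nN_{n,*}=\frac14+\frac34P_nP_{n,*}M'_nM'_{n,*}+\frac14\mu_nA_1-\frac1{16}\mu_n^2A_2$.
\item In that expansion, the constant drops by antisymmetry. $A_1$ carries factors $Fg_n$ coming from $1-M_n=\mu_nFg_n$, whose Gaussian absorbs $e^{\tau}$. $A_2$ puts $\zeta'$ and $g'_{n,*}$ at different velocities.
\item The identity $A_0-A_0'=-\mu_n\eta_nq_n$ turns the leading term into $\mu_n\eta_n\vvmean{(\zeta+\zeta_*)q_n^2/(N_nN_{n,*}N'_nN'_{n,*})}$. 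This quadratic term is controlled through the $s$/$h$ Young inequality with the velocity-dependent choice $w_n(\tau+\tau_*)$, as in Lemma~\ref{lem:gn2-Nn-bounds}. It is the actual source of the $\mu_n\eta_n\log\frac{1}{\mu_n\eta_n}$ error, not an $r^*$ term paid by the relative entropy.
\item The $1/N_n^2$ defect is handled by the same splitting, with $q_n$ replaced by $q_n\big((N_nN_{n,*})^{-1}+(N'_nN'_{n,*})^{-1}\big)$.
\end{itemize}
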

\begin{proof}
	We begin by rewriting the conservation defect as
	\begin{equation*}
	    \vvmean{\zeta \frac{q_n}{N_n}}
	    = \vvmean{\zeta \left(
	        1 - \frac{1}{N_{n,*}}
	    \right) \frac{q_n}{N_n}}
	    + \vvmean{\zeta \frac{q_n}{N_n N_{n,*}}}.
	\end{equation*}
	Let us denote the first term by $T_1$ and the second by $T_2$.
	We study each of these terms separately.

	Regarding $T_1$, substituting the definition of the renormalization factor \eqref{eq:3-4-normalization} for the $(1 - 1 / N_{n,*})$ term yields
	\[
	    T_1 = \mu_n
	        \vvmean{
	            \zeta \left(\frac{3}{4} - F_*\right) \frac{g_{n,*}}{N_n N_{n,*}}
	            q_n
	        }.
	\]
	We establish a bound for the term in brackets.
	Since $|\zeta| \leq 4 \tau$, it suffices to bound this expression by replacing $\zeta$ with $\tau$.
	We rewrite this term to isolate the argument of $r$ in \eqref{eq:r-product-of-p-bound}, yielding
	\begin{equation}
	\label{eq:cons-def-factorization-T1}
	\begin{split}
		\tau
		\left(\frac{3}{4} - F_*\right) \frac{g_{n,*}}{N_n N_{n,*}} q_n
		= \bigg(
			\frac{\eta_n \mu_n q_n}{P_n P_{n,*} M'_n M'_{n,*}}
		\bigg)
		\bigg(
			\tau \frac{\eta_n \mu_n}{\alpha}
			\Big(\frac{3}{4} - {}& F_* \Big) \frac{g_{n,*}}{N_n N_{n,*}}
		\bigg) \\
		& \times
		\frac{\alpha}{\eta_n^2 \mu_n^2}
		P_n P_{n,*} M'_n M'_{n,*}.
	\end{split}
	\end{equation}

	For any $\alpha > 0$, the definition of $N_{n,*}$ and the bound $1/N_n \leq 4$ established in Lemma \ref{lem:Nn-Pn-bounds} imply that, for any $n$ large enough such that $\eta_n \leq \alpha/4$, we have
	\begin{equation*}
		\frac{\eta_n \mu_n}{\alpha}
		\left|
			\left(\frac{3}{4} - F_*\right)
			\frac{g_{n,*}}{N_n N_{n,*}}
		\right|
		\leq 4 \frac{\eta_n}{\alpha}
		\leq 1.
	\end{equation*}
	Applying the Young-Fenchel inequality \eqref{eq:young-fenchel} to the two terms in parentheses of \eqref{eq:cons-def-factorization-T1}, the bound above allows us to use the quadratic homogeneous estimate for $r^*$ from Lemma \ref{lem:h-r-properties}~(\ref{item:h*-r*-quadratic}).
	Therefore, for every sufficiently large $n$,
	\begin{equation}
	\label{eq:cons-def-young-fenchel-T1}
	\begin{split}
		\tau \left|
			\left(\frac{3}{4} - F_*\right)
			\frac{g_{n,*}}{N_n N_{n,*}}
			q_n
		\right|
		&\leq \frac{\alpha}{\eta_n^2 \mu_n^2}
		r\left(
			\frac{\eta_n \mu_n q_n}{P_n P_{n,*} M'_n M'_{n,*}}
		\right) P_n P_{n,*} M'_n M'_{n,*} \\
		&\quad + \frac{1}{\alpha} \left(\frac{3}{4} - F_*\right)^2 \frac{g^2_{n,*}}{N_n^2 N_{n,*}^2} P_n P_{n,*} M'_n M'_{n,*}
		r^* \left(\tau\right).
	\end{split}
	\end{equation}

	We estimate the second term on the right-hand side by noting that $3/4 - F \leq 1$ and $M'_n, M'_{n,*} \leq 1+e$.
	Using the bounds on $1/N_n$, $P_n/N_n$, and $P_{n,*}/N_{n,*}$ from Lemma \ref{lem:Nn-Pn-bounds}, together with the estimate for $r^*$ from Lemma \ref{lem:h-r-properties}~(\ref{item:h*-r*-exp-estimates}), we obtain
	\[
		\frac{1}{\alpha} \left(\frac{3}{4} - F_*\right)^2 \frac{g^2_{n,*}}{N_n^2 N_{n,*}^2} P_n P_{n,*} M'_n M'_{n,*}
				r^*(\tau)
		\leq
		\frac{64}{9} \frac{1}{\alpha} \frac{g^2_{n,*}}{N_{n,*}} e^{\tau}.
	\]
	Fix a final time $T>0$.
	Substituting this back into \eqref{eq:cons-def-young-fenchel-T1}, integrating over $t \in [0,T]$ and $(x,v,v_*,\omega)$ with respect to the measure $dt dx\Lambda$ and recalling \eqref{eq:r-product-of-p-bound} yields
	\begin{equation}
		\label{eq:cons-def-integrated-young-fenchel-T1}
		\int_0^T \int_{\mathbb{T}^N}
			\vvmean{
				\left|
				\tau
				\left(\frac{3}{4} - F_*\right)
				\frac{g_{n,*}}{N_n N_{n,*}}
				q_n
				\right|
			}
			\, dx dt
		\leq 4 \alpha C^{in}
		+ \frac{64}{9} \frac{1}{\alpha}
			\int_0^T \int_{\mathbb{T}^N}
				\vvmean{\frac{g^2_{n,*}}{N_{n,*}} e^{\tau}}
			\, dx dt.
	\end{equation}

	To estimate the integral on the second term, we use the collision kernel estimate \eqref{eq:b-v-v*-estimate} alongside the bounds $1-F', 1-F'_* \leq 1$ and $1/(1-F_*) \leq 1+e$.
	Evaluating the integral with respect to $\omega$ then yields
	\begin{equation*}
		\begin{split}
			\int_0^T \int_{\mathbb{T}^N}
				\vvmean{\frac{g^2_{n,*}}{N_{n,*}} e^{\tau}}
			\, dx dt
			\leq C_b (1+e) |\mathbb{S}^{N-1}|
				&\left(
					\int_{\mathbb{R}^N} (1+|v|) e^\tau F \, dv
				\right) \\
			&\quad\times \left(
					\int_0^T \int_{\mathbb{T}^N} \vmean{(1+|v|) \frac{g^2_n}{N_n}} \, dxdt
				\right).
		\end{split}
	\end{equation*}
	Since $F(v) \leq e^{1 - |v|^2/2}$ we have that $e^\tau F \leq e^{5/4} e^{-|v|^2/4}$, ensuring the first integral on the right-hand side is finite.
	For the second integral, since $1+|v| \leq 8 \tau$, Lemma \ref{lem:gn2-Nn-bounds} implies the existence of a sequence $(K_n)_n$ of order $\log(1/\mu_n)$ such that for every $\alpha > 0$ and sufficiently large $n$,
	\begin{equation*}
		\int_0^T \int_{\mathbb{T}^N}
			\vvmean{
				\left|
				\tau
				\left(\frac{3}{4} - F_*\right)
				\frac{g_{n,*}}{N_n N_{n,*}}
				q_n
				\right|
			}
			\, dx dt
		\leq 4 \alpha C^{in}
		+ \frac{1}{\alpha} K_n.
	\end{equation*}

	Finally, we optimize the parameter $\alpha$ by choosing $\alpha_n = \sqrt{K_n}$.
	Since $K_n \to \infty$ and $\eta_n \to 0$, the validity condition $\eta_n \leq \sqrt{K_n} / 4$ is satisfied for all sufficiently large $n$, yielding
	\begin{equation}
		\label{eq:cons-def-estimation-for-T1}
		\int_0^T \int_{\mathbb{T}^N}
			\vvmean{
				\left|
				\tau
				\left(\frac{3}{4} - F_*\right)
				\frac{g_{n,*}}{N_n N_{n,*}}
				q_n
				\right|
			}
			\, dx dt
		\leq (4 C^{in} + 1) \sqrt{K_n}.
	\end{equation}
	Since $T > 0$ is arbitrary, we conclude that
	\begin{equation*}
		T_1 = \mathcal{O}\left(
			\mu_n
			\sqrt{
				\log\Big(
					\frac{1}{\mu_n}
				\Big)
			}
		\right)
		\quad \text{in }
		L^1_{loc}(dt; L^1(dx)).
	\end{equation*}

	We now turn our attention to $T_2$.
	Rewriting the denominator to include the post-collisional terms, we then perform a pre/post-collisional change of variables $(v, v_*) \mapsto (v', v'_*)$.
	Using the anti-symmetry $q'_n = - q_n$ and the collision invariance identity $\zeta' + \zeta'_* = \zeta + \zeta_*$, we obtain that
	\begin{equation}
		\label{eq:cons-def-T2-rewrite}
		\begin{split}
		    T_2
		    &= \frac{1}{2} \vvmean{(\zeta + \zeta_*) \frac{q_n}{N_n N_{n,*}}} \\
		    &= \frac{1}{2} \vvmean{(\zeta + \zeta_*) \frac{N'_n N'_{n,*}}{N_n N_{n,*} N'_n N'_{n,*}} q_n} \\
		    &= - \frac{1}{2} \vvmean{(\zeta + \zeta_*) \frac{N_n N_{n,*}}{N_n N_{n,*} N'_n N'_{n,*}} q_n}.
		\end{split}
	\end{equation}
	For the last equality, we used the collision invariance $\zeta' + \zeta'_* = \zeta + \zeta_*$ and the fact that $q_n$ changes sign under this transformation.
	The renormalization factor \eqref{eq:3-4-normalization} can be written in terms of $P_n$ and $M_n$ as $N_n = \frac{3}{4} P_n + \frac{1}{4} M_n$.
	Therefore, the numerator $N_n N_{n,*}$ can be expanded as
	\begin{equation}
		\label{eq:cons-def-Nn-decomposition1}
		N_n N_{n,*}
		= \frac{9}{16} P_n P_{n,*}
		+ \frac{3}{16} \left[ P_n M_{n,*} + M_n P_{n,*} \right]
		+ \frac{1}{16} M_n M_{n,*}.
	\end{equation}

	We decompose the first term on the right-hand side to reveal the term $P_n P_{n,*} M'_n M'_{n,*}$, which we will later use to obtain the full collision product on the numerator.
	Then, expanding $(1 - M'_n M'_{n,*})$ using the definition of $M_n$, we obtain
	\begin{align*}
		P_n P_{n,*} &= P_n P_{n,*} M'_n M'_{n,*} + P_n P_{n,*} (1 - M'_n M'_{n,*}) \\
		&= P_n P_{n,*} M'_n M'_{n,*}
		+ \mu_n P_n P_{n,*} \left(  F' g'_n +  F'_* g'_{n,*} \right) \\
		&\quad - \mu_n^2 P_n P_{n,*} \left( F' F'_* g'_n g'_{n,*} \right).
	\end{align*}
	For the second term on the right-hand side of \eqref{eq:cons-def-Nn-decomposition1}, multiplying the relation $P_n - M_n = \mu_n g_n$ by $P_{n,*} - M_{n,*} = \mu_n g_{n,*}$ and rearranging yields
	\[
		P_n M_{n,*} + M_n P_{n,*}
		= P_n P_{n,*}
		+ M_n M_{n,*}
		- \mu_n^2 g_n g_{n,*}
	\]
	Finally, substituting these relations back into \eqref{eq:cons-def-Nn-decomposition1}, expanding the last term using the definition of $M_n$ and grouping the terms by powers of $\mu_n$, we obtain
	\begin{equation}
		\label{eq:cons-def-Nn-decomposition2}
			N_n N_{n,*}
			= \frac{1}{4}
			+ \frac{3}{4} A_0
			+ \frac{1}{4} \mu_n A_1
			- \frac{1}{16} \mu_n^2 A_2
	\end{equation}
	where the terms $A_k$ are defined as
	\begin{align*}
		A_0 &= P_n P_{n,*} M'_n M'_{n,*}, \\
		A_1 &= 3 P_n P_{n,*} (F' g'_n + F'_* g'_{n,*})
			-  (F g_n + F_* g_{n,*}), \\
		A_2 &= 12 P_n P_{n,*} F' F'_* g'_n g'_{n,*}
			+ (3 - 4 F F_*) g_n g_{n,*}.
	\end{align*}

	Substituting \eqref{eq:cons-def-Nn-decomposition2} into \eqref{eq:cons-def-T2-rewrite} and expanding by linearity, we note that the integral corresponding to the constant term $1/4$ vanishes.
	Indeed, by a pre/post-collisional change of variables (with $\zeta' + \zeta'_* = \zeta + \zeta_*$ and $q'_n = -q_n$), this integral, which is a constant multiple of $\vvmean{ (\zeta + \zeta_*) (N_n N_{n,*} N'_n N'_{n,*})^{-1} q_n }$, equals minus itself.
	Therefore, the term $T_2$ can be rewritten as
	\[
		T_2 = - \frac{3}{8} I_0
		- \frac{1}{8} \mu_n I_1
		+ \frac{1}{32} \mu_n^2 I_2,
	\]
	where the integrals $I_k$ are given by
	\[
		I_k =
			\vvmean{
				(\zeta + \zeta_*)
				\frac{
					A_k
				}{
					N_n N_{n,*} N'_n N'_{n,*}
				}
				q_n
			}
		\quad \text{for } k \in \{0,1,2\}.
	\]

	To estimate $I_1$, we first split the integral corresponding to $3 P_n P_{n,*} (F' g'_n + F'_* g'_{n,*})$ from the integral corresponding to $(F g_n + F_* g_{n,*})$.
	Applying the pre/post-collisional change of variables $(v, v_*) \mapsto (v', v'_*)$ to the first one, we recombine the resulting two terms to find that
	\[
		I_1 = - \vvmean{
			(\zeta + \zeta_*)
			\frac{
				3 P'_n P'_{n,*} (F g_n + F_* g_{n,*})
				+ ( F g_n + F_* g_{n,*})
			}{
				N_n N_{n,*} N'_n N'_{n,*}
			}
			q_n
		}.
	\]
	Grouping terms in the numerator gives $(3 P'_n P'_{n,*} + 1) F g_n + (3 P'_n P'_{n,*} + 1) F_* g_{n,*}$.
	Separating the integrals once more and applying the symmetry $v \leftrightarrow v_*$ to the second term produces two identical terms, which yields
	\[
		I_1 = - 2 \vvmean{
			(\zeta + \zeta_*)
			\frac{
				(3 P'_n P'_{n,*} + 1) F g_n
			}{
				N_n N_{n,*} N'_n N'_{n,*}
			}
			q_n
		}.
	\]

	To bound this expression, we again use the upper bound $|\zeta + \zeta_*| \leq 4 (\tau + \tau_*)$.
	Regrouping the terms to isolate the argument of $r$ in \eqref{eq:r-product-of-p-bound}, an application the Young-Fenchel inequality \eqref{eq:young-fenchel} yields that for every $n \in \mathbb{N}$,
	\begin{equation}
	\label{eq:cons-def-young-fenchel-I1}
	\begin{split}
		&(\tau + \tau_*) \left|
			\frac{(3 P'_n P'_{n,*} + 1) F g_n}{N_n N_{n,*} N'_n N'_{n,*}} q_n
		\right| \\
		&\quad\leq \frac{1}{\eta_n^2 \mu_n^2}
			r\left(
				\frac{\eta_n \mu_n q_n}{P_n P_{n,*} M'_n M'_{n,*}}
			\right) P_n P_{n,*} M'_n M'_{n,*}\\
		&\qquad+ \frac{1}{\eta^2_n \mu^2_n}
			r^*\left(
				(\tau + \tau_*)
				\left[
					\eta_n \mu_n
					\left|
						\frac{(3 P'_n P'_{n,*} + 1) F g_n}{N_n N_{n,*} N'_n N'_{n,*}}
					\right|
				\right]
			\right) P_n P_{n,*} M'_n M'_{n,*}.
	\end{split}
	\end{equation}

	We aim to estimate the second term on the right-hand side using Lemma \ref{lem:h-r-properties} (\ref{item:h*-r*-quadratic}).
	For the term in square brackets observe that, using the bounds for $P_n / N_n$, $1 / N_n$ and $\mu_n g_n/N_n$ established in Lemma \ref{lem:Nn-Pn-bounds}, we find
	\begin{equation*}
	\begin{split}
	    \eta_n \mu_n
	    \bigg|
	        \frac{(3 P'_n P'_{n,*} + 1) F g_n}{N_n N_{n,*} N'_n N'_{n,*}}
	    \bigg|
	    &= \eta_n F \cdot
	       \bigg| \frac{\mu_n g_n}{N_n} \bigg| \cdot
	       \bigg( \frac{3 P'_n P'_{n,*} + 1}{N'_n N'_{n,*}} \bigg) \cdot
	       \frac{1}{N_{n,*}} \\
	    &\leq \frac{256  M_{\mathrm{ren}}}{3} \eta_n.
	\end{split}
	\end{equation*}
	For $n$ large enough such that $\eta_n \leq 3/(256 M_{\mathrm{ren}})$, we apply Lemma \ref{lem:h-r-properties} (\ref{item:h*-r*-quadratic}) alongside the exponential bound from item (\ref{item:h*-r*-exp-estimates}) of the same lemma.
	Using that $F e^{\tau} \leq 1$ and $M'_n, M'_{n,*} \leq 1+e$, along with the bounds for $P_n / N_n$ and $1 / N_n$ from Lemma \ref{lem:Nn-Pn-bounds}, we deduce that there exists a constant $C_1 > 0$ such that
	\begin{equation*}
	\begin{split}
		&\frac{1}{\eta^2_n \mu^2_n}
			r^*\left(
				(\tau + \tau_*)
				\left[
					\eta_n \mu_n
					\left|
						\frac{(3 P'_n P'_{n,*} + 1) F g_n}{N_n N_{n,*} N'_n N'_{n,*}}
					\right|
				\right]
			\right) P_n P_{n,*} M'_n M'_{n,*} \\
		&\leq
			\frac{g^2_n}{N_n}
			\left[
				\frac{3 P'_n P'_{n,*} + 1}{N'_n N'_{n,*}}
			\right]^2
			\frac{P_n P'_n}{N_n N'_n}
			\frac{M'_n M'_{n,*}}{N_{n,*}}
			F^2 e^{\tau}
			e^{\tau_*} \\
		&\leq C_1 F \frac{g^2_n}{N_n} e^{\tau_*}.
	\end{split}
	\end{equation*}

	Fix a final time $T>0$.
	We substitute this inequality back into \eqref{eq:cons-def-young-fenchel-I1} and integrate over $t \in [0,T]$ and $(x,v,v_*,\omega)$ with respect to the measure $dt dx \Lambda$.
	Applying \eqref{eq:r-product-of-p-bound} shows that the first term is bounded by the initial constant $4 C^{in}$, yielding
	\[
		\int_0^T \int_{\mathbb{T}^N}
			\vvmean{
			(\tau + \tau_*)
				\left|
					\frac{
						(3 P'_n P'_{n,*} + 1) F g_n
					}{
						N_n N_{n,*} N'_n N'_{n,*}
					}
					q_n
				\right|
			} \, dx dt
		\leq 4 C^{in}
		+ C_1
			\int_0^T \int_{\mathbb{T}^N}
			\vvmean{
				F \frac{g_n^2}{N_n} e^{\tau_*}
			} \, dx dt.
	\]

	Let us estimate the integral in the second term.
	Applying the collision kernel bound \eqref{eq:b-v-v*-estimate} alongside the inequalities $1-F', 1-F'_* \leq 1$, we evaluate the constant integral over $\omega$.
	Then, separating the $v_*$ integral and using the facts that $(1+|v|) F \leq 2$ and $(1-F)^{-1} \leq 1+e$, we obtain
	\[
		\begin{split}
			\int_0^T \int_{\mathbb{T}^N}
				\vvmean{
					F \frac{g_n^2}{N_n} e^{\tau_*}
				} \, dx dt
			&\leq C_b |\mathbb{S}^{N-1}|
				\int_0^T \iiint_{\mathbb{T}^N \times \mathbb{R}^{2N}}
					(1+|v|)(1+|v_*|) F^2 \frac{g_n^2}{N_n} F_* e^{\tau_*} \, dxdvdv_* dt \\
			&\leq 2 (1+e) C_b |\mathbb{S}^{N-1}|
			\left(
				\int_{\mathbb{R}^N} (1 + |v_*|) F_* e^{\tau_*} \, dv_*
			\right)
			\int_0^T \int_{\mathbb{T}^N} \vmean{\frac{g_n^2}{N_n}} \, dxdt.
		\end{split}
	\]
	The first integral on the right-hand side is finite because $e^{\tau_*} F_* \leq e^{5/4} e^{-|v_*|^2/4}$.
	For the second integral, Lemma \ref{lem:gn2-Nn-bounds} ensures it is of order $\mathcal{O}(1)$.
	Given that the time $T>0$ is arbitrary, we conclude
	\begin{equation*}
		I_1 = \mathcal{O}(1)
		\quad \text{in }
		L^1_{loc}(dt; L^1(dx)).
	\end{equation*}

	Next, we estimate the term $I_2$.
	We first separate the integrals according to the sum in the numerator, using that $\zeta + \zeta_* = \zeta' + \zeta'_*$ on the first term and applying the pre/post-collisional change of variables $(v, v_*) \mapsto (v', v'_*)$ (with $q'_n = -q_n$) to the second.
	Merging these terms, we obtain a single integral involving $g'_n g'_{n,*}$.
	Finally, separating the integrals for $\zeta'$ and $\zeta'_*$ and swapping the variables $v \leftrightarrow v_*$ in the last one, we obtain
	\begin{equation*}
	\begin{split}
		I_2 &= - \vvmean{
			(\zeta' + \zeta'_*)
			\frac{
				(3 - 4 F' F'_* - 12 P_n P_{n,*} F' F'_*)
				g'_n g'_{n,*}
			}{
				N_n N_{n,*} N'_n N'_{n,*}
			}
			q_n
		} \\
		&= - 2 \vvmean{
			\zeta'
			\frac{
				(3 - 4 F' F'_* - 12 P_n P_{n,*} F' F'_*)
				g'_n g'_{n,*}
			}{
				N_n N_{n,*} N'_n N'_{n,*}
			}
			q_n
		}.
	\end{split}
	\end{equation*}

	The term in parentheses is bounded.
	Indeed, since $0 \leq F', F'_* \leq 1$, the term $4 F' F'_*$ clearly lies in $[0,4]$.
	Rewriting the term $P_n P_{n,*} F' F'_*$ using the equilibrium balance identity \eqref{eq:equilibrium-balance-identity}, we obtain a fraction whose numerator is bounded by $1$.
	Since $(1-F)^{-1}, (1-F_*)^{-1} \leq 1 + e$, we find
	\begin{equation*}
	    P_n P_{n,*} F' F'_*
	    = \frac{f_n f_{n,*} (1-F') (1-F'_*)}{(1-F) (1-F_*)}
	    \leq (1 + e)^2.
	\end{equation*}

	Let us consider the term $\mu_n I_2$.
	Recalling the bound for $\mu_n g_n/N_n$ established in Lemma \ref{lem:Nn-Pn-bounds} and the fact that $|\zeta'| \leq 4 \tau'$, there exists a constant $C_2 > 0$ such that
	\begin{equation}
		\label{eq:cons-def-estimate-I2}
		|\mu_n I_2| \leq C_2 \vvmean{
			\left|
				\tau'
				\frac{
					g'_{n,*}
				}{
					N_n N_{n,*} N'_{n,*}
				}
				q_n
			\right|
		}.
	\end{equation}

	We estimate this integrand using a factorization analogous to \eqref{eq:cons-def-factorization-T1}.
	Applying the Young-Fenchel inequality \eqref{eq:young-fenchel} then yields, for every $n \in \mathbb{N}$,
	\begin{equation*}
	\begin{split}
		\left|
			\tau'
			\frac{g'_{n,*}}{N_n N_{n,*} N'_{n,*}} q_n
		\right|
		&\leq \frac{\alpha}{\eta_n^2 \mu_n^2}
		r\left(
			\frac{\eta_n \mu_n q_n}{P_n P_{n,*} M'_n M'_{n,*}}
		\right) P_n P_{n,*} M'_n M'_{n,*}\\
		&\quad + \frac{\alpha}{\eta_n^2 \mu_n^2}
		r^*\left(
			\tau'
			\left[
				\frac{\eta_n \mu_n}{\alpha}
				\left|
					\frac{g'_{n,*}}{N_n N_{n,*} N'_{n,*}}
				\right|
			\right]
		\right) P_n P_{n,*} M'_n M'_{n,*}.
	\end{split}
	\end{equation*}
	Applying the bounds from Lemma \ref{lem:Nn-Pn-bounds} we can bound the term in square brackets above as
	\[
		\frac{\eta_n \mu_n}{\alpha}
		\left|
			\frac{g'_{n,*}}{N_n N_{n,*} N'_{n,*}}
		\right|
		= \frac{\eta_n}{\alpha}
		\left|
			\frac{\mu_n g'_{n,*}}{N'_{n,*}}
		\right|
		\frac{1}{N_n N_{n,*}}
		\leq \frac{16 M_{\mathrm{ren}}}{\alpha} \eta_n.
	\]
	Therefore, for $n$ large enough such that $\eta_n \leq \alpha/(16 M_{\mathrm{ren}})$, we apply the bound for $r^*$ in Lemma \ref{lem:h-r-properties} (\ref{item:h*-r*-quadratic}).
	Combining this with the estimates in Lemma \ref{lem:Nn-Pn-bounds}, we find a constant $C_3 > 0$ such that
	\begin{equation*}
	\begin{split}
		&\frac{\alpha}{\eta_n^2 \mu_n^2}
		r^*\left(
			\tau'
			\left[
				\frac{\eta_n \mu_n}{\alpha}
				\left|
					\frac{g'_{n,*}}{N_n N_{n,*} N'_{n,*}}
				\right|
			\right]
		\right) P_n P_{n,*} M'_n M'_{n,*} \\
		&\leq \frac{1}{\alpha}
		\frac{g'_{n,*}}{N'_{n,*}}
		\frac{1}{N_n N_{n,*} N'_{n,*}}
		\frac{P_n P_{n,*}}{N_n N_{n,*}}
		M'_n M'_{n,*} e^{\tau'}\\
		&\leq \frac{C_3}{\alpha} \frac{(g'_{n,*})^2}{N'_{n,*}} e^{\tau'}.
	\end{split}
	\end{equation*}

	Fix a final time $T>0$.
	Substituting these estimates into \eqref{eq:cons-def-estimate-I2}, integrating against the measure $dt dx \Lambda$, and applying \eqref{eq:r-product-of-p-bound} to the first term, we find that for every $n$ large enough such that $\eta_n \leq \alpha/(16 M_{\mathrm{ren}})$,
	\[
		\int_0^T \int_{\mathbb{T}^N} |\mu_n I_2| \, dxdt
		\leq 4 \alpha C_2 C^{in}
		+ \frac{C_2 C_3}{\alpha}
			\int_0^T \int_{\mathbb{T}^N}
				\vvmean{
					\frac{(g'_{n,*})^2}{N'_{n,*}} e^{\tau'}
				}
			\, dxdt.
	\]

	To estimate the integral in the second term, we perform a pre/post-collisional change of variables.
	Using the collision kernel bound \eqref{eq:b-v-v*-estimate}, combined with the estimates $(1-F_*)^{-1} \leq 1+e$ and $(1+|v_*|) \leq 8 \tau_*$, we obtain
	\[
		\int_0^T \int_{\mathbb{T}^N}
			\vvmean{
				\frac{(g'_{n,*})^2}{N'_{n,*}} e^{\tau'}
			}
		\, dxdt
		\leq 8 C_b |\mathbb{S}^{N-1}| (1+e)
			\left(
				\int (1+|v|) F e^\tau \, dv
			\right)
			\int_0^T \int_{\mathbb{T}^N} \vmean{\tau \frac{g_n^2}{N_n}} \, dxdt,
	\]
	The first integral on the right-hand side is finite since $e^\tau F \leq e^{5/4} e^{-|v|^2/4}$, while the second integral is $\mathcal{O}(\log(1/\mu_n))$ from Lemma \ref{lem:gn2-Nn-bounds}.
	Substituting this estimate into the bound for $\mu_n I_2$ and optimizing $\alpha$ as was done for the $T_1$ term, we conclude that
	\begin{equation*}
		\mu_n I_2
		= \mathcal{O}\left(
			\sqrt{\log\left(\frac{1}{\mu_n}\right)}
		\right)
		\quad \text{in }
		L^1_{loc}(dt; L^1(dx)).
	\end{equation*}

	Finally, we turn to the estimation of $I_0$.
	Symmetrizing this integral by a pre/post-collisional change of variables (with $\zeta' + \zeta'_* = \zeta + \zeta_*$ and $q'_n = -q_n$), we can rewrite the resulting numerator using \eqref{eq:scaled-collision-product}, which yields $A_0 - A'_0 = -\mu_n \eta_n q_n$.
	Therefore,
	\begin{equation}
		\label{eq:cons-def-new-exp-I0}
		I_0
		= - \frac{1}{2} \mu_n \eta_n
		\vvmean{
			(\zeta + \zeta_*) \frac{q^2_n}{N_n N_{n,*} N'_n N'_{n,*}}
		}.
	\end{equation}
	We estimate this quantity using the $s(z)$ function \eqref{eq:z-function}, explored in the proof of Lemma \ref{lem:gn2-Nn-bounds}.

	Unpacking the definition of $s$, we expand the expression \eqref{eq:scaled-collision-product} for the term $\mu_n \eta_n q_n$ in the denominator.
	Using the fact that $P'_n P'_{n,*} M_n M_{n,*} \geq 0$, we obtain
	\begin{equation*}
	\begin{split}
		\frac{1}{\mu^2_n \eta^2_n}
		s\left(
			\frac{\mu_n \eta_n q_n}{P_n P_{n,*} M'_n M'_{n,*} }
		\right)
		P_n P_{n,*} M'_n M'_{n,*}
		&= \frac{3}{2} \frac{q^2_n}{P'_n P'_{n,*} M_n M_{n,*} + 2 P_n P_{n,*} M'_n M'_{n,*}} \\
		&\geq \frac{3}{4} \frac{q^2_n}{P'_n P'_{n,*} M_n M_{n,*} + P_n P_{n,*} M'_n M'_{n,*}}.
	\end{split}
	\end{equation*}
	Moreover, observe that from the bounds for $P_n / N_n$ established in Lemma \ref{lem:Nn-Pn-bounds} together with the fact that $M_n \leq 1+e$, we find
	\begin{equation*}
	\begin{split}
		\frac{P'_n P'_{n,*} M_n M_{n,*} + P_n P_{n,*} M'_n M'_{n,*}}{N_n N_{n,*} + N'_n N'_{n,*}}
		&= \frac{P'_n P'_{n,*}}{N'_n N'_{n,*}}
			M_n M_{n,*}
			+ \frac{P_n P_{n,*}}{N_n N_{n,*}}
			M'_n M'_{n,*} \\
		&\leq \frac{32}{9} (1+e)^2.
	\end{split}
	\end{equation*}

	We multiply the preceding lower bound for $s$ by $(\tau + \tau_*) (N_n N_{n,*})^{-1}$ and we integrate over $(v,v_*,\omega)$ against the measure $\Lambda$.
	Next, by performing a pre/post-collisional change of variables (with $q'_n = -q_n$) and using the identity $\tau' + \tau'_* = \tau + \tau_*$, we symmetrize the term $(N_n N_{n,*})^{-1}$ in the integral.
	Finally, applying the reciprocal of the upper bound derived just above, we obtain
	\begin{equation}
	\label{eq:cons-def-qn-square-adaptation}
	\begin{split}
		&\vvmean{
			(\tau + \tau_*)
			\frac{1}{\mu^2_n \eta^2_n}
				s\left(
					\frac{\mu_n \eta_n q_n}{P_n P_{n,*} M'_n M'_{n,*} }
				\right)
			\frac{P_n P_{n,*} M'_n M'_{n,*}}{N_n N_{n,*}}
		} \\
		&\geq \frac{3}{4} \vvmean{
			(\tau + \tau_*) \frac{q^2_n}{P_n P_{n,*} M'_n M'_{n,*} + P'_n P'_{n,*} M_n M_{n,*}}
			\frac{1}{N_n N_{n,*}}
		}\\
		&= \frac{3}{8}
		\vvmean{
			(\tau + \tau_*) \frac{q^2_n}{P_n P_{n,*} M'_n M'_{n,*} + P'_n P'_{n,*} M_n M_{n,*}}
			\left(
				\frac{1}{N_n N_{n,*}} + \frac{1}{N'_n N'_{n,*}}
			\right)
		}\\
		&\geq
		\frac{27}{256(1+e)^2}
		\vvmean{
			(\tau + \tau_*) \frac{q^2_n}{N_n N_{n,*} N'_n N'_{n,*}}
		}.
	\end{split}
	\end{equation}

	Following an approach similar to the proof of Lemma \ref{lem:gn2-Nn-bounds}, we estimate the term on the left-hand side using the inequality \eqref{eq:young-inequality-h-s}.
	For each $n \in \mathbb{N}$ and $v,v_* \in \mathbb{R}^N$, let $w = w_n(\tau + \tau_*) > 0$ be the unique solution of
	\[
		\frac{h'(w_n(\tau + \tau_*))}{s'(w_n(\tau + \tau_*))}
		= 1 + \log\left(
			1 + \mu_n \eta_n \exp\left(\frac{1}{8}(\tau + \tau_*)\right)
		\right).
	\]
	Substituting $z = (\mu_n \eta_n q_n)/(P_n P_{n,*} M'_n M'_{n,*})$ and $w = w_n(\tau + \tau_*)$ into inequality \eqref{eq:young-inequality-h-s} and multiplying the whole expression by $(P_n P_{n,*} M'_n M'_{n,*})/(\mu_n^2 \eta_n^2 N_n N_{n,*})$ to obtain
	\begin{equation}
	\label{eq:cons-def-h'-s'-s-inequality}
	\begin{split}
		&\frac{h'(w_n(\tau + \tau_*))}{s'(w_n(\tau + \tau_*))}
		\frac{1}{\mu_n^2 \eta_n^2}
		s\left(
			\frac{\mu_n \eta_n q_n}{P_n P_{n,*} M'_n M'_{n,*}}
		\right) \frac{P_n P_{n,*} M'_n M'_{n,*}}{N_n N_{n,*}} \\
		&\leq \frac{2}{\mu_n^2 \eta_n^2}
		h\left(
			\frac{\mu_n \eta_n q_n}{P_n P_{n,*} M'_n M'_{n,*}}
		\right) \frac{P_n P_{n,*} M'_n M'_{n,*}}{N_n N_{n,*}} \\
		&\quad + \frac{2}{\mu_n^2 \eta_n^2}
		\left[
			\frac{h'(w_n(\tau + \tau_*))}{s'(w_n(\tau + \tau_*))} s(w_n(\tau + \tau_*)) - h(w_n(\tau + \tau_*))
		\right] \frac{P_n P_{n,*} M'_n M'_{n,*}}{N_n N_{n,*}}.
	\end{split}
	\end{equation}

	We estimate the terms on both sides separately, beginning with the right-hand side.
	For the second term in \eqref{eq:cons-def-h'-s'-s-inequality}, arguing as in the proof of Lemma \ref{lem:gn2-Nn-bounds}, we find that there exists a constant $C_4 > 0$ such that
	\[
		\frac{h'(w_n(\tau + \tau_*))}
		{s'(w_n(\tau + \tau_*))} s(w_n(\tau + \tau_*)) - h(w_n(\tau + \tau_*))
		\leq C_4 \mu_n^2 \eta_n^2 \exp\left(\frac{7}{16} (\tau + \tau_*)\right).
	\]
	Meanwhile, bounding the factor containing $P_n P_{n,*} M'_n M'_{n,*}$ using the $P_n / N_n$ estimate from Lemma \ref{lem:Nn-Pn-bounds} and the fact that $M_n \leq 1+e$ yields
	\[
		\frac{P_n P_{n,*}}{N_n N_{n,*}} M'_n M'_{n,*}
		\leq \frac{16}{9} (1+e)^2.
	\]

	For the first term on the right-hand side, using the inequality $\log(1+z) \leq z$ for $z > -1$, we deduce that $h(z) = r(z) + \log(1+z) - z \leq r(z)$.
	We also bound the factor $1/(N_n N_{n,*})$ by applying Lemma \ref{lem:Nn-Pn-bounds}.
	Turning to the left-hand side, analogously to the derivation of \eqref{eq:zeta_n-h'-s'-from-bgl} in Lemma \ref{lem:gn2-Nn-bounds}, we have
	\begin{equation*}
	\begin{split}
		\frac{h'(w_n(\tau + \tau_*))}{s'(w_n(\tau + \tau_*))}
		\geq \zeta_n \frac{1}{8} (\tau + \tau_*),
	\end{split}
	\end{equation*}
	where $\zeta_n$ is a sequence satisfying $\zeta_n^{-1} = \mathcal{O}(\log(1/(\mu_n \eta_n)))$.

	Combining these estimates, there exists a constant $C_5 > 0$ such that
	\begin{equation*}
	\begin{split}
		\zeta_n \, (\tau + \tau_*)
		&\frac{1}{\mu^2_n \eta^2_n}
			s\left(
				\frac{\mu_n \eta_n q_n}{P_n P_{n,*} M'_n M'_{n,*} }
			\right)
		\frac{P_n P_{n,*} M'_n M'_{n,*}}{N_n N_{n,*}}\\
		&\quad\leq \frac{C_5}{\mu_n^2 \eta_n^2}
		r\left(
			\frac{\mu_n \eta_n q_n}{P_n P_{n,*} M'_n M'_{n,*}}
		\right) P_n P_{n,*} M'_n M'_{n,*}
		+ C_5 \exp\left(\frac{7}{16} (\tau + \tau_*)\right).
	\end{split}
	\end{equation*}

	Fix a final time $T > 0$.
	Integrating both sides over $t\in [0,T]$ and $(x,v,v_*,\omega)$ with respect to the measure $dt dx \Lambda$, we bound the first term on the right-hand side using \eqref{eq:r-product-of-p-bound}.
	Next, we estimate the second term using \eqref{eq:b-v-v*-estimate} and the trivial bounds $1-F', 1-F'_* \leq 1$, yielding
	\begin{equation}
	\label{eq:cons-def-zetan-tau-Nn}
	\begin{split}
		\zeta_n
		\int_0^T \int_{\mathbb{T}^N}
		&\vvmean{
			(\tau + \tau_*)
			\frac{1}{\mu^2_n \eta^2_n}
				s\left(
					\frac{\mu_n \eta_n q_n}{P_n P_{n,*} M'_n M'_{n,*} }
				\right)
			\frac{P_n P_{n,*} M'_n M'_{n,*}}{N_n N_{n,*}}
		} \, dx dt \\
		&\quad \leq 4 C_5 C^{in}
		+ T C_5 C_b |\mathbb{S}^{N-1}|
		\left(
			\int_{\mathbb{R}^N}
			(1+|v|) F \exp\left(\frac{7}{16} \tau\right)
		\right)^2.
	\end{split}
	\end{equation}
	The right-hand side is bounded since $F \leq e^{3/2} e^{-2\tau}$.

	Substituting this result into \eqref{eq:cons-def-qn-square-adaptation}, recalling the expression \eqref{eq:cons-def-new-exp-I0}, and noting that $|\zeta + \zeta_*| \leq 4(\tau + \tau_*)$, we conclude that
	\begin{equation*}
		I_0 =
			\mathcal{O}\left(
				\mu_n \eta_n \log\left(
					\frac{1}{\mu_n \eta_n}
				\right)
			\right)
			\quad \text{in }
			L^1_{loc}(dt; L^1(dx)).
	\end{equation*}
	Combining this estimate with the bounds for $I_1$ and $I_2$, along with the expression for $T_2$ and the estimate for $T_1$, we obtain the desired result.

	Finally, for the second type of conservation defects, we decompose
	\begin{equation*}
		\vvmean{\zeta \frac{q_n}{N_n^2}}
		= \vvmean{\zeta \left(1 - \frac{1}{N^2_{n,*}}\right) \frac{q_n}{N^2_n}}
		+ \vvmean{\zeta \frac{q_n}{N^2_n N^2_{n,*}}}.
	\end{equation*}
	By comparison with the first case, we denote the first term by $\widetilde{T}_1$ and the second by $\widetilde{T}_2$.

	Factoring the difference of squares, we rewrite $\widetilde{T}_1$ as
	\begin{equation*}
		\widetilde{T}_1
		= \vvmean{\zeta \left(1 - \frac{1}{N_{n,*}}\right) \frac{q_n}{N_n} \left(\frac{1}{N_n} + \frac{1}{N_n N_{n,*}}\right)}.
	\end{equation*}
	We expand the first parenthesis using the definition of $N_{n,*}$.
	Then, applying the bounds for $1/N_n$ from Lemma \ref{lem:Nn-Pn-bounds}, as well as the fact that $|\zeta| \leq 4\tau$, we have that
	\begin{equation*}
	\begin{split}
		\widetilde{T}_1
		&= \mu_n
			\vvmean{
				\zeta \left(\frac{3}{4} - F_*\right) \frac{g_{n,*}}{N_n N_{n,*}}
				q_n
				\left(\frac{1}{N_n} + \frac{1}{N_n N_{n,*}}\right)
			} \\
		&\leq 80\mu_n
			\vvmean{
			\left|
				\tau \left(\frac{3}{4} - F_*\right) \frac{g_{n,*}}{N_n N_{n,*}}
				q_n
			\right|
			}
	\end{split}
	\end{equation*}
	The right-hand side is, up to multiplication by a constant, identical to the quantity bounded in \eqref{eq:cons-def-estimation-for-T1} during our estimation of $T_1$.
	Therefore, we can immediately conclude that
	\begin{equation*}
		\widetilde{T}_1 = \mathcal{O}\left(
			\mu_n
			\sqrt{
				\log\Big(
					\frac{1}{\mu_n}
				\Big)
			}
		\right)
		\quad \text{in }
		L^1_{loc}(dt; L^1(dx)).
	\end{equation*}

	For $\widetilde{T}_2$, we exchange $v$ and $v_*$ to extract the conserved quantity $(\zeta + \zeta_*)$ as a factor.
	Performing a pre/post-collisional change of variables (with $\zeta' + \zeta'_* = \zeta + \zeta_*$ and $q'_n = -q_n$) and factoring the resulting difference of squares yields
	\begin{equation*}
	\begin{split}
		\widetilde{T}_2
		&= \frac{1}{2} \vvmean{(\zeta + \zeta_*) \frac{q_n}{N^2_n N^2_{n,*}}}\\
		&= \frac{1}{4} \vvmean{
			(\zeta + \zeta_*) q_n
			\left(
				\frac{1}{N_n N_{n,*}}
				- \frac{1}{N'_n N'_{n,*}}
			\right)
			\left(
				\frac{1}{N_n N_{n,*}}
				+ \frac{1}{N'_n N'_{n,*}}
			\right)
		}.
	\end{split}
	\end{equation*}

	Distributing the difference term allows us to split the expression into two integrals.
	Applying the same pre/post-collisional change of variables to the first integral demonstrates that it is equal to the second.
	Combining these two integrals and rewriting the fraction to produce a symmetric denominator, we obtain
	\begin{equation*}
		\widetilde{T}_2
		= - \frac{1}{2}
		\vvmean{
			(\zeta + \zeta_*)
			\frac{N_n N_{n,*}}{ N_n N_{n,*} N'_n N'_{n,*}}
			\tilde{q}_n
		},
	\end{equation*}
	where we have defined $\tilde{q}_n = q_n \left( (N_n N_{n,*})^{-1} + (N'_n N'_{n,*})^{-1} \right)$.

	As before, substituting the decomposition \eqref{eq:cons-def-Nn-decomposition2} for the numerator $N_n N_{n,*}$ splits $\widetilde{T}_2$ into the sum

	\[
		\widetilde{T}_2 = - \frac{3}{8} \tilde{I}_0
		- \frac{1}{8} \mu_n \tilde{I}_1
		+ \frac{1}{32} \mu_n^2 \tilde{I}_2,
	\]
	where each $\tilde{I}_k$ is defined exactly as $I_k$, but with $\tilde{q}_n$ replacing $q_n$.
	Because $\tilde{q}_n$ inherits the collision symmetries of $q_n$ (namely, invariance under the exchange $v \leftrightarrow v_*$ and under pre/post-collisional changes of variables), applying the same symmetrization techniques as before yields
	\begin{align*}
		\tilde{I}_0 &= - \frac{1}{2} \mu_n \eta_n
			\vvmean{
				(\zeta + \zeta_*)
				\frac{
					q_n \tilde{q}_n
				}{
					N_n N_{n,*} N'_n N'_{n,*}
				}
			}, \\
		\tilde{I}_1 &= - 2 \vvmean{
			(\zeta + \zeta_*)
				\frac{
					(3 P'_n P'_{n,*} + 1) F g_n
				}{
					N_n N_{n,*} N'_n N'_{n,*}
				}
				\tilde{q}_n
			}, \\
		\tilde{I}_2 &= 2 \vvmean{
			\zeta'
			\frac{
				(3 - 4 F F_* - 12 P_n P_{n,*} F' F'_*)
				g'_n g'_{n,*}
			}{
				N_n N_{n,*} N'_n N'_{n,*}
			}
			\tilde{q}_n
		}.
	\end{align*}

	To conclude, we bound $\tilde{I}_0$, $\tilde{I}_1$ and $\tilde{I}_2$ to show they share the exact same asymptotic order as their counterparts $I_0$, $I_1$ and $I_2$.
	For $\tilde{I}_0$, observe that the bound for $1/N_n$ in Lemma \ref{lem:Nn-Pn-bounds} yields $\tilde{q}_n \leq 32 q_n$.
	Combining this with the fact that $|\zeta + \zeta_*| \leq 4(\tau + \tau_*)$, we obtain
	\[
		|\tilde{I}_0| \leq 64 \mu_n \eta_n
		\vvmean{
			(\tau + \tau_*)
			\frac{
				q_n^2
			}{
				N_n N_{n,*} N'_n N'_{n,*}
			}
		}.
	\]
	Therefore, inequality \eqref{eq:cons-def-qn-square-adaptation} applies directly, implying $\tilde{I}_0$ has the same order as $I_0$.

	For $\tilde{I}_1$, the same pointwise inequalities yield
	\[
		|\tilde{I}_1| \leq 256
		\vvmean{
			(\tau + \tau_*)
			\left|
				\frac{
					(3 P'_n P'_{n,*} + 1) F g_n
				}{
					N_n N_{n,*} N'_n N'_{n,*}
				}
				q_n
			\right|
		}.
	\]
	Applying inequality \eqref{eq:cons-def-young-fenchel-I1} and all the subsequent estimates for $I_1$, we obtain that $\tilde{I}_1$ is of the same order as $I_1$.

	Finally, for $\tilde{I}_2$, we incorporate the additional bound for $\mu_n g_n/N_n$, established in Lemma \ref{lem:Nn-Pn-bounds}.
	Hence, there exists a constant $C_6 > 0$ such that
	\begin{equation*}
		|\mu_n \tilde{I}_2| \leq C_6 \vvmean{
			\left|
				\tau'
				\frac{
					g'_{n,*}
				}{
					N_n N_{n,*} N'_{n,*}
				}
				q_n
			\right|
		}.
	\end{equation*}
	Applying inequality \eqref{eq:cons-def-estimate-I2} and repeating the subsequent steps then show that $\mu_n \tilde{I}_2$ has the same asymptotic order as $\mu_n I_2$, which completes the estimates for $\widetilde{T}_2$.
\end{proof}

\section{The acoustic limit theorem}
\label{sec:acoustic-limit-proof}

\proofpart{1}{The renormalized formulation}

Substituting the decomposition \eqref{eq:micro-macro-decomposition} into the scaled BFD equation \eqref{eq:scaled-bfd-fn} and isolating $g_n$ yields
\begin{equation}
	\label{eq:acoustic-limit-gn}
	\partial_t g_n
	+ v \cdot \nabla_x g_n
	= \frac{1}{\kappa_n \mu_n} \frac{Q(f_n)}{F(1-F)}.
\end{equation}
We define the renormalized fluctuations $\lambda_n$ as
\[
	\lambda_n = \frac{1}{\left(\frac{3}{4} - F\right) \mu_n}
	\log\left[
		1 + \left(\frac{3}{4} - F\right) \mu_n g_n
	\right].
\]

To deduce the governing equation for $\lambda_n$, we apply a standard mollification argument.
Let $\chi = \chi(t,x)$ be a nonnegative $C^\infty_c((0,\infty) \times \mathbb{R}^N_x)$ function such that $\int \chi \; dtdx = 1$.
Consider the mollifying sequence $\chi^m(t,x) = m^{N+1} \chi(mt,mx)$ and define $g^m_n = \chi^m *_{t,x} g_n$, where the convolution is taken only with respect to the variables $t$ and $x$.
Note that to compute this convolution, we extend $g_n$ by zero for $t \leq 0$.
Convolving both sides of \eqref{eq:acoustic-limit-gn} with $\chi^m$ and commuting derivatives with convolutions, we obtain
\[
	\partial_t g^m_n
	+ v \cdot \nabla_x g^m_n
	= \frac{1}{\kappa_n \mu_n} \frac{1}{F(1-F)}
	(\chi^m *_{t,x} Q(f_n)).
\]

For each $n$, let $N^m_n = 1 + (3/4 - F) \mu_n g^m_n$ be the regularized renormalization factor and define $\lambda^m_n = \log(N^m_n) / ((3/4 - F) \mu_n)$ as the corresponding regularized fluctuation.
Dividing both sides of the previous equation by $N^m_n$ and applying the chain rule, we obtain
\begin{equation}
	\label{eq:acoustic-limit-equation-lambda-mn}
	\partial_t \lambda^m_n
	+ v \cdot \nabla_x \lambda^m_n
	= \frac{1}{\kappa_n \mu_n} \frac{1}{F(1-F) N^m_n}
	(\chi^m *_{t,x} Q(f_n)).
\end{equation}

We pass this equation to the limit as $m \to \infty$.
Since $g_n \in L^1_{loc}((0,\infty) \times \Rxv)$, it follows by standard properties of approximation by convolution that $g^m_n \to g_n$ strongly in this space.
Passing to a subsequence in $m$, this convergence also holds almost everywhere and the sequence $(g^m_n)_n$ is dominated by an $L^1_{loc}$ function.
Therefore, $\lambda_n^m$ converges to $\lambda_n$ almost everywhere.

Recall from Lemma \ref{lem:Nn-Pn-bounds} that $1/4 \leq N_n \leq 3/(4F)$.
Since the mollifier $\chi^m$ is nonnegative and has unit mass, it follows that $N^m_n = \chi^m *_{t,x} N_n$ satisfies $1/4 \leq N^m_n \leq 3/(4F)$ for every $m \in \mathbb{N}$.
The sequence $(\log(N^m_n))_m$ is therefore bounded uniformly in $m$.
Combined with the bound $(3/4 - F)^{-1} \leq 4(e+1)/(3-e)$, the dominated convergence theorem implies that $\lambda_n^m \to \lambda_n$ in $L^1_{loc}((0,\infty) \times \Rxv)$.
Thus, the left-hand side of \eqref{eq:acoustic-limit-equation-lambda-mn} converges to $\partial_t \lambda_n + v \cdot \nabla_x \lambda_n$ in $\mathcal{D}'((0,\infty) \times \Rxv)$.

For the right-hand side, we use the entropy dissipation $R(f_n)$ to show that the function $Q(f_n)$ is bounded in $L^1_{loc}((0,\infty) \times \Rxv)$.
We apply the inequality $x \leq 2y + (\log 2)^{-1} (x-y) \log(x/y)$ for all $x, y > 0$, choosing $x = f_n' f'_{n,*} (1 - f_n) (1 - f_{n,*})$ and $y = f_n f_{n,*} (1-f'_n) (1 - f'_{n,*})$.
Multiplying the resulting inequality for $x - y$ by the collision kernel and a test function $\varphi = \varphi(v) \in C^\infty_c(\mathbb{R}^N_v)$, followed by integration over $(x,v,v_*,\omega) \in \mathbb{T}^N_x \times \mathbb{R}^{2N}_{v,v_*} \times \mathbb{S}^{N-1}_\omega$ and bounding $\varphi$ by its $L^\infty$ norm on the right-hand side yields
\[
	\begin{split}
		\int_{xv} Q(f_n) \varphi(v)
		&\leq \int_{xv v_* \omega} b(v-v_*, \omega) f_n f_{n,*} (1 - f'_n) (1 - f'_{n,*}) \varphi(v) \\
		&\quad + \frac{1}{\log 2} R(f_n) \| \varphi \|_{L^\infty(\mathbb{R}^N_v)}.
	\end{split}
\]

To estimate this resulting integral, we bound the collision kernel term using \eqref{eq:b-v-v*-estimate} and apply the estimate $f_n (1 - f'_n) (1 - f'_{n,*}) \leq 1$.
Evaluating the constant integral over $\omega$ and separating the integrals over $v$ and $(x, v_*)$, we obtain
\[
	\begin{split}
		\int_{xv} Q(f_n) \varphi(v)
		&\leq C_b |\mathbb{S}^{N-1}|
		\left(\int_v (1 + |v|) \varphi(v)\right)
		\int_{x v_*} (1 + |v_*|) f_{n,*} \\
		&\quad + \frac{1}{\log 2} R(f_n) \| \varphi \|_{L^\infty(\mathbb{R}^N_v)}.
	\end{split}
\]

The integral in parentheses is finite due to the compact support of $\varphi$.
Meanwhile, the first term is bounded uniformly in $t$ by the global conservation laws applied to $f_{n,*}$.
Furthermore, integrating both sides over a bounded time interval $[0,T]$ shows that the second term remains finite due to the entropy inequality \eqref{eq:entropy-inequality}.

Therefore, $Q(f_n)$ is bounded in $L^1_{loc}$.
From standard properties of mollification, it then follows that the sequence $(\chi^m *_{t,x} Q(f_n))$ converges almost everywhere and is, up to a subsequence, dominated by an $L^1_{loc}$ function.

Next, recall the uniform lower bound $N^m_n \geq 1/4$ for all $m \in \mathbb{N}$.
Since $(g^m_n)_m$ converges almost everywhere, we have $N^m_n \to N_n$ almost everywhere.
Combined with the convergence of the mollified collision operator, the right-hand side of \eqref{eq:acoustic-limit-equation-lambda-mn} converges almost everywhere.
Furthermore, because $(N^m_n)^{-1}$ is bounded and $(\chi^m *_{t,x} Q(f_n))$ is dominated by an $L^1_{loc}$ function, the entire right-hand side is also dominated by an $L^1_{loc}$ function for any fixed $n$, allowing us to pass to the limit by the dominated convergence theorem.

Equating this with the limit of the left-hand side as $m \to \infty$, we obtain the scaled and renormalized version of the BFD equation.
Expanding the definition of the collision integral and using the definition of $q_n$ yields
\[
	\partial_t \lambda_n
	+ v \cdot \nabla_x \lambda_n
	= \frac{\eta_n}{\kappa_n} \frac{1}{F(1-F)}
	\int_{v_* \omega} \frac{q_n}{N_n} b(v-v_*, \omega) F F_* (1-F') (1-F'_*).
\]

\proofpart{2}{Local conservation laws}

Multiplying this renormalized equation by $\zeta(v) \in \ker \mathcal{L}$ and integrating over $v$ yields the approximate local conservation laws
\begin{equation}
	\label{eq:acoustic-limit-local-conservation-laws}
	\partial_t \vmean{\zeta \lambda_n}
	+ \Div_x \vmean{v \zeta \lambda_n}
	= \frac{\eta_n}{\kappa_n} \vvmean{\zeta \frac{q_n}{N_n}}.
\end{equation}

To pass these conservation laws to the limit as $n \to \infty$, recall that under the acoustic scaling, $\eta_n = \sqrt{\kappa_n}$.
By Lemma \ref{lem:conservation-defects}, the conservation defects on the right-hand side satisfy
\begin{equation}
	\label{eq:acoustic-limit-conservation-defects-scaling}
	\frac{\eta_n}{\kappa_n} \vvmean{\zeta \frac{q_n}{N_n}}
	= \mathcal{O}
	\left(
		\frac{\mu_n}{\sqrt{\kappa_n}} \sqrt{\log \bigg( \frac{1}{\mu_n} \bigg)}
	\right)
	+ \mathcal{O}
	\left(\mu_n \log \bigg(\frac{1}{\mu_n}\bigg)\right)
	+ \mathcal{O}
	\left(\mu_n \log \bigg(\frac{1}{\kappa_n}\bigg)\right)
\end{equation}
in $L^1_{loc}(dtdx)$.
For sufficiently large $n$, the inequality $\log(1 / \kappa_n) \leq \log(1 / \mu_n)$ ensures the third term is bounded by the second.
Since $\mu_n \to 0$, both of these terms vanish in the limit, while the first term vanishes by the hypothesis \eqref{eq:acoustic-limit-technical-scaling}.
Therefore, the entire expression tends to zero as $n \to \infty$.

To pass the left-hand side of \eqref{eq:acoustic-limit-local-conservation-laws} to the limit by weak convergence, consider the inequality $\left(\log(1+z)\right)^2 \leq z^2 / (1+z)$ for all $z > -1$.
Substituting $z = (3/4-F)\mu_n g_n$ and dividing the resulting inequality by $(3/4-F)^2 \mu_n^2$, we obtain
\[
	\lambda_n^2 \leq \frac{g^2_n}{N_n}.
\]
By Lemma \ref{lem:gn2-Nn-bounds}, the right-hand side is uniformly bounded in $L^\infty(dt; L^1(F(1-F)dvdx))$.
Therefore, the sequence $(\lambda_n)_n$ is uniformly bounded in $L^\infty(dt; L^1(F(1-F)dvdx))$.
Applying the Banach-Alaoglu theorem and passing to a subsequence (which we still denote by index $n$) then yields
\[
	\lambda_n \weakstarto \lambda
	\quad\text{in } L^\infty(dt; L^2(F(1-F)dvdx)).
\]

For every test function $\varphi = \varphi(t,x) \in L^1(dt; L^2(dx))$, the product $\varphi(t,x)\zeta(v)$ naturally belongs to $L^1(dt; L^2(F(1-F)dvdx))$.
Therefore, the preceding limit implies the weak-* convergences $\vmean{\zeta \lambda_n} \weakstarto \vmean{\zeta \lambda}$ and $\vmean{\zeta v_j \lambda_n} \weakstarto \vmean{\zeta v_j \lambda}$ in $ L^\infty(dt; L^2(dx))$.
It follows that the left-hand side of \eqref{eq:acoustic-limit-local-conservation-laws} converges to $\partial_t \vmean{\zeta \lambda} + \Div_x \vmean{v \zeta \lambda}$ in $\mathcal{D}'(dtdx)$.

By the weak compactness established in Lemma \ref{lem:tau-gn-compactness}, we use a diagonal argument to extract a further subsequence such that $g_n \weakto g$ in $L^1_{loc}(dt; L^1(F(1-F) dvdx))$.
To identify the renormalized limit $\lambda$ with this physical limit $g$, we apply the inequality $0 \leq z - \log(1+z) \leq z^2 / (1+z) $ for all $z > -1$.
Substituting $z = (3/4 - F) \mu_n g_n$ and dividing both sides by $(3/4-F) \mu_n$ yields
\begin{equation}
	\label{eq:acoustic-limit-gn-lambdan-diff}
	0 \leq g_n - \lambda_n
	\leq \left(\frac{3}{4} - F\right) \mu_n \frac{g^2_n}{N_n}.
\end{equation}

From the first estimate in Lemma \ref{lem:gn2-Nn-bounds}, the right-hand side is uniformly bounded in $L^\infty(dt; L^1(\tau F(1-F)dvdx))$.
Since $\mu_n \to 0$, we deduce that $g_n - \lambda_n \to 0$ in this space, implying $\lambda_n \weakto g$ weakly in $L^1([0,T] \times \mathbb{T}^N_x \times \mathbb{R}^N_v; dtdx F(1-F)dv)$.
Uniqueness of the limit in the sense of distribution then yields $\lambda = g$ and the previously established weak-* limits become $\vmean{\zeta \lambda_n} \weakstarto \vmean{\zeta g}$ and $\vmean{\zeta v_j \lambda_n} \weakstarto \vmean{\zeta v_j g}$ in $L^\infty(dt; L^2(dx))$.
Therefore, passing the local conservation laws \eqref{eq:acoustic-limit-local-conservation-laws} to the limit yields, for every $\zeta \in \operatorname{span} \{ 1, v_j, |v|^2 \}$,
\begin{equation}
	\label{eq:acoustic-local-conserv}
	\partial_t \vmean{\zeta g} + \Div_x \vmean{v \zeta g} = 0.
\end{equation}

\proofpart{3}{Derivation of the acoustic system}

Recalling the weak compactness of $(\tau g_n)_n$ established in Lemma \ref{lem:tau-gn-compactness}, we deduce from Lemma \ref{lem:limit-local-maxwellian} that the limit $g$ takes the explicit infinitesimal Fermi-Dirac form
\[
	g(t,x,v) = \rho(t,x)
		+ u(t,x) \cdot v
		+ \theta(t,x) \frac{1}{2} \left( |v|^2 - K \right).
\]
Furthermore, the first result from Lemma \ref{lem:lower-semi-continuity-entropy} implies that $g \in L^\infty(dt; L^2(F(1-F) dvdx))$.
Since the velocity basis functions $1$, $v_j$ and $\frac{1}{2} (|v|^2 - K)$ are mutually orthogonal in $L^2(F(1-F) dv)$, it follows that the macroscopic parameters $\rho$, $u$ and $\theta$ belong to $L^\infty(dt; L^2(dx))$.

To derive the macroscopic equations, we substitute the explicit infinitesimal Fermi-Dirac form for $g$ into the local conservation laws \eqref{eq:acoustic-local-conserv} with $\zeta = 1$, $\zeta = v_i$ and $\zeta = |v|^2 - K$.
In evaluating the moments $\vmean{\zeta g}$ and $\vmean{\zeta v g}$, all integrals with odd powers of $v$ vanish by parity.
For the remaining terms, we use the definition of $K$, which implies $\vmean{|v|^2 - K} = 0$, alongside the isotropy identities
\[
	\begin{split}
		&\vmean{v_i v_j}
		= \frac{\vmean{|v|^2}}{N} \delta_{ij}
		= \frac{K}{N} \vmean{1} \delta_{ij}, \\
		&\vmean{(|v|^2 - K)v_i v_j}
		= \frac{\vmean{|v|^4} - K \vmean{|v|^2}}{N} \delta_{ij}
		= \frac{2K}{N} \vmean{1} \gamma \delta_{ij},
	\end{split}
\]
where $\gamma = \vmean{|v|^4} / (2 \vmean{|v|^2}) - K / 2$.

For $\zeta = 1$, the density and flux terms reduce to $\vmean{g} = \vmean{1} \rho$ and $\vmean{v g} = \frac{K}{N} \vmean{1} u$.
Dividing the resulting conservation law by $\vmean{1}$ yields the continuity equation
\[
	\partial_t \rho + \frac{K}{N} \Div_x u = 0.
\]

For $\zeta = v_i$, the density term is $\vmean{v_i g} =  \frac{K}{N} \vmean{1} u_i$.
Applying the isotropy relations to the flux term gives $\vmean{v_i v_j g} = \frac{K}{N} \vmean{1} (\rho + \gamma \theta) \delta_{ij}$.
Dividing the resulting conservation law by $\frac{K}{N} \vmean{1}$ yields the velocity equation
\[
	\partial_t u + \nabla_x ( \rho + \gamma \theta ) = 0.
\]

Finally, for $\zeta = |v|^2 - K$, the vanishing and isotropy arguments simplify the density term to $\vmean{\zeta g} = \frac{1}{2} \theta \vmean{(|v|^2 - K)^2}$, while the flux term reduces to $\vmean{\zeta v_i g} = \frac{2K}{N} \vmean{1} \gamma u_i$.
Expanding the square and applying the definition of $\gamma$, we obtain that $\vmean{(|v|^2 - K)^2} = 2 \gamma K \vmean{1}$, implying the density term can be rewritten as $\vmean{\zeta g} = \gamma K \vmean{1} \theta$.
Substituting into the conservation law and dividing by $\gamma K \vmean{1}$ yields the temperature equation
\[
	\partial_t \theta + \frac{2}{N} \Div_x u = 0.
\]

\proofpart{4}{Identification of the initial data}

Assume the moments of the initial fluctuations $(g^{in}_n)_n$ converge in the sense of distributions to $(\rho_0, u_0, \theta_0)$ as in the statement of the theorem.
To identify these parameters with the initial data of the acoustic system, we apply a variant of the Arzelà-Ascoli theorem.

Returning to the approximate local conservation laws \eqref{eq:acoustic-limit-local-conservation-laws}, we test the equation against a purely spatial function $\varphi \in C^\infty(\mathbb{T}^N)$.
Defining $\Phi_n(t) = \int_{\mathbb{T}^N} \vmean{\zeta \lambda_n}(t,x) \varphi(x) \, dx$, its time derivative in the sense of distributions is given by $\Phi'_n(t) = a_n(t) + b_n(t)$, where
\[
	\begin{split}
		a_n(t)
		&= \int_{\mathbb{T}^N} \vmean{v \zeta \lambda_n} \cdot \nabla_x \varphi(x) \, dx, \\
		b_n(t)
		&= \frac{\eta_n}{\kappa_n}
			\int_{\mathbb{T}^N} \vvmean{\zeta \frac{q_n}{N_n}} \varphi(x) \, dx.
	\end{split}
\]

Fix $T > 0$.
Applying the Cauchy-Schwarz inequality in the $(x,v)$ variables, followed by an $L^\infty$ bound in $t$ on the right-hand side, we have for almost every $t \in [0,T]$ that
\[
	| \Phi_n(t) |
	\leq \| \varphi \|_{L^2(dx)} \| \zeta \|_{L^2(F(1-F) dv)} \| \lambda_n \|_{L^\infty(dt; L^2(F(1-F) dvdx))}.
\]
Since $(\lambda_n)_n$ is uniformly bounded in $L^\infty(dt; L^2(F(1-F) dvdx))$, this implies that the sequence $(\Phi_n)_n$ is uniformly bounded in $L^\infty([0,T])$.

We estimate the $a_n$ and $b_n$ terms separately.
For $a_n$, expanding the definition of $\vmean{\cdot}$, applying the Cauchy-Schwarz inequality in $(x,v)$ and an $L^\infty$ bound in time yields, for almost every $t \in [0,T]$,
\begin{equation*}
	\begin{split}
		|a_n(t)|
		&= \left|
			\iint_{\Rxv} F(1-F) v \zeta \lambda_n \cdot \nabla_x \varphi(x) \, dxdv
		\right| \\
		&\leq \| \zeta v \|_{L^2(F(1-F) dv)}
			\| \nabla_x \varphi \|_{L^2(dx)}
			\| \lambda_n \|_{L^\infty(dt; L^2(F(1-F) dvdx))},
	\end{split}
\end{equation*}
which shows that $(a_n)_n$ is uniformly bounded in $L^\infty([0,T])$.

For $b_n$, bounding $\varphi$ by its $L^\infty$ norm and integrating over $[0,T]$ yields
\[
	\int_0^T | b_n(t) | \, dt
	\leq
		\left\|
			\frac{\eta_n}{\kappa_n} \vvmean{\zeta \frac{q_n}{N_n}}
		\right\|_{L^1(dtdx)}
		\| \varphi \|_{L^\infty(dx)}.
\]
From the vanishing of the conservation defects established in \eqref{eq:acoustic-limit-conservation-defects-scaling}, it follows that the right-hand side converges to zero in the limit as $n \to \infty$.

Combining this with the uniform $L^\infty$ bound on $a_n$, we deduce that for each $n$, the derivative $\Phi'_n$ is integrable on $[0,T]$.
Since $\Phi_n$ is also uniformly bounded in $L^\infty([0,T])$, it follows that $\Phi_n \in W^{1,1}([0,T])$.
Therefore, $\Phi_n$ is absolutely continuous, allowing us to apply the fundamental theorem of calculus to write
\[
	\Phi_n(t)
	= \Phi_n(0)
	+ \int_0^t a_n(\tau) \, d\tau
	+ \int_0^t b_n(\tau) \, d\tau.
\]

Let $A_n(t) = \int_0^t a_n(\tau) \, d\tau$.
Since its derivative in the sense of distributions is $a_n$ and $(a_n)_n$ is uniformly bounded in $L^\infty([0,T])$, the sequence $(A_n)_n$ uniformly Lipschitz continuous, thus equicontinuous.
Furthermore, since $|A_n(t)| \leq T \| a_n \|_{L^\infty([0,T])}$, the sequence is also uniformly bounded.
By the Arzela-Ascoli theorem, up to extraction of a subsequence, $(A_n)_n$ converges uniformly on $[0,T]$.

Let $B_n(t) = \int_0^t b_n(\tau) \, d\tau$.
Since $b_n \to 0$ strongly in $L^1([0,T])$ from the vanishing of conservation defects, we have
\[
	\sup_{t \in [0,T]} | B_n (t) |
	\leq \int_0^T |b_n(\tau)| \, d\tau \to 0
	\quad\text{as }  n \to \infty,
\]
which implies that $B_n$ converges to $0$ uniformly on $[0,T]$.
Moreover, $(\Phi_n(0))_n$ being a bounded real sequence, extracting a convergent subsequence we obtain that $\Phi_n(t) = \Phi_n(0) + A_n(t) + B_n(t)$ converges uniformly on $[0,T]$.

Let $\Phi \in C^0([0,T])$ be this uniform limit.
Since $\lambda_n \weakto g$ in $L^1([0,T] \times \Rxv)$, testing this weak convergence against $\varphi(x) \psi(t)$ for any test function $\psi \in C^\infty_c(0,T)$ implies that $\Phi = \int_{\mathbb{T}^N} \vmean{\zeta g} \varphi(x) \, dx$ almost everywhere in time.
Up to redefining $g$ in a set of null measure, we conclude that for every $\varphi \in C^\infty_c(\mathbb{T}^N)$,
\[
	\Phi_n(t)
	= \int_{\mathbb{T}^N} \vmean{\zeta \lambda_n}(t) \varphi \, dx
	\to \int_{\mathbb{T}^N} \vmean{\zeta g}(t) \varphi \, dx
\]
uniformly on $[0,T]$.

By the hypothesis of the theorem, the macroscopic moments of the initial fluctuations converge in the sense of distributions.
With a slight abuse of notation, we formally denote these limiting distributions by $\vmean{\zeta g^{in}}$.
Specifically, for $\zeta \in \{1, v_i, |v|^2 - K\}$, we denote
\[
	\vmean{\zeta g^{in}}
	\coloneq \lim_{n \to \infty} \vmean{\zeta g^{in}_n},
\]
where the limit is understood in $\mathcal{D}'(dx)$.

We now show that $\int_{\mathbb{T}^N} \vmean{\zeta g}(0) \varphi \, dx = \int_{\mathbb{T}^N} \vmean{\zeta g^{in}} \varphi \, dx$.
Definition \ref{def:weak-solution} guarantees the weak solutions $f_n$ are weakly continuous at $t = 0$, which implies the initial trace of the fluctuations $g_n(0)$ coincides with $g^{in}_n$.
Decomposing the difference using the triangle inequality, we obtain
\begin{equation*}
	\begin{split}
		\left|
			\int_{\mathbb{T}^N} \vmean{\zeta g}(0) \varphi \, dx
			- \int_{\mathbb{T}^N} \vmean{\zeta g^{in}} \varphi \, dx
		\right|
		&\leq \left|
			\int_{\mathbb{T}^N} \vmean{\zeta g}(0) \varphi \, dx
			- \int_{\mathbb{T}^N} \vmean{\zeta \lambda_n}(0) \varphi \, dx
		\right| \\
		&\quad + \left|
			\int_{\mathbb{T}^N} \vmean{\zeta \lambda_n}(0) \varphi \, dx
			- \int_{\mathbb{T}^N} \vmean{\zeta g_n}(0) \varphi \, dx
		\right| \\
		&\quad + \left|
			\int_{\mathbb{T}^N} \vmean{\zeta g^{in}_n} \varphi \, dx
			- \int_{\mathbb{T}^N} \vmean{\zeta g^{in}} \varphi \, dx
		\right|.
	\end{split}
\end{equation*}

The first term on the right-hand side vanishes as $n \to \infty$ by the uniform convergence established above, applied to $t = 0$.
The third term vanishes by definition of the notation $\vmean{\zeta g^{in}}$ and the assume convergence of the initial data in the sense of distributions.
Therefore, it remains only to show that the second term vanishes in the limit.

Since $|3/4 - F| \leq 1$, the estimate \eqref{eq:acoustic-limit-gn-lambdan-diff} implies that $|\zeta g_n - \zeta \lambda_n| \leq \mu_n |\zeta| g_n^2 / N_n$.
Using that $|\zeta| \leq C_\zeta \tau$ for some constant $C_\zeta > 0$, we integrate the resulting inequality against the spatial test function $\varphi$ and the velocity measure $F(1-F) dv$.
Applying an $L^\infty$ bound on $\varphi$ and the estimate for $\tau g_n^2 / N_n$ established in Lemma \ref{lem:gn2-Nn-bounds}, we deduce that there exists a constant $C_0 > 0$ such that, for almost every $t \in [0,\infty)$,
\begin{equation*}
	\begin{split}
		\left|
			\int_{\mathbb{T}^N} \vmean{\zeta g_n}(t) \varphi \, dx
			- \int_{\mathbb{T}^N} \vmean{\zeta \lambda_n}(t) \varphi \, dx
		\right|
		&\leq \int_{\mathbb{T}^N} \vmean{|\zeta g_n - \zeta \lambda_n|}(t) |\varphi| \, dx \\
		&\leq C_0 \| \varphi \|_{L^\infty} \mu_n \log\left(\frac{1}{\mu_n}\right).
	\end{split}
\end{equation*}

As established above, the function $t \mapsto \int_{\mathbb{T}^N} \vmean{\zeta \lambda_n}(t) \varphi \, dx = \Phi_n(t)$ is continuous.
Moreover, the function $t \mapsto \int_{\mathbb{T}^N} \vmean{\zeta g_n}(t) \varphi \, dx$ is also continuous by Definition \ref{def:weak-solution} of a weak solution.
Therefore, the above inequality actually holds for every $t \geq 0$.

In particular, evaluating this bound at $t = 0$ shows that the second term in our triangle inequality decomposition is bounded by $C_0 \| \varphi \|_{L^\infty} \mu_n \log(1 / \mu_n)$.
Since this vanishes in the limit $n \to \infty$, all three terms of the decomposition approach zero.
We therefore conclude that
\[
	\int_{\mathbb{T}^N} \vmean{\zeta g}(0) \varphi \, dx
	= \int_{\mathbb{T}^N} \vmean{\zeta g^{in}} \varphi \, dx.
\]
Choosing the $\zeta \in \{1, v_i, |v|^2 - K \}$ corresponding to the macroscopic quantities $\rho$, $u$ and $\theta$, we recover the desired identification of the initial data.

With the limit Cauchy problem now fully determined, we can upgrade the convergence of $g_n$ (until now established only for a subsequence) to the full sequence.
Let $g_{n_k} \weakto g$ be any weakly convergent subsequence.
By the arguments in Part 2, we can extract a further sub-subsequence along which $g$ is identified as an infinitesimal Fermi-Dirac distribution.
From Part 3, the parameters $(\rho, u, \theta)$ of this distribution form a weak solution to the acoustic system and, as just established, its initial data is precisely the $(\rho_0, u_0, \theta_0)$ given in the statement of the theorem.

Because this acoustic system admits a unique weak solution, the limiting distribution $g$ is uniquely determined.
Therefore, every weakly convergent subsequence of $g_n$ must converge to this identical limit.
By the weak compactness of the sequence, this guarantees that the entire sequence $g_n$ converges to $g$ as $n \to \infty$.

\proofpart{5}{Zero mean of the macroscopic parameters}

Finally, we establish the zero mean for the parameters $\rho$, $u$ and $\theta$.
By hypothesis \eqref{eq:fn-invariants-concentrated}, the initial data is constructed such that the total mass, momentum and kinetic energy are concentrated on the global Fermi-Dirac distribution $F$.
Thus, the fluctuations satisfy, for any collision invariant $\zeta \in \operatorname{span}\{1, v_i, |v|^2\}$,
\[
	\int_{\mathbb{T}^N} \vmean{\zeta g^{in}_n (x)} \, dx = 0.
\]

Using the convergence in the sense of distributions of the initial moments given in the statement of the theorem, we pass this relation to the limit.
This implies that the spatial integrals of the macroscopic initial data $\rho_0$, $u_0$ and $\theta_0$ over $\mathbb{T}^N$ vanish.

For $t > 0$, the parameters $\rho$, $u$ and $\theta$ are governed by the acoustic system subject to this initial data.
Integrating these macroscopic equations over the periodic domain $x \in \mathbb{T}^N$ causes the divergence and gradient terms to vanish, yielding the conservation of the spatial means of $\rho$, $u$ and $\theta$.
Because these means are zero for $\rho_0$, $u_0$ and $\theta_0$, we conclude that they remain zero for all $t \geq 0$, completing the proof.

\section{The Stokes-Fourier limit}
\label{sec:stokes-limit}

Before tackling the proof of Theorem \ref{thm:stokes-limit}, we establish a technical lemma that connects renormalized forms of the collision integral \eqref{eq:collision-integral} and the linearized collision integral \eqref{eq:linearized-op-def} asymptotically.
This estimate will be crucial for passing the flux term of the local approximate conservation laws to the limit in the proof of the main theorem.

\begin{lemma}
	\label{lem:stokes-fluxes-difference-vanishing}
	We have that
	\[
		\frac{q_n}{N_n N_{n,*} N'_n N'_{n,*}}
		- \frac{1}{\eta_n} \left(
			\frac{g'_{n,*}}{N'_{n,*}}
			+ \frac{g'_n}{N'_n}
			- \frac{g_n}{N_n}
			- \frac{g_{n,*}}{N_{n,*}}
		\right)
		= \mathcal{O}\left(
			\frac{\mu_n}{\eta_n} \left(\log \frac{1}{\mu_n}\right)^{1/2}
		\right)
	\]
	in $L^\infty(dt; L^1(dx; L^2(\Lambda)))$.
\end{lemma}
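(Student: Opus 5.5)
The plan is to expand everything in terms of the bounded renormalized fluctuations
\[
	\gamma_n = \frac{\mu_n g_n}{N_n},
\]
which satisfy $|\gamma_n| \leq M_{\mathrm{ren}}$ by Lemma \ref{lem:Nn-Pn-bounds}. The starting point is an exact algebraic identity. From the definitions,
\[
	P_n - N_n = \mu_n g_n\Big[(1-F) - \big(\tfrac34 - F\big)\Big] = \tfrac14 \mu_n g_n,
	\qquad
	M_n - N_n = -\tfrac34 \mu_n g_n,
\]
so that
\[
	\frac{P_n}{N_n} = 1 + \frac{\gamma_n}{4},
	\qquad
	\frac{M_n}{N_n} = 1 - \frac{3\gamma_n}{4}.
\]
Dividing the definition \eqref{eq:scaled-collision-product} of $q_n$ by $N_n N_{n,*} N'_n N'_{n,*}$ then gives
\[
	\frac{\mu_n\eta_n q_n}{N_n N_{n,*} N'_n N'_{n,*}}
	= \Big(1+\tfrac{\gamma'_n}{4}\Big)\Big(1+\tfrac{\gamma'_{n,*}}{4}\Big)\Big(1-\tfrac{3\gamma_n}{4}\Big)\Big(1-\tfrac{3\gamma_{n,*}}{4}\Big)
	- \Big(1+\tfrac{\gamma_n}{4}\Big)\Big(1+\tfrac{\gamma_{n,*}}{4}\Big)\Big(1-\tfrac{3\gamma'_n}{4}\Big)\Big(1-\tfrac{3\gamma'_{n,*}}{4}\Big).
\]
The constant terms cancel, and the linear terms combine to exactly $\gamma'_n + \gamma'_{n,*} - \gamma_n - \gamma_{n,*}$. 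After dividing by $\mu_n\eta_n$, this is precisely the bracket in the statement divided by $\eta_n$. Hence the quantity to estimate equals $(\mu_n\eta_n)^{-1}\mathcal{R}_n$, where $\mathcal{R}_n$ is a polynomial of degree two to four in the four variables $\gamma_n, \gamma_{n,*}, \gamma'_n, \gamma'_{n,*}$, with no constant or linear part and with fixed numerical coefficients. Since all $\gamma$'s are bounded by $M_{\mathrm{ren}}$, every cubic or quartic monomial is dominated by a quadratic one. This yields the pointwise bound
\[
	\frac{|\mathcal{R}_n|}{\mu_n\eta_n}
	\leq C\,\frac{\mu_n}{\eta_n} \sum_{i\neq j}\frac{|g_{n,i} g_{n,j}|}{N_{n,i}N_{n,j}},
\]
where $i,j$ range over the four velocities $v, v_*, v', v'_*$. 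The diagonal squares $\gamma_i^2$ cancel identically, which can be checked from the expansion: the coefficient of $\gamma_n^2$ is zero. This cancellation is important, because $g^4/N^4$ terms would not be controllable.

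It then remains to bound each cross term $\frac{g_i g_j}{N_iN_j}$ in $L^\infty(dt; L^1(dx; L^2(\Lambda)))$ by $\mathcal{O}((\log 1/\mu_n)^{1/2})$. I would first treat the pairs $(v,v_*)$ and $(v',v'_*)$, the latter reducing to the former by the pre/post-collisional change of variables, under which $\Lambda$ is invariant. For this pair I would use a bound of additive form, $b(v-v_*,\omega) \leq C(1+|v|+|v_*|)$, rather than \eqref{eq:b-v-v*-estimate}, together with $1/N \leq 4$, to get
\[
	\vvmean{\frac{g_n^2 g_{n,*}^2}{N_n^2N_{n,*}^2}}
	\leq C\Big(\vmean{\tau\tfrac{g_n^2}{N_n}}\vmean{\tfrac{g_n^2}{N_n}}\Big)(x).
\]
Taking the square root and integrating in $x$ by Cauchy--Schwarz gives the estimate $\big(\int\vmean{\tau g_n^2/N_n}\,dx\big)^{1/2}\big(\int\vmean{g_n^2/N_n}\,dx\big)^{1/2}$. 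By Lemma \ref{lem:gn2-Nn-bounds} this is $\mathcal{O}(\log(1/\mu_n))^{1/2}\cdot\mathcal{O}(1)$ uniformly in $t$, which is exactly the claimed order. Pairs such as $(v,v'_*)$ and $(v_*,v')$ are handled the same way after the exchange $v\leftrightarrow v_*$ combined with $\omega\mapsto$ the reflected direction, which maps them to $(v,v')$-type pairs.

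The main obstacle is the mixed pre/post pairs, say $(v,v')$, since $v$ and $v'$ are not independent integration variables under $\Lambda$. Here I would first try a Carleman-type change of variables $(v_*,\omega)\mapsto v'$ for fixed $v$. This rewrites
\[
	\vvmean{\tfrac{g^2}{N^2}\tfrac{g'^2}{N'^2}}
	= \iint \tfrac{g^2(v)}{N^2}\tfrac{g^2(v')}{N'^2}\,k(v,v')\,dv\,dv',
\]
with a kernel satisfying $k(v,v') \lesssim (1+|v|+|v'|)F(v)\,G(v')$, where $G$ is a Gaussian-type weight. To obtain such a bound on $k$ I would use $FF_*(1-F')(1-F'_*) = F'F'_*(1-F)(1-F_*) \leq F F'$, which supplies decay in both $v$ and $v'$. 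The Jacobian factor $|v-v'|^{-(N-1)}$ of the Carleman representation (respectively $|v-v'|^{-1}$ in the $\omega$-parametrization) must be absorbed using $N\geq2$ and the bound $b\leq C_b(1+|v-v_*|)$. If the singularity turns out to be too strong, the fallback is the crude inequality $|\gamma\gamma'|\leq\frac12(\gamma^2+\gamma'^2)$ applied only to the pointwise bound before squaring, combined with $\gamma^2\leq M_{\mathrm{ren}}|\gamma|$. Once the kernel is shown to factor into weights in $v$ and in $v'$ with the same moment structure as above, the Cauchy--Schwarz argument of the previous step closes the estimate with the same $(\log 1/\mu_n)^{1/2}$ rate.
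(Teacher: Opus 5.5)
Your reduction to the bounded variables $\gamma_n=\mu_n g_n/N_n$, via $P_n/N_n=1+\gamma_n/4$ and $M_n/N_n=1-3\gamma_n/4$, is correct. So is your estimate for the pairs $(v,v_*)$ and $(v',v'_*)$.

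The gap is the step where you dominate every monomial of $\mathcal{R}_n$ by an arbitrary quadratic one and sum over all pairs $i\neq j$. This introduces mixed pre/post products such as $\gamma_n\gamma'_n$, and your plan for these cannot work:
\begin{itemize}
\item The inequality $FF_*(1-F')(1-F'_*)\leq FF'$ is false. For a grazing collision $v'=v$, $v'_*=v_*$, the left side is $FF_*(1-F)(1-F_*)$, which is not $\lesssim F^2$ as $|v|\to\infty$.
\item In Carleman variables, already for hard spheres, the kernel is bounded below by a multiple of $F(v)\,e^{-(v'\cdot\hat z)^2/2}|v-v'|^{2-N}$ with $\hat z=(v'-v)/|v'-v|$. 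This exceeds $F(v)F(v')$ by the unbounded factor $e^{(|v'|^2-(v'\cdot\hat z)^2)/2}$.
\item Concentrating $g_n^2/N_n^2$ near a single large velocity then shows that $\vvmean{g_n^2(g'_n)^2/(N_nN'_n)^2}$ cannot be bounded in terms of $\vmean{\tau g_n^2/N_n}$ and $\vmean{g_n^2/N_n}$. Those two quantities are all that Lemma~\ref{lem:gn2-Nn-bounds} provides.
\item The fallback fails as well. The bound $|\gamma\gamma'|\leq\frac12(\gamma^2+\gamma'^2)$ requires $g_n^4/N_n^4$ to be integrable. Using $\gamma^2\leq M_{\mathrm{ren}}|\gamma|$ instead discards a factor $\mu_n$, leaving a bound no better than order $1/\eta_n$, which diverges in the Stokes scaling.
\end{itemize}

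The missing observation is that the mixed terms are absent. Your remark that the squares $\gamma_i^2$ cancel is vacuous, since both products are multilinear; the cancellation that matters is that of the mixed quadratic monomials. Completing the expansion you set up gives exactly
\[
	\mathcal{R}_n
	= \gamma_n\gamma_{n,*}\Bigl(\frac12+\frac{3}{16}\bigl(\gamma'_n+\gamma'_{n,*}\bigr)\Bigr)
	- \gamma'_n\gamma'_{n,*}\Bigl(\frac12+\frac{3}{16}\bigl(\gamma_n+\gamma_{n,*}\bigr)\Bigr).
\]
In this expansion:
\begin{itemize}
\item the coefficients of $\gamma_n\gamma'_n$, $\gamma_n\gamma'_{n,*}$, $\gamma_{n,*}\gamma'_n$ and $\gamma_{n,*}\gamma'_{n,*}$ vanish;
\item each cubic monomial contains $\gamma_n\gamma_{n,*}$ or $\gamma'_n\gamma'_{n,*}$;
\item the quartic terms cancel.
\end{itemize}
The brackets are bounded by $\frac12+\frac38 M_{\mathrm{ren}}$, so only the pairs $(v,v_*)$ and $(v',v'_*)$ appear. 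Your Cauchy--Schwarz argument then closes the proof at the stated rate.

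This is precisely the structure of the paper's proof, which reaches an equivalent identity through the variables $g_n/M_n$, $N_n/M_n$ and the decomposition \eqref{eq:linear-part-decomposition}. Your variables shorten the algebra. Your additive kernel bound also yields the product $\vmean{\tau g_n^2/N_n}\vmean{g_n^2/N_n}$ directly, instead of via Cauchy--Schwarz in $v$.
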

\begin{proof}
	Recall the decomposition established in \eqref{eq:linear-part-decomposition}.
	By rearranging this identity to isolate $\eta_n q_n$, we obtain
	\begin{equation*}
	\begin{split}
		\eta_n q_n
		&= (
			g'_{n,*} M'_n M_n M_{n,*}
			+ g'_n M'_{n,*} M_n M_{n,*}
			- g_{n,*} M_n M'_n M'_{n,*}
			- g_n M_{n,*} M'_n M'_{n,*}
		) \\
		&\quad + \mu_n
		(
			g'_n g'_{n,*} M_n M_{n,*} - g_n g_{n,*} M'_n M'_{n,*}
		)
	\end{split}
	\end{equation*}
	Substituting this expansion into our quantity of interest allows us to separate the $\mathcal{O}(\mu_n)$ terms from the rest.
	Specifically, we have
	\begin{equation}
	\label{eq:stokes-flux-quantity}
	\begin{split}
		\frac{q_n}{N_n N_{n,*} N'_n N'_{n,*}}
		- \frac{1}{\eta_n} \left(
			\frac{g'_{n,*}}{N'_{n,*}}
			+ \frac{g'_n}{N'_n}
			- \frac{g_n}{N_n}
			- \frac{g_{n,*}}{N_{n,*}}
		\right)
		&= \frac{\mu_n}{\eta_n} \frac{g'_n g'_{n,*} M_n M_{n,*} - g_n g_{n,*} M'_n M'_{n,*}}{N_n N_{n,*} N'_n N'_{n,*}} \\
		&\quad + \frac{1}{\eta_n} \mathcal{R}_n
	\end{split}
	\end{equation}
	where the remainder $\mathcal{R}_n$ is defined as
	\[
		\mathcal{R}_n = \frac{M_n M_{n,*} M'_n M'_{n,*}}{N_n N_{n,*} N'_n N'_{n,*}}
		\left(
			\frac{g'_{n,*}}{M'_{n,*}}
			+ \frac{g'_n}{M'_n}
			- \frac{g_n}{M_n}
			- \frac{g_{n,*}}{M_{n,*}}
		\right)
		- \left(
			\frac{g'_{n,*}}{N'_{n,*}}
			+ \frac{g'_n}{N'_n}
			- \frac{g_n}{N_n}
			- \frac{g_{n,*}}{N_{n,*}}
		\right).
	\]
	To analyse $\mathcal{R}_n$ more clearly, it is convenient at this stage to introduce the $M$-scaled abbreviations $\gamma_n := g_n / M_n$ and $\nu_n := N_n / M_n$.
	Rewriting $\mathcal{R}_n$ in terms of these new sequences, we can decompose it into two parts, $\mathcal{R}_n = \mathcal{R}^{(1)}_n + \mathcal{R}^{(2)}_n$, where
	\begin{align*}
		\mathcal{R}^{(1)}_n &\coloneq
			\frac{
				\gamma'_n + \gamma'_{n,*} - \gamma_n - \gamma_{n,*}
			}
			{
				\nu_n \nu_{n,*} \nu'_n \nu'_{n,*}
			}
			-
			\left(
				\frac{
					\gamma'_n + \gamma'_{n,*}
				}{
					\nu'_n \nu'_{n,*}
				}
				- \frac{
					\gamma_n + \gamma_{n,*}
				}{
					\nu_n \nu_{n,*}
				}
			\right), \\
		\mathcal{R}^{(2)}_n &\coloneq
			\left(
				\frac{
					\gamma'_n + \gamma'_{n,*}
				}{
					\nu'_n \nu'_{n,*}
				}
				- \frac{\gamma'_n}{\nu'_n}
				- \frac{\gamma'_{n,*}}{\nu'_{n,*}}
			\right)
			- \left(
				\frac{
					\gamma_n + \gamma_{n,*}
				}{
					\nu_n \nu_{n,*}
				}
				- \frac{\gamma_n}{\nu_n}
				- \frac{\gamma_{n,*}}{\nu_{n,*}}
			\right).
	\end{align*}

	We rewrite $\mathcal{R}^{(1)}_n$ by combining its terms over a common denominator, yielding
	\[
		\mathcal{R}^{(1)}_n = \frac{
			(\gamma'_n + \gamma'_{n,*}) (1 - \nu_n \nu_{n,*})
			- (\gamma_n + \gamma_{n,*}) (1 - \nu'_n \nu'_{n,*})
		}{
			\nu_n \nu_{n,*} \nu'_n \nu'_{n,*}
		}.
	\]
	Expanding the definition of $M_n - N_n$ yields the identity $1 - \nu_n = - (3/4) \mu_n \gamma_n$.
	We expand $1 - \nu_n \nu_{n,*}$ by separating the $1 - \nu_n$ terms.
	Substituting the identity into the resulting equation, we obtain
	\begin{equation*}
	\begin{split}
		1 - \nu_n \nu_{n,*}
		&= (1 - \nu_n) + (1 - \nu_{n,*}) - (1 - \nu_n) (1 - \nu_{n,*}) \\
		&=	- \frac{3}{4} \mu_n (\gamma_n + \gamma_{n,*})
			- \frac{9}{16} \mu_n^2 \gamma_n \gamma_{n,*}.
	\end{split}
	\end{equation*}

	By symmetry, an analogous relation holds for the primed term $1 - \nu'_n \nu'_{n,*}$.
	When we substitute this expression back into the numerator of $\mathcal{R}^{(1)}_n$, the first-order $\mu_n$ terms cancel out entirely and we have
	\[
		\mathcal{R}^{(1)}_n =
		- \frac{9}{16} \mu_n^2 \frac{\gamma'_n + \gamma'_{n,*}}{\nu'_n \nu'_{n,*}}
		\frac{\gamma_n \gamma_{n,*}}{\nu_n \nu_{n,*}}
		+ \frac{9}{16} \mu_n^2 \frac{\gamma_n + \gamma_{n,*}}{\nu_n \nu_{n,*}}
		\frac{\gamma'_n \gamma'_{n,*}}{\nu'_n \nu'_{n,*}}.
	\]

	For $\mathcal{R}^{(2)}_n$, notice its structure allows us to evaluate its primed and unprimed parenthesis symmetrically.
	Focusing on the unprimed bracket, collecting terms over a common denominator and applying the identity for $1 - \nu_n$, we obtain
	\begin{equation*}
	\begin{split}
		\frac{\gamma_n + \gamma_{n,*}}{\nu_n \nu_{n,*}}
		- \frac{\gamma_n}{\nu_n}
		- \frac{\gamma_{n,*}}{\nu_{n,*}}
		&= \frac{
			\gamma_n
			\left(
				1 - \nu_{n,*}
			\right)
			+ \gamma_{n,*}
			\left(
				1 - \nu_n
			\right)
		}
		{
			\nu_n \nu_{n,*}
		} \\
		&= - \frac{3}{2} \mu_n
		\frac{\gamma_n \gamma_{n,*}}{\nu_n \nu_{n,*}}.
	\end{split}
	\end{equation*}
	By symmetry, an analogous result holds for the primed parenthesis.
	Substituting these expressions into $\mathcal{R}^{(2)}_n$ yields
	\[
		\mathcal{R}^{(2)}_n = - \frac{3}{2} \mu_n \frac{\gamma'_n \gamma'_{n,*}}{\nu'_n \nu'_{n,*}}
		+ \frac{3}{2} \mu_n \frac{\gamma_n \gamma_{n,*}}{\nu_n \nu_{n,*}}.
	\]

	Returning to \eqref{eq:stokes-flux-quantity}, we rewrite the first term on the right-hand side in terms of the $M$-scaled abbreviations $\nu_n$ and $\gamma_n$, which yields $(g'_n g'_{n,*} M_n M_{n,*} - g_n g_{n,*} M'_n M'_{n,*}) / (N_n N_{n,*} N'_n N'_{n,*}) = (\gamma'_n \gamma'_{n,*} - \gamma_n \gamma_{n,*}) / (\nu_n \nu_{n,*} \nu'_n \nu'_{n,*})$.
	Substituting this identity, along with the expressions for $\mathcal{R}^{(1)}_n$ and $\mathcal{R}^{(2)}_n$ shown above, we obtain
	\begin{equation*}
	\begin{split}
		\frac{q_n}{N_n N_{n,*} N'_n N'_{n,*}}
		- {}& \frac{1}{\eta_n} \left(
			\frac{g'_{n,*}}{N'_{n,*}}
			+ \frac{g'_n}{N'_n}
			- \frac{g_n}{N_n}
			- \frac{g_{n,*}}{N_{n,*}}
		\right) \\
		& = - \frac{3}{2} \frac{\mu_n}{\eta_n}
			\left(
				\frac{3}{8} \mu_n
				\frac{\gamma'_n + \gamma'_{n,*}}{\nu'_n \nu'_{n,*}}
				+ \frac{2}{3} \frac{1}{\nu'_n \nu'_{n,*}}
				- 1
			\right)
			\frac{\gamma_n \gamma_{n,*}}{\nu_n \nu_{n,*}} \\
		&\quad + \frac{3}{2} \frac{\mu_n}{\eta_n}
 			\left(
 				\frac{3}{8} \mu_n
 				\frac{\gamma_n + \gamma_{n,*}}{\nu_n \nu_{n,*}}
 				+ \frac{2}{3} \frac{1}{\nu_n \nu_{n,*}}
 				- 1
 			\right)
 			\frac{\gamma'_n \gamma'_{n,*}}{\nu'_n \nu'_{n,*}},
 	\end{split}
	\end{equation*}
	Since the second term on the right-hand side is simply the primed version of the first (up to a sign change), it suffices to analyze the first term.

	We rewrite the quotient $(\gamma_n \gamma_{n,*})/(\nu_n \nu_{n,*})$ back in terms of $g_n$ and $N_n$.
	Inside the parentheses, we apply the identity $\mu_n \gamma'_n = - (4/3) (1 - \nu'_n)$, as well as the analogous relation for $\gamma'_{n,*}$, on the first fraction.
	Simplifying the result, we find that this first term equals
	\[
		- \frac{3}{2} \frac{\mu_n}{\eta_n}
		\left(
			\frac{1}{2 \nu'_n}
			+ \frac{1}{2 \nu'_{n,*}}
			- \frac{1}{3 \nu'_n \nu'_{n,*}}
			- 1
		\right)
		\frac{g_n g_{n,*}}{N_n N_{n,*}}.
	\]

	Using the bounds $1/N_n \leq 4$ (from Lemma \ref{lem:Nn-Pn-bounds}) and $M_n \leq (1-F)^{-1} \leq 1+e$, we deduce that $1/\nu_n \leq 4(1+e)$, which implies the term in parentheses above is uniformly bounded.
	Meanwhile, for the $(g_n g_{n,*})/(N_n N_{n,*})$ factor, we apply \eqref{eq:b-v-v*-estimate} alongside the bound $(1 - F)^{-1} \leq 1+e$ and the trivial estimates $1-F', 1-F'_* \leq 1$ to obtain
	\begin{equation*}
		\vvmean{
			\left(
				\frac{g_n g_{n,*}}{N_n N_{n,*}}
			\right)^2
		}
		\leq C_b |\mathbb{S}^{N-1}| (1+e)^2
			\left(
				\int_{\mathbb{R}^N} (1+|v|) \frac{g_n^2}{N_n^2} F(1-F) \, dv
			\right)^2.
	\end{equation*}

	We take the square root of this inequality and integrate it over $x \in \mathbb{T}^N$.
	By applying the Cauchy-Schwarz inequality, the bound $(1+|v|^2) \leq 8 \tau$ and the estimate for $N_n$ from Lemma \ref{lem:Nn-Pn-bounds}, we have that
	\begin{equation*}
		\int_{\mathbb{T}^N}
			\vvmean{
				\left(
					\frac{g_n g_{n,*}}{N_n N_{n,*}}
				\right)^2
			}^{1/2} dx
		\leq 8 \sqrt{2 C_b |\mathbb{S}^{N-1}|} (1+e)
		\left\| \tau \frac{g_n^2}{N_n} \right\|^{1/2}_{L^1(F(1-F) dvdx)}
		\left\| \frac{g_n^2}{N_n} \right\|^{1/2}_{L^1(F(1-F) dvdx)}.
	\end{equation*}
	Finally, from Lemma \ref{lem:gn2-Nn-bounds}, the first norm is $\mathcal{O}(\log(1/\mu_n))$ and the second one is $\mathcal{O}(1)$, which yields the desired bound.
\end{proof}

We are now ready to prove the main theorem of this section.

\begin{proof}[Proof of Theorem \ref{thm:stokes-limit}]
	\proofpart{1}{Renormalization and weak compactness}

	We start with the scaled BFD equation,
	\[
		\sigma_n \partial_t g_n
		+ v \cdot \nabla_x g_n
		= \frac{1}{\kappa_n \mu_n} \frac{Q(f_n)}{F(1-F)}.
	\]

	To renormalize the equation in the Stokes setting, we divide it by $N_n^2$.
	In order to rigorously justify this procedure, we apply the exact same mollification and limiting arguments developed for the acoustic limit in Section \ref{sec:acoustic-limit-proof}.

	Convolving the equation in $(t,x)$ with the same mollifier $\chi^m = \chi^m(t,x)$ and dividing by the regularized factor $(N^m_n)^2$, we apply the chain rule on the left-hand side to deduce that
	\[
		\sigma_n \partial_t \left[ \frac{g^m_n}{N^m_n} \right]
		+ v \cdot \nabla_x \left[ \frac{g^m_n}{N^m_n} \right]
		= \frac{1}{\kappa_n \mu_n} \frac{1}{F(1-F)}  \frac{1}{(N^m_n)^2} [\chi^m *_{t,x} Q(f_n)].
	\]
	As before, we pass to a subsequence such that $(g^m_n)_m$ converges almost everywhere to $g_n$ and is dominated by an $L^1_{loc}((0,\infty) \times \Rxv)$ function.
	This implies that $N^m_n = 1 + (3/4 - F) \mu_n g^m_n$ converges almost everywhere to $N_n$.
	From the uniform lower bound $N^m_n \geq 1/4$ for all $m$, we conclude by dominated convergence that $(g^m_n / N^m_n)_m$ converges in $L^1_{loc}$, which allows us to pass to the limit in the sense of distributions on the left-hand side.

	For the right-hand side, we recall that for each fixed $n$, the collision integral is bounded in $L^1_{loc}((0,\infty) \times \Rxv)$.
	Therefore, from standard properties of mollification, the sequence $(\chi^m *_{t,x} Q(f_n))_m$ converges almost everywhere and is, up to a subsequence, dominated by an $L^1_{loc}$ function.
	The almost everywhere convergence and the uniform lower bound for $(N^m_n)_m$ then allow us to pass to the limit on the right-hand side as $m \to \infty$ by dominated convergence.

	Equating these limits in the sense of distributions, we expand the collision integral $Q(f_n)$ on the right-hand side using the definition \eqref{eq:scaled-collision-product} of $q_n$.
	Recalling that the Stokes scaling yields $\sigma_n = \kappa_n = \eta_n$, we obtain
	\begin{equation}
	\label{eq:stokes-limit-scaled-renorm-equation}
	\begin{split}
		&\kappa_n \partial_t \left[ \frac{g_n}{N_n} \right]
		+ v \cdot \nabla_x \left[ \frac{g_n}{N_n} \right] \\
		&= \frac{1}{F(1-F)}
			\iint_{\mathbb{R}^N \times \mathbb{S}^{N-1}}
			b(v-v_*, \omega) F F_* (1-F') (1-F'_*)
			\frac{q_n}{N_n^2}
			\, dv_* d\omega.
	\end{split}
	\end{equation}

	By the weak compactness established in Lemma \ref{lem:tau-gn-compactness}, we can extract a subsequence (still indexed by $n$) such that $g_n \weakto g$ in $L^1_{loc}(dt; L^1(F(1-F) dvdx))$.
	Define the sequence of renormalized fluctuations $g^\flat_n \coloneq g_n / N_n$.
	The $L^\infty(dt; L^1(F(1-F) dvdx))$ bound for $g_n^2 / N_n$ established in Lemma \ref{lem:gn2-Nn-bounds}, together with the uniform bound on $1 / N_n$ from Lemma \ref{lem:Nn-Pn-bounds}, implies that $(g^\flat_n)_n$ is uniformly bounded in $L^\infty(dt; L^2(F(1-F) dvdx))$.
	Therefore, by the Banach-Alaoglu theorem, we can pass to a further subsequence to obtain
	\[
		g^\flat_n \weakstarto g^*
		\quad\text{in }
		L^\infty(dt; L^2(F(1-F) dvdx)).
	\]

	Expanding the definition of $g^\flat_n$, we express the difference $g_n - g^\flat_n$ in terms of $N_n - 1$.
	Substituting the definition of $N_n$ and using that $| 3 / 4 - F | \leq 1$, we find that
	\begin{equation}
		\label{eq:stokes-limit-gn-gnflat-diff}
		|g_n - g^\flat_n|
		\leq \mu_n \frac{g_n^2}{N_n}.
	\end{equation}
	From the first estimate in Lemma \ref{lem:gn2-Nn-bounds}, the fraction on the right-hand side is uniformly bounded in $L^\infty(dt; L^1(F(1-F)dvdx))$.
	Since $\mu_n \to 0$, we deduce that $g_n - g^\flat_n \to 0$ strongly in this space, implying $g^* = g$.

	Moreover, by Lemma \ref{lem:limit-local-maxwellian}, we find that $g$ has the explicit infinitesimal Fermi-Dirac form
	\begin{equation}
		\label{eq:stokes-limit-explicit-infinitesimal-fd}
		g(t,x,v) = \rho(t,x)
			+ u(t,x) \cdot v
			+ \theta(t,x) \frac{1}{2} \left( |v|^2 - K \right)
	\end{equation}
	on $(0,\infty) \times \Rxv$.
	Since the velocity basis functions $1$, $v_j$ and $\frac{1}{2} \left( |v|^2 - K \right)$ are orthogonal in $L^2(F(1-F) dv)$, it follows from Lemma \ref{lem:lower-semi-continuity-entropy} that the macroscopic fluctuations $\rho$, $u$ and $\theta$ belong to $L^\infty(dt; L^2(dx))$.

	The uniform boundedness of $(g_n)_n$ established in Lemma \ref{lem:tau-gn-compactness} guarantees that $N_n \to 1$ in the space $L^\infty(dt; L^1(F(1-F) dvdx))$.
	We extract a further subsequence along which this convergence holds almost everywhere in $(0,\infty) \times \Rxv$ and, by Lemma \ref{lem:collision-product-compactness}, the sequence $(q_n / N_n)_n$ converges weakly in $L^1(dtdx\Lambda)$.
	Together with the uniform bound on $(N_n)_n$ from Lemma \ref{lem:Nn-Pn-bounds}, the product limit theorem implies that
	\[
		\frac{q_n}{N_n} \weakto q
		\quad\text{and}\quad
		\frac{q_n}{N_n^2} \weakto q
		\quad\text{in }
		L^1(dtdx\Lambda).
	\]

	Multiplying \eqref{eq:stokes-limit-scaled-renorm-equation} by $F(1-F)$ and passing both sides to the limit in the sense of distributions, we obtain the limiting BFD equation
	\begin{equation}
		\label{eq:stokes-limit-transport-g}
		F(1-F) v \cdot \nabla_x g
		= \iint_{\mathbb{R}^N \times \mathbb{S}^{N-1}}
			b(v-v_*, \omega) F F_* (1-F') (1-F'_*)
			q
			\, dv_* d\omega.
	\end{equation}

	\proofpart{2}{Boussinesq and incompressibility relations}

	We start by deducing the approximate local conservation laws.
	Let $\zeta \in \ker \mathcal{L} = \operatorname{span}\{1, v_j, |v|^2\}$.
	We multiply \eqref{eq:stokes-limit-scaled-renorm-equation} by $F(1-F) \zeta(v)$ and integrate over $v$.
	Dividing the resulting expression by $\kappa_n$, we find
	\begin{equation}
		\label{eq:stokes-limit-approximate-local-conserv}
		\partial_t \vmean{\zeta g^\flat_n}
		+ \frac{1}{\kappa_n} \Div_x \vmean{\zeta v g^\flat_n}
		= \frac{1}{\kappa_n} \vvmean{ \zeta \frac{q_n}{N_n^2}}.
	\end{equation}

	The conservation defects on the right-hand side vanish as $n \to \infty$.
	Indeed, in the Stokes scaling we have $\eta_n = \kappa_n$ and the result of Lemma \ref{lem:conservation-defects} yields
	\begin{equation}
		\label{eq:stokes-limit-conservation-defect-vanishing}
		\frac{1}{\kappa_n} \vvmean{\zeta \frac{q_n}{N_n^2}}
		= \mathcal{O}
		\left(
			\frac{\mu_n}{\kappa_n} \sqrt{\log \bigg( \frac{1}{\mu_n} \bigg)}
		\right)
		+ \mathcal{O}
		\left(\mu_n \log \bigg(\frac{1}{\mu_n}\bigg)\right)
		+ \mathcal{O}
		\left(\mu_n \log \bigg(\frac{1}{\kappa_n}\bigg)\right)
	\end{equation}
	in $L^1_{loc}(dtdx)$.
	The first term vanishes directly by the scaling assumption $\mu_n \sqrt{\log(1 / \mu_n)} = o \left( \kappa_n \right)$.
	Furthermore, since this implies $\mu_n = o(\kappa_n)$, we have $\log(1 / \kappa_n) \leq \log(1 / \mu_n)$ for $n$ sufficiently large.
	Therefore, the last two terms are bounded by $\mathcal{O}(\mu_n \log(1 / \mu_n))$, which vanishes as $\mu_n \to 0$.

	To deduce the Boussinesq and incompressibility relations, we multiply the approximate local conservation law \eqref{eq:stokes-limit-approximate-local-conserv} by $\kappa_n$ and take the limit as $n \to \infty$ in the sense of distributions.
	As established above, the right-hand side vanishes in $L^1_{loc}(dtdx)$.

	For the first term on the left-hand side, let $\varphi = \varphi(t,x) \in L^\infty(dtdx)$.
	Testing the weak convergence of $(g^\flat_n)_n$ against functions of the form $\varphi(t,x) \zeta(v)$ and $\varphi(t,x) \zeta(v) v_i$ for $\varphi \in L^\infty(dtdx)$, we obtain the weak-$*$ convergences $\vmean{\zeta g^\flat_n} \weakstarto \vmean{\zeta g}$ and $\vmean{\zeta v_i g^\flat_n} \weakstarto \vmean{\zeta v_i g}$ in $L^\infty(dt; L^2(dx))$.

	Because $\vmean{\zeta g^\flat_n}$ is bounded in this space, the time derivative term $\kappa_n \partial_t \vmean{\zeta g^\flat_n}$ vanishes in the sense of distributions as $\kappa_n \to 0$.
	Thus, in the limit we have
	\[
		\Div_x \vmean{v \zeta g} = 0,
	\]
	for every $\zeta \in \operatorname{span}\{1, v_j, |v|^2\}$.

	Choose $\zeta(v) = 1$ in the equation above.
	Substituting the explicit infinitesimal Fermi-Dirac form for the limit distribution $g$, we observe that the odd integrals $\vmean{v_i}$ and $\vmean{v_i (|v|^2 - K)}$ vanish.
	Using the isotropy identity $\vmean{v_i v_j} = \vmean{|v|^2} \delta_{ij} / N = (K/N) \vmean{1} \delta_{ij}$, we obtain the incompressibility relation $\Div_x u = 0$.

	Next, we choose $\zeta(v) = v_i$ for each $i = 1, \dots, N$ and once again substitute the explicit form for $g$.
	Observe that the odd integral $\vmean{v_i v_j v_k}$ vanishes. Recalling the isotropy identities for $\vmean{v_i v_j}$ and $\vmean{v_i v_j (|v|^2 - K)} = (2K / N) \vmean{1} \gamma \delta_{ij}$ employed in the proof of the acoustic limit, we obtain the weak Boussinesq relation $\nabla_x (\rho + \gamma \theta) = 0$.

	Since the spatial gradient vanishes on the connected domain $\mathbb{T}^N$, it follows that $\rho + \gamma \theta$ is independent of $x$.
	Thus, there exists a function $C_B(t)$ such that $\rho + \gamma \theta = C_B(t)$.
	To show that the function $C_B$ is constant in time, we choose $\zeta = |v|^2$ in the approximate local conservation law \eqref{eq:stokes-limit-approximate-local-conserv} and test this equation against a function of the form $\Psi(t,x) = \varphi(t)$, where $\varphi \in C^\infty_c(0,\infty)$.
	Because $\nabla_x \Psi = 0$, the divergence term vanishes and we obtain, in $\mathcal{D}'(0,\infty)$,
	\[
		\partial_t
		\left(
			\int_{\mathbb{T}^N} \vmean{\zeta g^\flat_n} \, dx
		\right)
		= \int_{\mathbb{T}^N} \frac{1}{\kappa_n} \vvmean{ \zeta \frac{q_n}{N_n^2}} \, dx.
	\]

	Passing to the limit $n \to \infty$, the right-hand side vanishes in $L^1_{loc}(dt)$ by \eqref{eq:stokes-limit-conservation-defect-vanishing}, while the weak convergence of $\vmean{\zeta g^\flat_n}$ established above implies the left-hand side converges in the sense of distributions.
	Therefore, the time derivative of $\int_{\mathbb{T}^N} \langle \zeta g(t) \rangle \, dx$ in the sense of distributions is zero, implying this integral is constant for almost every $t \geq 0$.
	Substituting the explicit form \eqref{eq:stokes-limit-explicit-infinitesimal-fd} of $g$ and observing that the odd integral $\langle |v|^2 v_i \rangle$ vanishes, we use the definition of the constant $\gamma$ to deduce that
	\[
		\int_{\mathbb{T}^N} \vmean{|v|^2 g(t)} \, dx
		= \vmean{|v|^2} \int_{\mathbb{T}^N} (\rho + \gamma \theta) \, dx
		= \vmean{|v|^2} C_B(t),
	\]
	which implies the function $C_B$ is constant for almost every $t \geq 0$.
	This yields the Boussinesq relation
	\begin{equation}
		\label{eq:stokes-limit-boussinesq-cb}
		\rho + \gamma \theta = C_B.
	\end{equation}
	In Part 5 of this proof, we will establish that the constant $C_B$ is in fact zero.

	\proofpart{3}{The temperature equation}

	We now pass to the limit in the approximate local conservation laws
	\eqref{eq:stokes-limit-approximate-local-conserv} to recover the Stokes-Fourier system.
	Let $K' = \langle |v|^4 \rangle / (2 \langle |v|^2 \rangle)$ and choose $\zeta = |v|^2 / 2 - K'$.
	The vanishing of the conservation defects shown in \eqref{eq:stokes-limit-conservation-defect-vanishing} implies that the right-hand side converges to zero in the sense of distributions.
	Furthermore, from the weak-$*$ convergence $\vmean{\zeta g^\flat_n} \weakstarto \vmean{\zeta g}$ established in the previous step, the time derivative term $\partial_t \vmean{\zeta g^\flat_n}$ converges to $\partial_t \vmean{\zeta g}$ in the sense of distributions as $n \to \infty$.

	Substituting the explicit form \eqref{eq:stokes-limit-explicit-infinitesimal-fd} of the infinitesimal Fermi-Dirac distribution $g$, the odd integrals $\vmean{(|v|^2 / 2 - K') v_i}$ vanish.
	Moreover, the definition of $K'$ yields the identity $\langle (|v|^2 / 2 - K') (|v|^2 - K) \rangle = - K \vmean{|v|^2 / 2 - K'}$.
	Finally, substituting $\rho$ as a function of $\theta$ using the Boussinesq relation \eqref{eq:stokes-limit-boussinesq-cb}, we expand the limit as
	\[
		\vmean{ \left(\frac{|v|^2}{2} - K'\right) g}
		= \vmean{\frac{|v|^2}{2} - K'} C_B + C_1 \theta,
	\]
	where
	\[
		C_1
		= \frac{\vmean{|v|^4}}{4 \vmean{|v|^2}^2}
		\left(
			\vmean{|v|^4} \vmean{1} - \vmean{|v|^2}^2
		\right).
	\]

	This constant $C_1$ is strictly positive.
	Indeed, the Cauchy-Schwarz inequality ensures the term in parentheses is nonnegative and the linear independence of $1$ and $|v|^2$ in $L^2(F(1-F))$ guarantees that the inequality is strict.
	Therefore, taking the time derivatives in the sense of distributions, we conclude that
	\[
		\partial_t \vmean{\zeta g^\flat_n} \to C_1 \partial_t \theta
		\quad\text{in }
		\mathcal{D}'((0,\infty) \times \mathbb{T}^N_x).
	\]

	To establish the limit of the flux term, consider $B(v) = (|v|^2 / 2 - K')v$.
	For each component $i = 1, \dots, N$, we have $B_i \perp \ker \mathcal{L}$.
	Since the linearized collision operator $\mathcal{L}$ is self-adjoint, it follows that $B_i \in \overline{\operatorname{Im}(\mathcal{L})}$.
	Recall from Lemma \ref{lem:linearized-properties} that $\mathcal{L}$ is Fredholm and therefore its image must be closed.
	This means there exists a $\hat{B}_i \in L^2(\tau F(1-F) dv)$ such that $\mathcal{L}(\hat{B}_i) = B_i$.
	Since the domain of $\mathcal{L}$ decomposes orthogonally as $\ker\mathcal{L} \oplus (\ker\mathcal{L})^\perp$, we can uniquely define $\hat{B}_i$ by further imposing the condition that $\hat{B}_i \in (\ker \mathcal{L})^\perp$.
	For convenience, we denote this unique pre-image using the pseudo-inverse as $\hat{B}_i = \mathcal{L}^{-1}[B_i]$.

	With this definition, we rewrite the flux term using the self-adjointness of $\mathcal{L}$, yielding
	\begin{equation*}
		\frac{1}{\kappa_n} \Div_x \vmean{\zeta v g^\flat_n}
		= \frac{1}{\kappa_n} \Div_x \vmean{B g^\flat_n}
		= \Div_x \vmean{\hat{B} \frac{1}{\kappa_n} \mathcal{L}(g^\flat_n)}.
	\end{equation*}
	We intend to pass the term inside the divergence to the limit in the sense of distributions.
	Expanding the definition \eqref{eq:linearized-op-def} of $\mathcal{L}$ in the angle brackets, we obtain
	\begin{equation*}
		\vmean{\hat{B} \frac{1}{\kappa_n} \mathcal{L}(g^\flat_n)}
		= - \vvmean{\hat{B} \frac{1}{\kappa_n}
			\Big(
				(g^\flat_n)' + (g^\flat_n)'_*
				- g^\flat_n - (g^\flat_n)_*
			\Big)}.
	\end{equation*}

	The main ingredient needed to pass this expression to the limit is the strong convergence established in Lemma \ref{lem:stokes-fluxes-difference-vanishing}.
	However, to apply this result, we must first prove that $\hat{B}$ belongs to $L^2(\Lambda)$.

	Applying the estimate \eqref{eq:b-v-v*-estimate} on the collision kernel, using the trivial bound $(1 - F') (1 - F'_*) \leq 1$ and evaluating the constant integral over $\omega$, we obtain
	\begin{equation}
		\label{eq:stokes-limit-bbar-L2-Lambda}
		\| \hat{B} \|_{L^2(\Lambda)}^2
		\leq C_b | \mathbb{S}^{N-1} |
			\left(
				\int \big( \hat{B} \big)^2 (1 + |v|) F \, dv
			\right)
			\left(
				\int (1 + |v_*|) F_* \, dv_*
			\right).
	\end{equation}
	The bounds $1 + |v| \leq 1 + 2 \tau \leq 6 \tau$ and $(1-F)^{-1} \leq 1 + e$ imply that the first integral on the right-hand side is bounded by $6 (1 + e) \| \hat{B} \|_{L^2(\tau F(1 - F) dv)}^2$.
	Since $\hat{B}$ belongs to $L^2(\tau F(1 - F) dv)$ by construction and the remaining integral with respect to $v_*$ is finite, we conclude that $\hat{B} \in L^2(\Lambda)$.

	Applying the Cauchy-Schwarz inequality in $L^2(\Lambda)$, integrating the resulting inequality over $x \in \mathbb{T}^N_x$ and taking the supremum over $t \in (0,\infty)$, we obtain
	\[
		\begin{split}
			&\left\|
				\hat{B}
				\left[
					\frac{q_n}{N_n N_{n,*} N'_n N'_{n,*}}
					- \frac{1}{\kappa_n}
					\left(
						(g^\flat_n)' + (g^\flat_n)'_*
						- g^\flat_n - (g^\flat_n)_*
					\right)
				\right]
			\right\|_{L^\infty(dt;L^1(dx\Lambda))} \\
			&\leq \| \hat{B} \|_{L^2(\Lambda)}
			\left\|
				\frac{q_n}{N_n N_{n,*} N'_n N'_{n,*}}
				- \frac{1}{\kappa_n}
				\left(
					(g^\flat_n)' + (g^\flat_n)'_*
					- g^\flat_n - (g^\flat_n)_*
				\right)
			\right\|_{L^\infty(dt;L^1(dx;L^2(\Lambda)))}.
		\end{split}
	\]

	Since $\eta_n = \sqrt{\kappa_n \sigma_n} = \kappa_n$ under the Stokes scaling, Lemma \ref{lem:stokes-fluxes-difference-vanishing} implies that the second factor on the right-hand side vanishes as $n \to \infty$.
	Therefore, it follows that
	\begin{equation}
		\label{eq:stokes-limit-b-qn-minus-linear-gn-vanishing}
		\vvmean{
			\hat{B}
			\left[
				\frac{q_n}{N_n N_{n,*} N'_n N'_{n,*}}
				- \frac{1}{\kappa_n}
				\left(
					(g^\flat_n)' + (g^\flat_n)'_*
					- g^\flat_n - (g^\flat_n)_*
				\right)
			\right]
		} \to 0
	\end{equation}
	in $L^\infty(dt; L^1(dx))$.

	On the other hand, the weak compactness result of Lemma \ref{lem:collision-product-compactness} implies that, up to extraction of a subsequence, $q_n / N_n \weakto q$ in $L^1(dtdx\Lambda)$.
	Combining the uniform bound in $1 / N_n$ given by Lemma \ref{lem:Nn-Pn-bounds} with the pointwise almost everywhere convergence $N_n \to 1$ (up to a further subsequence), the product limit theorem yields
	\begin{equation}
		\label{eq:stokes-limit-qn-renormalized-weakly-conv}
		\frac{q_n}{N_n N_{n,*} N'_n N'_{n,*}}
		\weakto q
	\end{equation}
	in $L^1_{loc}(dt; L^1(dx\Lambda))$.

	Since Lemma \ref{lem:collision-product-compactness} further establishes that this sequence is uniformly bounded in $L^2(dtdx\Lambda)$, the convergence also holds weakly in this space.
	Testing this weak convergence against the function $\hat{B} \varphi(t,x) \in L^2(dtdx\Lambda)$, we deduce
	\[
		\vvmean{
			\hat{B}
			\frac{q_n}{N_n N_{n,*} N'_n N'_{n,*}}
		}
		\weakto \vvmean{\hat{B} q}
	\]
	weakly in $L^2(dtdx)$.
	Because the spatial domain has finite measure, this implies weak convergence in $L^1_{loc}(dt; L^1(dx))$.
	Combining this limit with \eqref{eq:stokes-limit-b-qn-minus-linear-gn-vanishing}, we find that the flux term $\frac{1}{\kappa_n} \langle \hat{B} \mathcal{L}(g^\flat_n) \rangle$ converges weakly in the same space.
	Taking the divergence yields, in the sense of distributions,
	\[
		\frac{1}{\kappa_n} \Div_x \vmean{\zeta v g^\flat_n}
		= \Div_x \vmean{
			\hat{B} \frac{1}{\kappa_n}
			\mathcal{L}(g^\flat_n)
		}
		\to - \Div_x \vvmean{\hat{B} q}.
	\]

	We now relate the quantity $\vvmean{\hat{B} q}$ to the limit distribution $g$.
	Multiplying equation \eqref{eq:stokes-limit-transport-g} by the component $\hat{B}_i$ and integrating with respect to $v$, we obtain $\Div_x \langle \hat{B}_i v g \rangle = \vvmean{\hat{B}_i q}$.
	Adopting the tensor divergence convention $A = A_{ij}$ as $\Div_x A = \partial_{x_i} A_{ij}$, we rewrite this as
	\[
		\vvmean{\hat{B} q}
		= \Div_x \vmean{(v \otimes \hat{B}) g}.
	\]

	To evaluate the term inside the divergence on the right-hand side, observe that the $\mathcal{L}$ operator preserves parity, that is, $\mathcal{L}[f(- \cdot)](v) = \mathcal{L}[f](-v)$.
	By uniqueness, the pseudo-inverse $\mathcal{L}^{-1}$ defined on $(\ker \mathcal{L})^\perp$ also respects the parity of functions.

	Therefore, since $B_i(v)$ is an odd function in $v$, it follows that $\hat{B}_i(v) = \mathcal{L}^{-1}[B_i](v)$ is also odd with respect to $v$.
	Substituting the explicit form \eqref{eq:stokes-limit-explicit-infinitesimal-fd} of the infinitesimal Fermi-Dirac distribution $g$, we expand the tensor product as
	\begin{equation}
		\label{eq:stokes-limit-v-otimes-hatB-g-expansion}
		\vmean{(v \otimes \hat{B}) g}
		= \left(\rho - \frac{1}{2} K \theta\right) \vmean{v \otimes \hat{B}}
		+ \sum_i u_i \vmean{(v \otimes \hat{B}) v_i}
		+ \frac{1}{2} \theta \vmean{|v|^2 (v \otimes \hat{B})}.
	\end{equation}

	Since $\hat{B}$ is an odd function of $v$, the components $\langle (v \otimes \hat{B}) v_i \rangle_{jk} = \langle v_j \hat{B}_k v_i \rangle$ vanish identically because the integrand is odd.
	Furthermore, by construction, we have $\hat{B}_i \perp \ker \mathcal{L}$.
	Since the velocity components $v_j$ belong to $\ker \mathcal{L}$, we have $\langle v \otimes \hat{B} \rangle = 0$.
	Therefore, in the above expression, only the term involving $\langle |v|^2 (v \otimes \hat{B}) \rangle$ remains.
	Recalling the definition $B(v) = (|v|^2 / 2 - K')v$ and applying the vanishing of $\langle v \otimes \hat{B} \rangle$ once more, we obtain
	\[
		\vmean{(v \otimes \hat{B}) g}
		= \frac{1}{2} \theta \vmean{|v|^2 (v \otimes \hat{B})}
		= \theta \vmean{B \otimes \hat{B}}.
	\]

	The linearized collision operator $\mathcal{L}$ is invariant under orthogonal transformations.
	While this is proved for the classical Boltzmann equation in $\mathbb{R}^3$ in \cite[Appendix 2, Lemma 12]{golse-2012}, the argument extends analogously to the Boltzmann-Fermi-Dirac linearized operator in $\mathbb{R}^N$.
	Indeed, for any $R \in O_N(\mathbb{R})$, the integral substitutions $(w_*, u) = (R v_*, R \omega)$ remain valid in the $N$-dimensional setting with unit Jacobian.
	Furthermore, the Fermi-Dirac analog of the Maxwellian weight $M$, given by the map $(v, v_*) \mapsto F_* (1 - F') (1 - F'_*) / (1 - F)$, is invariant under the action of $O_N(\mathbb{R})$.
	Therefore, for all $R \in O_N(\mathbb{R})$ and $\phi \in \operatorname{Dom} \mathcal{L}$, we have $\mathcal{L}(\phi) \circ R = \mathcal{L}(\phi \circ R)$.

	This property implies that the vector-valued function $\hat{B}: \mathbb{R}^N \to \mathbb{R}^N$ is isotropic, meaning for all $R \in O_N(\mathbb{R})$, we have $\hat{B}(Rv) = R\hat{B} (v)$.
	Indeed, applying the rotational invariance of $\mathcal{L}$ and using that $B(v)$ is isotropic yields
	\[
		\mathcal{L}(\hat{B} \circ R) = \mathcal{L}(\hat{B}) \circ R = R \circ \mathcal{L}(\hat{B}) = \mathcal{L}(R \circ \hat{B}).
	\]
	By linearity, this implies $\hat{B} \circ R - R \circ \hat{B}$ belongs to $\ker \mathcal{L}$.
	On the other hand, by construction, $\hat{B} \perp \ker \mathcal{L}$.
	Because the kernel of $\mathcal{L}$ is invariant under orthogonal transformations, both $\hat{B} \circ R$ and $R \circ \hat{B}$ belong to $(\ker \mathcal{L})^\perp$.
	Since $\hat{B} \circ R - R \circ \hat{B}$ is simultaneously in $\ker \mathcal{L}$ and in $(\ker \mathcal{L})^\perp$, it must vanish identically, implying $\hat{B}$ is isotropic.

	By \cite[Appendix 1, Lemma 8]{golse-2012}, any isotropic vector-valued function must be of the form $\hat{B}(v) = \beta(|v|) v$ for some real-valued scalar function $\beta$.
	Substituting this into the tensor product yields
	\[
		\theta \vmean{B \otimes \hat{B}}
		= \theta \vmean{
			\beta(|v|)
			\left( \frac{|v|^2}{2} - K' \right)
			(v \otimes v)
		}.
	\]
	Applying \cite[Appendix 1, Lemma 9]{golse-2012}, the off-diagonal terms of this integral vanish.
	The above tensor then reduces to a multiple of the identity matrix, $\theta \langle B \otimes \hat{B} \rangle = \theta C_2 I$, where the scalar $C_2$ is given by
	\[
		C_2 =
		\frac{1}{N}
		\vmean{
			\beta(|v|)
			\left( \frac{|v|^2}{2} - K' \right)
			|v|^2
		}.
	\]

	Furthermore, since $\mathcal{L}$ is a nonnegative self-adjoint operator and $\hat{B} \in (\ker \mathcal{L})^\perp$, taking the trace of the tensor yields $N C_2 = \langle \hat{B} \cdot B \rangle = \langle \hat{B} \cdot \mathcal{L}(\hat{B}) \rangle > 0$, ensuring that $C_2$ is strictly positive.
	It follows that the flux term converges as
	\[
		\frac{1}{\kappa_n}
		\vmean{ \zeta v g^\flat_n}
		\to - \Div_x (\theta C_2 I)
		= - C_2 \nabla_x \theta
		\quad\text{in }
		\mathcal{D}'((0,\infty) \times \mathbb{T}^N).
	\]

	Combining the convergence of the time derivative $\partial_t \vmean{\zeta g^\flat_n}$, the vanishing of the conservation defects and the limit of the flux term, we pass to the limit in the approximate local conservation law.
	Defining the thermal conductivity $\kappa = C_2 / C_1$, we obtain
	\[
		\partial_t \theta - \kappa \Delta_x \theta = 0,
	\]

	\proofpart{4}{The momentum equation}

	While the approximate local conservation laws \eqref{eq:stokes-limit-approximate-local-conserv} are defined for scalar collision invariants $\zeta$, we may formally extend them to vector-valued invariants by applying the operations component by component.
	To derive the momentum equation, we choose the collision invariant $\zeta(v) = v$ and \eqref{eq:stokes-limit-approximate-local-conserv} takes the form
	\[
		\partial_t \vmean{v g^\flat_n}
		+ \frac{1}{\kappa_n} \Div_x \vmean{(v \otimes v) g^\flat_n}
		= \frac{1}{\kappa_n} \vvmean{v \frac{q_n}{N_n^2}}.
	\]

	As before, the conservation defects vanish by \eqref{eq:stokes-limit-conservation-defect-vanishing} and the weak-$*$ convergence $\vmean{\zeta g^\flat_n} \weakstarto \vmean{\zeta g}$ established in Step 2 yields that $\partial_t \vmean{v g^\flat_n} \to \partial_t \vmean{v g}$ in the sense of distributions.
	Substituting the explicit form \eqref{eq:stokes-limit-explicit-infinitesimal-fd} for $g$ into $\vmean{\zeta g}$, the odd integrals $\vmean{v}$ and $\vmean{(|v|^2 - K) v}$ vanish.
	Using the isotropy identity $\vmean{v_i v_j} = (K/N) \vmean{1} \delta_{ij}$, we find that $\vmean{v g} = (K/N) \vmean{1} u$.
	Therefore, we conclude that
	\begin{equation}
		\label{eq:stokes-limit-velocity-conv}
		\partial_t \vmean{v g^\flat_n}
		\to \frac{K}{N} \vmean{1} \partial_t u
		\quad\text{in }
		\mathcal{D}'((0,\infty) \times \mathbb{T}^N_x; \mathbb{R}^N).
	\end{equation}

	Next, we pass the flux term to the limit.
	To do this, we separate the traceless part of $v \otimes v$, yielding the decomposition
	\[
		\frac{1}{\kappa_n} \vmean{(v \otimes v) g^\flat_n}
		= \frac{1}{\kappa_n} \vmean{
			\left(
				v \otimes v - \frac{1}{N} |v|^2 I
			\right)
			g^\flat_n
		}
		+ \frac{1}{\kappa_n} \frac{1}{N} \vmean{|v|^2 g^\flat_n} I,
	\]
	where $I$ denotes the $N \times N$ identity matrix.
	Applying the divergence on both sides, the second term on the right-hand side becomes $\nabla_x \vmean{|v|^2 g^\flat_n} / (N \kappa_n)$.
	As a pure gradient, this term disappears when tested against divergence-free vector fields, as required by the weak formulation of the Stokes momentum equation.
	Therefore, it suffices to study the limit of this traceless component.

	To this end, let $A(v) = v \otimes v - \frac{1}{N} |v|^2 I$.
	Since $\mathcal{L}$ is a Fredholm operator and $A_{ij} \perp \ker \mathcal{L}$ for all $i, j = 1, \dots, N$, there exists a unique $\hat{A}_{ij} \in L^2(\tau F(1-F) dv)$ satisfying both $\hat{A}_{ij} \in (\ker \mathcal{L})^\perp$ and $\mathcal{L}(\hat{A}_{ij}) = A_{ij}$.
	For convenience, we denote this unique pre-image using the pseudo-inverse, writing $\hat{A}_{ij} = \mathcal{L}^{-1}[A_{ij}]$.
	Substituting $A = \mathcal{L}(\hat{A})$ and using the self-adjointness of $\mathcal{L}$, we obtain
	\[
		\frac{1}{\kappa_n} \vmean{A g^\flat_n}
		= \frac{1}{\kappa_n} \vmean{\mathcal{L}(\hat{A}) g^\flat_n}
		= - \vvmean{
			\hat{A}
			\frac{1}{\kappa_n}
			\left(
				(g^\flat_n)' + (g^\flat_n)'_*
				- g^\flat_n - (g^\flat_n)_*
			\right)
		}.
	\]

	Observe that the estimates applied to $\hat{B}$ in the previous step also hold for $\hat{A}$.
	Therefore, the bound \eqref{eq:stokes-limit-bbar-L2-Lambda} remains valid with $\hat{A}$ in place of $\hat{B}$ and an identical argument allows us to deduce that $\hat{A} \in L^2(\Lambda)$.
	Together with Lemma \ref{lem:stokes-fluxes-difference-vanishing}, this implies that
	\[
		\vvmean{
			\hat{A}
			\left[
				\frac{q_n}{N_n N_{n,*} N'_n N'_{n,*}}
				- \frac{1}{\kappa_n}
				\left(
					(g^\flat_n)' + (g^\flat_n)'_*
					- g^\flat_n - (g^\flat_n)_*
				\right)
			\right]
		}
		\to 0
	\]
	in $L^\infty(dt; L^1(dx))$.
	Recalling the weak convergence \eqref{eq:stokes-limit-qn-renormalized-weakly-conv} established in Step 3 (which extends to weak convergence in $L^2(dtdx\Lambda)$ by uniform boundedness), we have
	\[
		\vvmean{
			\hat{A}
			\frac{q_n}{N_n N_{n,*} N'_n N'_{n,*}}
		}
		\weakto
		\vvmean{\hat{A} q}
	\]
	weakly in $L^1_{loc}(dt; L^1(dx))$.
	Therefore, we deduce that $\frac{1}{\kappa_n} \vmean{A g^\flat_n}$ converges weakly to $- \vvmean{\hat{A} q}$ in $L^1_{loc}(dt; L^1(dx))$.

	To explicitly evaluate this limit term, we multiply the limiting equation \eqref{eq:stokes-limit-transport-g} by $\hat{A}$ and integrate with respect to $v$.
	This yields the identity $\Div_x \langle (v \otimes \hat{A}) g \rangle = \vvmean{\hat{A} q}$.
	Substituting this into our weak limit, we conclude that
	\begin{equation}
		\label{eq:stokes-limit-flux-velocity-convergence}
		\frac{1}{\kappa_n} \vmean{A g^\flat_n}
		\weakto
		- \Div_x \vmean{(v \otimes \hat{A}) g}
	\end{equation}
	weakly in $L^1_{loc}(dt; L^1(dx))$.

	Because $A(v)$ is even and the pseudo-inverse $\mathcal{L}^{-1}$ preserves parity, $\hat{A}(v)$ must also be even.
	Substituting the explicit form \eqref{eq:stokes-limit-explicit-infinitesimal-fd} of $g$ yields the exact same expansion as \eqref{eq:stokes-limit-v-otimes-hatB-g-expansion}, but with $\hat{A}$ in place of $\hat{B}$.
	Since $\hat{A}$ is even, the odd integrals $\langle v \otimes \hat{A} \rangle$ and $\langle |v|^2 (v \otimes \hat{A}) \rangle$ vanish, yielding
	\[
		\vmean{(v \otimes \hat{A}) g}
		= \sum_i u_i \vmean{(v \otimes \hat{A}) v_i}
		= u \cdot \vmean{v \otimes v \otimes \hat{A}},
	\]
	where the dot product denotes contraction over the first index.

	By the definition of the pseudo-inverse, $\hat{A}_{ij} \perp \ker \mathcal{L}$, which implies $\langle |v|^2 \hat{A} \rangle = 0$.
	Therefore, substituting $v \otimes v = A + \frac{1}{N} |v|^2 I$, the trace component disappears and we obtain $\langle v \otimes v \otimes \hat{A} \rangle = \langle A \otimes \hat{A} \rangle$.

	Furthermore, since $\mathcal{L}$ is invariant by orthogonal transformations, the same reasoning applied previously to $\hat{B}$ yields that the isotropy of $A$ implies the isotropy of $\hat{A}$.
	By \cite[Appendix 1, Lemma 8]{golse-2012}, this guarantees the existence of real-valued scalar functions $\lambda_1$ and $\lambda_2$ such that
	\[
		\hat{A}(v) = \lambda_1(|v|) I + \lambda_2(|v|) v \otimes v.
	\]

	Since $A$ is traceless and $\mathcal{L}(\hat{A}) = A$, we have $\tr(\mathcal{L}(\hat{A})) = 0$.
	Because $\mathcal{L}$ is a linear operator acting component-wise, it commutes with the trace, implying $\mathcal{L}(\tr(\hat{A})) = 0$.
	Therefore, $\tr(\hat{A}) \in \ker\mathcal{L}$.
	However, by the definition of the pseudo-inverse, each component $\hat{A}_{ij}$ is orthogonal to $\ker\mathcal{L}$, which implies $\tr(\hat{A}) \perp \ker\mathcal{L}$.
	Since $\tr(\hat{A})$ is both in the kernel and orthogonal to it, we conclude that $\tr(\hat{A}) = 0$.

	Taking the trace of the expression for $\hat{A}$ yields $\lambda_1(|v|) N + \lambda_2(|v|) |v|^2 = 0$.
	Solving for $\lambda_1(|v|)$ and substituting back into the formula, we find
	\[
		\hat{A}(v)
		= \lambda_2(|v|)
		\left(
			v \otimes v - \frac{1}{N} |v|^2 I
		\right)
		= \lambda_2(|v|) A.
	\]

	To compute the flux, we must evaluate the components of the tensor $A \otimes \hat{A}$.
	Substituting the above expression and expanding the definition of $A$ yields, for every $i, j, k, l = 1, \dots, N$,
	\begin{equation*}
		\begin{split}
			\vmean{A \otimes \hat{A}}_{ijkl}
			&= \vmean{\lambda_2(|v|) v_i v_j v_k v_l}
			- \frac{1}{N} \vmean{\lambda_2(|v|) |v|^2 v_i v_j} \delta_{kl} \\
			&\quad - \frac{1}{N}\vmean{\lambda_2(|v|) |v|^2 v_k v_l} \delta_{ij}
			+ \frac{1}{N^2} \vmean{\lambda_2(|v|) |v|^4} \delta_{ij} \delta_{kl}.
		\end{split}
	\end{equation*}

	Using \cite[Appendix 1, Lemma 9]{golse-2012}, we evaluate the integral in the second term as $\vmean{\lambda_2(|v|) |v|^2 v_i v_j} = \frac{1}{N} \vmean{\lambda_2(|v|) |v|^4} \delta_{ij}$, with an analogous expression holding for the third term.
	Similarly, we evaluate the fourth-order moments in the first term using \cite[Appendix 1, Lemma 10]{golse-2012}, which yields $\vmean{\lambda_2(|v|) v_i v_j v_k v_l} = \frac{1}{N(N+2)} \vmean{\lambda_2(|v|) |v|^4} (\delta_{ij} \delta_{kl} + \delta_{ik} \delta_{jl} + \delta_{il} \delta_{jk})$.
	Substituting these identities back into the expansion, we obtain
	\begin{equation*}
		\vmean{A \otimes \hat{A}}_{ijkl}
		= \frac{\vmean{\lambda_2(|v|) |v|^4}}{N(N+2)}
		\left[
			\delta_{ik} \delta_{jl}
			+ \delta_{il} \delta_{jk}
			- \frac{2}{N} \delta_{ij} \delta_{kl}
		\right].
	\end{equation*}

	Contracting this tensor with the velocity $u$ and taking the divergence with respect to $x$ yields
	\[
		\left[
			\Div_x \left( u \cdot \vmean{A \otimes \hat{A}} \right)
		\right]_{kl}
		= \frac{\vmean{\lambda_2(|v|) |v|^4}}{N(N+2)}
		\left[
			\partial_{x_l} u_k + \partial_{x_k} u_l
			- \frac{2}{N} (\Div_x u) \delta_{kl}
		\right].
	\]
	From the incompressibility relation $\Div_x u = 0$, the last term vanishes.
	Taking the second divergence with respect to $x$, we commute the derivatives in the first term to find $\sum_k \partial_{x_l} \partial_{x_k} u_k = \sum_k \partial_{x_l} (\Div_x u) = 0$.
	The term in square brackets therefore simplifies to $\Delta_x u_l$.

	Recalling that $\langle (v \otimes \hat{A}) g \rangle = u \cdot \langle A \otimes \hat{A} \rangle$, we apply the divergence to both sides of the weak limit \eqref{eq:stokes-limit-flux-velocity-convergence} in the sense of distributions, yielding
	\[
		\Div_x \left[
			\frac{1}{\kappa_n} \vmean{A g^\flat_n}
		\right]
		\to - \frac{\vmean{\lambda_2(|v|) |v|^4}}{N(N+2)} \Delta_x u
	\]
	in $\mathcal{D}'((0,\infty) \times \mathbb{T}^N_x; \mathbb{R}^N)$.
	Together with the limit \eqref{eq:stokes-limit-velocity-conv}, we obtain the velocity equation of the Stokes-Fourier system.

	\proofpart{5}{Identification of the initial data}

	Suppose the moments of the initial data $(g^{in}_n)_n$ converge in the sense of distributions to $(u_0, \theta_0)$ as in the statement of the theorem.
	Let $\zeta = \zeta(v)$ denote either the scalar collision invariant $(C_1)^{-1} ( |v|^2 / 2 - K')$ or the vector collision invariant $N v / (K \vmean{1})$.
	We test the approximate local conservation law \eqref{eq:stokes-limit-approximate-local-conserv} against a spatial test function $\varphi = \varphi(x) \in C^\infty(\mathbb{T}^N)$, taking values in $\mathbb{R}$ or $\mathbb{R}^N$, respectively.
	In the vector case, we further assume that $\varphi$ is divergence-free, $\Div_x \varphi = 0$.

	To unify the notation, let us denote by $\cdot$ the standard inner product in the respective space.
	That is, it represents the vector dot product in the scalar case (temperature equation) and the full tensor contraction in the vector case (momentum equation).
	Defining $\Phi_n(t) = \int_{\mathbb{T}^N} \vmean{\zeta g^\flat_n} \varphi \, dx$, its time derivative in the sense of distributions is given by $\Phi'_n(t) = a_n(t) + b_n(t)$, where
	\[
		\begin{split}
			a_n(t) &= \int_{\mathbb{T}^N} \frac{1}{\kappa_n} \vmean{J g^\flat_n} \cdot \nabla_x \varphi(x) \, dx, \\
			b_n(t) &= \int_{\mathbb{T}^N} \frac{1}{\kappa_n} \vvmean{\zeta \frac{q_n}{N_n^2}} \varphi(x) \, dx.
		\end{split}
	\]
	The function $J = J(v)$ corresponds to the effective macroscopic flux associated with the invariant $\zeta$.
	For the temperature equation, where $\zeta = |v|^2 / 2 - K'$, this flux is $J(v) = \zeta(v) v = B(v)$.
	For the momentum equation, since $\varphi$ is divergence-free, the trace part of the standard momentum flux $v \otimes v$ vanishes when contracted with $\nabla_x \varphi$.
	Thus, we define $J(v)$ as its traceless part, $J(v) = A(v)$.

	Fix $T > 0$.
	From the weak-$*$ convergence $\vmean{\zeta g^\flat_n} \weakstarto \vmean{\zeta g}$ in $L^\infty(dt; L^2(dx))$, we deduce that the sequence $(\Phi_n)_n$ is uniformly bounded in $L^\infty([0,T])$.

	Furthermore, for each $n$, the function $a_n(t)$ is bounded in $L^1_{loc}([0,T])$, and this bound is uniform in $n$.
	To see this, consider the two cases for $\zeta$.
	For the temperature equation, where $\zeta = |v|^2 / 2 - K'$, we established in part 3 that the sequence $\frac{1}{\kappa_n} \langle \hat{B} \mathcal{L}(g^\flat_n) \rangle = \frac{1}{\kappa_n} \langle B g^\flat_n \rangle$ converges weakly in $L^1_{loc}(dt; L^1(dx))$.
	For the momentum equation, where $\zeta = v$, the $a_n(t)$ reduces to the integral of the traceless part  $\frac{1}{\kappa_n} \langle \hat{A} \mathcal{L}(g^\flat_n) \rangle = \frac{1}{\kappa_n} \langle A g^\flat_n \rangle$.
	As shown in \eqref{eq:stokes-limit-flux-velocity-convergence}, this also converges weakly in $L^1_{loc}(dt; L^1(dx))$.

	On the other hand, the scaling \eqref{eq:stokes-limit-conservation-defect-vanishing} of the conservation defects implies that the integrand of $b_n(t)$ vanishes in $L^1_{loc}(dtdx)$.
	Therefore, the sequence $(b_n)_n$ vanishes in $L^1([0,T])$.
	In particular, the expression of $\Phi'_n$ ensures that the sequence $(\Phi_n)_n$ is uniformly bounded in $W^{1,1}([0,T])$.

	This implies that, for each $n$, the function $\Phi_n$ is absolutely continuous.
	By the fundamental theorem of calculus we have, for every $t \in [0,T]$,
	\begin{equation}
		\label{eq:stokes-limit-Phi-n-fund-theorem-calculus}
		\Phi_n(t)
		= \Phi_n(0)
		+ \int_0^t a_n(\tau) \, d\tau
		+ \int_0^t b_n(\tau) \, d\tau.
	\end{equation}
	Let $\mathcal{A}_n(t)$ and $\mathcal{B}_n(t)$ denote the first and second integral terms on the right-hand side, respectively.

	As in the acoustic limit, the vanishing of the conservation defects in $L^1_{loc}(dtdx)$ implies that the sequence $(\mathcal{B}_n)_n$ converges uniformly to zero on $[0,T]$.
	For the sequence $(\mathcal{A}_n)_n$, we apply the Arzelà-Ascoli theorem, but in a slightly different manner than for the acoustic limit.

	The sequence $(\mathcal{A}_n)_n$ is uniformly bounded in $L^\infty([0,T])$.
	Indeed, applying an $L^\infty$ bound on $\nabla_x \varphi$ we have, for every $t \in [0,T]$,
	\[
		|\mathcal{A}_n(t)| \leq \| \nabla_x \varphi \|_{L^\infty(dx)}
			\left\|
				\frac{1}{\kappa_n} \vmean{J g^\flat_n}
			\right\|_{L^1([0,T] \times \mathbb{T}^N)}.
	\]
	The second factor is uniformly bounded with respect to $n$, since the sequence $(\frac{1}{\kappa_n} \vmean{J g^\flat_n})_n$ converges weakly in $L^1_{loc}(dt; L^1(dx))$.

	Furthermore, we note that, by the Dunford-Pettis theorem, the sequence $(\frac{1}{\kappa_n} \vmean{J g^\flat_n})_n$ is uniformly integrable.
	Therefore, for every $\varepsilon > 0$, there exists a $\delta > 0$ such that if $t_1, t_2 \in [0,T]$ satisfy $|t_2 - t_1| = |[t_1, t_2] \times \mathbb{T}^N| < \delta$, then
	\[
		\int_{t_1}^{t_2} \int_{\mathbb{T}^N}
			\left|
				\frac{1}{\kappa_n}
				\vmean{J g^\flat_n}
			\right| \, dx dt
		< \varepsilon
	\]
	for all $n \in \mathbb{N}$.
	This implies $|\mathcal{A}_n(t_2) - \mathcal{A}_n(t_1)| \leq \varepsilon \| \nabla_x \varphi \|_{L^\infty(dx)}$
	for all $n \in \mathbb{N}$, which shows that the sequence $(\mathcal{A}_n)_n$ is uniformly equicontinuous.

	By the Arzela-Ascoli theorem, we can extract a subsequence such that $(\mathcal{A}_n)_n$ converges uniformly on $[0,T]$.
	Moreover, by passing to a further subsequence, we can ensure that the sequence of real numbers $(\Phi_n(0))_n$ also converges.
	Therefore, returning to \eqref{eq:stokes-limit-Phi-n-fund-theorem-calculus}, it follows that the full sequence $(\Phi_n)_n$ converges uniformly on $[0,T]$.

	Since $\vmean{\zeta g^\flat_n} \weakstarto \vmean{\zeta g}$ in $L^\infty(dt; L^2(dx))$, we can identify this uniform limit.
	Up to redefining $g$ on a set of measure zero, we have
	\begin{equation}
		\label{eq:stokes-limit-uniform-time}
		\Phi_n(t)
		= \int_{\mathbb{T}^N} \vmean{\zeta g^\flat_n}(t) \varphi \, dx
		\to \int_{\mathbb{T}^N} \vmean{\zeta g}(t) \varphi \, dx
	\end{equation}
	uniformly on $[0,T]$.

	Observe that this limit is a continuous function in time, as it is a uniform limit of continuous functions.
	Substituting the explicit form \eqref{eq:stokes-limit-explicit-infinitesimal-fd} of the infinitesimal Fermi-Dirac distribution $g$ and the Boussinesq relation \eqref{eq:stokes-limit-boussinesq-cb} for each respective choice of $\zeta$, we deduce that the macroscopic parameters $u$ and $\theta$ belong to $C([0,T]; \mathcal{D}'(\mathbb{T}^N))$.
	In particular, their initial traces at $t = 0$ are well-defined.

	We first apply this uniform convergence to deduce that the $C_B$ constant of the Boussinesq relation \eqref{eq:stokes-limit-boussinesq-cb} vanishes.
	Choose the constant test function $\varphi = 1$.
	By hypothesis \eqref{eq:fn-invariants-concentrated}, for any collision invariant $\zeta \in \operatorname{span}\{1, v_i, |v|^2\}$, we have
	\begin{equation}
		\label{eq:stokes-limit-zeta-initial-data-vanishing}
		\int_{\mathbb{T}^N} \vmean{\zeta g^{in}_n (x)} \, dx = 0.
	\end{equation}

	Since the initial trace of the fluctuations $g_n(0)$ coincides with the initial data $g^{in}_n$, the triangle inequality yields
	\begin{equation*}
		\begin{split}
			\left|
				\int_{\mathbb{T}^N} \vmean{\zeta g}(0) \, dx
			\right|
			&= \left|
				\int_{\mathbb{T}^N} \vmean{\zeta g}(0) \, dx
				- \int_{\mathbb{T}^N} \vmean{\zeta g^{in}_n} \, dx
			\right| \\
			&\leq \left|
				\int_{\mathbb{T}^N} \vmean{\zeta g}(0) \, dx
				- \int_{\mathbb{T}^N} \vmean{\zeta g^\flat_n}(0) \, dx
			\right| \\
			&\quad + \left|
				\int_{\mathbb{T}^N} \vmean{\zeta g^\flat_n}(0) \, dx
				- \int_{\mathbb{T}^N} \vmean{\zeta g_n}(0) \, dx
			\right|.
		\end{split}
	\end{equation*}
	The first term on the right-hand side vanishes as $n \to \infty$ by the uniform limit \eqref{eq:stokes-limit-uniform-time} applied at $t = 0$.

	For the second term, using the fact that $|\zeta| \leq C_\zeta \tau$ for some constant $C_\zeta > 0$, we deduce from inequality \eqref{eq:stokes-limit-gn-gnflat-diff} that $|\zeta g_n - \zeta g^\flat_n| \leq C_\zeta \mu_n \tau g_n^2 / N_n$.
	Integrating this inequality over $\Rxv$ against the measure $F(1-F) dvdx$ and applying the estimate for $(\tau g_n^2 / N_n)_n$ established in Lemma \ref{lem:gn2-Nn-bounds}, we guarantee the existence of a constant $C_0 > 0$ such that
	\begin{equation}
		\label{eq:stokes-limit-zeta-gn-gflat-bound}
		\begin{split}
			\left|
				\int_{\mathbb{T}^N} \vmean{\zeta g_n}(t) \, dx
				- \int_{\mathbb{T}^N} \vmean{\zeta g^\flat_n}(t) \, dx
			\right|
			&\leq \int_{\mathbb{T}^N} \vmean{|\zeta g_n - \zeta g^\flat_n|}(t) \, dx \\
			&\leq C_0 \mu_n \log\left(\frac{1}{\mu_n}\right).
		\end{split}
	\end{equation}
	Since the functions $t \mapsto \int_{\mathbb{T}^N} \vmean{\zeta g_n}(t) \, dx$ and $t \mapsto \int_{\mathbb{T}^N} \vmean{\zeta g^\flat_n}(t) \, dx$ are continuous in time, the above inequality holds everywhere, including at $t = 0$.
	Therefore, the second term of the triangle inequality also vanishes as $n \to \infty$ and by passing to the limit, we obtain $\int_{\mathbb{T}^N} \vmean{\zeta g}(0) \, dx = 0$.

	Evaluating this identity for $\zeta = |v|^2 / \vmean{|v|^2}$ using the infinitesimal Fermi-Dirac form \eqref{eq:stokes-limit-explicit-infinitesimal-fd} of $g$ yields $\int_{\mathbb{T}^N} (\rho(0,x) + \gamma \theta(0,x)) \, dx = 0$.
	Substituting the Boussinesq relation \eqref{eq:stokes-limit-boussinesq-cb}, we obtain
	\[
		C_B = 0.
	\]

	Having established the strong Boussinesq relation $\rho + \gamma \theta = 0$, we now identify the initial data of the Stokes-Fourier system $(u(0), \theta(0))$ with the limits of the macroscopic moments $(u_0, \theta_0)$ given in the statement.
	From the hypothesis of the theorem, the macroscopic moments of the initial fluctuations converge in the sense of distributions to a limit denoted by $\xi_0$.
	Specifically, we set $\xi_0 = u_0$ for the momentum equation (where $\zeta = N v / (K \vmean{1})$) and $\xi_0 = \theta_0$ for the temperature equation (where $\zeta = (C_1)^{-1} ( |v|^2 / 2 - K')$).

	We now show that the initial trace of the limit matches the limit of the initial data, that is, $\int_{\mathbb{T}^N} \vmean{\zeta g}(0) \varphi \, dx = \int_{\mathbb{T}^N} \xi_0 \varphi \, dx$.
	Returning to a generic test function $\varphi$ (divergence-free in the vector case), since the initial trace of the fluctuations $g_n(0)$ coincides with the initial data $g^{in}_n$, the triangle inequality yields
	\begin{equation*}
		\begin{split}
			\left|
				\int_{\mathbb{T}^N} \vmean{\zeta g}(0) \varphi \, dx
				- \int_{\mathbb{T}^N} \xi_0 \varphi \, dx
			\right|
			&\leq \left|
				\int_{\mathbb{T}^N} \vmean{\zeta g}(0) \varphi \, dx
				- \int_{\mathbb{T}^N} \vmean{\zeta g^\flat_n}(0) \varphi \, dx
			\right| \\
			&\quad + \left|
				\int_{\mathbb{T}^N} \vmean{\zeta g^\flat_n}(0) \varphi \, dx
				- \int_{\mathbb{T}^N} \vmean{\zeta g_n}(0) \varphi \, dx
			\right| \\
			&\quad + \left|
				\int_{\mathbb{T}^N} \vmean{\zeta g^{in}_n} \varphi \, dx
				- \int_{\mathbb{T}^N} \xi_0 \varphi \, dx
			\right|.
		\end{split}
	\end{equation*}
	The first term on the right-hand side vanishes as $n \to \infty$ due to the uniform convergence \eqref{eq:stokes-limit-uniform-time} evaluated at $t = 0$.

	Incorporating the $L^\infty$ bound of $\varphi$ into the derivation of \eqref{eq:stokes-limit-zeta-gn-gflat-bound}, we obtain
	\begin{equation*}
		\left|
			\int_{\mathbb{T}^N} \vmean{\zeta g_n}(t) \varphi \, dx
			- \int_{\mathbb{T}^N} \vmean{\zeta g^\flat_n}(t) \varphi \, dx
		\right|
		\leq C_0 \|\varphi\|_{L^\infty} \mu_n \log\left(\frac{1}{\mu_n}\right).
	\end{equation*}
	By the continuity of the functions $t \mapsto \int_{\mathbb{T}^N} \vmean{\zeta g_n}(t) \varphi \, dx$ and $t \mapsto \int_{\mathbb{T}^N} \vmean{\zeta g^\flat_n}(t) \varphi \, dx$, this bound holds for all $t \in [0,T]$.
	Evaluating at $t = 0$, we find that the second term of the triangle inequality is bounded by $C_0 \| \varphi \|_{L^\infty} \mu_n \log(1 / \mu_n)$, which vanishes as $n \to \infty$.

	To establish that the third term vanishes, we consider the scalar and vector cases separately.
	For the temperature equation, the conclusion follows directly from the convergence in the sense of distributions assumed the theorem.
	For the momentum equation, since $\varphi$ is divergence-free, its inner product with the vector field is identical to the inner product with its divergence-free projection.
	That is,
	\[
		\int_{\mathbb{T}^N}
		\vmean{
			\frac{N}{K \vmean{1}} v g^{in}_n
		} \cdot \varphi \, dx
		= \int_{\mathbb{T}^N}
		\Pi \vmean{
			\frac{N}{K \vmean{1}} v g^{in}_n
		} \cdot \varphi \, dx.
	\]
	By hypothesis, this second integral converges to $\int_{\mathbb{T}^N} \xi_0 \varphi \, dx$, which implies that the third term also vanishes.

	Since all three terms converge to zero, we conclude that the continuous trace at $t = 0$ satisfies
	\[
		\int_{\mathbb{T}^N} \vmean{\zeta g}(0) \varphi \, dx
		= \int_{\mathbb{T}^N} \xi_0 \varphi \, dx.
	\]
	By substituting the explicit form \eqref{eq:stokes-limit-explicit-infinitesimal-fd} for $g$ into the left-hand side and incorporating the Boussinesq relation $\rho + \gamma \theta = 0$, we see that the specific choices of the collision invariant $\zeta$ isolate the macroscopic parameters $u(0,x)$ and $\theta(0,x)$ by orthogonality.
	Since this identity holds for any generic test function $\varphi$, we deduce that the initial traces equal $u_0$ and $\theta_0$ in the sense of distributions, thus identifying the initial data for the Stokes-Fourier system.

	Since the Cauchy problem for the Stokes-Fourier system admits a unique weak solution $(u, \theta)$, the limiting distribution $g$ is uniquely determined.
	Following the same compactness and subsequence argument detailed in Part 4 of the proof of Theorem \ref{thm:acoustic-limit}, this uniqueness allows us to upgrade the weak convergence of $g_n$, established in Part 1, from a subsequence to the entire sequence.

	Finally, the zero spatial mean of the macroscopic parameters $u$ and $\theta$ follows from the same argument used in Part 5 of the proof of Theorem \ref{thm:acoustic-limit}, with a necessary adjustment for the initial velocity projection.
	For the temperature fluctuation $\theta_0$, we recall the zero spatial mean of the initial data moments \eqref{eq:stokes-limit-zeta-initial-data-vanishing}.
	Choosing $\zeta = (C_1)^{-1} ( |v|^2 / 2 - K')$ and passing to the limit in the sense of distributions yields $\int_{\mathbb{T}^N} \theta_0 \, dx = 0$.

	For the velocity fluctuation $u_0$, we must account for the Leray projector $\Pi$.
	Applying the Helmholtz-Hodge decomposition on the torus, we express the vector field as a sum of its divergence-free projection and a gradient term, given by
	\[
		\vmean{\frac{N}{K \vmean{1}} v g^{in}_n}
		= \Pi \vmean{\frac{N}{K \vmean{1}} v g^{in}_n}
		+ \nabla_x G_n,
	\]
	for some periodic scalar potential $G_n$.
	Because the integral of a gradient over the torus $\mathbb{T}^N$ vanishes, integrating both sides and applying \eqref{eq:stokes-limit-zeta-initial-data-vanishing} yields
	\[
		\int_{\mathbb{T}^N} \Pi \vmean{\frac{N}{K \vmean{1}} v g^{in}_n (x)} \, dx
		= \int_{\mathbb{T}^N} \vmean{\frac{N}{K \vmean{1}} v g^{in}_n (x)} \, dx
		= 0.
	\]
	Passing to the limit $n \to \infty$ in the sense of distributions, we obtain $\int_{\mathbb{T}^N} u_0 \, dx = 0$.

	Integrating the Stokes-Fourier system over the torus $\mathbb{T}^N$ eliminates all divergence and gradient terms, ensuring the spatial means of $u$ and $\theta$ are conserved for all $t > 0$.
	Since we established that their initial means vanish at $t = 0$, they identically vanish for all $t \geq 0$.
\end{proof}

\bibliographystyle{plain}
\bibliography{references}

\end{document}